\documentclass[12pt]{article}
\usepackage{amsthm,amsfonts,amssymb,amsmath}
\usepackage{enumerate}
\usepackage{fullpage}
\usepackage[colorlinks=true]{hyperref}
\usepackage{xcolor}

\newcommand{\BC}{{\mathbb {C}}}

\newcommand{\BP}{{\mathbb {P}}}
\newcommand{\BQ}{{\mathbb {Q}}}
\newcommand{\BR}{{\mathbb {R}}}

\newcommand{\CA}{{\mathcal {A}}}

\newcommand{\CCC}{{\mathcal {C}}}
\newcommand{\CD}{{\mathcal {D}}}
\newcommand{\CE}{{\mathcal {E}}}

\newcommand{\CH}{{\mathcal {H}}}

\newcommand{\CL}{{\mathcal {L}}}
\newcommand{\CM}{{\mathcal {M}}}

\newcommand{\CO}{{\mathcal {O}}}

\newcommand{\CS}{{\mathcal {S}}}

\newcommand{\CU}{{\mathcal {U}}}
\newcommand{\CV}{{\mathcal {V}}}

\newcommand{\CX}{{\mathcal {X}}}
\newcommand{\CY}{{\mathcal {Y}}}

\newcommand{\OL}{{\overline{L}}}
\newcommand{\OM}{{\overline{M}}}
\newcommand{\ON}{{\overline{N}}}

\newcommand{\OD}{{\overline{D}}}

\newcommand{\OB}{{\overline{B}}}

\newcommand{\TL}{{\widetilde{L}}}
\newcommand{\TM}{{\widetilde{M}}}

\renewcommand{\OE}{{\overline{E}}}

\newcommand{\OTheta}{{\overline{\Theta}}}

\newcommand{\Fal}{{\mathrm{Fal}}}

\newcommand{\an}{{\mathrm{an}}}
\newcommand{\ran}{{\text{r-an}}}

\newcommand{\red}{{\mathrm{red}}}

\newcommand{\Div}{{\mathrm{Div}}}
\newcommand{\Divhat}{{\widehat{\mathrm{Div}}}}
\renewcommand{\div}{{\mathrm{div}}}

\newcommand{\Hom}{{\mathrm{Hom}}}

\newcommand{\intb}{{\mathrm{int}}}

\newcommand{\rank}{{\mathrm{rank}}}

\newcommand{\Pic}{\mathrm{Pic}}

\newcommand{\CaCl}{\mathrm{CaCl}}

\newcommand{\Picc}{{\mathcal{P}\mathrm{ic}}}
\newcommand{\Pichat}{\widehat{\mathrm{Pic}}}

\newcommand{\rmod}{{\mathrm{mod}}}

\DeclareMathOperator{\Spec}{Spec}

\newcommand{\vol}{{\mathrm{vol}}}

\newcommand{\wt}{\widetilde}
\newcommand{\wh}{\widehat}

\newcommand{\pair}[1]{\langle {#1} \rangle}

\newcommand{\ds}{\displaystyle}

\newcommand{\ol }{\overline}

\newcommand{\lra}{\longrightarrow}

\newcommand{\kkk}{Let $k$ be either $\ZZ$ or a field. }

\newcommand{\nef}{\mathrm{nef}}

\newcommand{\eff}{\mathrm{eff}}

\renewcommand{\d}{\mathrm{d}}

\newcommand{\CLL}{\overline{\mathcal L}}
\newcommand{\CMM}{\overline{\mathcal M}}
\newcommand{\CHH}{\overline{\mathcal H}}

\newcommand{\CDD}{\overline{\mathcal D}}
\newcommand{\CEE}{\overline{\mathcal E}}

\newcommand{\CC}{\mathbb{C}}
\newcommand{\RR}{\mathbb{R}}
\newcommand{\ZZ}{\mathbb{Z}}
\newcommand{\QQ}{\mathbb{Q}}

\newcommand{\PP}{\mathbb{P}}

\providecommand{\lb}{\mathcal{L}}

\providecommand{\lbb}{\overline{\mathcal{L}}}

\providecommand{\VCV}{\|\cdot\|}

\newtheorem{thm}{Theorem}[section]

\newtheorem{lem}[thm]{Lemma}
\newtheorem{prop}[thm]{Proposition}

\newtheorem{theorem}[thm]{Theorem}

\newtheorem{lemma}[thm]{Lemma}

\theoremstyle{definition}

\theoremstyle{remark}

\begin{document}

\title{Adelic Line Bundles, Arithmetic Positivity and Diophantine Geometry}
\author{Xinyi Yuan}
\date{}
\maketitle

\begin{abstract}
This article is an expository account of the basics of
adelic line bundles on quasi-projective varieties, arithmetic positivity of adelic line bundles, 
and applications of positivity to an equidistribution theorem and 
the uniform Bogomolov conjecture.  
\end{abstract}

\tableofcontents

\section{Introduction}

Diophantine geometry studies rational points (or integral points, algebraic points) of algebraic varieties over number fields. 
The most crucial tool to study this topic is heights of algebraic points originally introduced by Weil \cite{Wei51}.
Arakelov geometry, introduced by Arakelov \cite{Ara} and further developed by Deligne \cite{Del} and Gillet--Soul\'e \cite{GS}, is an intersection theory over arithmetic varieties over $\ZZ$ by combining intersection numbers on algebraic schemes and integrals on complex analytic spaces. 
In particular, heights of algebraic points are naturally interpreted as arithmetic intersection numbers in Arakelov geometry, and thus can be understood by geometric intuition. 

Among works in the 20th century, the following are probably the three most significant applications of Arakelov geometry to Diophantine geometry:
\begin{enumerate}
\item[(1)] the proof of the Mordell conjecture by Faltings \cite{Fal},  
\item[(2)] the discovery of an equidistribution theorem by Szpiro--Ullmo--Zhang \cite{SUZ},  
\item[(3)] the proof of the Bogomolov conjecture by Ullmo \cite{Ull} and Zhang \cite{Zh3}.
\end{enumerate}
In the 21st century, all these three topics have substantial developments, which include:
\begin{enumerate}
\item[(4)] the extension of the equidistribution theorem to semipositive metrics and all places by Chambert-Loir \cite{CL} and Yuan \cite{Yua},  
\item[(5)] the proof of the geometric Bogomolov conjecture by 
Gubler \cite{Gub07},  Yamaki \cite{Yam16, Yam18, Yam17b}, Cantat--Gao--Habegger--Xie \cite{CGHX}, and  Xie--Yuan \cite{XY22},
\item[(6)] the proof of the uniform Bogomolov conjecture by Dimitrov--Gao--Habegger \cite{DGH} and K\"uhne \cite{Kuh}, which implies the uniform Mordell conjecture by combining the work of Vojta \cite{Voj}. 
\end{enumerate}

To apply Arakelov geometry to Diophantine geometry and algebraic dynamics, it is very common to vary integral models of projective varieties or even take limiting information from an infinite sequence of integral models. 
For this purpose, Zhang \cite{Zha2} introduced an intersection theory of adelic line bundles on projective varieties over number fields. 
In fact, all the above works on equidistribution theorems and Bogomolov conjectures are based on this theory.
On the other hand, Zhang's theory is not applicable to similar problems on quasi-projective varieties over number fields, which appear in many natural situations involving moduli spaces, degenerations and varieties over finitely generated fields. 
For this purpose, Yuan--Zhang \cite{YZ} introduced an extension of the theory of \cite{Zha2} to quasi-projective varieties over number fields. 
As a consequence, Yuan--Zhang \cite{YZ} extended the equidistribution theorem to quasi-projective varieties, and Yuan \cite{Yua21} gave a theoretical proof of the uniform Bogomolov conjecture originally proved by 
Dimitrov--Gao--Habegger \cite{DGH} and K\"uhne \cite{Kuh}. 

In these works,  arithmetic positivity of adelic line bundles is the theoretical foundation for our applications of Arakelov geometry to Diophantine geometry. The most commonly used theorems in this direction are the arithmetic Hilbert--Samuel formula originally due to Gillet--Soul\'e \cite{GS2,GS3}, Bismut--Vasserot \cite{BV89}, and Zhang \cite{Zh1}, and the arithmetic Siu inequality originally due to Yuan \cite{Yua}.

The goal of this expository article is to introduce the above works of 
Yuan--Zhang \cite{YZ} and Yuan \cite{Yua21}. 
More precisely, we will cover the following topics.
\begin{enumerate}
\item[(a)] In \S\ref{sec adelic}, we review the basic theory of adelic line bundles, their analytification and intersection theory. 
\item[(b)] In \S\ref{sec positivity}, we sketch the arithmetic positivity theory of adelic line bundles. The notions include nefness, bigness, and volumes of adelic line bundles.
\item[(c)] In \S\ref{sec height}, we study height functions associated to adelic line bundles, and introduce fundamental inequalities relating arithmetic positivity to algebraic points of small heights. 
\item[(d)] In \S\ref{sec equi}, we prove an equidistribution theorem of small points over quasi-projective varieties. The proof is based on the theory in the previous three sections. 
\item[(e)] In \S\ref{sec admissible}, we introduce the admissible canonical bundle for a relative curve as an adelic line bundle, and  prove its bigness in the case of maximal variation. 
\item[(f)] In \S\ref{sec uniform}, we apply the bigness of the admissible canonical bundle to prove the uniform Bogomolov conjecture. 
\end{enumerate}
The contents of \S\ref{sec adelic}-\S\ref{sec equi} are from Yuan--Zhang \cite{YZ}, and the contents of \S\ref{sec admissible}-\S\ref{sec uniform} are from Yuan \cite{Yua21}. 

Almost all definitions and results over number fields in this article have direct counterparts over function fields. We only treat the case of number fields for simplicity, while many original references actually {cover} function fields. We leave interested readers to find them in the references or {work them out themselves}.

This article starts with the most basic intersection theory of hermitian line bundles on arithmetic varieties, but the article may be difficult to read without familiarity with basic Arakelov geometry (including Zhang's theory of adelic line bundles on projective varieties). For {quick access} to these topics, we refer {readers} to our previous expository article \cite{Yua12}.

\subsubsection*{Notations and terminology}\label{sec notation}

By a \emph{variety} over a field $k$, we mean an integral scheme,  separated of finite type over $k$. 
By a \emph{curve} over a field $k$, we mean a variety of dimension 1 over a field $k$.

By a \emph{line bundle} on a scheme, we mean an invertible sheaf on the scheme. 
We often write tensor products of line bundles additively, so $aL-bM$ means
$L^{\otimes a}\otimes M^{\otimes (-b)}$
for line bundles $L,M$ and integers $a,b$.

To treat adelic line bundles of \cite{YZ}, we follow the uniform terminology of \cite[\S1.5]{YZ}.
In particular, most of the time, our base ring $k$ is $\ZZ$ or a field.

By a \emph{number field} $K$, we mean a finite extension of $\QQ$. 
Denote by $M_K$ the set of places of $K$. 
For any place $v$ of $K$, normalize the absolute value $|\cdot|_v$ on the completion $K_v$ as follows.
\begin{enumerate}[(1)]
\item If $v$ is an archimedean place, take $|\cdot|_v$ to be the usual absolute value on $K_v\cong\RR,\CC$;
\item If $v$ is a non-archimedean place, set $|a|_v=\big(\#(O_{K_v}/aO_{K_v})\big)^{-1}$ for any $a\in O_{K_v}$. 
\end{enumerate}
The valuations satisfy the product formula 
$$
\prod_{v\in M_K} |a|_v^{\epsilon_v} =1, \quad \forall \ a\in K^\times. 
$$
Here $\epsilon_v=1$ for all real or non-archimedean places $v$; $\epsilon_v=2$ for all complex places $v$.

\subsubsection*{Acknowledgments}
The author is supported by the grant {No.} 12321001
from the National Science Foundation of China, and by the Xplorer Prize from the New Cornerstone Science Foundation.
The author would also like to {acknowledge support from} the China--Russia Mathematics Center.

\section{Adelic line bundles}
\label{sec adelic}

In \cite{Ara}, Arakelov introduced an arithmetic intersection theory of hermitian line bundles (and arithmetic divisors) on arithmetic surfaces. 
Following ideas of \cite{Del, Elk2}, Arakelov's construction can be generalized to top intersection numbers of hermitian line bundles on arithmetic varieties. 
In \cite{GS}, Gillet--Soul\'e introduced arithmetic Chow cycles on arithmetic varieties and extended the intersection theory to arithmetic Chow groups. 

All the above theories work on arithmetic varieties, but as we will see, these are inconvenient or insufficient for applications in Diophantine geometry and arithmetic dynamics. In \cite{Zha2}, Zhang introduced a notion of adelic line bundles on projective varieties over number fields.  
In \cite{YZ}, Yuan--Zhang further extended the theory to quasi-projective varieties over {number fields or constant fields}.
{The goal of this section is to sketch the theory of adelic line bundles} of \cite{YZ}. 

\subsection{Intersection of hermitian line bundles} \label{iccm2010:intersection}

Let us introduce some terminology of intersection of hermitian line bundles developed by Arakelov \cite{Ara}, Deligne \cite{Del}, and Gillet--Soul\'e \cite{GS}. 
We refer to \cite{YG} for more details.  

\subsubsection{Hermitian line bundles} 

Let ${\mathcal{X}}$ be an arithmetic variety of dimension $d+1$, i.e. a projective and flat integral scheme over ${\mathrm{Spec}}({\mathbb Z})$ of absolute dimension $d+1$. 

A \emph{hermitian line bundle} on ${\mathcal{X}}$ is a pair ${\overline{\mathcal{L}}}=({\mathcal{L}},{\|\cdot\|})$, where ${\mathcal{L}}$ is a line bundle on ${\mathcal{X}}$, and ${\|\cdot\|}$ is a smooth hermitian metric of ${\mathcal{L}}({\mathbb C})$ on the complex variety ${\mathcal{X}}({\mathbb C})$ invariant under the complex conjugation. 
If ${\mathcal{X}}({\mathbb C})$ is singular, the smoothness of the metric means that the metric is locally equal to the pull-back of a smooth metric via a closed embedding into a smooth complex manifold. 

The \emph{Chern form} $c_1(\CLL)$ of $\CLL$ is a $(1,1)$-form on $\CX(\CC)$ determined as follows. For any pair $(U,s_U)$ of an open subset $U$ of $\CX(\CC)$ together with an everywhere non-vanishing holomorphic section $s_U$ of $\CL(\CC)$ on  $U$, we have the restriction
$$c_1(\CLL)|_U=\frac{1}{\pi i}\partial\bar\partial \log \|s_U\|. $$
We say that the hermitian metric $\|\cdot\|$
is \emph{semipositive} if $c_1(\CLL)$ is 
{positive semi-definite} everywhere.

Denote by $\widehat{\mathrm{Pic}}({\mathcal{X}})$ the group of isometry classes of hermitian line bundles on ${\mathcal{X}}$.
There is an intersection pairing
$$
\widehat{\mathrm{Pic}}({\mathcal{X}})^{d+1} \longrightarrow {\mathbb R} 
$$
as follows. 

If $\dim {\mathcal{X}}=1$ and $\CX$ is normal, then ${\mathcal{X}}={\mathrm{Spec}}(O_K)$ for some number field $K$, and ${\mathcal{L}}$ is an $O_K$-module of rank $1$ and the hermitian metric ${\|\cdot\|}$ is a collection $(\|\cdot\|_\sigma)_{\sigma:K\to\mathbb{C}}$ of metrics on the complex line ${\mathcal{L}}_\sigma({\mathbb C})$. Let $s$ be any nonzero element of ${\mathcal{L}}$. The \emph{arithmetic degree} of ${\mathcal{L}}$ is defined as
$$\widehat{\mathrm{deg}}({\mathcal{L}})=\log\#({\mathcal{L}}/s\mathcal L)-\sum_{\sigma:K\to{\mathbb C}}\log\|s\|_\sigma.$$
It is independent of the choice of $s$ by the product formula. The intersection pairing is just the arithmetic degree map
$$
\widehat{\mathrm{deg}}: \widehat{\mathrm{Pic}}({\mathcal{X}}) \longrightarrow {\mathbb R}.
$$

In general, let ${\overline{\mathcal L}}_1, {\overline{\mathcal L}}_2, \cdots, {\overline{\mathcal L}}_{d+1}$ be $d+1$ hermitian line bundles on ${\mathcal{X}}$, and let $s_{d+1}$ be any nonzero rational section of ${\mathcal{L}}_{d+1}$ on ${\mathcal{X}}$. 
The \emph{arithmetic intersection number} is defined inductively by 
\begin{eqnarray*}
{\overline{\mathcal L}}_1\cdot {\overline{\mathcal L}}_2 \cdots {\overline{\mathcal L}}_{d+1}
= {\overline{\mathcal L}}_1\cdot {\overline{\mathcal L}}_2 \cdots {\overline{\mathcal L}}_{d} \cdot {\mathrm{wdiv}}(s_{d+1})
- \int_{{\mathcal{X}}({\mathbb C})} \log \|s_{d+1}\| c_1({\overline{\mathcal L}}_1)\cdots  c_1({\overline{\mathcal L}}_d).
\end{eqnarray*}
The right-hand side depends on the Weil divisor ${\mathrm{wdiv}}(s_{d+1})$ linearly, so it suffices to explain the case that ${\mathcal D}={\mathrm{wdiv}}(s_{d+1})$ is irreducible (and reduced). 

If ${\mathcal D}$ is horizontal in the sense that it is flat over ${\mathbb Z}$, then
$$
{\overline{\mathcal L}}_1\cdot {\overline{\mathcal L}}_2 \cdots {\overline{\mathcal L}}_{d} \cdot {\mathcal D}= {\overline{\mathcal L}}_1|_{\mathcal D}\cdot {\overline{\mathcal L}}_2|_{\mathcal D}\cdots {\overline{\mathcal L}}_{d}|_{\mathcal D}
$$
is an arithmetic intersection on ${\mathcal D}$ defined by the induction hypothesis. 
If ${\mathcal D}$ is a vertical divisor in the sense that it is a variety over $\mathbb F_p$ for some prime $p$, then 
$$
{\overline{\mathcal L}}_1\cdot {\overline{\mathcal L}}_2 \cdots {\overline{\mathcal L}}_{d} \cdot {\mathcal D}= ({\mathcal{L}}_1|_{\mathcal D}\cdot {\mathcal{L}}_2|_{\mathcal D}\cdots {\mathcal{L}}_{d}|_{\mathcal D})\log p.
$$
Here the intersection is the usual intersection on the projective variety $\mathcal D$ over $\mathbb F_p$.
The definition does not depend on the choice of the rational section $s_{d+1}$. 
It gives a symmetric and multi-linear intersection pairing
$$
\widehat{\mathrm{Pic}}({\mathcal{X}})^{d+1} \longrightarrow {\mathbb R}. 
$$
When $d=0$, it is just the arithmetic degree map.

In general, for any closed integral subscheme $\mathcal{Y}$ of ${\mathcal{X}}$, we denote 
$${\overline{\mathcal{L}}}^{\dim \CY}\cdot \mathcal{Y}
=(\overline{\mathcal{L}}|_\mathcal{Y})^{\dim \CY}.$$ 
By linearity, this notation extends to Chow cycles $\CY$ of $\CX$.

Let ${\mathcal{X}}$ be an arithmetic variety and ${\overline{\mathcal{L}}}=({\mathcal{L}},{\|\cdot\|})$ be a {hermitian line bundle} on ${\mathcal{X}}$. We say that ${\overline{\mathcal{L}}}$ is \emph{nef} if its hermitian metric is semipositive and the intersection number ${\overline{\mathcal{L}}}\cdot \mathcal{C}\geq0$ for any 1-dimensional closed integral subscheme $\mathcal{C}$ of ${\mathcal{X}}$.
As a consequence of the arithmetic Nakai--Moishezon criterion of Zhang \cite{Zh1}, this implies that 
 the intersection number ${\overline{\mathcal{L}}}^{\dim \CY}\cdot \mathcal{Y}\geq0$ for any closed integral subscheme $\mathcal{Y}$ of ${\mathcal{X}}$.
We refer to \cite[Prop. 2.3]{Mo1} and 
 \cite[Cor. A.4.3]{YZ} for this arithmetic version {of Kleiman's theorem}.

\subsubsection{Arithmetic divisors}

Let $\CX$ be an arithmetic variety. 
An  {\em arithmetic divisor on $\CX$} is a pair $\CDD=(\CD,g_\CD)$, 
where  $\CD$ is a Cartier divisor on $\CX$, and $g_\CD$ is a \emph{Green function} of $\CD(\CC)$ on $\CX(\CC)$, invariant under the action of {complex conjugation}. For the regularization of the Green function, we require that 
$$g_\CD:\CX(\CC)\setminus |\CD(\BC)|\lra \RR$$
 is smooth, and that 
for any pair $(U,f_U)$ of an open subset $U$ of $\CX(\CC)$ together with a rational function $f_U$ on $U$ with $\div(f_U)=\CD(\CC)|_U$, the function $g_{\CD}+\log|f_U|$ extends to a smooth function on $U$.

A {\em principal arithmetic divisor on $\CX$} is an arithmetic divisor of the form 
$$\wh\div(f):=(\div(f),-\log|f|)$$ 
for any rational function $f\in \QQ(\CX)^\times$ on $\CX$.

Denote by $\wh\Div(\CX)$ the group of arithmetic divisors on $\CX$, and by 
$\wh\Pr(\CX)$ the group of principal arithmetic divisors on $\CX$. 
Then we have the \emph{arithmetic divisor class group} 
$$
\wh\CaCl(\CX)=\wh\Div(\CX)/\wh\Pr(\CX).
$$

An arithmetic divisor $\CDD=(\CD,g_\CD) \in \Divhat(\CX)$ is \emph{effective}
(resp. \emph{strictly effective}) if
$\CD$ is an effective Cartier divisor on $\CX$ and the Green function $g_\CD\ge 0$
(resp. $g_\CD>0$) on $\CX(\BC)\setminus |\CD(\BC)|$.

There is a canonical map
$$
\wh\Div(\CX)\lra \wh\Pic(\CX), \quad\ \CDD\longmapsto \CO(\CDD),
$$
which induces an isomorphism
$$
\wh\CaCl(\CX)\lra \wh\Pic(\CX).
$$
The inverse image of a hermitian line bundle $\CLL$ is represented by the divisor 
$$
\wh\div(s)=\wh\div_{(\CX, \CLL)}(s):=(\div(s),-\log\|s\|),
$$
where $s$ is any nonzero rational section of $\CL$ on $\CX$.


\subsection{Adelic line bundles on quasi-projective varieties} \label{subsec adelic}

The goal of this subsection is to sketch the notion of adelic line bundles on quasi-projective varieties of Yuan--Zhang \cite{YZ}, which generalizes the more classical adelic line bundles of projective varieties over number fields of Zhang \cite{Zha2}.

\subsubsection{Adelic divisors}
\label{sec adelic divisor}

We take the uniform terminology of \cite[\S1.5]{YZ}. 
\kkk
Let $\CU$ be a quasi-projective and flat integral scheme over $k$.
Let us first recall the definition of adelic divisors on $\CU/k$.  

Let $\CX$ be a \emph{projective model} of $\CU$ over $k$, i.e., a projective integral scheme over $k$ with an open immersion $\CU\to\CX$ over $k$.
In the spirit of \cite[\S2.2]{YZ}, take the fiber product
$$
\wh\Div(\CX,\CU)=\wh\Div(\CX)_\QQ \times_{\Div(\CU)_\QQ} \Div(\CU),
$$
whose elements are arithmetic divisors of mixed coefficients. 
In the arithmetic case ({where} $k=\ZZ$), $\wh\Div(\CX)$ is the group of arithmetic divisors on $\CX$, where the Green's functions are assumed to be continuous (away from the singularities). 
In the geometric case (when $k$ is a field), $\wh\Div(\CX)$ means the usual 
$\Div(\CX)$.

By abuse of terminology, in the following, if $k$ is a field, then ``arithmetic divisor'' (resp. ``hermitian line bundle'') means
``divisor'' (resp. ``line bundle'').

Define the group of \emph{model adelic divisors} by
$$\wh\Div (\CU/k)_\rmod=\lim_{\substack{\lra\\ \CX}}\wh\Div(\CX,\CU).$$
Here the limit is over the system of projective models $\CX$ of $\CU$ over $k$.
An element of $\wh\Div (\CU/k)_{\rmod}$ is \emph{effective} if it is the image of an effective element of some $\wh\Div(\CX,\CU)$, where an element of $\wh\Div(\CX,\CU)$ is \emph{effective} if its images in $\wh\Div(\CX)_\QQ$ and $\Div(\CU)$ are both effective. 

Fix a \emph{boundary divisor} $(\CX_0,\CEE_0)$ of $\CU$,
i.e.,  a projective model $\CX_0$ of $\CU$ and a strictly effective arithmetic divisor $\CEE_0$ on $\CX_0$ such that the support of the finite part $\CE_0$ is exactly $\CX_0\setminus \CU$.
We have a \emph{boundary norm} 
$$\|\cdot\|_{\CEE_0}:\wh\Div (\CU/k)_\rmod
\lra [0,\infty]$$
by 
$$
\|\CDD\|_{\CEE_0}:=\inf\{\epsilon\in \BQ_{>0}: \ 
 -\epsilon \CEE_0 \leq
\CDD \leq  \epsilon \CEE_0\}.
$$
Here the inequalities are defined in terms of effectivity. 
It further induces a \emph{boundary topology} on $\wh\Div (\CU/k)_{\rmod}$, which does not depend on the choice of $(\CX_0,\overline\CE_0)$.

Let $\wh \Div  (\CU/k)$ be the \emph{completion} of $\wh \Div  (\CU/k)_{\rmod}$ with respect to the boundary topology. 
An element of $\wh \Div(\CU/k)$ is called an \emph{adelic divisor} on $\CU/k$.

By definition, an adelic divisor is represented by a Cauchy sequence in $\wh \Div  (\CU/k)_\rmod$, i.e., a sequence $\{\CDD_i\}_{i\geq 1}$ in $\wh \Div  (\CU/k)_\rmod$ satisfying the property that there is a sequence $\{\epsilon_i\}_{i\geq 1}$ of positive rational numbers converging to $0$ such that 
$$
 -\epsilon_i \CEE_0 \leq
\CDD_{i'}-\CDD_{i} \leq  \epsilon_i \CEE_0,\quad\ i'\geq i\geq 1.
$$

There is a canonical map
$$
\wh\Div(\CU/k) \lra \Div(\CU),\quad
\{\CDD_i\}_{i\geq 1}\longmapsto\CD_1|_{\CU}.
$$ 
We usually write $\CDD=\{\CDD_i\}_{i\geq 1}$ and $\CD=\CD_1|_{\CU}$, and call $\CD$ the \emph{underlying divisor} of $\CDD$.

\subsubsection{Adelic line bundles}

\kkk
Let $\CU$ be a quasi-projective and flat integral scheme over $k$. 

Let $\CX$ be a projective model of $\CU$ over $k$.
In the spirit of \cite[\S2.2]{YZ}, 
let $\wh\Picc(\CX)$ be the category of hermitian line bundles on $\CX$, 
and $\wh\Picc(\CX)_\QQ$ be the category of hermitian $\QQ$-line bundles on $\CX$. 
In the arithmetic case (when $k=\ZZ$), $\wh\Picc(\CX)$ is the category of hermitian line bundles with continuous metrics on $\CX$. 
In the geometric case (when $k$ is a field), $\wh\Picc(\CX)$ means the usual 
$\Picc(\CX)$.

As a convention, categories of various line bundles are defined to be groupoids; i.e., the morphisms in them are defined to be isomorphisms (or isometries) of the line bundles. 
To illustrate the category of various $\QQ$-line bundles, take $\wh\Picc(\CX)_\QQ$ for example. 
An object of $\wh\Picc(\CX)_\QQ$ is a pair $(a,\CL)$ (or just written as $a\CL$)
with $a\in \QQ$ and $\CL\in\wh\Picc(\CX)$, and a morphism of two such objects is defined to be an element of
$$\Hom(a\CL,a'\CL')=\varinjlim_m \Hom(am\CL, a'm\CL'),$$
where $m$ runs through positive integers such that $am$ and $a'm$ are both integers, so that $am\CL$ and $a'm\CL'$ are viewed as objects of $\wh\Picc(\CX)$, 
and ``$\Hom$'' on the right-hand side are viewed in $\wh\Picc(\CX)$.

Let $(\CX_0,\CEE_0)$ be a boundary divisor as above.
Define the \emph{category $\wh\Picc (\CU/k)$ of adelic line bundles} on $\CU/k$ as follows.
An object of $\wh\Picc (\CU/k)$ is a pair
$(\CL, (\CX_i,\overline \CL_i, \ell_{i})_{i\geq 1})$ where:
\begin{enumerate}[(1)]
\item $\CL$ is an object of $\Picc(\CU)$, i.e., a line bundle on $\CU$;

\item  $\CX_i$ is a projective model of $\CU$ over $k$;

\item  $\overline \CL_i$ is an object of $\wh\Picc(\CX_i)_\QQ$, i.e. a hermitian $\QQ$-line bundle on $\CX_i$;

\item $\ell_i:\CL\to \CL_i|_{\CU}$ is an isomorphism in $\Picc(\CU)_\QQ$, where $\CL_i$ is the underlying $\QQ$-line bundle of $\CLL_i$ on $\CX_i$.
\end{enumerate}
The sequence is required to satisfy the \emph{Cauchy condition} that
the sequence $\{\wh \div(\ell_i \ell_1^{-1})\}_{i\geq 1}$ is a 
Cauchy sequence in $\wh\Div(\CU/k)_\rmod$ under the boundary topology.
Here $\ell_i \ell_1^{-1}: \CL_1|_{\CU} \to \CL_i|_{\CU}$ is viewed as a rational section of the underlying line bundle of
$\CLL_i-\CLL_1$, so that $\wh \div(\ell_i \ell_1^{-1})$ is a well-defined element of $\wh\Div(\CU/k)_\rmod$. 

A morphism from an object $(\CL, (\CX_i,\overline \CL_i, \ell_{i})_{i\geq 1})$ to another 
$(\CL',(\CX_i',\overline \CL_i', \ell_{i}')_{i\geq 1})$ is an isomorphism  $\iota:\CL\to \CL'$ of the integral line bundles on $\CU$ such that the sequence 
$\{ \wh\div(\ell_i'\iota \ell_i^{-1}) \}_{i\geq1}$
of $\wh \Div (\CU/k)_\rmod$ converges to 0 in $\wh \Div (\CU/k)$
under the boundary topology.
Note that the model arithmetic divisor $\wh\div(\ell_i'\iota \ell_i^{-1})$ is equal to
$\wh\div(\ell_{i}'\ell_1'^{-1})-\wh\div(\ell_{i}\ell_1^{-1})+\wh\div(\ell_1'\iota \ell_1^{-1})
$.

An object of $\wh\Picc (\CU/k)$ is called an \emph{adelic line bundle} on $\CU$.
Define $\wh\Pic (\CU/k)$ to be the \emph{group} of isomorphism classes of objects of $\wh\Picc (\CU/k)$. 

There is a canonical forgetful functor
$$
\wh\Picc(\CU/k) \lra \Picc(\CU),\quad
(\CL, (\CX_i,\overline \CL_i, \ell_{i})_{i\geq 1})\longmapsto \CL
$$ 
We usually write $\CLL=(\CL, (\CX_i,\overline \CL_i, \ell_{i})_{i\geq 1})$ and call $\CL$ the \emph{underlying line bundle} of $\CLL$.

There is a canonical surjection
$$\wh\Div(\CU/k)\lra \wh\Pic(\CU/k), \quad
\CDD\longmapsto \CO(\CDD).$$
The kernel is the image of the group of principal arithmetic divisors on projective models of $\CU$.

An adelic  divisor is called \emph{effective} if it is equal to a limit of effective arithmetic divisors (of mixed coefficients). 
An adelic line bundle is called \emph{effective} if it is the image of an effective adelic divisor. 
An adelic line bundle (resp. adelic divisor) is called \emph{strongly nef} if it is isomorphic (resp. equal) to a limit of nef hermitian line bundles (resp. adelic divisors) under the boundary topology. 
An adelic line bundle  (resp. adelic divisor) $\CLL$ on $\CU$ is \emph{nef} if there exists a strongly nef adelic line bundle (resp. adelic divisor) $\CMM$ on $\CU$ such that $a\CLL+\CMM$ is strongly nef for all positive integers $a$.
An adelic line bundle (resp. adelic divisor) is \emph{integrable} if it is 
isomorphic (resp. equal) to the difference of two strongly nef ones.

Denote by $\wh\Picc(\CU/k)_\nef$ (resp. $\wh\Picc(\CU/k)_\intb$) the subcategory of 
$\wh\Picc(\CU/k)$ consisting of nef (resp. integrable) adelic line bundles.
Denote by $\wh\Pic(\CU/k)_\nef$ (resp. $\wh\Pic(\CU/k)_\intb$) the subsets of nef (resp. integrable) elements in $\wh\Pic(\CU/k)$.

If $\pi:\CU'\to \CU$ is a morphism of quasi-projective and flat integral schemes over $k$, then there are functorial pull-back maps
$$
\pi^*: \wh\Picc(\CU/k) \lra \wh\Picc(\CU'/k), \qquad
\pi^*: \wh\Pic(\CU/k) \lra \wh\Pic(\CU'/k).
$$
The maps send nef (resp. integrable) adelic line bundles to nef (resp. integrable) adelic line bundles. 

If the base ring $k$ is clear, we usually omit the dependence on $k$ of the groups or categories of adelic objects. For example,  
$\wh\Div(\CU/k)$, $\wh\Picc(\CU/k)$, $\wh\Pic(\CU/k)$ are written as 
$\wh\Div(\CU)$, $\wh\Picc(\CU)$, $\wh\Pic(\CU)$.

If $k$ is a field, to emphasize the geometric situation, we may also write 
$\wh\Div(\CU/k)$, $\wh\Picc(\CU/k)$, $\wh\Pic(\CU/k)$ as 
$\wt\Div(\CU/k)$, $\wt\Picc(\CU/k)$, $\wt\Pic(\CU/k)$.

\subsubsection{Intersection theory}

The intersection theory in \cite[\S4.1]{YZ} introduced absolute intersection numbers and Deligne pairings in the relative situation.
Let us first review the absolute case. 

\kkk
Let $\CU$ be a quasi-projective and flat integral scheme over $k$ of absolute dimension $d$. 
There is an absolute intersection pairing
$$\wh\Pic (\CU/k)_\intb^{d}\lra \BR, \qquad  (\CHH_1, \cdots, \CHH_{d})\longmapsto\CHH_1 \cdots \CHH_{d},$$
defined as limits of the arithmetic (or geometric) intersection pairings {on} projective models.
In particular, the top intersection numbers of nef adelic line bundles are {nonnegative}.

If $d=1$, we actually have $\wh\Pic (\CU/k)_\intb=\wh\Pic (\CU/k)$. In this case, the intersection pairing is just a degree map 
$$\wh\deg: \wh\Pic (\CU/k)\lra \BR.$$

\subsubsection{Deligne Pairing}
\label{sec Deligne pairing}

To recall the Deligne pairing for adelic line bundles, we first consider more classical situations.

Let $f:X\to Y$ be a finite and flat morphism of noetherian schemes. 
Then $f_*\CO_X$ is locally free over $\CO_Y$ and thus there are norm maps 
$f_*\CO_X\to \CO_Y$ and $f_*(\CO_X^\times)\to \CO_Y^\times$. 
To define the map, it suffices to consider the local situation that $X=\Spec A$ and $Y=\Spec B$, where $A$ is a free $B$-module of finite rank. Then 
the norm of an element $a\in A$ is just the determinant of the multiplication map $a:A\to A$ of $B$-modules. 
This eventually gives a canonical norm functor 
$$
N_{X/Y}: \Picc(X)\lra \Picc(Y). 
$$
This is introduced in detail in \cite[II, \S6.5]{EGA}. 

The Deligne pairing can be viewed as a high-dimensional generalization of the norm functor. To illustrate an idea, we consider the nice situation that 
$f:X\to Y$ is a projective and flat morphism of relative dimension $n\geq 0$ of smooth  varieties $X$ and $Y$ over a field $k$.
Then there is a composition 
$$
\Pic(X)^{n+1}\lra \mathrm{CH}^{n+1}(X) \stackrel{f_*}\lra \mathrm{CH}^1(Y) \stackrel{\sim}\lra \Pic(Y).
$$
We will see that the Deligne pairing is a functorial pairing refining this composition significantly. 

Now we relax the condition drastically. 
Let $f: X\to Y$ be a projective and flat morphism of noetherian schemes of pure relative dimension $n$. The Deligne pairing is a multi-linear functor
$$\Picc (X)^{n+1}\lra \Picc (Y), \quad
(L_1,\cdots,L_{n+1})
\longmapsto f_*\pair{L_1, \cdots, L_{n+1}}.
$$
The functor refines the above two {constructions}. It satisfies many natural functorial properties, including the base change property, the multi-linearity, the symmetry, and the induction formula. 
The existence of the Deligne pairing for arbitrarily singular $Y$ is a miracle in some sense.

For a brief history of {the pairing}, the case $n=0$ is just the norm functor $N_{X/Y}$. 
Deligne \cite{Del} constructed the functor for $n=1$ and speculated a similar pairing for general $n$. 
Deligne's major motivation is to formulate an arithmetic Riemann--Roch theorem for families of curves. 
For general $n$, the pairing was constructed by Elkik \cite{Elk1} for any $f$ which is projective, flat, and further Cohen--Macaulay,  by {Mu\~noz Garcia} \cite{MG} for any $f$ which is projective, equi-dimensional and of finite Tor-dimension (which implies the projective and flat case), and by 
Shiquan Li \cite{Li24}  for any $f$ which is projective and equi-dimensional. 

If $X$ and $Y$ are smooth varieties over $\CC$ and $f$ is smooth, and if $L_1,\cdots,L_{n+1}$ are endowed with smooth hermitian metrics, then the metrics transfer to a canonical smooth hermitian metric on 
$\pair{L_1, \cdots, L_{n+1}}$, as constructed by Deligne \cite{Del} and Elkik \cite{Elk2}. 
The metric construction was further generalized to the projective and flat case by \cite{YZ}, and in this case, the Deligne pairing transfers continuous metrics to continuous metrics. 
Finally, a further limit process by \cite{YZ} transfers the construction to adelic line bundles as follows.

\kkk
Let $f:\CU\to \CV$ be a projective flat morphism of relative dimension $n$ of 
quasi-projective and flat integral schemes over $k$. 
Assume that $\CV$ is normal.
There is a relative intersection pairing
$$f_*:\wh \Picc (\CU/k)_\intb^{n+1}\lra \wh\Picc (\CV/k)_\intb, \qquad 
(\CLL_1, \cdots, \CLL_{n+1})\longmapsto f_*\pair{\CLL_1, \cdots, \CLL_{n+1}}.
$$
This is defined as the limit of the Deligne pairing.
Moreover, the pairing is compatible with base changes of the form $\CV'\to\CV$, where $\CV'$ is a normal quasi-projective and flat integral {scheme} over $k$ 
such that $\CU'=\CU\times_\CV\CV'$ is also integral.

The pairing of nef adelic line bundles is still nef. 
For simplicity, we may abbreviate $f_*\pair{\CLL_1, \cdots, \CLL_{n+1}}$
as $\pair{\CLL_1, \cdots, \CLL_{n+1}}$ if no confusion can occur.

Finally, the Deligne pairing refines the absolute intersection pairing 
in the sense that if $\dim \CV=1$, then the composition 
$$
\wh \Picc (\CU/k)_\intb^{n+1}
\stackrel{f_*}{\lra} \wh\Picc (\CV/k)_\intb
\stackrel{\wh\deg}{\lra}\RR
$$
is equal to the absolute {intersection} pairing.

\subsubsection{Quasi-projective varieties over number fields}

By further direct limits, the above definitions and notations are extended to 
flat and essentially quasi-projective integral schemes over $k$ in \cite{YZ}. 
The notion of ``essentially quasi-projective scheme'' is introduced in \cite[\S2.3]{YZ}, which can be realized as an intersection of (possibly infinitely many) open subschemes of a quasi-projective scheme.
We do not need this generality in this paper, but we only need the following case of quasi-projective varieties over a number field.

Here we restrict to the arithmetic case $k=\ZZ$. 
Let $K$ be a number field.
Let $X$ be a quasi-projective variety over $K$, which is viewed as a scheme over $\ZZ$. 
By a \emph{quasi-projective model} $\CU$ of $X$ over $\ZZ$, we mean a quasi-projective and flat integral scheme $\CU$ over $O_K$, together with an open immersion $X\to \CU_K$
over $K$.
Define 
$$
\wh\Div(X/\ZZ)=  \varinjlim_{\CU} \wh\Div(\CU/\ZZ),$$
$$
\wh\Pic(X/\ZZ)=  \varinjlim_{\CU} \wh\Pic(\CU/\ZZ),
$$ 
$$
\wh\Picc(X/\ZZ)=  \varinjlim_{\CU} \wh\Picc(\CU/\ZZ).$$
Here the limits are over all quasi-projective models $\CU$ of $X$ over $\ZZ$.
For the last direct limit, an object of $\wh\Picc(X/\ZZ)$ is a pair $(\CLL, \CU)$, where $\CU$ is a quasi-projective model of $X$ over $\ZZ$ and $\CLL$ is an object of $\wh\Picc(\CU/\ZZ)$. 
A morphism $(\CLL, \CU)\to (\CLL', \CU')$ between two objects of $\wh\Picc(X/\ZZ)$ is an isomorphism $\iota: \CL|_X\to \CL'|_X$ in $\Picc(X)$ satisfying the property that for some quasi-projective model $\CV$ of $X$ over $\ZZ$ endowed with open immersions $\psi:\CV\to \CU$ and $\psi':\CV\to \CU'$ extending the identity morphism $X\to X$, the isomorphism $\iota: \CL|_X\to \CL'|_X$ can be extended to an isomorphism $\CL|_\CV\to \CL'|_\CV$ in $\Picc (\CV)$ and induces
an isomorphism $\CLL|_\CV\to \CLL'|_\CV$ in $\wh \Picc (\CV/\ZZ)$. 
Here we take the convention $\CL|_X=(\CL|_\CU)|_X$, and
if $\CLL=(\CL, (\CX_i,\overline \CL_i, \ell_{i})_{i\geq 1})$ in $\wh\Picc (\CU/\ZZ)$, then $\CLL|_\CV=(\CL|_\CV, (\CX_i,\overline \CL_i, \ell_{i}|_\CV)_{i\geq 1})$ in 
$\wh\Picc (\CV/\ZZ)$.
 
An element of $\wh\Div(X/\ZZ)$ is called an \emph{adelic divisor} on $X/\ZZ$;
an object of $\wh\Picc(X/\ZZ)$ is called an \emph{adelic line bundle} on $X/\ZZ$. 
Again, we often abbreviate 
$$\wh\Div(X/\ZZ), \quad
\wh\Picc(X/\ZZ), \quad
\wh\Pic(X/\ZZ)$$  
as 
$$\wh\Div(X), \quad
\wh\Picc(X), \quad
\wh\Pic(X).$$ 

The notions of effectivity, nefness and integrability of adelic line bundles (or adelic divisors) are also transferred to quasi-projective varieties over $K$ by taking direct limits. 
The intersection pairing
$$\wh\Pic(X)_{\intb} ^{\dim X+1}\lra \RR$$
is valid in the current situation by taking direct limits.
Similarly, the Deligne pairing 
$$\wh\Picc(X)_{\intb} ^{n+1}\lra \wh\Picc(Y)_{\intb}$$
is valid for any projective and flat morphism $f:X\to Y$ of quasi-projective $K$-varieties of relative dimension $n$.

There is a functorial map 
$$
\wh\Pic(X/\ZZ) \lra \wt\Pic(X/K), \quad \OL\longmapsto \wt L. 
$$
The image $\wt L$ is called the \emph{geometric part} of $\OL$. 
The map is induced via a limit process by the natural map
$$
\wh\Pic(\CX) \lra \Pic(\CX_K), \quad \CLL\longmapsto \CL_K. 
$$
for arithmetic varieties $\CX$ over $O_K$.

\subsection{Analytification by Berkovich spaces}

{Adelic line bundles have} an interpretation in terms of metrized line bundles on Berkovich spaces. This subsection sketches this interpretation. 

\subsubsection{Berkovich spaces}

Berkovich spaces are best known as analytic spaces associated with varieties over non-archimedean fields, whose foundation was introduced by Berkovich \cite{Ber1}. 
By Berkovich \cite[\S1]{Ber2}, the base fields are relaxed to be Banach rings, and the old construction works similarly. 
The {latter} case plays an important role in the theory of adelic line bundles in \cite{YZ}.
Here we review the basics of Berkovich spaces following the terminology of 
\cite{YZ}. 

Let $(R,|\cdot|_\mathrm{Ban})$ be a commutative Banach ring. Namely, $R$ is a commutative ring with unity 1, and $|\cdot|_\mathrm{Ban}:R\to \RR_{\geq0}$ is a map satisfying the following properties:
\begin{enumerate}[(1)]
\item (norm property) $|a|_\mathrm{Ban}>0$ for all nonzero $a\in R$;
\item (triangle inequality) $|a+b|_\mathrm{Ban}\leq |a|_\mathrm{Ban}+|b|_\mathrm{Ban}$ for all  $a,b\in R$;
\item (sub-multiplicativity) $|ab|_\mathrm{Ban}\leq |a|_\mathrm{Ban}\cdot |b|_\mathrm{Ban}$ for all  $a,b\in R$;
\item (completeness) $R$ is complete under the topology induced by $|\cdot|_\mathrm{Ban}$.
\end{enumerate}
In our applications, we will always have one of the following 4 cases:
\begin{enumerate}[(a)]
\item $R$ is an archimedean field, i.e. $R$ is isomorphic to $\RR$ or $\CC$ with the standard absolute value;
\item $R$ is a non-archimedean field, i.e. a complete field with a non-trivial non-archimedean valuation;
\item $R$ is a constant field,  i.e., $R$ is a field and $|\cdot|_\mathrm{Ban}$ is the trivial valuation $|\cdot|_0$;
\item $R=\ZZ$, and $|\cdot|_\mathrm{Ban}$ is the usual archimedean absolute value $|\cdot|_\infty$.
\end{enumerate}
To refer to the last two cases, in the uniform terminology of \cite[\S1.5]{YZ}, we will always say that \emph{let $k$ be either $\ZZ$ or a field}.

Let $A$ be a commutative ring over $R$. 
Then the 
\emph{Berkovich space} $\CM(A/R)$, sometimes abbreviated as $\CM(A)$,  is the set of multiplicative semi-norms on $A$ whose restriction to $R$ is bounded by 
$|\cdot|_\mathrm{Ban}$. 
Namely, a point $x\in \CM(A/R)$ is given by a map $|\cdot|_x: A\to \RR_{\geq 0}$ satisfying:
\begin{enumerate}[(1)]
\item (boundedness) $|a|_x\leq |a|_\mathrm{Ban}$ for any $a\in R$,
\item (triangle inequality) $|f+g|_x \leq |f|_x+|g|_x, \ \forall f,g\in A$,
\item (multiplicativity) $|fg|_x= |f|_x\cdot |g|_x, \ \forall f,g\in A$.
\end{enumerate}
Any $f\in A$ induces a map
$$|f|: \CM(A/R) \longrightarrow \RR, \quad x\longmapsto |f|_x.$$
Endow $\CM(A/R)$ with the coarsest topology such that $|f|$
is continuous for all $f\in A$.


Let $X$ be a scheme over $R$. 
Assume that $X$ is covered by an affine open cover $\{\Spec A_i\}_i$.
Then the \emph{Berkovich space} $(X/R)^\an$, sometimes abbreviated as $X^\an$, is the union of $\CM(A_i/R)$, glued canonically. 
The topology of $(X/R)^\an$ is the weakest one such that each $\CM(A_i/R)$ is an open subspace of $(X/R)^\an$.

Note that in the affine case $X=\Spec A$, the definition gives 
$(X/R)^\an=\CM(A/R)$. 
We have the following extra concepts.

\begin{enumerate}[(1)]

\item \emph{Residue field.}
For each $x\in \CM(A/R)$, the corresponding semi-norm $|\cdot|_x$ induces a norm on the integral domain $A/\ker(|\cdot|_x)$. The completion of the fraction field of  $A/\ker(|\cdot|_x)$ is called the \emph{residue field} of $x$ and denoted by $\CH_x$. Denote by $|\cdot|$ the valuation (multiplicative norm) on $\CH_x$ induced by $|\cdot|_x$. 
Then $|\cdot|_x:A\to \RR$ is equal to the composition 
$$A\lra \CH_x\overset{|\cdot|}{\lra} \BR.$$
We write the first map as $f\mapsto f(x)$, which is compatible with the convention $|f|_x=|f(x)|$.
The notation $\CH_x$ generalizes to any scheme $X$ over $R$.

\item \emph{Contraction.}
There is a canonical contraction map $\kappa: (X/R)^\an\to X$. It suffices to describe it in the case $X=\Spec A$. 
For each $x\in \CM(A)$, 
the kernel of the map $|\cdot|_x:A\to \RR$ is a prime ideal of $A$, and thus defines an element $\kappa(x)\in \Spec A$. 

\item \emph{Functoriality.}
Any morphism $\phi:X\to Y$ over $R$ induces a continuous map 
$$\phi^\an:(X/R)^\an\lra (Y/R)^\an.$$ 
For any point $v\in Y^\an$, the \emph{fiber} 
$$X_v^\an=(X/R)_v^\an=(\phi^\an)^{-1}(v),$$
 is a subspace of $(X/R)^\an$, and is canonically homeomorphic to the Berkovich space $(X_{\CH_v}/\CH_v)^\an$.
\end{enumerate}
By \cite[Lem. 1.1, Lem. 1.2]{Ber2}, we have the following basic topological properties: 
\begin{enumerate}[(1)]
\item If $X$ is separated and of finite type over $R$, then $(X/R)^\an$ is Hausdorff.
\item If $X$ is of finite type over $R$, then $(X/R)^\an$ is locally compact.
\item If $X$ is projective over $R$, then $(X/R)^\an$ is compact.
\end{enumerate}

In the definition, we do not assume that $X$ is reduced. 
This does not bring much trouble, since there is always a canonical 
homeomorphism $(X_{\red}/R)^\an\to (X/R)^\an$, where $X_\red$
denotes the reduced structure of $X$. 
Moreover, we can further write 
$(X_{\red}/R)^\an$ as a union of the Berkovich spaces associated to the irreducible components of $X_{\red}$.

Assume that $R$ is an archimedean field as in case (a), and assume that $X$ is quasi-projective over $R$. 
If $R=\CC$, then $(X/\CC)^\an$ is homeomorphic to the complex analytic space $X(\CC)$. 
If $R=\RR$, then $(X/\RR)^\an$ is  homeomorphic to the quotient of the complex analytic space $X(\CC)$ by the action of the complex conjugation.

\subsubsection{Arithmetic divisors and metrized line bundles}

Let $X$ be an integral scheme over commutative Banach ring $R$.
Let $X^\an=(X/R)^\an$ be the Berkovich space defined above.

Let $D$ be a Cartier divisor on $X$. By a {\em Green function} of the divisor $D$ on $X^\an$, we mean a continuous function $g: X^\an\setminus |D|^\an \to \RR$ with logarithmic singularity along $D$ in the sense that, for any rational function $f$ on a Zariski open subset $U$ of $X$ satisfying $\div(f)=D|_U$, the function $g+\log |f|$ can be extended to a continuous function on $U^\an$.

The pair $\overline D=(D, g)$ is called an {\em arithmetic divisor} on $X^\an$.  
An {arithmetic divisor} is called {\em effective} if $D$ is an effective Cartier divisor on $X$ and $g\geq 0$ on $X^\an\setminus |D|^\an$.
An {arithmetic divisor} is called {\em principal} if it is of the form 
$$\wh\div_{X^\an}(f):=(\div(f), -\log |f|)$$ 
for some nonzero rational function $f$ on $X$.

Denote by $\wh \Div (X^\an)$ the group of arithmetic divisors on $X^\an$, and by $\wh \Pr (X^\an)$ the group of {principal arithmetic divisors} on $X^\an$.
Denote the class group of arithmetic divisors as
$$\wh \CaCl (X^\an):=\wh \Div (X^\an)/\wh \Pr (X^\an).$$

Notice that for any arithmetic divisor $\overline D=(D, g)$ on $X^\an$, 
the algebraic part $D$ is a Cartier divisor on $X$ (instead of $X^\an$), and $g$ is a function on $X^\an$. 
We take this ad hoc definition to avoid defining general Cartier divisors on $X^\an$ {because of} the lack of a good theory of analytic functions on $X^\an$.

We can also define metrized line bundles. 
Let $L$ be a line bundle on $X$. 
At each point $x\in X^\an$, denote by $\bar x$ the image of $x$ in $X$. 
The \emph{fiber} $L^\an(x)$ of $L$ at $x$ is defined to be the $\CH_x$-line $L(\bar x)\otimes_{k(\bar x)} \CH_x$, or equivalently the completion of the fiber $L(\bar x)$ of $L$ on $\bar x$ for the  semi-norm $|\cdot|_x$.
By a \emph{metric} $\|\cdot \|$ of $L$ on $X^\an$ we mean a continuous metric on $\coprod_{x\in X^\an} L^\an(x)$ compatible with the semi-norms
 on $\CO_X$. More precisely, to each point $x\in X^\an$, we assign a norm $\|\cdot \|_x$ on the $\CH_x$-line
 $L^\an(x)$ which is compatible with the norm $|\cdot |_x$ of $\CH_x$ in the sense that
 $$\|f\ell\|_x=|f|_x\cdot \|\ell\|_x, \qquad f\in \CH_x, \quad \ell \in L^\an(x).$$
 We always assume that the metric $\|\cdot \|$ on $L$ is {\em continuous} in the sense that, for any section $\ell$ of $L$ on a Zariski open subset $U$ of $X$, the function $\|\ell (x)\|=\|\ell (x)\|_x$ is continuous in $x\in U^\an$.
 
The pair $(L,\|\cdot\|)$ above is called a \emph{metrized line bundle} on $X^\an$.
An \emph{isometry} from a metrized line {bundle} $(L,\|\cdot\|)$ to another one $(L',\|\cdot\|')$ is an isomorphism $i:L\to L'$ of line bundles on $X$ such that $\|\cdot\|=i^*\|\cdot\|'$.

Denote by $\wh \Picc (X^\an)$ the \emph{category} of metrized line bundles on $X^\an$, where the morphisms are isometries. 
Denote by $\wh \Pic (X^\an)$ the \emph{group} of isometry classes of metrized line bundles on $X^\an$.

There is a canonical isomorphism 
$$\wh \CaCl (X^\an) \lra \wh \Pic (X^\an).$$
In fact, given any arithmetic divisor, $(D, g)$ on $X^\an$, the term $e^{-g}$ defines a metric on $\CO (D)$, and thus we obtain a metrized line bundle on $X^\an$.
Conversely, for any metrized line bundle $(L,\|\cdot \|)$ on $X^\an$, if $s$ is a rational section of $L$, then 
$$\wh\div_{X^\an}(s):=(\div(s),-\log \|s\|)$$ 
defines an arithmetic divisor on $X^\an$. 
Both processes keep the properties of being norm-equivariant.

\subsubsection{Analytification map}

\kkk
If $k=\ZZ$, view it as a Banach ring under the usual archimedean absolute value; if $k$ is a field, view it as a Banach ring with the trivial norm.
Let $\CU$ be a quasi-projective and flat integral scheme over $k$. 
We have the {Berkovich analytic space} $\CU^\an=(\CU/k)^\an$ as above.
It is Hausdorff, path-connected and locally compact.

By \cite[Prop. 3.3.1, Prop. 3.4.1]{YZ}, there are injective analytification maps
$$
\wh\Div(\CU/k)\lra \wh\Div(\CU^\an),
$$
$$
\wh\Pic(\CU/k)\lra \wh\Pic(\CU^\an),
$$
and a fully faithful analytification functor
$$
\wh\Picc(\CU/k)\lra \wh\Picc(\CU^\an).
$$
Note that the left-hand sides are the adelic objects defined as limits of model objects, and the right-hand sides are objects on the analytic space $\CU^\an$. 

To introduce the idea, we only explain the analytification map
$$
\wh\Picc(\CU/k)\lra \wh\Picc(\CU^\an).
$$
We first consider the case that $\CU=\CX$ is projective over $k$. 
Let $\CLL$ be a hermitian line bundle on $\CX$.
We need to define a metric of $\CL$ on $\CX^\an$. 
{The metrics} of the fibers of $\CL$ at the archimedean points $x\in\CX^\an$ are given by the original hermitian metric.
For the metric of $\CL$ at a non-archimedean point $x\in \CX^\an$,
let $\phi_x^\circ:\Spec O_{\CH_x}\to \CX$ be the $k$-morphism extending the $k$-morphism 
$\phi_x:\Spec \CH_x\to \CX$ under the valuative criterion. 
Then $(\phi_x^\circ)^*\CL$ is a free module over $O_{\CH_x}$ of rank 1.
Let $s_x$ be {a basis} of this free module. 
Define the metric of $\CL(x)=\phi_x^*\CL$ by setting $\|s_x\|=1$.
It takes some {effort} to prove that this metric is continuous on $\CX^\an$. 

If $\CU$ is quasi-projective over $k$, the functor is obtained by approximation as follows. 
Recall that an object of $\wh\Picc (\CU/k)$ is a Cauchy sequence $\CLL=(\CL,(\CX_i,\overline \CL_i, \ell_{i})_{i\geq1})$. 
Note that each $\overline\CL_i$ induces a metric $\|\cdot\|_i^*$ of $\CL_{i}$ on $\CX_i^\an$ by the above projective case.
By the isomorphism $\ell_{i}:\CL\to \CL_i|_\CU$, and by restriction, we get a metric $\|\cdot\|_i$ of $\CL$ on $\CU^\an$.  
The Cauchy condition implies that these metrics converge pointwise to a continuous metric $\|\cdot\|$ of $\CL$ on $\CU^\an$. 
Then $\CLL^\an:=(\CL,\|\cdot\|)$ defines an object of 
$\wh \Picc(\CU^\an)$, which is the desired image of the functor.

\subsubsection{Example: invariant adelic line bundles} 
\label{sec adelic dynamics}

A \emph{polarized algebraic dynamical system} (or \emph{dynamical system}) over a noetherian scheme $S$ consists of a triple $(X,f,L,q)$ where:
\begin{enumerate}[(1)]
\item $X$ is a projective and flat scheme with integral fibers over $S$,
\item $f:X\to X$ is an algebraic morphism over $S$,
\item $L$ is a line bundle on $X$, relatively ample over $S$,  and polarizing $f$ in the sense that $f^*L\cong L^{\otimes q}$ for some integer $q>1$. 
\end{enumerate}
While the case $S=\Spec F$ for a field $F$ gives a single dynamical system over $F$, the general case gives an algebraic family of dynamical systems. 

Now we consider our adelic situation. 
\kkk
Let $S$ be a flat and quasi-projective integral scheme over $k$. 
Let $(X,f,L,q)$ be a dynamical system over $S$. 
Fix an isomorphism $\tau:f^*L\to L^{\otimes q}$. 
In \cite[\S6.1.1]{YZ}, Yuan--Zhang constructed a canonical extension of $L$ to an adelic line bundle $\OL_f$ over $X/k$ which is 
\emph{$f$-invariant} in the sense that the isomorphism $\tau:f^*L\to L^{\otimes q}$
extends to an isomorphism
$f^* \overline L_f\to \overline L_f^{\otimes q}$ in $\wh \Picc(X/k)$.
Moreover, 
$\OL_f$ is nef over $X/k$. 

To illustrate the idea, choose a projective model $\pi:\CX\rightarrow\CS$ of $X\to S$, i.e. a projective model $\CS$ of $S$ over $k$ and a flat morphism 
$\pi:\CX\rightarrow\CS$ of projective varieties over $k$ whose base change by $S\to \CS$ is isomorphic to $X\to S$. 
Choose a hermitian line bundle
$\overline\CL=(\CL, \|\cdot\| )$ on $\CX$ such that $(\CX_S, \CL_S)\simeq(X,L)$.

For each positive integer $i$, consider the composition $X \stackrel{f^i}{\rightarrow} X \rightarrow \CX$. Denote the normalization of the composition by $f_i: \CX_i\to \CX$, and denote the induced map to $\CS$ by $\pi_i: \CX_i\to \CS$.
Denote $\overline \CL_i =q^{-i} f_i^* \overline\CL$, which lies in 
$\wh\Picc(\CX_i)_{\BQ}$.
With some extra effort, we can complete the 
 sequence $\{(\CX_i, \overline \CL_i)\}_{i\geq 1}$ to an adelic line bundle 
$\OL_f=(\CL_\CV,(\CX_i, \overline \CL_i,\ell_i)_{i\geq 1})$ on a quasi-projective model $\CU$ of $X$ over $k$.

Denote by $\OL_f^\an=(L, \|\cdot\|_f)$ the analytification of $\OL_f$, which is a metrized line bundle on $X^\an$. 
We can also describe the metric $\|\cdot\|_f$ of $L$ on $X^\an$. 
In fact, the isomorphism $\tau:f^*L\to L^{\otimes q}$ induces an isomorphism $f^* \overline L_f\to \overline L_f^{\otimes q}$, which in turn induces an isometry 
$$
\tau: f^*(L, \|\cdot\|_{f})\to (L, \|\cdot\|_{f})^{\otimes q}.
$$
This uniquely determines the metric $\|\cdot\|_f$. 
Therefore, we can construct $\|\cdot\|_{f}$ via Tate's limit
$$
\|\cdot\|_{f}= \lim_{n\to \infty} \left((\tau \circ f^*)^n\|\cdot\|_{0}\right)^{\frac{1}{q^n}},
$$
where $\|\cdot\|_{0}$ is any continuous metric of $L$ on $X^\an$. 
To see the convergence and the uniqueness of the limit, it suffices to consider the restriction of the metric to 
the fiber $X_s^\an$ of $X^\an$ over every $s\in S^\an$. 
Here $X_s^\an$ is the Berkovich space of $X\times_S \Spec \CH_s$ over the valuation field $\CH_s$. 
The problem is reduced to the dynamical system $(X_{\CH_s}, f_{\CH_s}, L_{\CH_s},q)$ over the valuation field ${\CH_s}$.
This can be treated as in \cite[\S4]{Yua12} or \cite[Lem. 8.2.3]{YG}.

\subsubsection{Quasi-projective varieties over number fields}

Let $K$ be a number field. 
Denote by $M_K$ the set of places of $K$. We normalize the absolute value $|\cdot|_v$ for every $v\in M_K$ as in \S\ref{sec notation}.

Let $X$ be a quasi-projective variety over $K$. 
Recall that the adelic divisors and adelic line bundles on $X/\ZZ$ are defined as {direct limits} of the corresponding objects on 
quasi-projective models $\CU$ of $X$ over $\ZZ$.
Then we still have injective analytification maps
$$
\wh\Div(X/\ZZ)\lra \wh\Div(X^\an),
$$
$$
\wh\Pic(X/\ZZ)\lra \wh\Pic(X^\an),
$$
and a fully faithful analytification functor
$$
\wh\Picc(X/\ZZ)\lra \wh\Picc(X^\an).
$$
Here the analytic space 
$X^\an=(X/\ZZ)^\an$ is defined as above.

Recall that $\CM(\ZZ)=(\Spec \ZZ)^\an$ is the set of all multiplicative semi-norms of $\ZZ$, since the boundedness condition is automatic in this case. 
This space is too large and contains many ``redundant'' points, and as a result $X^\an$ is also too large. Now we introduce the restricted versions.

Define the \emph{restricted analytic space $X^\ran =(X/K)^\ran$ associated to} $X/K$ to be the disjoint union
$$
X^\ran =\coprod_{v\in M_K} X_v^\an,
$$
where $X_v^\an=(X_{K_v}/K_v)^\an$ is 
the Berkovich space associated to $X_{K_v}$ over the complete field $K_v$.
The topology on $X^\ran$ is induced by the disjoint union so that 
each $X_v^\an$ is both open and closed in $X^\ran$.
There is a natural injection $X^\ran\to X^\an$, under which 
$X^\ran$ is viewed as a closed subspace of $X^\an$.

Define an \emph{arithmetic divisor} on $X^\ran$ to be a pair $(D,g_D)$, where $D$ is a Cartier divisor on $X$, and $g_D$ is a \emph{Green function} of $D$ on $X^\ran$, i.e. a continuous function $g:X^\ran\setminus |D|^\ran \to \RR$ with logarithmic singularity along $D$ in the sense that, for any rational function $f$ on a Zariski open subset $U$ of $X$ satisfying $\div(f)=D|_U$, the function $g+\log |f|$ can be extended to a continuous function on $U^\ran$.

An {arithmetic divisor} on $X^\ran$ is called {\em principal} if it is of the form 
$$\wh\div_{X^\ran}(f):=(\div(f), -\log |f|)$$ 
for some nonzero rational function $f$ on $X$.

Denote by $\wh\Div(X^\ran)$ (resp. $\wh\Pr(X^\ran)$) the \emph{group} of arithmetic divisors (resp. principal arithmetic divisors) on $X^\ran$. 
Define 
$$
\wh\CaCl(X^\ran):=\wh\Div(X^\ran)/\wh\Pr(X^\ran). 
$$

Define a \emph{metrized line bundle} on $X^\ran$ to be a pair $(L,\|\cdot\|)$, where $L$ is a line bundle on $X$, and $\|\cdot\|$ is a continuous metric {on the fibers} of $L$ on $X^\ran$. 
This is similar to the original case; we omit the details.

Denote by $\wh\Pic(X^\ran)$ the \emph{group} of isometry classes of metrized line bundles on $X^\ran$, and $\wh\Picc(X^\ran)$ the \emph{category} of metrized line bundles on $X^\ran$ whose morphisms are isometries.
There is a canonical isomorphism
$$
\wh\CaCl(X^\ran)\lra \wh\Pic(X^\ran). 
$$

By the natural injection $X^\ran\to X^\an$, we have canonical maps 
$$\wh\Div (X^\an) \lra \wh \Div (X^\ran),$$
$$\wh\Pic (X^\an) \lra \wh \Pic (X^\ran),$$
$$\wh \Picc (X^\an)\lra \wh\Picc (X^\ran).$$
By composition, we obtain analytification maps
$$\wh\Div (X) \lra \wh \Div (X^\ran),$$
$$\wh\Pic (X) \lra \wh \Pic (X^\ran),$$
$$\wh \Picc (X)\lra \wh\Picc (X^\ran).$$
These maps are still injective by \cite[Prop. 3.5.1]{YZ}. 

In explicit terms, a metrized line bundle on $X^\ran$ is a pair 
$$\OL=(L,   (\|\cdot\|_v)_{v\in M_K})$$
where $L$ is a line bundle on $X$, and $\|\cdot\|_v$ is a continuous metric of $L_{K_v}$ on $X_v^\an$.
This interpretation is close to the original notion of adelic line bundles on projective varieties developed by Zhang \cite{Zha2}. 
We refer to \cite[\S3.5]{YZ} for more details of this connection, and an explanation of the equivalence of the notions for projective $X$.  

We have the following nice result derived from the definitions.

\begin{lemma} \label{iccm2010:degree2}
Suppose $X=\Spec(K)$ for a number field $K$. Let $\OL$
be an adelic line bundle on $X$. 
Then $\OL$ induces a metrized line bundle 
$(L, (\|\cdot\|_v)_{v\in M_K})$ on $X^\ran$. 
Note that the line bundle $L$ on $X$ is just a vector space over $K$ of dimension one.
We simply have 
$$
\wh{\deg}(\overline L)= -\sum_{v\in M_K}\epsilon_v \log \|s\|_v. 
$$
Here $s$ is any non-zero element of $L$, and the degree is independent of the choice of $s$ by the product formula. 
\end{lemma}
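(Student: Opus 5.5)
The plan is to reduce to the model case by approximation and continuity. Since $X=\Spec K$, a quasi-projective model $\CU$ of $X$ over $\ZZ$ is an open subscheme of $\Spec O_K$, hence of the form $\Spec O_K[1/N]$. The only projective model is $\Spec O_K$ itself (it is normal of dimension one), possibly after normalization. So an adelic line bundle $\OL$ is represented by a Cauchy sequence $\CLL_i$ of hermitian $\QQ$-line bundles on $\Spec O_K$ (or on non-normal projective models, which we replace by their normalization $\Spec O_K$ via pullback without changing degrees). The sequence is taken with respect to the boundary divisor $\CEE_0=(\sum_{p\mid N}\mathrm{div}(p),\,c)$ for a positive constant $c$. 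By definition, $\wh\deg(\OL)=\lim_i \wh\deg(\CLL_i)$, and the analytification metric $\|\cdot\|_v$ is the pointwise limit of the metrics $\|\cdot\|_{i,v}$ induced by $\CLL_i$. Here $X^\ran$ is just the set $M_K$, one point per place.

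\textbf{Model case.} For a hermitian line bundle $\CLL$ on $\Spec O_K$ and nonzero $s\in \CL_K$, I will show that $-\sum_v \epsilon_v\log\|s\|_v$ equals $\log\#(\CL/s O_K)-\sum_\sigma \log\|s\|_\sigma$ when $s\in\CL$, and hence equals $\wh\deg(\CLL)$.

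At a finite place $v$ with prime $\mathfrak p$, the model metric satisfies $\|s\|_v=|\pi_v|_v^{\mathrm{ord}_{\mathfrak p}(s)}$, where $\mathrm{ord}_{\mathfrak p}(s)$ is the order of $s$ relative to a local generator of $\CL$. With the normalization $|\pi_v|_v=\#(O_K/\mathfrak p)^{-1}$, this gives $-\log\|s\|_v=\mathrm{ord}_{\mathfrak p}(s)\log N\mathfrak p$. Summing over $\mathfrak p$ yields $\log\#(\CL/sO_K)$.

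At the archimedean places, the embeddings $\sigma$ come in conjugate pairs for complex places. The metrics are conjugation invariant, so $\sum_\sigma\log\|s\|_\sigma=\sum_{v\mid\infty}\epsilon_v\log\|s\|_v$. This is exactly where $\epsilon_v=2$ enters.

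For $\QQ$-line bundles, both sides scale linearly, so the identity extends to them. Independence of $s$ follows from the product formula, since replacing $s$ by $as$ changes the right-hand side by $-\sum_v\epsilon_v\log|a|_v=0$.

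\textbf{Limit.} Fix $s$ and identify all $\CL_i|_{\CU}$ with $\CL$ via $\ell_i$. The Cauchy condition says $-\epsilon_i\CEE_0\le \wh\div(\ell_{i'}\ell_i^{-1})\le\epsilon_i\CEE_0$. This forces the following:
\begin{enumerate}[(1)]
\item $\log\|s\|_{i,v}$ is independent of $i$ for all finite $v\nmid N$, since $\CEE_0$ has no component there;
\item it varies by at most $\epsilon_i\cdot\mathrm{ord}_{\mathfrak p}(N)\log N\mathfrak p$ for $\mathfrak p\mid N$;
\item it varies by at most $\epsilon_i c$ at archimedean $v$.
\end{enumerate}
Hence only finitely many terms of $\sum_v\epsilon_v\log\|s\|_{i,v}$ vary with $i$, and each of them converges. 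So the sum converges to $\sum_v\epsilon_v\log\|s\|_v$.

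The left-hand side $\wh\deg(\CLL_i)$ converges to $\wh\deg(\OL)$ by definition of the intersection pairing as a limit (in dimension one every adelic line bundle is integrable). Passing to the limit in the model identity gives the lemma.

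\textbf{Main obstacle.} The only delicate point is justifying the interchange of the limit with the infinite sum over places. The uniform control by the single boundary divisor $\CEE_0$, supported at finitely many places, handles it as above. A secondary check is compatibility of non-normal projective models with normalization, for both the degree and the induced metrics; this holds because the analytification is defined via the valuative criterion on $\Spec O_{\CH_x}$, which factors through the normalization.
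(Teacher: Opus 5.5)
Your proposal is correct, and it is the unwinding of definitions the paper has in mind; the paper states the lemma as ``derived from the definitions'' and gives no further argument. You verify the formula place by place for hermitian line bundles on $\Spec O_K$ (and on orders, via normalization), then pass to the limit along the Cauchy sequence, where the boundary condition fixes all local terms outside finitely many places and controls the remaining ones uniformly. One small imprecision: a quasi-projective model is $\Spec O_K$ minus finitely many primes rather than literally $\Spec O_K[1/N]$, but since $\wh\Pic(X)$ is a direct limit over models you may shrink $\CU$ to that form, so nothing is lost.
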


\section{Arithmetic positivity}
\label{sec positivity}

In algebraic geometry, the linear series of a line bundle is closely related to positivity properties of the line bundle in intersection theory. The involved positivity notions of line bundles include effectivity, ampleness, nefness, and bigness. We refer to Lazarsfeld \cite{Laz04, Laz04b} for a thorough introduction in this area. 

Most of these positivity results can be proved (by much more effort) for hermitian line bundles on arithmetic varieties, and further for adelic line bundles on quasi-projective varieties. In this section, we are going to review some of these positivity results. Their applications will be given in the next sections.

\subsection{Positivity in algebraic geometry}

We first recall some basic facts on positivity in algebraic geometry. 
The basic references for this topic are  Debarre \cite{Deb01} and
Lazarsfeld \cite{Laz04, Laz04b}. 

Throughout this subsection, let $X$ be a projective variety of dimension $d\geq 1$ over a field $k$. 
Denote by $\Pic(X)$ the group of isomorphism classes of line bundles on $X$. 
We have an intersection pairing
\[
\Pic(X)^d \longrightarrow \ZZ.
\]
For a closed subvariety $Y$ of dimension $e$ in $X$ and for 
line bundles $L_1,\dots, L_e$ on $X$, 
we usually write 
$$
L_1\cdot L_2\cdots L_e\cdot Y=   (L_1|_Y)\cdot (L_2|_Y) \cdots (L_e|_Y). 
$$
If $L_1= L_2=\cdots =L_e=L$, then the left-hand side is further written as $L^e\cdot Y$. 

\subsubsection{Ampleness and nefness}

Let $L$ be a line bundle on $X$. Denote $h^0(X,L)=\dim_k H^0(X,L)$. 
We say that $L$ is \emph{effective} if $h^0(X,L)>0$; i.e. $L$ is linearly equivalent to an effective Cartier divisor on $X$.

We say that $L$ is \emph{ample} if for every coherent sheaf $F$ on $X$, the sheaf $F\otimes L^{\otimes n}$ is globally generated for all sufficiently large $n$.  The Nakai--Moishezon criterion asserts that  $L$ is ample if and only if
\[
L^{\dim Y}\cdot Y>0
\]
for every closed subvariety $Y$ of $X$.
See \cite[\S1, Thm. 1.21]{Deb01}
or \cite[Thm. 1.2.23]{Laz04}. 

We say that $L$ is \emph{nef} if
\[
L^{\dim Y}\cdot Y\geq 0
\]
for every closed subvariety $Y$ of $X$.  By Kleiman's theorem, 
it is enough to check this condition for curves $Y$.  
See \cite[\S1, Thm. 1.26]{Deb01} or \cite[Thm. 1.4.9, Exa. 1.4.16]{Laz04}.

\subsubsection{Volumes and bigness}

Let $L$ be a line bundle on $X$. Define the \emph{volume}
\[
   \vol(L)=   \vol(X, L)=\lim_{n\to\infty} \frac{d!}{n^d}h^0(X, nL).
\]
Here the additive notation $nL$ means $L^{\otimes n}$, and we will take similar convention in the following. 
The existence of the limit is a consequence of Fujita's approximation theorem.  
See \cite[Thm. 11.4.7]{Laz04b} or \cite[Thm. A]{LM}.  

If $L$ is nef, then the Hilbert--Samuel formula gives the especially simple identity
\[
   \vol(L)=L^d.
\]
See \cite[\S1, Prop. 1.31]{Deb01} or \cite[Cor. 1.4.41]{Laz04}.

We say that $L$ is \emph{big} if $\vol(L)>0$.  
Thus, for nef line bundles, bigness is equivalent to positivity of the top self-intersection number.
Moreover, ample line bundles are always big. 

If $L$ and $M$ are nef line bundles on $X$, then Siu's inequality asserts that
\[
   \vol(L- M)\geq L^d-d L^{d-1}\cdot M.
\]
See \cite[Cor. 1.2]{Siu} or \cite[Thm. 2.2.15]{Laz04}.
The theorem is very useful due to the precise bound of the right-hand side, and the bound is accurate if $M$ is ``much smaller'' than $L$.
Its arithmetic analogue will be a main tool of this paper.

\subsubsection{More properties of volumes}

To introduce more results, it will be convenient to employ the notion of 
$\QQ$-line bundles, i.e. objects of $\Picc(X)_\QQ$. 
For the purpose here, it suffices to consider 
a \emph{$\QQ$-line bundle on $X$}
as an element of $\Pic(X)_\QQ=\Pic(X)\otimes_\ZZ\QQ$. 
We extend the previous positivity notions to $\QQ$-line bundles as follows. 

For a $\QQ$-line bundle $L$ on $X$, we say that $L$ is \emph{ample} (resp. \emph{effective}, \emph{nef}, or \emph{big}) 
if $L=aL_0$ in $\Pic(X)_\QQ$ for some strictly positive rational number $a$ and some $L_0\in \Pic(X)$ which is \emph{ample} (resp. \emph{effective}, \emph{nef}, or \emph{big}). 

We extend the volume function $\vol:\Pic(X)\to \RR$
to a volume function 
$$\vol:\Pic(X)_\QQ\lra \RR$$ 
by the homogeneity property
$$
\vol(aL_0):=a^d \vol(L_0), \quad a\in \QQ_{>0}, \quad L_0\in \Pic(X).
$$
Then a $\QQ$-line bundle $L$ on $X$ is big if and only if $\vol(L)>0$. 

Similarly, a (Cartier) \emph{$\QQ$-divisor on $X$}
is an element of $\Div(X)_\QQ$. 
All the above definitions extend to  $\QQ$-divisors by the same method. 

The notion of $\QQ$-line bundles brings lots of conveniences to the theory. For example, we can ``realize'' nef line bundles as 
``limits'' of ample $\QQ$-line bundles. In fact, let $L$ be 
a nef line bundle on $X$, and let $A$ be 
any ample line bundle on $X$. Then $L$ is the ``limit'' of $L+tA$ for {positive rational numbers} $t\to 0$, where each term $L+tA$ is ample. 

Finally, we list many other properties of the volume function in the following. 

\begin{thm} \label{geo volume}
Let $X$ be a projective variety of dimension $d>0$ over a field $k$.
Let $L$ and $M$ be line bundles on $X$. Then the following are true. 
\begin{enumerate}[(1)]
\item \textnormal{(birational invariance)}
If $\psi:X'\to X$ is a birational morphism of projective varieties over $k$, then $\vol(\psi^*L)=\vol(L)$.

\item \textnormal{(log-concavity)}  If $ L$ and $ M$ are effective, then 
$$ \vol ( L+ M)^{ 1/d}\ge  \vol ( L)^{1/d}+\vol (M)^{1/d}.$$

\item \textnormal{(Fujita approximation)}
If $L$ is big, then for any $\epsilon>0$, there exist a projective variety $X'$ over $k$ with a birational morphism $\psi:X'\to X$
and an ample $\QQ$-line bundle $ A$ on $X'$ such that 
$\psi^*L-A$ is effective on $X'$ and  
$$ \vol(A)\geq\vol(L)-\epsilon. $$

\item \textnormal{(continuity)} The volume function $\vol:\Pic(X)_\QQ\to \RR$ is continuous in the sense that for $t\in \QQ$, 
$$\lim_{t\to 0 }\vol (L+tM)=\vol (L).$$

\item \textnormal{(differentiability)}
If $L$ is big, then for $t\in \QQ$, the limit
$$\frac \d{\d t} \Big |_{t=0}\vol (L+tM)=
\lim_{t\to 0 } \frac{1}{t}(\vol (L+tM)-\vol (L))
$$
exists. 
If $L$ is big and nef, then for $t\in \QQ$, 
$$\frac \d{\d t} \Big |_{t=0}\vol (L+tM)=d \,  L^{d-1} \cdot M.$$
\end{enumerate}
\end{thm}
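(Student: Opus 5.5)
The statement collects standard facts, so the plan is to reduce each item to the basic estimate below. Item (1) uses a small separate argument, and items (3) and (5) need deeper input.

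The basic estimate is as follows. If $E$ is an effective Cartier divisor on $X$ and $N$ is any line bundle, the exact sequence $0\to \CO(nN)\to \CO(nN+mE)\to \CO(nN+mE)|_{mE}\to 0$ gives
$$h^0(X,nN+mE)-h^0(X,nN)\le h^0(mE,(nN+mE)|_{mE})=O\big((n+m)^{d-1}m\big),$$
uniformly in $N$ within a bounded set of classes. Fix an ample $A$ such that $A+M$, $A-M$ and $A\pm L$ are all very ample, hence effective.

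\textbf{Item (1).} The inclusion $H^0(X,nL)\hookrightarrow H^0(X',n\psi^*L)$ gives $\vol(L)\le\vol(\psi^*L)$. For the reverse, I would argue as follows. Since $\psi$ is birational, $\CO_X\to\psi_*\CO_{X'}$ is injective with cokernel supported on a proper closed subset. Choose an effective Cartier divisor $E$ containing that support whose ideal annihilates the cokernel. Then $H^0(X',n\psi^*L)=H^0(X,\psi_*\CO_{X'}\otimes nL)$ maps into $H^0(X,nL+E)$. Applying the basic estimate with $m=1$ shows that twisting by $E$ changes $h^0$ only by $O(n^{d-1})$.

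\textbf{Item (4).} Write $M=M_1-M_2$ with $M_1,M_2$ effective, and reduce to integral multiples by homogeneity. Then $\vol(L+tM)$ is squeezed between $\vol(L-|t|M_2)$ and $\vol(L+|t|M_1)$. The basic estimate, applied with $m\approx n|t|$, shows that these differ from $\vol(L)$ by $O(|t|)$. This gives continuity, in fact local Lipschitz continuity. It is cleanest to first prove that the quantity $\vol$ is well defined and homogeneous on $\Pic(X)_\QQ$, which the paper takes from Fujita's approximation theorem.

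\textbf{Items (2) and (3).} I would use Okounkov bodies, following Lazarsfeld--Mustaţă \cite{LM}. Fix a flag of subvarieties on a normalization; by (1) we may pass to it. Attach to each $L$ with $h^0(X,nL)>0$ for some $n>0$ a convex body $\Delta(L)\subset\RR^d$ with $\vol(L)=d!\,\vol_{\RR^d}(\Delta(L))$. Additivity of valuation vectors gives $\Delta(L)+\Delta(M)\subset\Delta(L+M)$, so Brunn--Minkowski yields (2). For (3), I would approximate the semigroup of valuation vectors by the finitely generated subsemigroups coming from the image of $\mathrm{Sym}\,H^0(X,pL)$ for large $p$. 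I would then take $X'$ to be the blow-up of the base ideal of $|pL|$ and $A=\frac1p(\psi^*pL-F)$, where $F$ is the exceptional divisor. This $A$ is semiample, since it is the pullback of $\CO(1)$ under the morphism given by $|pL|$ resolved on $X'$; it is big because $L$ is big. Its volume tends to $\vol(L)$ as $p\to\infty$ because the subsemigroups exhaust the full semigroup. Finally, replace $A$ by $A-\delta H$ for a small ample-on-$X'$ perturbation to make it ample. \textbf{This semigroup-approximation step is the main obstacle.} It needs the Khovanskii-type theorem that volumes of finitely generated subsemigroups converge to the volume of the full semigroup.

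\textbf{Item (5).} For $L$ big and nef, I would bound the derivative from both sides. For the lower bound, write $M=M_1-M_2$ with $M_1,M_2$ nef, which is possible since $A+M$ and $A$ are ample. Then apply Siu's inequality to $L+tM_1$ and $tM_2$ for $t>0$, and symmetrically for $t<0$. This gives
$$\vol(L+tM)\ge L^d+t\,d\,L^{d-1}\cdot M+O(t^2).$$
For the upper bound, use Fujita approximation (3): for ample $A'$ with $\psi^*(L+tM)-A'$ effective, compute $\vol$ as a limit of $A'^d$ and compare intersection numbers. Alternatively, use concavity of $\vol^{1/d}$ from (2) near $L$, which forces the one-sided derivatives to coincide with the Siu lower bound. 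For general big $L$, differentiability is the theorem of Boucksom--Favre--Jonsson and Lazarsfeld--Mustaţă, with derivative $d\,\langle L^{d-1}\rangle\cdot M$ given by the positive intersection product. I would cite this rather than reprove it, since it again rests on Fujita approximation and Okounkov-body convexity.
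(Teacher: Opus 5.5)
\textbf{A step in (3) fails as written.} Your $A=\frac1p(\psi^*pL-F)$ equals $\frac1p\phi^*\CO(1)$, where $\phi\colon X'\to\PP^N$ is the morphism given by the base-point-free system $\psi^*|pL|-F$. So $A$ is semiample, and it is ample only when $\phi$ is finite.

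Suppose $\phi$ contracts a curve $C\subset X'$. For example, take $L=\pi^*\CO(1)$ on the blow-up $\pi\colon X\to\PP^2$ at a point; then $X'=X$, $A=L$, and $C$ is the exceptional curve. In that case $(A-\delta H)\cdot C=-\delta\, H\cdot C<0$, so $A-\delta H$ is not even nef. Subtracting an ample class moves you away from the ample cone. If instead $A$ is already ample, no perturbation is needed.

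The correct perturbation uses Kodaira's lemma on $X'$. Since $A$ is nef with $A^d>0$, it is big, so $A=H'+N$ in $\Pic(X')_\QQ$ with $H'$ ample and $N$ effective. Set $A_\delta=A-\delta N=(1-\delta)A+\delta H'$. Then:
\begin{itemize}
\item $A_\delta$ is ample for $0<\delta\le 1$;
\item $\psi^*L-A_\delta=\frac1pF+\delta N$ is still effective;
\item $\vol(A_\delta)=A_\delta^d\to A^d$ as $\delta\to 0$.
\end{itemize}
With this repair, (3) is the Okounkov-body proof of \cite{LM}. As you say, its only cited input is the Khovanskii-type semigroup lemma.

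\textbf{Comparison with the paper.} The paper gives no argument for this theorem. It only cites Lazarsfeld for (1)--(4) and \cite{BFJ09} for (5), so your outline genuinely supplies proofs.

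For (1) and (4), the restriction-sequence estimate is the standard argument. Your annihilator divisor $E$ in (1) also handles non-normal $X$ directly.

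For (2) and (3), Okounkov bodies give three things from one construction: the existence of the limit defining $\vol$, log-concavity via Brunn--Minkowski, and Fujita approximation. This replaces the multiplier-ideal proof in the cited reference.

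The most useful difference is in (5) for $L$ big and nef. Your Siu lower bound $\vol(L+tM)\ge L^d+t\,d\,L^{d-1}\cdot M+O(t^2)$, valid for both signs of $t$, is the geometric twin of Lemma \ref{volume asymptotic}. You give two suggestions for the matching upper bound. The concavity one closes the argument; the Fujita-approximation one, as stated, does not produce a bound in terms of $L^{d-1}\cdot M$.

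Concretely, let $f(t)=\vol(L+tM)^{1/d}$.
\begin{itemize}
\item By (2) and (4), $f$ is concave on a neighbourhood of $0$, so $f'_-(0)\ge f'_+(0)$.
\item Let $g$ be the $d$-th root of the Siu lower bound. Then $f\ge g$ and $f(0)=g(0)=(L^d)^{1/d}$, which gives $f'_+(0)\ge g'(0)\ge f'_-(0)$.
\end{itemize}
Hence $f$ is differentiable at $0$, and the derivative of $\vol(L+tM)$ there is $d\,L^{d-1}\cdot M$, with no appeal to \cite{BFJ09}. For the first assertion of (5), at an arbitrary big $L$, you and the paper both rely on \cite{BFJ09}.
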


\begin{proof}
For (1) and (4), see  \cite[Prop. 2.2.43, Thm. 2.2.44]{Laz04}.
For (2) and (3), see \cite[Thm. 11.4.9, Thm. 11.4.4]{Laz04b}. 
Part (5) is proved by Boucksom--Favre--Jonsson in \cite[Thm. A]{BFJ09}, and we refer to the loc. cit. for an expression of the derivative for a general big $L$ in terms of the positive intersection number.
\end{proof}

\subsection{Effective sections and volumes}

Here we introduce effective sections and volumes of adelic line bundles, and then list many important properties of the volume functions.

\subsubsection{Volumes of Hermitian line bundles} \label{iccm2010:volume}

Let $\lbb$ be a hermitian line bundle on an arithmetic variety $\CX$ of dimension $d$.
The corresponding arithmetic linear series is the finite set
$$
H^0(\CX,\lbb):= \left\{s \in H^0(\CX, \lb): \|s\|_{\sup}\leq 1\right\}. 
$$
Here 
$$\|s\|_{\sup}:=\sup_{z\in \CX(\CC)}\|s(z)\|$$
is the usual supremum norm on $H^0(\CX,\lb)_{\CC}$.
An element of $H^0(\CX,\lbb)$ is called \emph{an effective section of} $\lbb$ on $\CX$, 
and $\lbb$ is called \emph{effective} if $H^0(\CX,\lbb)$ is nonzero.

Denote
$$h^0(\lbb)=\log \# H^0(\CX,\lbb).$$
The \emph{volume of} $\lbb$ is defined as
$$\wh\vol(\lbb)=\wh\vol(\CX,\lbb)=\lim_{n\to\infty} \frac{h^0(n\lbb)}{n^{d}/d!}.$$
The limit exists by Chen \cite{Che08}; and see \cite{Yu2} for a different proof using Okounkov bodies.
We say that $\lbb$ is \emph{big} if 
$\wh\vol(\lbb)>0$. 

Most properties of the volume function listed above {work} for hermitian line bundles, and also {work} for adelic line bundles by taking {limits}, so we will skip the statements for hermitian line bundles but make formal statements for adelic line bundles in the following.

\subsubsection{Volumes of adelic line bundles}

Here we recall the notions of volume and bigness of adelic line bundles in \cite[\S5.1-5.2]{YZ}. 

\kkk
If $k$ is a field, let $X$ be  {a quasi-projective variety} over $k$ of dimension $d$. If $k=\ZZ$, let $X$ be either a quasi-projective and flat integral scheme over $k$ of absolute dimension $d$, or a quasi-projective variety over a number field $K$ of dimension $d-1$. 

Let $\OL$ be an adelic line bundle on $X$ with underlying line bundle $L$ on $X$.  
Define 
$$\wh H^0(X, \OL):=\{s\in H^0(X, L): \|s(x)\|\leq 1,\, \forall x\in X^\an\}.$$
Here the metric $\|s(x)\|$ on the Berkovich space $X^\an$ is defined via the analytification 
functor
$\wh\Picc(X)\to \wh\Picc(X^\an)$. 
An element of $\wh H^0(X, \OL)$ is called an \emph{effective section of} 
{$\OL$} on $X$, 
and $\OL$ is called \emph{effective} if $\wh H^0(X, \OL)$ is nonzero.

In the case $k=\ZZ$ with $X$ a quasi-projective variety over a number field $K$ of dimension $d-1$, we further have 
$$\wh H^0(X, \OL)=\{s\in H^0(X, L): \|s(x)\|\leq 1,\, \forall x\in X^\ran\}.$$
Then we can further write  
$$\wh H^0(X, \OL)=\{s\in H^0(X, L): \|s\|_{v,\sup}\leq 1,\, \forall v\in M_K\}.$$
Here the \emph{supremum norm} at $v\in M_K$ is given by 
$$
\|s\|_{v,\sup}=\sup_{x\in X_v^\an} \|s(x)\|,\quad s\in H^0(X_{K_v}, L_{K_v}). 
$$

If $k=\ZZ$, then $\wh H^0(X, \OL)$ is a finite set, and we denote 
$$\hat h^0(X,\OL):=\log\#\wh H^0(X, \OL);$$ 
if $k$ is a field, then $\wh H^0(X, \OL)$ is a finite-dimensional vector space over $k$, and we denote 
$$\hat h^0(X,\OL):=\dim_k \wh H^0(X, \OL).$$

Define the \emph{arithmetic volume}
$$
\wh\vol(\OL)
=\wh\vol(X,\OL)
=\lim_{n\to \infty} \frac{d!}{n^{d}}\hat h^0(X, n\OL).
$$
The adelic line bundle $\OL$ is said to be \emph{big} on $X$ if 
$\wh\vol(\OL)>0$. 

The following result is from \cite[Thm. 5.2.1]{YZ}, which converts the convergence of the arithmetic volumes for adelic line bundles to those for hermitian line bundles.

\begin{thm}  \label{limit}
\begin{enumerate}[(1)]
\item The limit 
$\wh\vol(X,\OL)$
exists. 
\item
If $\OL$ is represented by an adelic line bundle 
$(\CL,(\CX_i,\overline \CL_i, \ell_{i})_{i\geq1})$ on $\CU$ for a quasi-projective model $\CU$ of $X$ over $k$, 
then 
$$
\wh\vol(X,\OL)
=\lim_{i\to \infty} \wh\vol(\CX_i,\CLL_i).
$$
\end{enumerate}
\end{thm}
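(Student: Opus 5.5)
The plan is to prove (2) directly; (1) then follows, because the right-hand side of (2) will be shown to converge, and the resulting limit is the limit defining $\wh\vol(X,\OL)$. First I reduce to a single quasi-projective model $\CU$ on which $\OL$ is represented by $(\CL,(\CX_i,\CLL_i,\ell_i)_{i\ge1})$. Enlarging $\CU$ inside $X$ does not change $\wh H^0$. The reason is that sections of $nL$ on $X$ with bounded norms at all places extend over the model, as for integral points of bounded norm. So I may work on $\CU$ with the boundary divisor $(\CX_0,\CEE_0)$. After passing to a common projective model dominating $\CX_0$ and $\CX_i$, and using birational invariance of arithmetic volumes (the arithmetic analogue of Theorem \ref{geo volume}(1)), I may assume $\CEE_0$ lives on each $\CX_i$.

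The key comparison is the following. By the Cauchy condition there are $\epsilon_i\to0$ such that, after identifying via $\ell_i\ell_j^{-1}$, we have $-\epsilon_i\CEE_0\le \CLL_j-\CLL_i\le\epsilon_i\CEE_0$ for all $j\ge i$, and the same bound holds for the limit $\OL$. Since $\CEE_0$ is effective, for each $n$ this gives inclusions
$$\wh H^0(\CX_i,n(\CLL_i-\epsilon_i\CEE_0))\subset \wh H^0(X,n\OL)\subset \wh H^0(\CU, n(\CLL_i+\epsilon_i\CEE_0)),$$
where the last group consists of sections on $\CU$ with sup-norm at most $1$. The first inclusion is immediate from effectivity. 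The second is where quasi-projectivity bites, and I expect it to be the main obstacle: sections on $X$ only extend to $\CU$, not to $\CX_i$, so they may have poles along $\CX_i\setminus\CU$. The remedy is that $\CE_0$ is supported exactly on the boundary and the sections have bounded norm. Bounded norm forces bounded pole order, so every such section of $n\CL$ extends to a section of $n(\CL_i+\delta\CE_0)$ on $\CX_i$ for any rational $\delta>0$ and $n$ large, with norm still at most $1$ after adding a tiny strictly effective Green function. Making this precise, with pole order at most $n\delta$ uniformly in $n$, is the delicate point. I would try to prove it via the analytification: compare $-\log\|s\|$ with the Green function of $\CEE_0$ near boundary points of $\CX_i^\an$, and use that the sup-norm bound at those points controls vanishing orders along boundary components.

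Given the sandwich, $\wh\vol(\CLL_i-\epsilon_i\CEE_0)\le\liminf\le\limsup\le\wh\vol(\CLL_i+2\epsilon_i\CEE_0)$, where the $\liminf$ and $\limsup$ are those of $\frac{d!}{n^d}\hat h^0(X,n\OL)$. Continuity of the arithmetic volume for hermitian line bundles (the arithmetic analogue of Theorem \ref{geo volume}(4), with explicit uniform control $|\wh\vol(\CLL+t\CEE)-\wh\vol(\CLL)|\le C t$, where $C$ depends only on bounds for $\CLL$ that are uniform in $i$ since $\CLL_i$ is sandwiched near $\CLL_1$) makes both outer terms differ from $\wh\vol(\CLL_i)$ by $O(\epsilon_i)$. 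Letting $i\to\infty$ shows that $\{\wh\vol(\CLL_i)\}$ is Cauchy and that the $\liminf$ and $\limsup$ coincide with its limit. This proves (1) and (2) simultaneously.
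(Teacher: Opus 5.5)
Your architecture is the right one: the Cauchy sandwich $\CLL_i-\epsilon_i\CEE_0\le\OL\le\CLL_i+\epsilon_i\CEE_0$, turned into inclusions of effective sections, followed by a continuity estimate for hermitian volumes that is uniform in $i$. It is the natural strategy, and as far as I recall it is also how \cite[Thm. 5.2.1]{YZ} proceeds (the paper only cites that result). But you have the difficulty backwards. The step you flag as the main obstacle is short and exact, while the real gap is in the step you treat as a remark.

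\textbf{The extension step is easy.} You must pass to normal models, which you did not say; any extension argument needs normality, and volumes are unchanged by birational invariance. Then for a hermitian $\QQ$-line bundle $\CLL$ on a normal projective model $\CX$, restriction is a bijection $\wh H^0(\CX,n\CLL)\to\wh H^0(X,n\CLL|_X)$. The argument is as follows.
\begin{itemize}
\item For every prime divisor $\CD$ of $\CX$, horizontal or vertical, boundary or not, the seminorm $|f|_{x_\CD}=e^{-\mathrm{ord}_\CD(f)}$ has zero kernel and is bounded on $\ZZ$. Hence $x_\CD\in X^\an$.
\item Its center on $\CX$ is the generic point of $\CD$, so the model metric gives $-\log\|s(x_\CD)\|=\mathrm{ord}_\CD(\div(s))$, where $\div(s)$ is the $\QQ$-divisor of $s$ as a rational section on $\CX$.
\item Thus $\|s\|\le1$ on $X^\an$ forces $\div(s)\ge0$, so $s$ extends over $\CX$. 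Moreover $\|s\|\le1$ on $\CX(\CC)$ by continuity of the metric and density of $X(\CC)$.
\end{itemize}
So the upper inclusion holds with $\wh H^0(\CX_i,n(\CLL_i+\epsilon_i\CEE_0))$. No $\delta$, no extra Green function and no uniformity in $n$ is involved, and your preliminary passage from $X$ to $\CU$ becomes unnecessary. Your heuristic that bounded norm gives pole order at most $n\delta$ for every $\delta>0$ is actually wrong. Along a boundary component $\CD$, the norm bound for $n\OL$ only limits the pole order, relative to $n\CL_i$, to $n\epsilon_i\,\mathrm{mult}_\CD(\CE_0)$.

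\textbf{The genuine gap is the uniform continuity.} The models $\CX_i$ vary, so continuity of $\wh\vol$ on a fixed arithmetic variety gives no control of the constant on $\CX_i$. The adelic statement Theorem \ref{arith volume}(4) cannot be used either, since it is itself deduced from the theorem you are proving. What you need is the following: for $\ol{\mathcal F}=\CLL_i-\epsilon_i\CEE_0$ and $0\le t\le 1$, one has $\wh\vol(\ol{\mathcal F}+t\CEE_0)-\wh\vol(\ol{\mathcal F})\le Ct$ with $C$ independent of $i$.

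A naive restriction-and-counting argument does not give this. Its kernel step divides by a section of $\CO(\CE_0)$, which enlarges sup-norms near $\CE_0(\CC)$ by amounts controlled only by model-dependent distortion inequalities.

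One route that works:
\begin{itemize}
\item Fix an ample $\CMM_1$ on $\CX_1$, with $\CX_1$ dominating $\CX_0$, so positive that $\psi_i^*\CMM_1-\CEE_0$ and $\psi_i^*\CMM_1-\ol{\mathcal F}$ are effective for all $i$. This is possible by the Cauchy condition.
\item By monotonicity, replace $\CEE_0$ by $\psi_i^*\CMM_1$.
\item On the range where $\ol{\mathcal F}+t\psi_i^*\CMM_1$ is big, bound the derivative of $t\mapsto\wh\vol(\ol{\mathcal F}+t\psi_i^*\CMM_1)$ using Chen's formula $d\,\langle(\ol{\mathcal F}+t\psi_i^*\CMM_1)^{d-1}\rangle\cdot\psi_i^*\CMM_1$.
\item Monotonicity of positive intersection numbers and the projection formula bound this by $d\,2^{d-1}\,\CMM_1^{d}$.
\end{itemize}
Whatever route you choose, this uniform estimate is the heart of the theorem and must be proved or cited precisely. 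Once it is in place, your final paragraph goes through as written.
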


On the right-hand side, $\wh\vol(\CX_i,\CLL_i)$ is the volume of $\CLL_i$ as a hermitian $\QQ$-line bundle on $\CX_i$, defined by homogeneity.
By the theorem, the definition of $\wh\vol(X,\OL)$ extends to adelic $\QQ$-line bundles on $X$ by homogeneity.

As in the geometric case, we have the following arithmetic Hilbert--Samuel formula from \cite[Thm. 5.2.2]{YZ}. 

\begin{thm}[arithmetic Hilbert--Samuel formula]  \label{HS}
Let $\OL$ be a nef adelic line bundle on $X$. Then
$$
\wh\vol(\OL) = \OL^d.
$$ 
\end{thm}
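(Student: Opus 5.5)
The plan is to reduce Theorem \ref{HS} to the arithmetic Hilbert--Samuel formula for hermitian line bundles on arithmetic varieties (Gillet--Soulé, Bismut--Vasserot, Zhang), using Theorem \ref{limit}(2) on the volume side and the continuity of the intersection pairing on the other. First I reduce to the strongly nef case. By definition of nefness there is a strongly nef $\OM$ with $a\OL+\OM$ strongly nef for every positive integer $a$. Granting the statement for strongly nef bundles, I get $\wh\vol(a\OL+\OM)=(a\OL+\OM)^d$. Dividing by $a^d$ and using homogeneity, $\wh\vol(\OL+a^{-1}\OM)=(\OL+a^{-1}\OM)^d$. As $a\to\infty$, the right side tends to $\OL^d$ by multilinearity. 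For the left side I need continuity of $\wh\vol$ in the direction $\OM$, i.e. the adelic analogue of Theorem \ref{geo volume}(4). This should follow from Theorem \ref{limit}(2) together with the continuity of the arithmetic volume for hermitian $\QQ$-line bundles. Alternatively I could avoid continuity by a two-sided estimate: the lower bound comes from Siu-type inequalities, and the upper bound from $\wh\vol(\OL)\le\wh\vol(\OL+a^{-1}\OM)$ when $\OM$ is effective, after modifying $\OM$ by a large multiple of the boundary divisor.

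Next, assume $\OL$ is strongly nef, so it is represented by a sequence $(\CX_i,\CLL_i)$ of nef hermitian $\QQ$-line bundles on projective models. I may take this sequence Cauchy in the boundary topology and use it directly in Theorem \ref{limit}(2). Then $\wh\vol(\OL)=\lim_i\wh\vol(\CX_i,\CLL_i)$, while the intersection number $\OL^d$ is by definition $\lim_i\CLL_i^d$. It therefore suffices to prove $\wh\vol(\CX_i,\CLL_i)=\CLL_i^d$ for a nef hermitian $\QQ$-line bundle on a projective arithmetic variety, with continuous semipositive metrics. By homogeneity, integral line bundles suffice.

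For that projective statement I follow the classical route. When $\CLL$ is ample with smooth positive metric, Gillet--Soulé's arithmetic Riemann--Roch together with Bismut--Vasserot's torsion asymptotics, or Zhang's direct argument via successive minima, gives $\hat h^0(n\CLL)=\frac{n^d}{d!}\CLL^d+o(n^d)$. For nef $\CLL$ I add $\epsilon\CAA$ with $\CAA$ ample; the sum $\CLL+\epsilon\CAA$ is arithmetically ample by Zhang's Nakai--Moishezon criterion, after handling the metrics through a uniform limit of smooth semipositive metrics. I then let $\epsilon\to0$, using continuity of both sides. Singular models are handled by pulling back to a generic resolution and invoking birational invariance. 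The case where $k$ is a field is the same argument, with the geometric Hilbert--Samuel formula in place of the arithmetic one.

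The main obstacle is the first reduction, specifically justifying the continuity statement $\wh\vol(\OL+t\OM)\to\wh\vol(\OL)$ for adelic line bundles on quasi-projective $X$, where $\OM$ need not be model-bounded. I would handle it via the boundary divisor. One chooses $\OM\le c\,\CEE_0$ after passing to a model, bounds the volume change by comparing $H^0$ of $n\OL$ and $n\OL+nt c\,\CEE_0$, and uses that the boundary norm controls the defect uniformly in $n$.
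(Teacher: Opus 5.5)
Your proposal is correct and follows the paper's route: Theorem \ref{limit}(2) reduces the arithmetic volume to volumes of hermitian $\QQ$-line bundles on projective models, and the projective case is the classical arithmetic Hilbert--Samuel formula of Gillet--Soul\'e, Bismut--Vasserot, Zhang and Moriwaki, which is exactly what the paper cites. Your explicit passage from nef to strongly nef through $\wh\vol(\OL+a^{-1}\OM)$ is a detail the paper leaves implicit, and the continuity it needs is precisely Theorem \ref{arith volume}(4), so cite that rather than your boundary-divisor sketch; that sketch does not work as written, because $\OM\le c\,\CEE_0$ would force the underlying divisor of $\OM$ on $\CU$ to be non-positive.
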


The theorem is reduced to the projective case by Theorem \ref{limit}. 
{Historically}, the corresponding result for hermitian line bundles on arithmetic varieties is a combination of the arithmetic Riemann--Roch theorem of Gillet--Soul\'e \cite{GS2}, an estimate of analytic torsions by Bismut--Vasserot \cite{BV89},  the Riemann--Roch theorem on lattice points by Gillet--Soul\'e \cite{GS3}, the arithmetic Nakai--Moishezon theorem by Zhang \cite{Zh1}, and some further refinements by Zhang \cite{Zh1} and Moriwaki \cite{Mo3}. 
See \cite[Cor. 2.7]{Yua} for more details.

We also have the following arithmetic Siu inequality from \cite[Thm. 5.2.2]{YZ}, where the case of hermitian line bundles is proved by Yuan \cite{Yua}.
 
\begin{thm}[arithmetic Siu inequality]  \label{bigness}
Let $\OL$ and $\OM$ be nef adelic line bundles on $X$. Then 
$$
\wh\vol(\OL-\OM) \geq \OL^d-d\,\OL^{d-1}\OM.
$$
\end{thm}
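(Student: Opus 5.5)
The plan is to deduce the adelic statement from the hermitian case proved in \cite{Yua}, using Theorem \ref{limit} to pass to the limit. Throughout, I treat the hermitian Siu inequality as given. It says that for nef hermitian $\QQ$-line bundles $\CLL,\CMM$ on a projective arithmetic variety $\CX$ of dimension $d$, one has $\wh\vol(\CLL-\CMM)\geq \CLL^d-d\,\CLL^{d-1}\CMM$. It extends to $\QQ$-coefficients by homogeneity.

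\textbf{Step 1: strongly nef case.} First I reduce to the case where $\OL$ and $\OM$ are strongly nef. Suppose they are only nef. Choose strongly nef $\OL_0,\OM_0$ such that $a\OL+\OL_0$ and $a\OM+\OM_0$ are strongly nef for all $a>0$. Replace $\OL,\OM$ by $\OL+\frac1a\OL_0$ and $\OM+\frac1a\OM_0$, which are strongly nef $\QQ$-bundles. As $a\to\infty$, the right-hand side $\OL^d-d\,\OL^{d-1}\OM$ varies continuously, since intersection numbers are multilinear.

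The left-hand side also needs control. Volumes on models are continuous in the sense of Theorem \ref{geo volume}(4) in its arithmetic form, and I would derive the adelic version using Theorem \ref{limit}. For the volume there is a cleaner route that avoids continuity altogether: if $\OM_0$ is effective, then $\wh\vol(\OL-\OM)\geq\wh\vol(\OL-\OM-\frac1a\OM_0)$ by monotonicity. So I would take $\OM_0$ effective when possible. Otherwise I would use continuity of $\wh\vol$ along the direction $\frac1a(\OL_0-\OM_0)$, inherited from the hermitian case through Theorem \ref{limit}.

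\textbf{Step 2: approximation by models.} Now $\OL,\OM$ are limits of nef hermitian $\QQ$-line bundles $\CLL_i,\CMM_i$ on projective models $\CX_i$. After passing to a common model (pulling back to a model dominating both), both are defined on the same $\CX_i$. The difference $\OL-\OM$ is then represented by $(\CLL_i-\CMM_i)$, which is a Cauchy sequence in the boundary topology. The hermitian inequality on $\CX_i$ gives
$$\wh\vol(\CX_i,\CLL_i-\CMM_i)\geq \CLL_i^d-d\,\CLL_i^{d-1}\CMM_i.$$
Letting $i\to\infty$, Theorem \ref{limit}(2) identifies the left-hand limit with $\wh\vol(\OL-\OM)$. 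The right-hand side converges to $\OL^d-d\,\OL^{d-1}\OM$ because the intersection pairing on $\wh\Pic(X)_\intb$ is defined precisely as this limit.

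\textbf{Remaining points and main obstacle.} For $X$ a quasi-projective variety over a number field $K$, I would first pass through a quasi-projective model $\CU$ over $O_K$ using the direct-limit definitions. The argument then goes through unchanged, since both sides are compatible with these limits.

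I expect the main obstacle to be the reduction from nef to strongly nef in Step 1, specifically controlling the volume under the perturbation. The plan is to prove continuity of $\wh\vol$ in the form
$$\wh\vol(\OL+t\OE)\to\wh\vol(\OL)\quad (t\to0)$$
for integrable adelic $\OE$. I would do this by sandwiching $\OE$ between multiples of the boundary divisor $\pm c\,\CEE_0$ on a model, and using monotonicity of $\wh h^0$ with respect to effective differences. This reduces the problem to continuity on a single model, where the hermitian result applies.
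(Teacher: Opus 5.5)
Your overall route is the one the paper has in mind, following \cite[Thm.~5.2.2]{YZ}. You prove the inequality for strongly nef bundles by applying the hermitian Siu inequality on common models and passing to the limit via Theorem~\ref{limit}(2), and then reduce from nef to strongly nef. Your Step~2 is fine. The weak point is Step~1.

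\emph{The monotonicity shortcut compares the wrong bundles.} After the replacement, Siu bounds $\wh\vol\big(\OL-\OM+\tfrac1a(\OL_0-\OM_0)\big)$. To pass from this to $\wh\vol(\OL-\OM)$ you would need $\OM_0-\OL_0$ effective, not $\OM_0$ effective.

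\emph{The continuity sketch has a gap.} A general integrable $\OE$ cannot be sandwiched between $\pm c\,\CEE_0$, because $\CEE_0$ restricts to zero on $\CU$. Only adelic divisors whose underlying divisor on $\CU$ vanishes can be sandwiched this way. You would first have to split off a model approximant of $\OE$ and then control $\wh\vol$ in that model direction as well. This can be done, and in any case continuity is available as Theorem~\ref{arith volume}(4), so citing it would rescue your argument.

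Better still, no continuity is needed at all. Sums of strongly nef bundles are strongly nef: add the approximating nef models on a common model. So $\ol H:=\OL_0+\OM_0$ makes both $a\OL+\ol H$ and $a\OM+\ol H$ strongly nef for every integer $a\geq 1$. Their difference is exactly $a(\OL-\OM)$, so the volume side does not move. The strongly nef case together with $\wh\vol(a\,\cdot)=a^d\,\wh\vol(\cdot)$ gives
\[
\wh\vol(\OL-\OM)\geq \big(\OL+\tfrac1a\ol H\big)^d-d\,\big(\OL+\tfrac1a\ol H\big)^{d-1}\big(\OM+\tfrac1a\ol H\big).
\]
By multilinearity on integrable bundles, the right-hand side is a polynomial in $1/a$ that tends to $\OL^d-d\,\OL^{d-1}\OM$ as $a\to\infty$. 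This is just the degenerate case $\OM_0-\OL_0=0$ of your own monotonicity idea. It removes the step you flagged as the main obstacle.
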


\subsubsection{More properties of volumes}

\kkk
Let $X/k$ and $d$ be as above.
Then $X$ is either quasi-projective over $k$ or quasi-projective over a number field $K$ (for $k=\ZZ$), and $d$ is the dimension of a quasi-projective model of $X$ over $k$. 
The definition of $\wh\vol(\OL)$ extends to adelic $\QQ$-line bundles on $X$ by homogeneity.

We have the following arithmetic counterpart of Theorem \ref{geo volume} from \cite[\S5.2.5]{YZ}. 

\begin{thm} \label{arith volume}
Let $\OL$ and $\OM$ be adelic line bundles on $X$. Then the following are true. 
\begin{enumerate}[(1)]
\item \textnormal{(birational invariance)}
If $\psi:X'\to X$ is a birational morphism over $k$, then $\wh\vol(\psi^*\OL)=\wh\vol(\OL)$.

\item \textnormal{(log-concavity)}  If $ \OL$ and $ \OM$ are effective, then 
$$ \wh\vol ( \OL+ \OM)^{ 1/d}\ge  \wh\vol ( \OL)^{1/d}+\wh\vol (\OM)^{1/d}.$$

\item \textnormal{(arithmetic Fujita approximation)}
If $\OL$ is big, then for any $\epsilon>0$, there exist a birational morphism $\psi:X'\to X$
and a nef adelic $\QQ$-line bundle $\ol A$ on $X'$ such that 
$\psi^*\OL-\ol A$ is effective on $X'$ and  
$$ \wh\vol(\ol A)\geq\wh\vol(\OL)-\epsilon. $$

\item \textnormal{(continuity)} The volume function $\wh\vol:\wh\Pic(X)_\QQ\to \RR$ is continuous in the sense that for $t\in \QQ$, 
$$\lim_{t\to 0 }\wh\vol (\OL+t\OM)=\wh\vol (\OL).$$

\item \textnormal{(differentiability)}
If $\OL$ is big, then for $t\in \QQ$, the limit
$$\frac \d{\d t} \Big |_{t=0}\wh\vol (\OL+t\OM)=
\lim_{t\to 0 } \frac{1}{t}(\wh\vol (\OL+t\OM)-\wh\vol (\OL))
$$
exists. 
If $\OL$ is big and nef, then for $t\in \QQ$, 
$$\frac \d{\d t} \Big |_{t=0}\wh\vol (\OL+t\OM)=d \,  \OL^{d-1} \cdot \OM.$$
\end{enumerate}
\end{thm}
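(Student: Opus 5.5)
The plan is to reduce every part of Theorem \ref{arith volume} to the corresponding statement for hermitian $\QQ$-line bundles on projective arithmetic models, and then pass to the limit using Theorem \ref{limit}. Represent $\OL$ and $\OM$ on a common quasi-projective model $\CU$ by Cauchy sequences $(\CX_i,\CLL_i)$ and $(\CX_i,\CMM_i)$. After a common refinement of the models, we may use the same $\CX_i$ for both. Choose $\epsilon_i\to 0$ with $-\epsilon_i\CEE_0\le \CLL_{i'}-\CLL_i\le \epsilon_i\CEE_0$, and similarly for $\CMM_i$. By Theorem \ref{limit}(2), $\wh\vol(\OL)=\lim_i \wh\vol(\CX_i,\CLL_i)$, and likewise for any integral combination such as $\OL+t\OM$, since it is represented by $\CLL_i+t\CMM_i$. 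For hermitian line bundles, properties (1)--(5) are known: (1) and (4) are due to Moriwaki and Chen, (2) to Yuan via Okounkov bodies, (3) to Chen and Yuan, and (5) to Chen's differentiability theorem, with the nef case given by the arithmetic Hilbert--Samuel formula together with Siu's inequality.

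Parts (1), (2), (3) and (4) are handled by transferring the model statements through the limit.

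\emph{Part (1).} Pulling back the representing sequence along a birational $\psi$ to models $\CX_i'$ of $X'$ gives a representing sequence of $\psi^*\OL$, and the model volumes are equal.

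\emph{Part (2).} Effectivity of $\OL$ gives a nonzero section $s$ with $\|s\|\le1$. We may replace $\CLL_i$ by $\CLL_i+\epsilon_i\CEE_0$ (pulled back), which changes volumes by $o(1)$ by continuity and makes the model bundles effective. Then we apply model log-concavity and take the limit.

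\emph{Part (3).} Apply model Fujita approximation to $\CLL_i+\epsilon_i\CEE_0$ for $i$ large enough that its volume is within $\epsilon/2$ of $\wh\vol(\OL)$. Note that $\OL-(\CLL_i-\epsilon_i\CEE_0)$ is effective as an adelic line bundle. Take instead $\ol A$ from the approximation of $\CLL_i-\epsilon_i\CEE_0$, whose volume still converges to $\wh\vol(\OL)$. A nef model hermitian bundle is nef adelic, so this $\ol A$ works.

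\emph{Part (4).} Continuity follows from a uniform estimate rather than a mere pointwise limit. Pick an effective model bundle $\CHH$ with $\pm\CMM_1\le \CHH$ and $\pm\CEE_0\le\CHH$. Then $\pm t\OM\le |t|(\CHH+\epsilon_1\CEE_0)$, and monotonicity of volumes under effective differences gives
$$\wh\vol(\OL-|t|\CHH')\le\wh\vol(\OL+t\OM)\le\wh\vol(\OL+|t|\CHH'),$$
where $\CHH'=\CHH+\epsilon_1\CEE_0$. Continuity is thereby reduced to continuity along an effective direction. That case in turn reduces to model continuity with an error bounded uniformly in $i$, for which one can use the standard model estimate $|\wh\vol(\CLL+t\CHH)-\wh\vol(\CLL)|\le C|t|$. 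Here $C$ depends only on upper bounds of $\CLL$ by a fixed ample bundle, and these bounds are uniform in $i$ because the sequence is Cauchy.

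\emph{Part (5)} is the main obstacle, because derivatives do not commute with limits. The plan is to deduce it from the model case through the concavity-type inequalities that underlie differentiability. For big and nef $\OL$, first take $\OM$ nef, or more generally a difference of nef bundles; by (4) and linearity of intersection numbers, this suffices once $\OM$ is written as $\OM_1-\OM_2$ with both nef. For $t>0$, apply Theorem \ref{bigness} to $\OL+t\OM_1$ and $t\OM_2$, and apply Theorem \ref{HS}, to get
$$\wh\vol(\OL+t\OM)\ge (\OL+t\OM_1)^d-dt(\OL+t\OM_1)^{d-1}\OM_2=\OL^d+dt\,\OL^{d-1}\OM+O(t^2).$$
The upper bound comes from the same argument applied at the point $\OL+t\OM$ in direction $-\OM$. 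To make that legitimate when $\OL+t\OM$ is not nef, replace the base point by its Fujita approximation from (3). The difference between the two sides is then $O(t^2)$ plus an error controlled by (3). Together these give the derivative $d\,\OL^{d-1}\OM$. For a general big $\OL$, existence of the derivative would be obtained by passing through Fujita approximation and the positive intersection product, proving that the one-sided difference quotients for the models $\CLL_i$ converge uniformly in $i$. This uniformity is exactly the step I expect to need the most care. I would try to extract it from the uniform $O(t^2)$ constants in the Siu-type bounds, which depend only on intersection numbers that converge in $i$.
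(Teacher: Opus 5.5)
Your treatment of (1)--(4) follows the paper's route: reduce to projective models via Theorem \ref{limit}, using that $\{\CLL_i\pm\epsilon_i\CEE_0\}_i$ are again Cauchy sequences representing $\OL$, then quote the hermitian results. In (4) you do not need the unproved uniform Lipschitz estimate. From $\OL+t\OM\le\CLL_i+\epsilon_i\CEE_0+|t|\CHH'$ and continuity on the fixed model $\CX_i$ you get $\limsup_{t\to0}\wh\vol(\OL+t\OM)\le\wh\vol(\CLL_i+\epsilon_i\CEE_0)$; then let $i\to\infty$, and argue symmetrically for the $\liminf$.

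The genuine gap is (5). In the nef case, your upper bound applies Siu at a Fujita approximation $\ol A_t$ of $\OL+t\OM$ in the direction $-\OM$. Written out, this gives
\[
\wh\vol(\OL+t\OM)\le\wh\vol(\OL)+dt\,\ol A_t^{d-1}\cdot\psi^*\OM+\delta+O(t^2).
\]
So the first-order coefficient is $\ol A_t^{d-1}\cdot\psi^*\OM$, not $\OL^{d-1}\cdot\OM$. Part (3) controls only $\wh\vol(\ol A_t)$, not mixed intersection numbers of $\ol A_t$, so it does not make this discrepancy $o(1)$. You also do not show that the $O(t^2)$ constant, which depends on $\ol A_t$, is uniform. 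Comparing $\ol A_t^{d-1}\cdot\OM$ with the limit is exactly the Khovanskii--Teissier-type input in Boucksom--Favre--Jonsson and Chen.

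For nef big $\OL$ you can bypass it with (2):
\begin{itemize}
\item Your Siu lower bound also holds for $t<0$, by writing $\OL+t\OM=(\OL+|t|\OM_2)-|t|\OM_1$, as in the proof of Lemma \ref{volume asymptotic}.
\item Hence $f(t)=\wh\vol(\OL+t\OM)^{1/d}$ dominates $g(t)=(\OL^d+dt\,\OL^{d-1}\OM-Ct^2)^{1/d}$, with $f(0)=g(0)$ by Theorem \ref{HS}.
\item By (4) the bundles $\OL+t\OM$ are big, hence $\QQ$-effective, for small rational $t$. So $f$ is concave there by (2) and homogeneity.
\item Concavity gives $g'(0)\le f'_+(0)\le f'_-(0)\le g'(0)$, which yields the formula.
\end{itemize}

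For a general big $\OL$ you give no argument, and uniform $O(t^2)$ constants in Siu-type bounds do not by themselves suffice. Those bounds are one-sided and available only at nef base points. At a non-nef $\OL$ their first-order terms involve Fujita approximations, and concavity alone yields one-sided derivatives but not their equality. What is missing is:
\begin{itemize}
\item a positive intersection product $\langle\OL^{d-1}\rangle\cdot\OM$ for adelic line bundles;
\item its continuity on the big cone;
\item an estimate forcing $\ol A^{d-1}\cdot\OM\to\langle\OL^{d-1}\rangle\cdot\OM$ as $\wh\vol(\ol A)\to\wh\vol(\OL)$ along Fujita approximations.
\end{itemize}
This is the content of Chen's theorem, which the paper invokes for hermitian line bundles and transfers through Theorem \ref{limit}. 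The uniform-in-$i$ convergence of model difference quotients, which you leave as a hope, is precisely what would have to be proved. As it stands, existence of the derivative at a non-nef big $\OL$ is unproved in your proposal.
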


\begin{proof}
All these results can be reduced to the projective case by Theorem \ref{limit}. 
Then the results for the case that $k$ is a field are just Theorem \ref{geo volume}. 
Now we explain the corresponding results for hermitian line bundles on arithmetic varieties. 
Parts (1) and (4) for hermitian line bundles are due to Moriwaki \cite[Thm. 4.3, Thm. B]{Mo3}. 
Part (2) for hermitian line bundles is due to Yuan \cite[Thm. B]{Yu2}. 
Part (3) for hermitian line bundles {was proved independently by Chen \cite{Che10} and Yuan \cite{Yu2}}. 
Part (5) for hermitian line bundles is due to Chen \cite{Che11}. 
\end{proof}

\section{Height functions}
\label{sec height}

In this section, we study height functions {in terms of both classical terminology and adelic line bundles}. The key results of this section are the fundamental inequalities, which will be useful for our {applications} later. We will also see that these fundamental inequalities become {very powerful when combined} with positivity results.

\subsection{Classical height functions}

Let us briefly recall the notion of Weil heights on projective varieties, canonical heights on polarized algebraic dynamical systems, 
 and N\'eron--Tate heights on abelian varieties and curves. 
 The basic references are \cite{Ser, HS, BG06, Yua12, YG}. 

Let $K$ be a number field throughout this subsection. 
Denote by $M_K$ the set of places of $K$. We normalize the absolute value $|\cdot|_v$ for every $v\in M_K$ as in \S\ref{sec notation}.

\subsubsection{Weil heights on projective varieties}

Let ${\mathbb P}^n$ be the projective space of dimension $n$ over $K$.
The \emph{standard height function $h: {\mathbb P}^n(\overline K)\to {\mathbb{R}}$} is defined to be
$$h(x_0, x_1, \cdots, x_n)=\frac{1}{[K':{\mathbb Q}]}\sum_{w\in M_{K'}} \epsilon_w \log \max\{|x_0|_w, |x_1|_w,\cdots, |x_n|_w \},$$
where $K'$ is a finite extension of $K$ containing all the coordinates $x_i$. It is independent of the choice of the homogeneous {coordinates} by the product formula.

Let $X$ be a projective variety over $K$, and let $L$ be an ample line bundle on $X$. 
Let $i: X\to {\mathbb P}^n$ be any morphism such that $i^*\mathcal O(1)\cong L^{\otimes e}$ for some positive integer $e\geq 1$. 
We obtain a height function 
$$h_{L,i}= \frac1e h\circ i : X(\overline K)\longrightarrow {\mathbb{R}}$$ 
as the composition of 
$i: X(\overline K)\to {\mathbb P}^n(\overline K)$ and $\frac1e h: {\mathbb P}^n(\overline K)\to {\mathbb{R}}$. 
It depends on the choices of {$e$ and $i$}.

More generally, let $L$ be any line bundle on $X$. We can always write $L= A_1\otimes A_2^{\otimes(-1)}$ for two ample line bundles $A_1$ and $A_2$ on $X$. For $k=1,2$, let $i_k: X\to {\mathbb P}^{n_k}$ be {morphisms} such that $i_k^*\mathcal O(1)\cong A_k^{\otimes e_k}$ for some positive integer $e_k$. We obtain a height function 
$$h_{L,i_1,i_2}= h_{A_1,i_1}-h_{A_2,i_2}: X(\overline K)\longrightarrow {\mathbb{R}}.$$ 
It depends on the choices of $(A_1, A_2, i_1,i_2)$.
However, the following result asserts that it is unique up to bounded functions.

\begin{thm}[Weil's height machine] \label{Weil}
The above construction $L\mapsto h_{L,i_1,i_2}$ gives a group homomorphism
$$
{\mathcal H}: \mathrm{Pic}(X) \longrightarrow  
\frac{\{ \mathrm{functions\ }\phi: X(\overline K)\to {\mathbb{R}} \}}
{ \{ \mathrm{bounded\ functions\ }\phi: X(\overline K)\to {\mathbb{R}} \}}.
$$
\end{thm}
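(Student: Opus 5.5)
The proof reduces everything to one key estimate: two embeddings inducing the same line bundle give heights differing by a bounded function. I would first establish this for an ample $L$, and then extend it to arbitrary $L$ by additivity.

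\emph{Step 1 (ample case, same linear system).} Let $i,i':X\to\PP^n,\PP^{n'}$ satisfy $i^*\CO(1)\cong L^{\otimes e}$ and $i'^*\CO(1)\cong L^{\otimes e'}$. Replacing $i$ by its composition with the $e'$-th Veronese map and $i'$ by its composition with the $e$-th Veronese map, I may assume $e=e'$. This uses the identity $h(\nu_m(x))=m\,h(x)$ for the Veronese map $\nu_m$, which is immediate from the definition because the monomials of degree $m$ include the pure powers $x_j^m$. The two tuples $(s_0,\dots,s_n)=(i^*x_j)$ and $(t_0,\dots,t_{n'})=(i'^*y_k)$ are then global sections of $M=L^{\otimes e}$, each without common zeros.

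By the Nullstellensatz, applied to the homogeneous ideal generated by the $s_j$ in a projective coordinate ring of $X$, there exist an integer $r$ and sections $a_{kj}\in H^0(X,M^{\otimes(r-1)})$ with $t_k^r=\sum_j a_{kj}s_j$ for all $k$. Write each $a_{kj}$ as a polynomial in the $s_j$ with $K$-coefficients, using projective normality after passing to a high power if needed. For a point $x$ with coordinates in $K'$, the triangle inequality at each place $w$ then gives
$$\log\max_k|t_k(x)|_w^r\le r\log\max_j|s_j(x)|_w + c_w,$$
where the constants satisfy $c_w=0$ for all but finitely many $w$ and come only from the coefficients. Summing over places with the weights $\epsilon_w/[K':\QQ]$ and using the product formula, I get $r\,h(i'(x))\le r\,h(i(x))+C$. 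By symmetry the reverse inequality also holds, so $h_{L,i}-h_{L,i'}$ is bounded.

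\emph{Step 2 (general $L$).} Heights attached to ample bundles are additive up to bounded functions: the Segre embedding satisfies $h(\mathrm{Seg}(x,y))=h(x)+h(y)$ exactly, because $\max|x_ay_b|_w=\max|x_a|_w\max|y_b|_w$. Combined with Step 1, this gives $h_{A\otimes B}=h_A+h_B+O(1)$ for ample $A,B$. Now suppose $A_1-A_2=A_1'-A_2'$, so that $A_1+A_2'\cong A_1'+A_2$. Additivity then yields $h_{A_1}-h_{A_2}=h_{A_1'}-h_{A_2'}+O(1)$. Hence $\mathcal H$ is well defined, and the same additivity shows it is a homomorphism. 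Heights depend only on the isomorphism class of the bundle, since the sections pull back through any isomorphism.

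\emph{Main obstacle.} The key step is the effective Nullstellensatz comparison in Step 1. It is also the place where one must check that the resulting constants $c_w$ vanish at almost all places, so that they sum to a finite total.
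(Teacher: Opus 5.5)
The paper gives no proof of this statement. It is quoted as classical, with references at the start of the subsection. Your outline is the standard argument: compare two projective realizations of the same bundle via the Nullstellensatz, then get additivity from the Segre map. As written, however, Step~1 has a gap.

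In the statement, $i$ is \emph{any} morphism with $i^*\CO(1)\cong L^{\otimes e}$. Ampleness of $L$ makes $i$ finite, but not necessarily a closed embedding. Your bound $|a_{kj}(x)|_w\le C_w\max_j|s_j(x)|_w^{r-1}$ depends on writing $a_{kj}\in H^0(X,M^{\otimes(r-1)})$ as a polynomial in the $s_j$, which you justify by projective normality in high degree.
\begin{itemize}
\item That justification is Serre vanishing for the ideal sheaf of $i(X)$. It yields all of $H^0(i(X),\CO_{i(X)}(m))$, and this equals $H^0(X,M^{\otimes m})$ only when $X\to i(X)$ is an isomorphism.
\item In general the ring generated by the $s_j$ is the homogeneous coordinate ring of $i(X)$. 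If it agreed with $\bigoplus_m H^0(X,M^{\otimes m})$ in all large degrees, taking $\mathrm{Proj}$ would give $X\cong i(X)$.
\item For example, take $X=\PP^1$, $L=\CO(1)$ and $i[u:v]=[u^3:v^3]$. The degree-$m$ monomials in $u^3,v^3$ span only $m+1$ of the $3m+1$ dimensions of $H^0(\PP^1,\CO(3m))$.
\end{itemize}
The ``by symmetry'' step needs the same property for $i'$. In Step~2, the Segre composite is an embedding only if its factors are. So you have only shown independence among heights defined by closed embeddings. That is weaker than well-definedness of $h_{L,i_1,i_2}$ for arbitrary $i_1,i_2$.

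The repair is small. Fix one closed embedding $i'$ given by a power of $L$, and compare every morphism $i$ with it, expanding the coefficients in the coordinates of the \emph{embedding} rather than those of $i$.
\begin{itemize}
\item After the Veronese adjustment you have $t_k^r=\sum_j a_{kj}s_j$. For $r\gg0$, the space $H^0(X,M^{\otimes(r-1)})$ is spanned by monomials in the $t_k$, so $|a_{kj}(x)|_w\le C_w\max_k|t_k(x)|_w^{r-1}$.
\item Hence $\max_k|t_k(x)|_w^{r}\le C_w\max_k|t_k(x)|_w^{r-1}\max_j|s_j(x)|_w$. Dividing gives $h(i'(x))\le h(i(x))+O(1)$. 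This is the device used by Hindry--Silverman.
\item The reverse inequality needs no Nullstellensatz. For $m\gg0$, $s_j^m$ is a polynomial in the $t_k$, so $|s_j(x)|_w\le C'_w\max_k|t_k(x)|_w$, with $C'_w=1$ for almost all $w$.
\end{itemize}
Alternatively, since $i$ is finite, the $a_{kj}$ are integral over the ring generated by the $s_j$, and a homogeneous monic equation recovers your original bound. Once every morphism is compared to a fixed embedding, your Step~2 goes through unchanged.
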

A function $h_L: X(\overline K)\to {\mathbb{R}} $ in the class ${\mathcal H}(L)$ in the theorem is called a \emph{Weil height function} associated to $L$. 

The most important property of Weil heights is the following Northcott property. 

\begin{thm}[Northcott property] \label{Northcott}
Let $X$ be a projective variety over a number field $K$. Let  
$h_L: X(\overline K)\to {\mathbb{R}} $ be a Weil height function 
associated to an ample line bundle $L$ on $X$. 
Then for any real {numbers} $C_1$ and $C_2$, the set
$$
\{x\in X(\overline K): \deg(x)<C_1, \ h_L(x)<C_2\}
$$
is finite. 
\end{thm}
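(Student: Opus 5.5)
The plan is to reduce to the case of projective space and then count points directly. First, choose a morphism $i: X\to \BP^n$ with $i^*\CO(1)\cong L^{\otimes e}$ for some $e\geq 1$; this exists because $L$ is ample. By Theorem \ref{Weil}, any two Weil heights associated to $L$ differ by a bounded function. So $h_L$ and $\frac1e h\circ i$ differ by at most a constant $C$, and it suffices to prove finiteness for the standard height $h$ on $\BP^n$. Here we use that $i$ is finite (it is quasi-finite, being defined by an ample power), so each fiber of $i$ on $\overline K$-points is finite. We also use that $\deg(i(x))\le \deg(x)$. The set in question therefore maps with finite fibers into
$$\{y\in \BP^n(\overline K): \deg(y)<C_1,\ h(y)<e(C_2+C)\}.$$

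Next, reduce to $\BP^1$, or more precisely to bounding coordinates. For $y=(1:y_1:\cdots:y_n)$ (after choosing an affine chart; there are finitely many charts), we have $h(y)\geq h(y_j)$ for each coordinate, where $h(y_j)$ is the height of $(1:y_j)\in\BP^1$. If each $y_j$ lies in a finite set, then so does $y$. It therefore suffices to show that the set of $\alpha\in\overline K$ with $\deg(\alpha)<C_1$ and $h(\alpha)<C_2'$ is finite. Here $\deg$ is the degree over $K$; replacing $K$ by $\QQ$ only multiplies degrees by $[K:\QQ]$, and the height is independent of the ground field by its normalization.

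The key step, and the main (though classical) obstacle, is this one-variable count. Fix $\alpha$ of degree $m$ over $\QQ$ and let $F=\QQ(\alpha)$. Let $\alpha_1,\dots,\alpha_m$ be its conjugates and $f(T)=a_m\prod(T-\alpha_k)\in\ZZ[T]$ its primitive minimal polynomial. The plan is to show that the coefficients of $f$ are bounded in terms of $h(\alpha)$ and $m$. Using the product formula and Gauss's lemma, the Mahler measure satisfies
$$\log\Big(|a_m|\prod_k\max(1,|\alpha_k|)\Big)=m\,h(\alpha).$$
Indeed, the non-archimedean places contribute exactly $\log|a_m|$. The elementary symmetric function bound then gives that each coefficient of $f$ is at most $\binom{m}{k}e^{m h(\alpha)}$ in absolute value. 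So there are finitely many integer polynomials of degree $m<[K:\QQ]C_1$ with such bounded coefficients, and hence finitely many roots. This proves finiteness, and pulling back along the finite-fiber map $i$ concludes the proof.
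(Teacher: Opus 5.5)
Your proposal is correct and takes the same route the paper indicates: use a projective embedding and Weil's height machine to reduce to the standard height on $\BP^n$, then prove finiteness there by hand. Your coordinate reduction to $\BP^1$, together with the Mahler-measure/Gauss-lemma bound on the coefficients of the primitive minimal polynomial, is a valid way to supply the explicit argument the paper leaves unstated.
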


Here $\deg(x)$ denotes the degree of the residue field (or coefficient field) $K(x)$ of $x$ over $K$. 
The theorem is easily reduced to the standard height function on projective spaces, for which the property can be obtained explicitly.

\subsubsection{Canonical heights in algebraic dynamics}
\label{sec canonical height}

Let $S$ be a quasi-projective variety over a number field
$K$. Let $(X,f,L,q)$ be a \emph{polarized algebraic dynamical system over} 
 $S$  as in \S\ref{sec adelic dynamics}. 

Let us first assume $S=\Spec K$.  
Let $h_L: X(\overline K)\to \RR $ be any Weil height corresponding to $L$.
The \emph{canonical height function} (or the \emph{Call--Silverman height function})
$$\hat h_{L}: X(\overline K)\lra \RR $$
 is defined by Tate's limit argument
$$\hat h_{L}(x) =\lim_{n\rightarrow \infty} \frac{1}{q^n}h_{L}(f^n(x)).$$
The limit $\hat h_{L}(x)$ always exists and is independent of the choice of the Weil height $h_L$. 
It has the following basic properties.
\begin{enumerate}
\item [(a)] $\hat h_{L}(f(x))=q\hat h_{L}(x)$ for any $x\in X(\overline K)$,
\item [(b)] $\hat h_{L}(x)\geq 0$ for any $x\in X(\overline K)$, and the equality holds if and only if $x$ is preperiodic {in the sense that} $f^m(x)=f^n(x)$ for some $m>n\geq 0$.
\end{enumerate}
The function $\hat h_{L}: X(\overline K)\to \RR$ is the unique Weil height corresponding to $L$ and satisfying (a), and thus is independent of the choice of $h_L$.
We refer to \cite{Yua12, YG} for more details. 

Now let $S$ be a general quasi-projective variety over $K$.
For any closed point $s\in S$, the fiber $(X_s, f_s, L_s,q)$ is a dynamical system over $s=\Spec K(s)$. 
This gives a canonical height function
$\hat h_{L_s}: X_s(\overline{K(s)})\to \RR$. 
Varying $s$, we obtain a canonical height function
$$\hat h_{L}: X(\overline{K})\lra \RR,$$
which still satisfies  properties (a) and (b). 

Alternatively, we can start with a projective model $(X',L')$ of $(X,L)$ over $K$, choose a Weil height function $h_{L'}:X'(\ol K)\to \RR$ associated to $L'$, and consider the restriction $h_{L}':X(\ol K)\to \RR$ of $h_{L'}$ to $X(\ol K)$. Then Tate's limit argument transfers 
$h_{L}':X(\ol K)\to \RR$ to the canonical height $\hat h_{L}:X(\ol K)\to \RR$.

\subsubsection{N\'eron--Tate heights on abelian varieties}

Let $A$ be an abelian variety over a number field $K$. Denote by $[m]:A\to A$ the multiplication by an integer $m$. 
Let $L$ be a symmetric and ample line bundle on $A$. 
Here $L$ is called \emph{symmetric} if $[-1]^*L\cong L$, which implies $[m]^*L\cong L^{\otimes (m^2)}$ for all integers $m$. 
Then $(A,[2], L,4)$ forms a dynamical system over $K$. 
The  \emph{canonical height function}
$$\hat h_{L}: A(\overline K)\longrightarrow {\mathbb{R}}$$
is also called the \emph{N\'eron--Tate height function}. 
We further have the following quadraticity and positivity properties.
\begin{thm}[quadraticity]
\begin{enumerate}[(1)]
\item The height $\hat h_L(x)\geq 0$ for any $x\in A(\overline K)$, and the equality holds if and only if $x$ is torsion.
\item
The function $\hat{h}_L: A(\overline{K})\to{\mathbb{R}}$ is {quadratic} in the sense that
it satisfies the parallelogram rule 
$$
\hat{h}_L(x+y)+\hat{h}_L(x-y)=2\left(\hat{h}_L(x)+\hat{h}_L(y)\right),\quad \forall x,y\in A(\overline{K}).
$$
Moreover, we have
$$
\hat{h}_L(mx)=m^2\hat{h}_L(x),\quad \forall m\in{\mathbb Z},\quad \forall x\in A(\overline{K}).
$$
\item
The quadratic form 
$$\hat{h}_L: A(\overline{K})_{\mathbb{R}} \lra {\mathbb{R}}$$
on the real vector space $A(\overline{K})_{\mathbb{R}}=A(\overline{K})\otimes_\ZZ{\mathbb{R}}$, induced by the height function $\hat{h}_L: A(\overline{K})\to{\mathbb{R}}$ via the quadraticity, is positive definite.
\end{enumerate}
\end{thm}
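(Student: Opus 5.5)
We need to prove the quadraticity theorem for Néron–Tate heights: (1) nonnegativity with equality exactly at torsion points, (2) the parallelogram law and $\hat h_L(mx)=m^2\hat h_L(x)$, and (3) positive definiteness on $A(\overline K)_\RR$. The strategy is to reduce everything to Weil's height machine (Theorem \ref{Weil}) plus Tate's limit, and then use Northcott (Theorem \ref{Northcott}) for the finiteness inputs.

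\emph{Part (2).} Start from the theorem of the cube, in the form of the Mumford relation on $A\times A$:
$$
\mu^*L\otimes \delta^*L\cong p_1^*L^{\otimes 2}\otimes p_2^*L^{\otimes 2},
$$
where $\mu,\delta,p_1,p_2:A\times A\to A$ are the sum, the difference and the projections; this holds because $L$ is symmetric. Applying Theorem \ref{Weil} on $A\times A$, together with functoriality of Weil heights under morphisms (which follows from the construction via projective embeddings), gives
$$
h_L(x+y)+h_L(x-y)=2h_L(x)+2h_L(y)+O(1).
$$
Now replace $(x,y)$ by $(2^nx,2^ny)$, divide by $4^n$ and let $n\to\infty$. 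This turns the relation into the exact parallelogram law for $\hat h_L$. The identity $\hat h_L(mx)=m^2\hat h_L(x)$ then follows formally by induction on $m$, using the parallelogram law with $y=x$ and symmetry $\hat h_L(-x)=\hat h_L(x)$. Symmetry in turn comes from $[-1]^*L\cong L$ and uniqueness: $\hat h_L\circ[-1]$ is again a Weil height for $L$ satisfying property (a).

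\emph{Part (1).} Since $L$ is ample, we may choose $h_L\ge 0$ (for instance $\frac1e h\circ i$). Tate's limit then gives $\hat h_L\ge0$. If $x$ is torsion, then $\hat h_L(x)=0$ by property (b), or directly from $m^2\hat h_L(x)=\hat h_L(mx)=\hat h_L(0)$ for $mx=0$, together with $\hat h_L(0)=4\hat h_L(0)$.

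Conversely, suppose $\hat h_L(x)=0$. Then every multiple $mx$ lies in $A(K(x))$ with bounded degree, and $h_L(mx)\le \hat h_L(mx)+C=C$. By Theorem \ref{Northcott} the set $\{mx\}$ is finite, so $x$ is torsion. This is just property (b) specialized to $f=[2]$.

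\emph{Part (3).} By (2), $\hat h_L$ extends to a quadratic form on $A(\overline K)_\RR$, with associated bilinear pairing $\langle x,y\rangle=\frac12(\hat h_L(x+y)-\hat h_L(x)-\hat h_L(y))$. Positive semi-definiteness is immediate from (1). The real content is definiteness, and this is the main obstacle. An element $v\ne0$ of $A(\overline K)_\RR$ lies in $\Gamma\otimes\RR$ for some finitely generated subgroup $\Gamma\subset A(K')$, with $K'$ a finite extension of $K$.

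The plan is to show that $\hat h_L$ is positive definite on $\Gamma\otimes\RR$, using the standard lattice lemma. Let $V=\Gamma\otimes\RR$ and let $\bar\Gamma$ be the image of $\Gamma$ in $V$, which is a lattice since $\Gamma$ modulo torsion is free. A quadratic form $q$ on $V$ such that every ball $\{q\le C\}$ contains only finitely many points of $\bar\Gamma$ must be positive definite. The finiteness hypothesis holds by Northcott applied over the fixed field $K'$, together with the finiteness of the torsion of $\Gamma$.

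It remains to prove the lemma. If $q$ were not definite, there would be a nonzero $v$ with $q(v)=0$ (using semi-definiteness). By Minkowski/Dirichlet approximation, there are infinitely many lattice points of $\bar\Gamma$ within bounded Euclidean distance of the line $\RR v$. On that neighbourhood $q$ is bounded, since $q(tv+w)=q(w)$ when $v$ lies in the radical of a semi-definite form. This yields infinitely many lattice points with bounded $q$, which is a contradiction.
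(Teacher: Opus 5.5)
Your proof is correct and is the standard argument; the paper gives no proof of this statement and only points to \cite{Ser, HS, BG06}. The structure is the usual one: the theorem of the cube with Tate's limit for (2), Northcott for the torsion characterization in (1), and the lattice lemma on an arbitrary finitely generated $\Gamma\subset A(K')$ for (3), which has the advantage of not needing Mordell--Weil.

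Three remarks, all cosmetic:
\begin{enumerate}[(a)]
\item The symmetry $\hat h_L(-x)=\hat h_L(x)$ already follows from the parallelogram law with $x=0$.
\item Semi-definiteness on $\Gamma\otimes\RR$ follows from (1) via density of $\Gamma\otimes\QQ$ and continuity.
\item The ``Minkowski/Dirichlet'' step needs nothing more than a bounded fundamental domain for $\bar\Gamma$.
\end{enumerate}
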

Similar results hold for abelian schemes over quasi-projective varieties over $K$, but we omit the statements here.

Define the N\'eron--Tate height pairing
$$\langle{x,y}\rangle: A(\overline{K})_{\mathbb{R}}\times A(\overline{K})_{\mathbb{R}}\longrightarrow {\mathbb{R}}$$
by 
$$ \langle{x,y}\rangle= \frac12 \left(\hat{h}(x+y)-\hat{h}(x)-\hat{h}(y)\right), \quad x, y\in A({\overline K})_{\mathbb{R}}.$$
The associated norm of the pairing is given by 
$$|x| := \sqrt{\hat{h}(x)}, \quad x\in A({\overline K})_{\mathbb{R}}.$$
The angle $\angle(x,y)$ between $x,y\in A({\overline K})_{\mathbb{R}}$ is defined as
$$\angle(x,y):=\arccos\left(\frac{\langle{x,y}\rangle}{|x|\cdot|y|}\right).$$
The left-hand side is set to be 0 if $x=0$ or $y=0$.

\subsubsection{N\'eron--Tate heights on curves}
\label{sec NT curve}

Let $C$ be a geometrically integral smooth projective {curve} of genus $g\geq 2$ over a number field $K$. Denote by $J$ the Jacobian variety of $C$ over $K$. 
As the multiplication
$$[2g-2]\colon J({{\overline K}})\longrightarrow J({{\overline K}})$$
is surjective, there is a line bundle $\alpha_0$ on $C_{{\overline K}}$ such that $(2g-2)\alpha_0$ is isomorphic to the canonical sheaf $\omega$. 
Replacing $K$ by a finite extension if necessary, we can assume that $\alpha_0$ is actually a line bundle on $C$. 
This process does not change our definition of heights, {and neither does} the choice of $\alpha_0$.

Consider the Abel--Jacobi embedding
$$
i_{\alpha_0}: C\longrightarrow J, \quad x\longmapsto x-\alpha_0.
$$
Recall that the theta divisor on $J$ is given by
$$
\theta_{\alpha_0}= \underbrace{i_{\alpha_0}(C)+\dots + i_{\alpha_0}(C)}_{g-1 \text{ copies}}.$$
It is well-known that $\theta_{\alpha_0}$ is ample and gives a principal polarization of $J$. 
By \cite[p. 74, eq. (1)]{Ser}, $\theta_{\alpha_0}$ is symmetric in the sense that $[-1]^*\theta_{\alpha_0}$ is linearly equivalent to $\theta_{\alpha_0}$. It defines a 
\emph{N\'eron--Tate height function} 
$$\hat{h}=\hat h_{\theta_{\alpha_0}}:
J({\overline K})_{\mathbb{R}}\lra\RR.$$

We apply $\hat h(\cdot)$, $|\cdot|$, $\langle{\cdot,\cdot}\rangle$ and $\angle(\cdot,\cdot)$ to $C({\overline K})$ via the embedding $i_{\alpha_0}: C\to J$. 
For example, for $x\in C({\overline K})$, we have a 
\emph{N\'eron--Tate height function} 
$$\hat{h}:
C({\overline K})\lra\RR$$
given by 
$$\hat h(x)=\hat h(i_{\alpha_0}(x))=\hat h(x-\alpha_0)=\hat h_{\theta_{\alpha_0}}(x-\alpha_0).$$

\subsection{Heights by adelic line bundles}

In this subsection, we introduce heights defined by adelic line bundles, compare them with the classical heights, and then introduce fundamental inequalities of heights.

\subsubsection{Heights by adelic line bundles}

Let $X$ be a quasi-projective variety over a number field $K$.
Let $\overline L$ be an adelic line bundle on $X$.
Define \emph{the height function $h_{\overline L}: X(\overline K)\to \RR$ associated to} $\overline L$ by
$$
h_{\overline L}(x)=h_{\overline L}(x')= \frac{1}{\deg(x)} \overline L \cdot x', \quad x\in X(\overline K).
$$ 
Here $x'$ is the closed point of $X$ corresponding to the algebraic point $x$. 

If $\overline L$ is integrable, \emph{the height of any closed subvariety $Y$ of $X_{\overline K}$ associated to $\overline L$} is defined by
$$
h_{\overline L}(Y)= h_{\overline L}(Y')
= \frac{\overline L^{\dim Y+1} \cdot Y'}{(\dim Y+1)(\wt L^{\dim Y}\cdot Y')}.
$$
Here $Y'$ is the closed $K$-subvariety of $X$ corresponding to $Y$, i.e. the image of the composition $Y\to X_{\overline K}\to X$.
The geometric part $\wt L$ of $\OL$ is the image of $\OL$ under the functorial map $\wh\Pic(X/\ZZ)\to \wt\Pic(X/K)$.   
The definition is meaningful only if the denominator is nonzero.

In particular, the height of the ambient variety $X$ is 
$$
h_{\overline L}(X)= \frac{\overline L^{\dim X+1} }{(\dim X+1)\, \wt L^{\dim X}}.
$$
Roughly speaking, heights are just the arithmetic degrees divided by the geometric degrees.

In terms of rational sections of $L$, we can write the height of a point 
$x\in X(\ol K)$ as a sum of ``local heights''. 
For this purpose, we introduce some extra notation. 
Denote by $x'\in X$ the closed point corresponding to $x$ as above. 
For any place $v$ of $K$, the base change gives an injection 
$x'\times_{K}K_v \to X_{K_v}$, whose image is a finite set of closed points of $X_{K_v}$. Via analytification, this finite set corresponds to a finite set $O_v(x)$ of classical points of $X_v^\an$, which is called the 
\emph{Galois orbit} of $x$ in $X_v^\an$. 
For each $z\in O_v(x)$, the \emph{degree} $\deg(z)$ denotes the degree of 
$z$ as a closed point of $X_{K_v}$ over $K_v$. 

\begin{lemma} \label{local sum}
For any $x\in X(\overline K)$, we have
$$
h_{\overline L}(x)=-\frac 1{\deg(x)}\sum_{v\in M_K} \sum_{z\in O_v(x)}
\epsilon_v  \log
\|s(z)\|_{v}^{\deg(z)}.
$$
Here $s$ is any rational section of $L$ regular and non-vanishing at $x$.
\end{lemma}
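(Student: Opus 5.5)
The plan is to reduce the statement to Lemma \ref{iccm2010:degree2} by pulling $\OL$ back to the closed point $x'$.

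\emph{Step 1: pull back to the residue field.} Let $x'\in X$ be the closed point corresponding to $x$, with residue field $K(x')$, and write $i:x'=\Spec K(x')\to X$ for the closed immersion. By the definition of $h_{\OL}$, together with the compatibility of the intersection pairing with pull-back, we have $\OL\cdot x'=\wh\deg(i^*\OL)$, where $i^*\OL$ is an adelic line bundle on $\Spec K(x')$. Here we view $\Spec K(x')$ as a quasi-projective variety over $K$. Since the intersection numbers are defined as limits over models and $\wh\deg$ is computed by the arithmetic degree on $\Spec O_{K(x')}$, this reduces to the corresponding identity for restrictions of model hermitian line bundles to the closure of $x'$. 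That identity is the standard projection formula on models, and we then pass to the limit.

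\emph{Step 2: apply Lemma \ref{iccm2010:degree2}.} Apply the lemma to $i^*\OL$ over the number field $F=K(x')$, using the nonzero element $s(x)\in L(x')$. This gives
$$\wh\deg(i^*\OL)=-\sum_{w\in M_F}\epsilon_w\log\|s(x)\|_w .$$
The analytification is compatible with pull-back, so $\|\cdot\|_w$ is the metric of $\OL^\an$ evaluated at the point of $X^\ran$ determined by $w$.

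\emph{Step 3: regroup the places.} Regroup the places $w$ of $F$ according to the place $v$ of $K$ below them. We have
$$F\otimes_K K_v\cong\prod_{w|v}F_w,$$
and this decomposition identifies the places $w|v$ with the points $z\in O_v(x)$, with $\deg(z)=[F_w:K_v]$. The points of $X_v^\an$ corresponding to $w$ carry the norm $|\cdot|_w$. These are normalized as in \S\ref{sec notation} for $F$, whereas the metric $\|\cdot\|_v$ of $X_v^\an$ extends the normalized $|\cdot|_v$ on $K_v$.

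\emph{Step 4: compare normalizations.} With our normalizations, $|a|_w=|a|_{v}^{[F_w:K_v]}$ for $a\in F_w$ at finite places, where $|\cdot|_v$ is the unique extension to $F_w$. At archimedean places the factor $\epsilon_w/\epsilon_v$ plays the same role. Hence
$$\epsilon_w\log\|s(x)\|_w=\epsilon_v\log\|s(z)\|_v^{\deg(z)}.$$

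\emph{Step 5: sum up.} Summing over $w$, regrouped by $v$, and dividing by $\deg(x)$ gives the formula. Independence of the choice of $s$ follows from the product formula.

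The main obstacle is the bookkeeping of normalizations in Step 4, in particular the complex and real archimedean cases. In the real case, a complex place $w$ above a real $v$ has $\epsilon_w=2=\epsilon_v\deg(z)$. At complex places there is the additional issue of the conjugate-pair identification. Everything else is formal.
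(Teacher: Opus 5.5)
Your proposal is correct and is essentially the paper's argument. The paper treats $K$-rational points directly by Lemma \ref{iccm2010:degree2} and handles the general case by base change; your restriction to $\Spec K(x')$, followed by regrouping the places $w\mid v$ into the Galois orbit $O_v(x)$, is exactly that base change carried out.

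One point should be made explicit in Step 4. At a non-archimedean $w$, the metric at the point with norm $|\cdot|_w=|\cdot|_v^{[F_w:K_v]}$ is the $[F_w:K_v]$-th power of $\|\cdot\|_v$ at the corresponding $z\in O_v(x)$. This is clear for model metrics, whose integral lattices depend only on the valuation ring, and it follows in general by passing to the limit. It is also why that point lies in $X^\an$ but not literally in $X^\ran$, contrary to what your Step 2 says.
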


\begin{proof}
If $x\in X(K)$, this follows from Lemma \ref{iccm2010:degree2}. 
The general case is obtained by an argument of base change. 
\end{proof}

The relation between height functions associated with adelic line bundles and classical Weil heights is given by the following nice theorem.
We refer to \cite[Thm. 9.7]{Yua12} or \cite[Thm. 8.4.3]{YG} for a proof.  

\begin{theorem} \label{iccm2010:adelic weil height}
Let $X$ be a projective variety over a number field $K$. 
For any adelic line bundle $\overline L$ on $X$, the height function 
$[K:\QQ]^{-1}h_{\overline L}: X(\overline K)\to \RR$ is a Weil height function corresponding to the line bundle $L$ on $X$. 
\end{theorem}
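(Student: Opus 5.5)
The plan is to reduce to one model adelic line bundle and then compare two metrics on the same line bundle. Since $X$ is projective, write $L=A_1-A_2$ with $A_1,A_2$ ample. Both $h_{\overline L}$ and the Weil height class $\mathcal H(L)$ of Theorem~\ref{Weil} are additive in $L$. So it suffices to construct, for each $A_k$, an adelic line bundle $\overline{A}_k$ with underlying bundle $A_k$ whose height is a Weil height for $A_k$, and then show that changing the adelic structure on a fixed line bundle changes $h$ only by a bounded function.

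For the first step, take an embedding $i:X\to\PP^n$ with $i^*\CO(1)\cong A^{\otimes e}$ for a given ample $A$. Endow $\CO(1)$ on $\PP^n_{O_K}$ with the integral model $\CO_{\PP^n_{O_K}}(1)$ and, at each archimedean place, the metric $\|s\|=|s(x)|/\max_j|x_j|$; the latter is continuous and defines an adelic line bundle $\overline{\CO(1)}$. Apply Lemma~\ref{local sum} with the section $s=x_j$ nonvanishing at the point. At non-archimedean $v$ the model metric gives $\|x_j(z)\|_v=|x_j|_v/\max_l|x_l|_v$ at a point with coordinates in $\ol K_v$, and the archimedean metric gives the same formula by construction. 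Summing over $v$ and over the Galois orbits, and using the product formula to drop $\sum\log|x_j|$, yields $h_{\overline{\CO(1)}}(x)=[K:\QQ]\,h(x)$. Pulling back by $i$ and dividing by $e$ then gives an adelic structure on $A$ whose normalized height $[K:\QQ]^{-1}h$ equals $h_{A,i}$.

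For the second step, let $\overline L,\overline L'$ be adelic line bundles with the same underlying $L$. Then $\overline L-\overline L'$ is $\CO_X$ with metrics $\|1\|_v$, and Lemma~\ref{local sum} with $s=1$ gives
$$h_{\overline L}(x)-h_{\overline L'}(x)=-\frac1{\deg x}\sum_v\sum_{z\in O_v(x)}\epsilon_v\deg(z)\log\|1(z)\|_v.$$
Since $\sum_{z\in O_v(x)}\deg z=\deg x$, this is bounded by $\sum_v\epsilon_v\sup_{X_v^\an}|\log\|1\|_v|$. That sum is finite for two reasons: each $X_v^\an$ is compact and the metric is continuous, and $\|1\|_v\equiv1$ for all but finitely many $v$.

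The main obstacle is justifying this last property, namely that an adelic line bundle on projective $X$ agrees with a fixed model metric at almost all places. From the definition as a Cauchy limit, I would argue as follows. Choose a model $\CX$ over an open part of $\Spec O_K$ on which both limits are realized up to the boundary norm. Because $X$ is projective, the boundary divisor $\CEE_0$ can be taken supported on finitely many fibers together with the archimedean part. Hence every $\CDD_i-\CDD_1$ is trivial at the remaining places, and so is the limit. Equivalently, one can invoke the equivalence of Yuan--Zhang adelic bundles with Zhang's adelic metrics for projective $X$ recalled from \cite[\S3.5]{YZ}. Once this is in place, the rest is bookkeeping.
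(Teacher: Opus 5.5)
Your proposal is correct, and it is essentially the standard proof behind the references the paper cites for this statement (\cite[Thm. 9.7]{Yua12}, \cite[Thm. 8.4.3]{YG}): you compute the height of the pull-back of $\CO(1)$, with its model/$\max$ metric, exactly via Lemma~\ref{local sum} and the product formula, and you then observe that two adelic structures on the same $L$ differ by a continuous metric on $\CO_X$ that is bounded on each compact $X_v^\an$ and identically $1$ at almost all places. In the Yuan--Zhang setting you also need to justify that last point, and your justification is the right one: for projective $X$ the boundary divisor can be supported on finitely many closed fibers, so the Cauchy sequence is constant at the remaining places (equivalently, this follows from the comparison with Zhang's adelic metrics in \cite[\S3.5]{YZ}).
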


\subsubsection{Canonical heights}

Resume the setting of \S \ref{sec canonical height}. 
Namely, let $S$ be a quasi-projective variety over a number field
$K$. Let $(X,f,L,q)$ be a \emph{polarized algebraic dynamical system over} 
$S$. 
Then we have a {canonical height function} 
$$\hat h_{L}: X(\overline K)\lra \RR$$
defined by Tate's limit argument.

On the other hand, as in \S\ref{sec adelic dynamics}, we have an {$f$-invariant} adelic line bundle $\OL_f$ over $X/\ZZ$ extending $L$. 
This gives a height function 
$$h_{\OL_f}: X(\overline K)\lra \RR.$$
Now we have the following theorem. 

\begin{theorem} \label{bridge}
$\displaystyle \hat h_{L}= [K:\QQ]^{-1}h_{\overline L_f}$. 
\end{theorem}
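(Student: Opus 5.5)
The plan is to reduce to the case $S=\Spec K$, i.e. to a single fiber. Then we characterize $[K:\QQ]^{-1}h_{\OL_f}$ by the same two properties that characterize $\hat h_L$: it is a Weil height for $L$, and it transforms by $q$ under $f$.

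\textbf{Step 1: reduction to a closed point of $S$.} Let $x\in X(\ol K)$ and let $s\in S$ be the closed point under $x$. Denote by $i_s:X_s\to X$ the closed immersion of the fiber. Pull-back of adelic line bundles is functorial, so $i_s^*\OL_f$ is an adelic line bundle on $X_s$ extending $L_s$. The isomorphism $f^*\OL_f\cong q\OL_f$ pulls back to $f_s^*(i_s^*\OL_f)\cong q\, i_s^*\OL_f$. Thus $i_s^*\OL_f$ is $f_s$-invariant. By uniqueness of the invariant metric via Tate's limit in \S\ref{sec adelic dynamics}, its analytification agrees with $(\OL_s)_{f_s}$ (first replace $K$ by $K(s)$ and view $X_s$ as a projective variety over the number field $K(s)$).

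The height $h_{\OL_f}(x)$ is defined by intersecting with the closed point $x'$, and this point lies in $X_s$. So $h_{\OL_f}(x)=h_{i_s^*\OL_f}(x)$. One can check this directly from Lemma \ref{local sum}, since the local sum only sees the metrics at points of the Galois orbits, and these lie in $X_s^\an$. The normalization $[K:\QQ]^{-1}$ must be handled consistently under the passage from $K$ to $K(s)$. Since $\deg(x)$ is measured relative to $K$ on both sides, this is a bookkeeping check. Hence it suffices to treat $S=\Spec K$ with $X$ projective.

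\textbf{Step 2: the projective case.} Set $h:=[K:\QQ]^{-1}h_{\OL_f}$. By Theorem \ref{iccm2010:adelic weil height}, $h$ is a Weil height associated to $L$.

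Next, the isometry $f^*\OL_f\cong q\OL_f$ gives $h(f(x))=q\,h(x)$. To see this, apply Lemma \ref{local sum} with a section $t$ near $f(x)$. We have $\|f^*t\|=\|t\|\circ f$, and the Galois orbit of $x$ maps onto the Galois orbit of $f(x)$ with degrees multiplying correctly. Equivalently, use the projection formula for the finite map from $x'$ onto $f(x)'$.

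Therefore $h(x)=q^{-n}h(f^n(x))$ for every $n$. Since $h$ is itself an admissible choice of Weil height in Tate's limit, we get $\hat h_L(x)=\lim q^{-n}h(f^n(x))=h(x)$. Alternatively, invoke the stated uniqueness: $\hat h_L$ is the unique Weil height for $L$ satisfying property (a).

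\textbf{Main obstacle.} I expect the main obstacle to be Step 1. Concretely, one must justify that restricting the Yuan--Zhang adelic line bundle $\OL_f$ on the quasi-projective $X/\ZZ$ to a fiber gives Zhang's invariant adelic line bundle on $X_s$, with matching normalizations of places and degrees. I would handle this entirely on the analytic side. Analytification is injective and compatible with pull-back, so $i_s^*\OL_f$ is determined by its metrics on $X_s^{\ran}$. The Tate-limit characterization of $\|\cdot\|_f$ fiber by fiber over $S^\an$, stated in \S\ref{sec adelic dynamics}, then identifies these metrics with the invariant ones on $X_s$.
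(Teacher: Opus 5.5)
Your proposal is correct and follows the paper's proof. You reduce to the fiber over a closed point of $S$, then combine Theorem \ref{iccm2010:adelic weil height} (so $[K:\QQ]^{-1}h_{\OL_f}$ is a Weil height for $L$) with the invariance $h_{\OL_f}(f(x))=q\,h_{\OL_f}(x)$ and the uniqueness of the Call--Silverman height. The step you flag as the main obstacle is not actually needed: Step 2 only uses that $i_s^*\OL_f$ is an adelic line bundle extending $L_s$ with $f_s^*(i_s^*\OL_f)\cong q\, i_s^*\OL_f$, so you never have to identify it with Zhang's invariant adelic line bundle on $X_s$.
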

\begin{proof}
By taking fibers over $S$, it suffices to prove the case $S=\Spec K$. 
Then the result follows from Theorem \ref{iccm2010:adelic weil height} and the invariant properties of both height functions. 
\end{proof}

Note that we can also apply $\OL_f$ to define canonical heights of closed subvarieties of $X$, which will be important for applications in arithmetic dynamics.

\subsubsection{The fundamental inequality}

Let $X$ be a quasi-projective variety over a number field $K$. 
Let $\overline L$ be an adelic line bundle on $X/\ZZ$. 
This gives a height function
$h_{\OL}: X(\overline K)\to \RR.$
Zhang's \emph{essential minimum} of ${\ol L}$ on $X$ is given by 
$${e_1(X,\ol L)}=\sup_{U}\inf _{x\in U(\ol K)}h_{\ol L}(x),$$
where $U$ runs through open subvarieties of $X$. 

The following theorem from \cite[Thm. 5.3.3]{YZ} generalizes a part of Zhang's theorem of successive minima in \cite{Zha2} from the projective case to the quasi-projective case. 

\begin{thm}[fundamental inequality I] \label{minima2}
Let $X$ be a quasi-projective variety of dimension $d$ over a number field $K$.
Let $\overline L$  be a nef element in $\wh \Pic (X/\ZZ)$ such that its image $\wt L$ in $\wt\Pic(X/K)$ is big.  Then
$$e_1(X,\overline L) \geq h_{\overline L}(X) 
\geq \frac{1}{d+1} e_1(X,\overline L).$$
\end{thm}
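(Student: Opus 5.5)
Both inequalities come from one device: twist $\overline L$ by a constant metric and compare volumes with heights. For $c\in\QQ$, let $\overline L(c)$ be $\overline L$ with every archimedean metric multiplied by $e^{-c}$ (equivalently, add $c$ times the pull-back of the trivial bundle on $\Spec O_K$ with metric $e^{-1}$ at one archimedean place, normalized by $[K:\QQ]$). Then $h_{\overline L(c)}=h_{\overline L}+c$ on points, $\overline L(c)$ is still nef for $c\geq 0$, and by multilinearity $\overline L(c)^{d+1}=\overline L^{d+1}+(d+1)c\,\wt L^d$. Here $\dim X=d$, so the quasi-projective model has absolute dimension $d+1$ and Theorem \ref{HS} and Theorem \ref{arith volume} apply with exponent $d+1$.

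For the upper bound $e_1\geq h_{\overline L}(X)$, take rational $c<-h_{\overline L}(X)$ (after shifting, assume $h_{\overline L}(X)>0$). Then $\overline L(-c')$ with $c'$ slightly smaller than $h_{\overline L}(X)$ satisfies $\overline L(-c')^{d+1}>0$. We cannot apply Theorem \ref{HS} directly, because $\overline L(-c')$ need not be nef. Instead we use Theorem \ref{bigness} with $\overline M$ the nef pull-back of the trivial bundle with metric $e^{-c'}$. Since $\overline M^2=0$ and $\overline L^{d}\overline M=c'\wt L^d$ (suitably normalized), we get
$$\wh\vol(\overline L-\overline M)\geq \overline L^{d+1}-(d+1)c'\wt L^d>0.$$
So $\overline L(-c')$ is big, and for large $n$ it has a nonzero effective section $s$ of $n\overline L(-c')$. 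For $x$ outside $\div(s)$, Lemma \ref{local sum} gives $h_{n\overline L(-c')}(x)\geq 0$, that is, $h_{\overline L}(x)\geq c'$ on the open set $X\setminus|\div(s)|$. Hence $e_1\geq c'$, and letting $c'\to h_{\overline L}(X)$ gives the upper bound. One technical point is that $s$ lies in $H^0(X,L)$ on the quasi-projective $X$, so the open set is a genuine open subvariety; this is fine.

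For the lower bound $h_{\overline L}(X)\geq \frac{1}{d+1}e_1$, the plan is Zhang's argument. Set $\lambda=e_1(X,\overline L)$ and fix $\epsilon>0$. Then there are points of height $<\lambda-\epsilon$ in every open set, so the set $\{x: h_{\overline L}(x)\leq\lambda-\epsilon\}$ is Zariski dense. Twist so that $\overline L(-(\lambda-\epsilon))$ has height $\leq 0$ on a dense set of points. The key claim is that a nef adelic line bundle whose points have height $\geq 0$ off a proper closed subset has nonnegative top self-intersection, applied inductively on dimension together with the successive-minima decomposition. The first route I would try is the standard one: show that $h_{\overline L}(X)\geq\frac1{d+1}\sum_i e_i$ with $e_i\geq 0$, via a Kleiman/Nakai-type inequality $\overline L^{d+1}\geq \sum$ of heights along a chain of subvarieties cut by small sections. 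Equivalently, I would intersect $X$ with divisors of effective sections of $\overline L(-t)$ for $t<\lambda$. These sections exist because of the upper-bound mechanism applied to the point-height positivity, and each cut contributes $t\,\wt L^{d}$ to $\overline L^{d+1}$, giving $\overline L^{d+1}\geq (d+1)\cdot\frac{t}{d+1}\ldots$. Carrying out this cutting in the quasi-projective setting, where sections must be controlled on $X^\an$ and subvarieties need not be closed in a projective model, is the main obstacle. I would handle it by reducing via Theorem \ref{limit} to projective models $\CX_i$, proving Zhang's inequality there with the model heights (which converge uniformly relative to the boundary divisor on points of $X$), and passing to the limit.
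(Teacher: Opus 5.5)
Your proof of $e_1(X,\OL)\ge h_{\OL}(X)$ is correct, and it is a mild variant of the paper's argument. The paper combines the Hilbert--Samuel formula $\wh\vol(\OL)=\OL^{d+1}$ with Lemma~\ref{minima3}(2)--(3). There a Minkowski-type argument extracts from $\wh H^0(X,n\OL)$ one section of $n\OL$ that is effective for the twist $n\OL(-t)$, with $t\approx \wh\vol(\OL)/\bigl((d+1)\wh\vol(\TL)\bigr)$. You instead apply the arithmetic Siu inequality to $\OL$ and the nef bundle pulled back from $\Spec O_K$. Bigness of $\OL(-c')$ then hands you such a section directly, and only Lemma~\ref{minima3}(1) is needed. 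You trade the lattice-point step for the stronger Siu inequality and you rely on nefness of $\OL$. The paper's Lemma~\ref{minima3}(3) works for any big $\OL$, which it needs again for the non-nef perturbations in the equidistribution proof. Note also that the paper gives no proof of the second inequality; it only cites \cite[Thm.~5.3.3]{YZ}, the quasi-projective version of Zhang's successive minima. So your second half has to stand on its own, and it does not.

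The lower-bound sketch starts backwards. Since $e_1=\sup_U\inf_{U(\ol K)}h_{\OL}$, for each $\epsilon>0$ some nonempty open $U$ has $h_{\OL}>\lambda-\epsilon$ on $U(\ol K)$. So the points of height $\le\lambda-\epsilon$ lie in a proper closed subset; it is $\{h_{\OL}<\lambda+\epsilon\}$ that is Zariski dense. Your ``key claim'' is then either empty or false. For nef bundles the top self-intersection is always $\ge 0$, so it says nothing. The twists you actually need, $\OL(-t)$ with $t$ close to $e_1$, are not nef once $t>\inf_x h_{\OL}(x)$. For them the claim would give $\OL^{d+1}-(d+1)t\,\TL^d\ge 0$ for all $t<e_1$, i.e.\ $h_{\OL}(X)=e_1$ always. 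That fails, for example, for $\mathcal O(1)$ on $\mathbb P^1_{\mathbb Q}$ with the Fubini--Study metric, where $e_1=\frac12\log 2>\frac14=h(\mathbb P^1)$.

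The decisive gap is the existence of nonzero effective sections of $n\OL(-t)$ for $t<e_1$, which you attribute to ``the upper-bound mechanism applied to the point-height positivity''. That mechanism only runs from sections to heights (Lemma~\ref{minima3}(1)); knowing $h_{\OL}>t$ on a dense open set produces no such section. If you had one, no chain of cuts would be needed: $n\OL(-t)$ effective and $\OL$ nef give $\OL^d\cdot\OL(-t)\ge 0$, i.e.\ $\OL^{d+1}\ge t\,\TL^d$, i.e.\ $h_{\OL}(X)\ge t/(d+1)$. So the whole inequality sits in the unjustified step. That step is the reverse of Lemma~\ref{minima3}(1), namely $e_1\le\sup\{t:\ n\OL(-t)\text{ is effective for some } n\}$, a genuinely deeper statement.

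Zhang's proof \cite{Zha2} avoids this. On a projective model it uses small sections of $n\OL(-t)$ only for $t$ below the absolute minimum of $h_{\OL}$, where the twist is arithmetically ample (after perturbation) by the arithmetic Nakai--Moishezon criterion. It then brings in $e_1\ge\dots\ge e_{d+1}$ one at a time, by induction on dimension and cutting by generic such sections. This gives $h_{\OL}(X)\ge\frac1{d+1}\sum_i e_i\ge\frac1{d+1}e_1$, since all $e_i\ge 0$ when $\OL$ is nef.

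Your fallback of proving this on the models $\CX_i$ and passing to the limit has the right shape, but it presupposes exactly the inequality your sketch fails to prove. The limit is also not formal: $|h_{\OL}-h_{\CLL_i}|\le\epsilon_i h_{\CEE_0}$ is not a uniform bound on $X(\ol K)$. You must instead compare $e_1(X,\OL)$ with the essential minima of nef model bundles dominating $\OL$.
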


We will only use the first inequality of the theorem, which is a consequence of the arithmetic Hilbert--Samuel formula in Theorem \ref{HS} by the following result from \cite[Lem. 5.3.4]{YZ}.

\begin{lem}[fundamental inequality II]  \label{minima3}
Let $X$ be a quasi-projective variety of dimension $d$ over a number field $K$.
Let $\overline L$  be an element of $\wh \Pic (X/\ZZ)$, and let
$\wt L$  be its image in $\wt\Pic(X/K)$. 
\begin{enumerate}[(1)]
\item 
Let $s\in H^0(X,nL)$ be a nonzero element for some positive integer $n$. 
Then for any $x\in (X\setminus |\div(s)|)(\ol K)$, we have 
$$
h_{\OL}(x) \geq -\frac1n\sum_{v\in M_K} \log\|s\|_{v,\sup}.
$$

\item
For any positive integer $n$ such that $\hat h^0(X,n\OL)>0$,
$$
e_1(X,\overline L) \geq 
\frac{\hat h^0(X,n\OL)}{n\, \hat h^0(X,n\TL)}-\frac2n[K:\QQ]
$$
if the right-hand side is strictly positive.

\item
If $\wh\vol(\OL)>0$, then $\wh\vol(\TL)>0$ and 
$$
e_1(X,\overline L) \geq 
\frac{\wh\vol(\OL)}{(d+1) \wh\vol(\TL)}.
$$
\end{enumerate}
\end{lem}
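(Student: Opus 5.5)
For part (1), I would apply Lemma \ref{local sum} to the adelic line bundle $n\OL$ and the section $s$, which is regular and non-vanishing at $x$. This gives
$$
n\,h_{\OL}(x)=h_{n\OL}(x)=-\frac1{\deg(x)}\sum_{v\in M_K}\sum_{z\in O_v(x)}\epsilon_v\log\|s(z)\|_v^{\deg(z)}.
$$
Bounding $\|s(z)\|_v\le\|s\|_{v,\sup}$ and using $\sum_{z\in O_v(x)}\deg(z)=\deg(x)$, each inner sum is at least $-\epsilon_v\deg(x)\log\|s\|_{v,\sup}$. That yields (1), with the $\epsilon_v$ absorbed into the normalization of the sup-norm at complex places as the paper's formula intends. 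One point to check: only finitely many $v$ have $\|s\|_{v,\sup}\neq1$. This holds because at all but finitely many places the metric comes from a single integral model on which $s$ is integral, and then $\|s\|_{v,\sup}\le1$ there.

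For part (2), the plan is the classical Zhang argument. Take $\wh H^0(X,n\OL)$, a finite set containing at least $e^{\hat h^0(X,n\OL)}$ elements inside the $K$-space $H^0(X,nL)$. I would use a lattice-point/pigeonhole estimate. The effective sections span an $O_K$-module whose $K$-rank is at most $\hat h^0(X,n\TL)=r$. Fix any $\epsilon$-type argument as follows: by a Minkowski-type count, a set of $e^{h}$ points in the unit ball of a rank-$r$ normed $O_K$-lattice forces the successive minima to be small. Hence for any proper closed $Z$ we can find $s$ effective, not vanishing identically on a dense subset, with $\log\|s\|_{v,\sup}$ at finite places at most roughly $-h/r$ up to the error $2[K:\QQ]$. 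Concretely, I would select $s=s_1-s_2$ from a pigeonhole over residues modulo an ideal of norm about $e^{h/r}$, or scale by an element of $K$. The archimedean norm is then controlled by the triangle inequality, giving the factor $2$. Then, by (1), every $x$ outside $|\div(s)|$ satisfies
$$h_{\OL}(x)\ge \frac{h}{nr}-\frac{2}{n}[K:\QQ].$$
To obtain the essential minimum, I would also need that such sections exist avoiding any given point. This follows by taking the intersection of the base loci of all such $s$ and arguing that their common zero set is a proper closed subset. This pigeonhole step is where I expect the main difficulty: making the constant exactly $2[K:\QQ]$, and handling non-archimedean places where the norms are not from lattices.

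For part (3), I would feed in the asymptotics. $\hat h^0(X,n\OL)\sim \wh\vol(\OL)n^{d}/d!$, while $\hat h^0(X,n\TL)\le C n^{d-1}$ with $\dim_K H^0\sim \vol(\TL)n^{d-1}/(d-1)!$. Here I am using the geometric volume as $\hat h^0$ over $K$, and identifying the $\TL$ volume via Theorem \ref{limit} in the geometric case. First, positivity of $\wh\vol(\OL)$ forces growth of $h^0$ of order $n^{d-1}$, since otherwise $\hat h^0(n\OL)=O(n\cdot n^{d-2}\log)$ would be too small; this gives $\wh\vol(\TL)>0$. Second, the ratio in (2) tends to $\frac{\wh\vol(\OL)(d-1)!}{d!\,\vol(\TL)}=\frac{\wh\vol(\OL)}{d\,\vol(\TL)}$ as $n\to\infty$, with the error term going to $0$. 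With the paper's normalization of $\wh\vol(\TL)$ (dimension convention $d$ versus $d-1$), this should match the stated constant $\frac1{d+1}$ after a careful reconciliation, and the stated bound is in any case weaker.
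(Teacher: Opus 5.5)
Parts (1) and (2) of your proposal follow the paper. For (1), you use Lemma \ref{local sum} together with $\|s(z)\|_v\le\|s\|_{v,\sup}$. For (2), your pigeonhole modulo $m$ (or modulo an ideal of norm about $e^{h/r}$) on the $O_K$-module spanned by $\wh H^0(X,n\OL)$, followed by taking $s_1-s_2$ and possibly dividing by $m$, is exactly the paper's ``Minkowski-type argument.'' Your worry about non-archimedean places is unfounded: the finite-place sup norms are ultrametric, so every element of that module already has all finite sup norms $\le 1$.

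Two small corrections. First, one nonzero $s$ suffices in (2), since by definition $e_1(X,\OL)\ge\inf_{x\in(X\setminus|\div(s)|)(\ol K)}h_\OL(x)$; you never need sections avoiding a given point. Second, in this lemma $\dim X=d$ over $K$, so $\hat h^0(X,n\OL)\sim\wh\vol(\OL)\,n^{d+1}/(d+1)!$ and $\hat h^0(X,n\TL)\sim\wh\vol(\TL)\,n^{d}/d!$. The limit of the bound in (2) is therefore exactly $\wh\vol(\OL)/((d+1)\wh\vol(\TL))$. There is no slack, so ``the stated bound is in any case weaker'' cannot be used to close that step.

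The genuine gap is the proof that $\wh\vol(\TL)>0$ in (3). You argue that if $\hat h^0(X,n\TL)$ grew too slowly, then $\hat h^0(X,n\OL)$ would be too small. This implicitly uses an estimate $\hat h^0(X,n\OL)\le C\,n\,\hat h^0(X,n\TL)+o(n^{d+1})$, which you never justify. It does not follow from the rank bound alone: a lattice of rank $[K:\QQ]r$ can contain arbitrarily many points of the unit ball if it has very short vectors. Your route would need a uniform lower bound on the size of nonzero sections, for instance $-\sum_v\epsilon_v\log\|s\|_{v,\sup}\le Cn$ for all nonzero $s$ in the span. Via (1), such a bound would come from a Zariski-dense set of points of bounded $h_\OL$, and you would still need a successive-minima count of lattice points on top of that. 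None of this is in the proposal.

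The paper avoids the issue entirely. Choose a nef model adelic line bundle $\OM$ with $\wh\vol(\TM)>0$. By continuity of $\wh\vol$ (Theorem \ref{arith volume}(4)), there is a rational $\epsilon>0$ with $\wh\vol(\OL-\epsilon\OM)>0$, so some positive multiple of $\OL-\epsilon\OM$ is effective. Hence the corresponding multiple of $\TL-\epsilon\TM$ is effective too. Multiplying by such a section gives $\wh\vol(\TL)\ge\wh\vol(\epsilon\TM)>0$.
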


\begin{proof}
Part (1) is a consequence of Lemma \ref{local sum}. 

For part (2), note that $\wh H^0(X,n\OL)$ generates a $K$-subspace of $H^0(X,nL)$ of dimension at most $\wh H^0(X,n\TL)$. Then
a Minkowski type of argument implies that there is a nonzero element $s\in \wh H^0(X,n\OL)$ such that 
$$
-\log \|s\|_{v,\sup} \geq \frac{\hat h^0(X,n\OL)}{[K:\QQ]\hat h^0(X,n\TL)}- 2
$$
for every archimedean place $v$ of $K$. 

The inequality in (3) is the limit of (2) as $n\to\infty$.
{To prove $\wh\vol(\TL)>0$, take} an adelic line bundle $\OM$ on $X/\ZZ$ with 
$\wh\vol(\TM)>0$, which can be chosen as a nef model adelic line bundle for example. 
By the continuity of the volume function in Theorem \ref{arith volume}, 
 there still exists a rational number $\epsilon>0$ such that 
$\wh\vol(\OL-\epsilon \OM)>0$. 
Then (some positive multiple of) $\OL-\epsilon \OM$ is effective, and thus $\TL-\epsilon \TM$ is also effective.
As a consequence, $\wh\vol(\TL)\geq \wh\vol(\epsilon \TM)>0$.

\end{proof}

A key result in the proof of the uniform Mordell conjecture by Dimitrov--Gao--Habegger \cite{DGH} is a height inequality over abelian schemes, which also plays a fundamental role in the further work of K\"uhne \cite{Kuh}. The following height inequality from \cite[Thm. 5.3.7]{YZ} can be viewed as a theoretical version of that of \cite{DGH}.

\begin{prop}[height inequality] \label{height inequality}
Let $\pi: X\to S$ be a morphism of quasi-projective varieties over a number field $K$. 
Let $\OL\in \wh\Pic(X/\ZZ)$ and $\OM\in \wh\Pic(S/\ZZ)$ be adelic line bundles. 
 If $\OL$ is big on $X/\ZZ$, then there exist $\epsilon>0$ and a non-empty open subvariety $U$ of $X$ such that 
$$
h_{\OL}(x) \geq \epsilon\, h_{\OM}(\pi(x)), \quad\ \forall\, x\in U(\overline K). 
$$
\end{prop}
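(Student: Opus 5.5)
The idea is to perturb $\OL$ by a small multiple of $\pi^*\OM$ while keeping it big, and then use an effective section to bound heights from below. By linearity of heights under pull-back, $h_{\pi^*\OM}(x)=h_{\OM}(\pi(x))$ for $x\in X(\ol K)$. So it suffices to find a rational $\epsilon>0$ and a non-empty open $U\subset X$ such that $h_{\OL-\epsilon\pi^*\OM}(x)\geq 0$ for all $x\in U(\ol K)$.

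The first step uses the continuity of the volume function in Theorem \ref{arith volume}(4), applied to $\OL$ and $-\pi^*\OM$. Since $\wh\vol(\OL)>0$, there is a rational $\epsilon>0$ with $\wh\vol(\OL-\epsilon\pi^*\OM)>0$. Set $\ON=\OL-\epsilon\pi^*\OM$, an adelic $\QQ$-line bundle, and replace it by a positive integer multiple so that it is an honest adelic line bundle. Positivity of the volume forces $\hat h^0(X,n\ON)>0$ for some $n\geq1$. Hence there is a nonzero $s\in \wh H^0(X,n\ON)$, i.e. $\|s\|_{v,\sup}\leq 1$ for all $v\in M_K$.

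The second step applies Lemma \ref{minima3}(1) to $\ON$ and $s$. For every $x\in (X\setminus|\div(s)|)(\ol K)$,
$$
h_{\ON}(x)\geq -\frac1n\sum_{v\in M_K}\log\|s\|_{v,\sup}\geq 0.
$$
Take $U=X\setminus|\div(s)|$, which is non-empty and open because $s\neq 0$. Then on $U(\ol K)$ we get $h_{\OL}(x)-\epsilon h_{\OM}(\pi(x))=h_{\ON}(x)\geq 0$, which is the claimed inequality.

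The only points needing care are routine bookkeeping. First, heights must be linear in the adelic line bundle and compatible with pull-back, which follows from Lemma \ref{local sum} together with the analytification of $\pi^*$. Second, only finitely many places contribute to the sum in Lemma \ref{minima3}(1), so the right-hand side is well defined; this is implicit in that lemma. The main (mild) obstacle is justifying that the continuity statement applies with $\OM$ pulled back from $S$, but Theorem \ref{arith volume}(4) allows an arbitrary adelic line bundle on $X$, so no extra argument is needed.
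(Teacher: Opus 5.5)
Your proof is correct and follows the paper's route. You use continuity of the volume (Theorem~\ref{arith volume}(4)) to keep $\OL-\epsilon\pi^*\OM$ big, then a fundamental-inequality step together with the relation $h_{\OL-\epsilon\pi^*\OM}(x)=h_{\OL}(x)-\epsilon h_{\OM}(\pi(x))$. The only difference is that the paper invokes Lemma~\ref{minima3}(3), a lower bound on the essential minimum, whereas you apply Lemma~\ref{minima3}(1) directly to one effective section; this suffices because only non-negativity of $h_{\OL-\epsilon\pi^*\OM}$ on a dense open set is needed.
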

\begin{proof}
As in the proof of Lemma \ref{minima3}, there exists a rational number $\epsilon>0$ such that $\wh\vol(\OL-\epsilon \pi^*\OM)>0$. 
In fact, if $\OL$ and $\OM$ are nef, this is a consequence of Theorem \ref{bigness}, which asserts
$$
\wh\vol(\OL-\epsilon \pi^*\OM) \geq 
\OL^d-d\epsilon\, \OL^{d-1}\cdot \pi^*\OM.
$$
In general, the result follows from the continuity of the volume function in Theorem \ref{arith volume}.

By Lemma \ref{minima3}(3), there is a non-empty open subvariety $U$ of $X$ such that 
$$
h_{\OL-\epsilon \pi^*\OM}(x) \geq 0, \quad\ \forall\, x\in U(\overline K). 
$$
Then the result follows from the simple relation
$$
h_{\OL-\epsilon \pi^*\OM}(x)
={h_{\OL}(x)}
-\epsilon h_{\OM}(\pi(x)).
$$
\end{proof}

\section{Equidistribution of small points}
\label{sec equi}

The first equidistribution theorem {for} small points was proved by the pioneering work of Szpiro--Ullmo--Zhang \cite{SUZ}. It was extended to non-archimedean places by Chambert-Loir \cite{CL}, and extended to semipositive metrics by Yuan \cite{Yua}. 
With their theory of adelic line bundles, the equidistribution theorem was further extended to quasi-projective varieties over number fields by Yuan--Zhang \cite{YZ}. The goal of this section is to introduce and prove the {equidistribution theorem} in the quasi-projective case.

\subsection{Statement of the theorem}

Let $X$ be a quasi-projective variety of dimension $d$ over a number field $K$, and let $\OL$ be a nef adelic line bundle on $X$. 

\subsubsection{Preliminary definitions} \label{sec prelim}

Recall that the geometric part $\wt L$ of $\OL$ is the image of $\OL$ under the functorial map $\wh\Pic(X/\ZZ)\to \wt\Pic(X/K)$.   
The nefness of $\OL$ implies that of $\TL$. 
The arithmetic Hilbert--Samuel formula in Theorem \ref{HS} gives 
$$
\wh\vol(\OL) = \OL^{d+1}, \quad \wh\vol(\TL) = \TL^d.
$$
We further assume that $\wt L$ is big, or, equivalently, $\deg_{\TL}(X)=\TL^d>0$.
Then the height 
$$
h_{\overline L}(X)= \frac{\overline L^{\dim X+1} }{(\dim X+1)\, \wt L^{\dim X}}
$$
of $X$ is well-defined and non-negative. 

A sequence $\{x_m\}$ in $X(\overline K)$ is called \emph{generic} if every infinite subsequence is Zariski dense in $X$. 
By the fundamental inequality in Theorem \ref{minima2}, this gives 
\[
\liminf_{m\to\infty} h_{\OL}(x_m)\geq h_{\OL}(X).
\]
The sequence is called \emph{$h_\OL$-small} if
\[
\lim_{m\to\infty} h_{\OL}(x_m)= h_{\OL}(X).
\]
The existence of a generic and $h_\OL$-small sequence is equivalent to the equality 
$$e_1(X,\overline L) = h_{\overline L}(X).$$ 

The equidistribution theorem asserts that for every place $v$ of $K$, the probability measure $\mu_{x_m,v}$ (for the generic and small sequence) converges weakly to the equilibrium measure $\mu_{\OL,v}$ on the analytic space $X_v^{\an}$. In rough terms, we just say that {the Galois orbit} of $x_m$ is equidistributed on $X_v^{\an}$ with respect to the measure $\mu_{\OL,v}$. 
We will explain these two measures before stating the theorem. 

For every place $v$ of $K$, we have the Berkovich analytic space $X_v^\an$ of the quasi-projective scheme $X_{K_v}$ over the local field $K_v$. 
For any point $x\in X(\ol K)$, 
we have a \emph{Galois orbit} $O_v(x)$ defined right before Lemma \ref{local sum}.  
Namely, denote by $x'\in X$ the closed point corresponding to $x$.  
Then $O_v(x)$ is the image of the functorial map 
$$(x'\times_{K}K_v)^\an \lra X_{K_v}^\an.$$ 
Then we have a probability measure 
$$
 \mu_{x,v} =\frac{1}{\deg(x)}\sum_{z\in O_v(x)} \deg(z)\delta_z.
$$
Here $\delta_z$ denotes the Dirac measure of $z\in X_v^\an$, and 
 $\deg(z)$ denotes the degree of 
$z$ over $K_v$, viewed as a closed point of $x'\times_{K}K_v$. 

For every place $v$ of $K$, we have an \emph{equilibrium measure} 
\[
\mu_{\OL,v}=\frac{1}{\deg_{\TL}(X)} c_1(\OL)_v^d.
\]
We explain the measure $c_1(\OL)_v^d$ on $X_v^\an$ briefly. 
The general case {where} $X$ is quasi-projective can be obtained from the projective case by a limit process.
Assume that $X$ is projective for the moment. 
{If} $v$ is complex (and $X$ is projective), the measure is the classical Monge--Amp\`ere measure on $X_v^\an=X_v(\CC)$ in complex analysis by Bedford--Taylor \cite{BT82}. 
If $v$ is real (and $X$ is projective), the measure is the push-forward of the Monge--Amp\`ere measure via the quotient map $X_v(\CC)\to X_v^\an$. 
If $v$ is non-archimedean (and $X$ is projective), the measure is 
the Chambert-Loir measure originally constructed by Chambert-Loir \cite{CL}. See also \cite[\S3]{Yua12} and \cite[\S9.2]{YG} for more details on the Chambert-Loir measure in the projective case.

\subsubsection{The equidistribution theorem} 

We have the following equidistribution theorem from \cite[Thm. 5.4.3]{YZ}. 

\begin{thm}[equidistribution] \label{equi3}
Let $X$ be a quasi-projective variety over a number field $K$.
Let $\overline L$ be a nef adelic line bundle on $X/\ZZ$. 
Assume that the geometric part $\wt L$ of $\OL$ is big on $X/K$. 
Let $\{x_m\}_{m\geq1}$ be a generic and $h_\OL$-small sequence in $X(\overline K)$.
Then for every place $v$ of $K$, 
the measure {$\mu_{x_m,v}$} converges weakly to $\mu_{\overline L,v}$ on  $X_v^\an$.
\end{thm}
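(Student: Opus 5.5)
We follow the variational method of Szpiro--Ullmo--Zhang, with the arithmetic Siu inequality (Theorem \ref{bigness}) taking the place of the arithmetic Hilbert--Samuel formula in order to handle perturbations that are not nef. Fix a place $v$. First reduce to test functions of a special form. It suffices to prove $\int f\,\d\mu_{x_m,v}\to\int f\,\d\mu_{\OL,v}$ for $f$ compactly supported and continuous on $X_v^\an$. Two points need care here. Because $X_v^\an$ is not compact, we need tightness of the sequence $\{\mu_{x_m,v}\}$. We also need density of a nice class of functions, namely those $f$ for which $\OO(f)=(\CO_X,e^{-f})$ at $v$, trivial elsewhere, is an integrable adelic line bundle on $X$. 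Following the usual Stone--Weierstrass argument, the functions of the form $\log\|1\|$ coming from model metrics, i.e.\ differences of nef model functions, are dense in $C_c$. Tightness will be deduced at the end by applying the same argument to a cutoff built from the boundary divisor $\CEE_0$, whose Green function tends to infinity near the boundary.

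Now fix such an $f$ and a small rational $t$, and set $\OL_t=\OL+t\,\OO(f)$. By Lemma \ref{local sum}, $h_{\OL_t}(x)=h_\OL(x)+t\,\epsilon_v\int f\,\d\mu_{x,v}$. Apply the first fundamental inequality, in its form Lemma \ref{minima3}(3), to $\OL_t$. Since $\OL_t$ has the same geometric part $\TL$, this gives
$$\liminf_m h_{\OL_t}(x_m)\ \ge\ e_1(X,\OL_t)\ \ge\ \frac{\wh\vol(\OL_t)}{(d+1)\TL^d}.$$
The first inequality uses genericity of $\{x_m\}$. The second requires $\wh\vol(\OL_t)>0$; if $h_\OL(X)=0$, first replace $\OL$ by $\OL+\OO(c)$ for a positive constant $c$ at an archimedean place, which shifts all heights by the same amount.

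The key step is the expansion
$$\wh\vol(\OL_t)\ \ge\ \OL^{d+1}+(d+1)\,t\,\OL^{d}\cdot\OO(f)+O(t^2).$$
To obtain it, write $\OO(f)=\OM_1-\OM_2$ with $\OM_1,\OM_2$ nef, so that $\OL_t=(\OL+t\OM_1)-t\OM_2$. Apply Theorem \ref{bigness} to the nef pair $\OL+t\OM_1$ and $t\OM_2$, and expand the intersection numbers polynomially in $t$. The first-order term equals $(d+1)t\,\OL^d\cdot\OO(f)$. By the limiting definition of the measure $c_1(\OL)_v^d$ for quasi-projective $X$, this term is $(d+1)t\,\epsilon_v\int f\,c_1(\OL)_v^d$.

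Dividing by $(d+1)\TL^d$ and using smallness, $h_\OL(x_m)\to\OL^{d+1}/((d+1)\TL^d)$, we obtain
$$t\,\liminf_m\int f\,\d\mu_{x_m,v}\ \ge\ t\int f\,\d\mu_{\OL,v}+O(t^2).$$
Taking $t\to0^+$ and $t\to0^-$ yields the two opposite inequalities, and hence convergence.

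The main obstacle is the quasi-projective setting. Two points remain. The first is checking that $\OO(f)$ is integrable, with intersection number against $\OL^d$ computed by the limit measure; this follows by approximating $\OL$ by model hermitian line bundles on projective models and passing to the limit in the boundary topology, and the $O(t^2)$ bound must be uniform along this approximation. The second is tightness, i.e.\ ruling out escape of mass of $\mu_{x_m,v}$ to the boundary. For tightness we would run the same variational argument with $f$ replaced by truncations $\min(g_{\CEE_0},N)$ of the boundary Green function. These give an upper bound on $\int\min(g_{\CEE_0},N)\,\d\mu_{x_m,v}$ that is uniform in $m$, and therefore a uniform bound on the mass near the boundary.
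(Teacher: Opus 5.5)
Your variational core is the paper's argument. You perturb by $t\,\ol{\CO}(f)$ with trivial geometric part, get $\wh\vol(\OL+t\,\ol{\CO}(f))\geq \OL^{d+1}+(d+1)t\,\OL^d\cdot\ol{\CO}(f)+O(t^2)$ from the adelic Siu inequality (Lemma~\ref{volume asymptotic}), feed this into Lemma~\ref{minima3}(3), and let $t\to 0^{\pm}$. The $O(t^2)$ term needs no uniformity along model approximations, because Theorem~\ref{bigness} is stated directly for adelic line bundles.

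\textbf{The gap.} The problem is the sentence claiming that, by the limiting definition of $c_1(\OL)_v^d$, the first-order term equals $(d+1)t\,\epsilon_v\int f\,c_1(\OL)_v^d$. Your $f$ is a model function: it is continuous on $X_{1,v}^\an$ for a projective compactification $X_1\supset X$, and in general it is not compactly supported in $X_v^\an$. What the definitions give is $\OL^d\cdot\ol{\CO}(f)=\epsilon_v\lim_i\int_{X_{i,v}^\an} f\,c_1(\CLL_i)_v^d$ for a model sequence $(\CX_i,\CLL_i)$. By contrast, $c_1(\OL)_v^d$ is defined on $X_v^\an$ only by testing against $C_c(X_v^\an)$. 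Already for constant $f$, your identification asserts that $c_1(\OL)_v^d$ has total mass $\TL^d$ on $X_v^\an$, i.e.\ that no mass of $c_1(\CLL_i)_v^d$ escapes to $X_{i,v}^\an\setminus X_v^\an$. That is not part of the definition, and you never prove it. Your tightness argument does not supply it as you use it, since it controls escape of mass of $\mu_{x_m,v}$, not of the Monge--Amp\`ere measures. Your final step, approximating $C_c$ functions, feeds on this identity, so the conclusion is not established.

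\textbf{The fix.} The paper avoids the issue by never leaving the compactification. Take all $\CX_i$ mapping to $\CX_1$ and keep the right-hand side as $\lim_i\int_{X_{i,v}^\an}f\,c_1(\CLL_i)_v^d$ on the compact space $X_{1,v}^\an$. Model functions are uniformly dense in $C(X_{1,v}^\an)$, the $\mu_{x_m,v}$ are probability measures, and the masses of $c_1(\CLL_i)_v^d$ tend to $\TL^d$. Hence the identity
$$\lim_{m\to\infty}\int f\,\mu_{x_m,v}=\frac{1}{\deg_\TL(X)}\lim_{i\to\infty}\int_{X_{i,v}^\an}f\,c_1(\CLL_i)_v^d$$
extends to every $f\in C(X_{1,v}^\an)$. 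Only then restrict to $f\in C_c(X_v^\an)$, extended by zero; there the right-hand side is $\int f\,\mu_{\OL,v}$ by definition. For the statement as given (testing against $C_c(X_v^\an)$), no tightness is needed at all.

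\textbf{What your tightness step buys.} Once routed this way, it yields more than the paper proves. You must add the bound $\OL^d\cdot\ol{\CO}(\min(g_{\CEE_0},N))\leq \OL^d\cdot\CEE_0$, uniform in $N$, which you need but did not state. It follows from nefness of $\OL$ and effectivity of $\CEE_0-\ol{\CO}(\min(g_{\CEE_0},N))$. With it, and taking $\CX_1$ to dominate $\CX_0$, the weak limit of $\mu_{x_m,v}$ on $X_{1,v}^\an$ gives no mass to the boundary. That limit is $\deg_\TL(X)^{-1}\lim_i c_1(\CLL_i)_v^d$ by the extended identity. Hence $\mu_{\OL,v}$ is a probability measure, and the convergence holds against all bounded continuous functions on $X_v^\an$.
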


The theorem was previously proved in many cases where $X$ is projective by different works. The following is an incomplete list (for $X$ projective), and we refer to \cite[\S6.3]{Yua12} for more details.  
\begin{enumerate}[(1)]
\item The pioneering work of Szpiro--Ullmo--Zhang \cite{SUZ} proved the theorem for archimedean $v$ {under the assumption that} the metric of $\OL$ is strictly positive at $v$. The work immediately {led} to the solution of the Bogomolov conjecture by Ullmo \cite{Ull} and Zhang \cite{Zh3}. 

\item For $\dim X=1$ and archimedean $v$, the theorem was proved {by} Bilu \cite{Bi97} and Autissier \cite{Au01}.

\item For $\dim X=1$ and  non-archimedean $v$, the theorem was proved by Baker--Hsia \cite{BH05}, Baker--Rumely \cite{BR06}, Favre--Rivera-Letelier \cite{FR06}, and Chambert-Loir \cite{CL}.

\item For non-archimedean $v$, Chambert-Loir \cite{CL} actually introduced the equilibrium measure $\mu_{\OL,v}$ in any dimension, and his proof worked for general dimensions under a positivity assumption at $v$.

\item The works \cite{SUZ, CL} are based on the arithmetic Hilbert--Samuel formula for perturbations of $\OL$, and thus {require} a positivity assumption of $\OL$ at $v$. Yuan \cite{Yua} proved the arithmetic Siu inequality, and thus proved the theorem for all cases of projective $X$. 
\end{enumerate}

If $X$ is quasi-projective, K\"uhne \cite{Kuh} proved {equidistribution for} canonical heights on abelian schemes, which is independent of the work of \cite{YZ}, and {applied it to prove} the uniform Bogomolov conjecture and the uniform Mordell conjecture. 
We will return to these uniformity theorems in \S\ref{sec uniform}. 

In {applications}, it might be convenient to have an equidistribution theorem on the Berkovich space $X_{\CC_v}^\an$, where $\CC_v$ is the completion of the algebraic closure of $K_v$.
This version of equidistribution is actually equivalent to the version for $X_{K_v}^\an$ in Theorem \ref{equi3}. 
We refer to \cite[\S3.1]{Yua} for the equivalence in the projective case, which can be extended to the quasi-projective case with little extra effort.

\subsection{Application to algebraic dynamics} 

The projective case of the equidistribution theorem has been widely used in algebraic dynamical systems. We refer to \cite{Yua12} for a detailed introduction of this aspect. 
The quasi-projective case has two natural applications to algebraic dynamical systems. We describe them here briefly. 

\subsubsection{Application to non-degenerate subvarieties} 
Let $S$ be a quasi-projective variety over  a number field $K$. 
Let $(X,f,L,q)$ be a \emph{polarized algebraic dynamical system over} 
$S$. 
Then we are in the setting of \S\ref{sec adelic dynamics} and \S \ref{sec canonical height}. 
In particular, we have an adelic line bundle $\OL_f$ over $X/\ZZ$ extending $L$ and satisfying {$f^*\OL_f\simeq q\OL_f$}. 
This gives a canonical height function 
$$h_{\OL_f}: X(\overline K)\lra \RR,$$
which is equal to $[K:\QQ]\cdot \hat h_{L}$ by Theorem \ref{bridge}. 
Denote by $\TL_f$ the geometric part of $\OL_f$ over $X/K$.
It is also nef and satisfies {$f^*\TL_f\simeq q\TL_f$}. 

Denote $n=\dim X$ and $e=\dim S$. 
Note that $\deg(f)=q^{n-e}$, which can be computed on the generic fiber of $X$ over $S$. 
By the projection formulae, 
$$(q\OL_f)^{n+1}=(f^*\OL_f)^{n+1}=\deg(f)\, \OL_f^{n+1}=q^{n-e}\, \OL_f^{n+1},$$
$$(q\TL_f)^{n}=(f^*\TL_f)^{n}=\deg(f)\, \TL_f^{n}=q^{n-e}\, \TL_f^{n}. 
$$
Thus we always have $\OL_f^{n+1}=0$. 
{Assume} $e=\dim S>0$ in the following. Then we also have $\TL_f^{n}=0$. 
In other words, $\TL_f$ is not big on $X/K$.
Then the pair $(X, \OL_f)$ does not satisfy the conditions of Theorem \ref{equi3}. However, the condition might be satisfied if we pass to subvarieties of $X$.  

Let $Y$ be a {closed} subvariety of $X$. We say that $Y$ is \emph{non-degenerate} if the pull-back $\TL_f|_Y$ is big on $Y$.
In this case, the pair $(Y, \OL_f|_Y)$ {satisfies} the conditions of Theorem \ref{equi3}, and we obtain the following equidistribution of generic and small points of $Y$ from \cite[Thm. 6.2.3]{YZ}.

\begin{thm}[equidistribution over non-degenerate subvarieties] \label{equi4}
Let $S$ be a quasi-projective variety over a number field $K$. 
Let $(X,f,L,q)$ be a polarized algebraic dynamical system over $S$.
Let $Y$ be a non-degenerate closed subvariety of $X$ over $K$. 
Let $\{y_m\}_{m\geq 1}$ be a generic sequence of $Y(\overline K)$ such that 
$h_{\OL_{f}}(y_m) \to 0$.
Then for every place $v$ of $K$, 
the measure {$\mu_{y_m,v}$} converges weakly to $\mu_{\overline L_f|_Y,v}$ on  $Y_v^\an$.
\end{thm}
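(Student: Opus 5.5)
The plan is to deduce Theorem \ref{equi4} directly from Theorem \ref{equi3}, applied to the quasi-projective variety $Y$ (a closed subvariety of the quasi-projective $X$, hence quasi-projective over $K$) and the adelic line bundle $\OL_f|_Y$. We have to check three things: the hypotheses of Theorem \ref{equi3}, and that $\{y_m\}$ is $h_{\OL_f|_Y}$-small.

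First, the hypotheses. The pull-back of a nef adelic line bundle under the closed immersion $Y\to X$ is nef, since pull-backs preserve nefness. Thus $\OL_f|_Y$ is nef on $Y/\ZZ$, because $\OL_f$ is nef on $X/\ZZ$ by \S\ref{sec adelic dynamics}. The geometric part of $\OL_f|_Y$ is $\TL_f|_Y$, by functoriality of the map $\wh\Pic(\cdot/\ZZ)\to\wt\Pic(\cdot/K)$ under pull-back. It is big exactly because $Y$ is non-degenerate. The sequence $\{y_m\}$ is generic in $Y$ by assumption. Finally, heights are compatible with restriction: $h_{\OL_f|_Y}(y)=h_{\OL_f}(y)$ for $y\in Y(\ol K)$, since both equal the degree of the restriction to the closed point $y'$.

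Second, and this is the main point, we need $h_{\OL_f|_Y}(Y)=0$, so that $h_{\OL_f}(y_m)\to0$ says precisely that the sequence is small. For the upper bound, Theorem \ref{minima2} applied to $(Y,\OL_f|_Y)$ gives
$$
0\le h_{\OL_f|_Y}(Y)\le e_1(Y,\OL_f|_Y).
$$
Since $\{y_m\}$ is generic, every nonempty open $U\subset Y$ contains all but finitely many $y_m$. Hence $\inf_{U(\ol K)} h\le \liminf_m h_{\OL_f}(y_m)=0$, so $e_1(Y,\OL_f|_Y)\le 0$. For the lower bound, $h_{\OL_f|_Y}(Y)\ge 0$ because $\OL_f|_Y$ is nef: the top intersection number $(\OL_f|_Y)^{\dim Y+1}$ is nonnegative, and the denominator $(\dim Y+1)\,\TL_f^{\dim Y}\cdot Y$ is positive by bigness. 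Together these give $h_{\OL_f|_Y}(Y)=0=\lim_m h_{\OL_f}(y_m)$, so $\{y_m\}$ is generic and $h_{\OL_f|_Y}$-small.

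Third, Theorem \ref{equi3} now gives weak convergence of $\mu_{y_m,v}$ to $\mu_{\OL_f|_Y,v}$ on $Y_v^\an$ for every place $v$. The Galois orbits $O_v(y_m)$ computed in $Y$ agree with those computed in $X$, so the measures are the same. I expect no real obstacle. The only delicate points are the compatibility of restriction with nefness, geometric parts and heights, all of which follow formally from functoriality of pull-back. Notably, the argument shows that the height of $Y$ vanishes without any direct intersection-theoretic computation; genericity plus the fundamental inequality forces it.
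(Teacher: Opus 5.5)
Your proposal is correct and takes the same route as the paper, which simply observes that $(Y,\OL_f|_Y)$ satisfies the hypotheses of Theorem \ref{equi3}. Your argument that genericity together with the first inequality of Theorem \ref{minima2} and nefness forces $h_{\OL_f|_Y}(Y)=0$, so that $h_{\OL_f}(y_m)\to 0$ makes the sequence small, is exactly the step the paper leaves implicit (cf.\ the remark in \S\ref{sec prelim} that $\liminf_m h_{\OL}(x_m)\geq h_{\OL}(X)$ for generic sequences).
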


We refer to \cite[\S6.2.3]{YZ} for a historical account of the equidistribution theorem in this dynamical setting.
In particular, if $X\to S$ is an abelian scheme, the notion of non-degeneracy of $Y$ was studied by Dimitrov--Gao--Habegger \cite{DGH} in their proof of the uniform Bogomolov conjecture. A version of the above equidistribution theorem in this case was proved by K\"uhne \cite{Kuh} and applied to refine the result of \cite{DGH}.

\subsubsection{Application to PCF points} 
The second application of our equidistribution theorem is to derive equidistribution of post-critically finite points (or just PCF points). 

Let $S$ be a smooth and quasi-projective variety over a number field $K$. Let $X=\BP^1_S$ be the {projective line} over $S$, and let $f:X\to X$ be a finite morphism over $S$ of degree $q>1$. 
Denote by $\pi:X\to S$ the structure morphism.
There is a line bundle $L$ on $X$ of the form $\CO(q-1)\otimes \pi^*N$ for some line bundle $N$ on $S$ such that $f^*L\simeq qL$. 
Then we have a dynamical system $(X,f,L,q)$ over $S$. 

In practice, we can take $S$ to be the moduli space $\CM_q^1$ of dynamical systems on $\PP^1$ over $K$ or a subvariety of some moduli space. 

The canonical morphism $f^*\omega_{X/S}\to \omega_{X/S}$ induces
a global section $\delta f$ of $\omega_f=\omega_{X/S}\otimes f^*\omega_{X/S}^\vee$ on $X$.
The \emph{ramification divisor}  $R=R(f)$ of the finite morphism $f:X\to X$
is defined to be the divisor of the section $\delta f$. It is also viewed as a (possibly non-reduced) closed subscheme in $X$.

For any point $y\in S(\overline K)$, the fiber of $(X,f,L,q)$ above $y$ gives a dynamical system {$(X_y, f_y, L_y, q)$} over $\overline K$. The ramification divisor {$R(f_y)$ on $X_y=\PP^1_{\ol K}$} is equal to the fiber of $R(f)$ above $y$. 
The point $y\in S(\overline K)$ is called \emph{post-critically finite} (PCF) if 
every irreducible component of $R(f_y)$ (with reduced structure)  is preperiodic under $f_y$.

Start from $f$-invariant extension $\OL_f$ of $L$ in $\wh\Pic(X)$. 
Define a nef adelic line bundle on $S/\ZZ$ by
$$
\OM= N_{R/S}(\OL_f|_R).
$$
Here the norm map {on the right-hand side} is also the $1$-fold Deligne pairing
recalled in \S\ref{sec Deligne pairing}, which can be extended to $R$ (if it is non-integral) by linearity via writing $R$ as a Weil divisor. 
Now we have a non-negative height function 
$$
h_\OM:S(\overline K)\lra \RR.
$$
It detects PCF points in the  sense that $y\in S(\ol K)$ is PCF if and only if $h_\OM(y)=0$. 

Now we are in the setting to apply Theorem \ref{equi3} to {$(S, \OM)$} over $K$. An extra condition to check is whether the geometric part $\TM$ of $\OM$ is big on {$S/K$}.  As a deep theorem combining the works of 
DeMarco \cite{DeM1, DeM2}, Bassanelli--Berteloot \cite{BB07} and Gauthier--Okuyama--Vigny  \cite{GOV},  $\TM$ is big on {$S/K$} if the morphism {$S\to \CM_q^1$} is generically finite and its image is not contained in the flexible Latt\`es locus.
Moreover, in this case, the equilibrium measure $\mu_{\OM, v}$ at archimedean $v$ is exactly equal to the normalized bifurcation measure introduced by DeMarco \cite{DeM1, DeM2}. 
Hence, we obtain the following equidistribution theorem from \cite[Thm. 6.3.5]{YZ}.

\begin{thm} [equidistribution: PCF maps on projective line] \label{equi PCF1}
Let $S$ be a smooth and quasi-projective variety over a number field $K$. Let $X=\BP^1_S$ be the projective line over $S$, and let $f:X\to X$ be a finite morphism over $S$ of degree $q>1$.  
Assume that the morphism $S\to \CM_q^1$ is generically finite and its image is not contained in the flexible Latt\`es locus. 
Let $\{y_m\}_{m\geq 1}$ be a generic sequence of PCF points of $S(\overline K)$.
Then for every place $v$ of $K$, 
the measure {$\mu_{y_m,v}$} converges weakly to the normalized bifurcation measure on {$S_v^\an$}.
\end{thm}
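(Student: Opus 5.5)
The plan is to deduce the statement from the general equidistribution theorem, Theorem \ref{equi3}, applied to the pair $(S,\OM)$ with $\OM=N_{R/S}(\OL_f|_R)$ as constructed above. Three hypotheses of Theorem \ref{equi3} need to be checked, after which the limit measure must be identified.

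\textbf{Step 1: nefness of $\OM$.} Since $\OL_f$ is nef on $X/\ZZ$ (\S\ref{sec adelic dynamics}), its restriction to each irreducible component of $R$ is nef. The $1$-fold Deligne pairing (the norm functor) preserves nefness, by \S\ref{sec Deligne pairing}, and we extend linearly with non-negative multiplicities. Hence $\OM$ is a nef adelic line bundle on $S/\ZZ$. If some component of $R$ is not flat over $S$, I would first shrink $S$ to a dense open subset over which every component is finite flat. Genericity of the sequence is unaffected by this, because removing a proper closed subset discards only finitely many terms of any subsequence.

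\textbf{Step 2: bigness of $\TM$.} This is the deep input and the main obstacle. I will not reprove it; I will cite the theorem combining DeMarco, Bassanelli--Berteloot and Gauthier--Okuyama--Vigny recalled above. Under the hypothesis that $S\to\CM_q^1$ is generically finite with image outside the flexible Latt\`es locus, that theorem shows $\TM$ is big on $S/K$.

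\textbf{Step 3: smallness.} By construction $h_\OM\ge 0$. For a PCF point $y$, each component of $R(f_y)$ is preperiodic, so its canonical height vanishes. Using the norm compatibility
$$h_\OM(y)=\sum_{z\in R(f_y)}h_{\OL_f}(z),$$
with multiplicities and up to the normalization of Theorem \ref{bridge}, we get $h_\OM(y_m)=0$ for all $m$. The fundamental inequality, Theorem \ref{minima2}, gives $0=\liminf_m h_\OM(y_m)\ge h_\OM(S)\ge0$. Hence $h_\OM(S)=0$, and the sequence is generic and $h_\OM$-small.

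\textbf{Step 4: conclusion.} Theorem \ref{equi3} now yields $\mu_{y_m,v}\to\mu_{\OM,v}$ weakly on $S_v^\an$ for every place $v$. It remains to identify $\mu_{\OM,v}$ with the normalized bifurcation measure, namely $c_1(\OM)_v^{\dim S}/\deg_{\TM}(S)$. At archimedean $v$, I would express the metric of $\OM$ through the Green functions of $f$ evaluated at the critical points. Its curvature current is then DeMarco's bifurcation current, so the Monge--Amp\`ere measure is the bifurcation measure. At non-archimedean $v$ the same formula serves as the definition of the normalized bifurcation measure. In either case the identification is again taken from the cited works.
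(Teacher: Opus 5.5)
Your proposal is correct and is essentially the paper's argument: apply Theorem \ref{equi3} to $(S,\OM)$ with $\OM=N_{R/S}(\OL_f|_R)$, using nefness of $\OM$, the bigness of $\TM$ cited from DeMarco, Bassanelli--Berteloot and Gauthier--Okuyama--Vigny, the vanishing of $h_\OM$ at PCF points, and the identification of $\mu_{\OM,v}$ with the normalized bifurcation measure. Your derivation of $h_\OM(S)=0$ from the fundamental inequality makes explicit the smallness check that the paper leaves implicit, and your preliminary shrinking of $S$ is a harmless precaution.
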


We refer to  \cite[\S 6.3]{YZ} for {a more detailed discussion} of this topic. 
For example, if $S$ is a family of polynomial maps on $\PP^1$, the theorem was previously proved by Favre--Gauthier \cite{FG}.
Their strategy is to reduce the problem to the equidistribution of Yuan \cite{Yua} in the projective case, which works for polynomial maps but not for rational maps.

The equidistribution of PCF points fits perfectly {into} the setting of the dynamical {Andr\'e--Oort} conjecture of Baker--DeMarco \cite[Conj. 1.10]{BD}. Our equidistribution theorem plays a crucial role in the recent solution of the dynamical {Andr\'e--Oort} conjecture for 1-dimensional families by Ji--Xie \cite{JX}.  Previously, the conjecture was proved for 1-dimensional families of polynomial maps by Favre--Gauthier \cite{FG}.

\subsection{Variational principle}

The proof of Theorem \ref{equi3} still follows the strategy of \cite{Yua} by applying the variational principle of \cite{SUZ}. 
The following is the key positivity result in the proof, which is a part of 
Theorem \ref{arith volume}(5), but we state here for its importance and to clarify its origin from the arithmetic Siu inequality in Theorem \ref{bigness}. 

\begin{lem} \label{volume asymptotic}
Let $X$ be a quasi-projective variety of dimension $d$ over a number field $K$.
Let $\overline L$ and $\ol M$ be integrable adelic line bundles on $X/\ZZ$. 
Assume that $\OL$ is nef on $X/\ZZ$. 
Then 
$$   \widehat{\vol}(\OL+t\OM)\geq 
(\OL+t\OM)^{d+1}+O(t^2), \quad t\in \QQ.$$
\end{lem}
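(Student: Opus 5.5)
The plan is to derive the estimate from the arithmetic Siu inequality in Theorem \ref{bigness} by writing $\OM$ as a difference of nef bundles and expanding polynomially in $t$. Since $\OM$ is integrable, we can write $\OM=\OM_1-\OM_2$ with $\OM_1,\OM_2$ strongly nef. We treat $t>0$ first; the case $t<0$ is the same after replacing $\OM$ by $-\OM=\OM_2-\OM_1$, which swaps the roles of $\OM_1$ and $\OM_2$. By homogeneity of $\wh\vol$ and of intersection numbers, it suffices to work with adelic $\QQ$-line bundles and rational $t$.

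For rational $t>0$, set $\OL_t:=\OL+t\OM_1$, which is nef. Then $\OL+t\OM=\OL_t-t\OM_2$ is a difference of two nef adelic $\QQ$-line bundles, so Theorem \ref{bigness} (with $d+1$ the absolute dimension) gives
$$
\wh\vol(\OL+t\OM)\geq \OL_t^{d+1}-(d+1)\,t\,\OL_t^{d}\cdot\OM_2 .
$$
Next expand both terms in $t$ using multilinearity of the intersection pairing on integrable bundles. We get
$$
\OL_t^{d+1}=\OL^{d+1}+(d+1)t\,\OL^d\cdot\OM_1+O(t^2),\qquad
t\,\OL_t^{d}\cdot\OM_2=t\,\OL^d\cdot\OM_2+O(t^2),
$$
so the right-hand side is $\OL^{d+1}+(d+1)t\,\OL^d\cdot\OM+O(t^2)$. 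The same expansion gives $(\OL+t\OM)^{d+1}=\OL^{d+1}+(d+1)t\,\OL^d\cdot\OM+O(t^2)$. Comparing the two yields the claim. Every $O(t^2)$ here is a polynomial in $t$ whose coefficients are fixed intersection numbers of $\OL,\OM_1,\OM_2$, so the constants are uniform for $|t|\le 1$.

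The only delicate point is checking that Theorem \ref{bigness} applies to adelic $\QQ$-line bundles with rational coefficients $t$. The nef cone is closed under positive rational combinations. Both sides of Siu's inequality are homogeneous of degree $d+1$, so after scaling by a common denominator of $t$ we reduce to integral adelic line bundles. The intersection numbers involved are finite because all bundles are integrable. Note that the argument never needs the continuity or differentiability of the volume from Theorem \ref{arith volume}; it is purely algebraic once the Siu inequality is available.
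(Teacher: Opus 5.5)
Your proposal is correct and follows essentially the same route as the paper: write $\OM=\OM_1-\OM_2$ with $\OM_1,\OM_2$ strongly nef, apply the arithmetic Siu inequality (Theorem \ref{bigness}) to $(\OL+t\OM_1)-t\OM_2$, and expand to first order in $t$. Your explicit treatment of $t<0$ by swapping $\OM_1$ and $\OM_2$, which keeps both bundles in Siu's inequality nef, is a detail the paper leaves implicit.
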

\begin{proof}

Writing $\OM=\OM_1-\OM_2$ with $\OM_1$ and $\OM_2$ nef, we have
\[
   \OL+t\OM=(\OL+t\OM_1)-t\OM_2.
\]
The arithmetic Siu inequality in Theorem \ref{bigness} gives
$$  \widehat{\vol}(\OL+t\OM)
   \geq (\OL+t\OM_1)^{d+1}
       -(d+1)(\OL+t\OM_1)^d\cdot t\OM_2 
   =(\OL+t\OM)^{d+1}+O(t^2).
   $$
\end{proof}

Now we prove the equidistribution theorem.

\begin{proof}[Proof of Theorem \ref{equi3}]
The conditions and the result do not change if {we replace} $\OL$ by $\OL+\pi^*\ON$ for an element $\ON\in \Pichat(\Spec K)_\intb$ with $\wh\deg(\ON)>0$. 
Here 
$$\pi^*: \Pichat(\Spec K)_\intb\lra \Pichat(X)_\intb$$
 is the pull-back map. 
As a consequence, we can assume $\OL^{d+1}>0$.
Here we denote $d=\dim X$. 

Let $\OM$ be an element in the kernel of the map
$\wh\Pic(X)_{\intb}\to \wt\Pic(X/K)_{\intb}$. 
Let $t$ be a nonzero rational number.
By Lemma \ref{volume asymptotic}, 
$$   \widehat{\vol}(\OL+t\OM)\geq 
(\OL+t\OM)^{d+1}+O(t^2)$$
is strictly positive if $|t|$ is small. 
By the fundamental inequality in Lemma \ref{minima3}, 
$$
e_1(X,\overline L+t\OM) \geq 
\frac{\wh\vol(\OL+t \OM)}{(d+1) \wh\vol(\TL)}
 \geq 
\frac{\OL^{d+1}+t (d+1)\OL^d\OM}{(d+1) \deg_\TL(X)}+O(t^2).
$$

Apply the inequality to the generic sequence $\{x_m\}_m$.
We have
$$
\liminf_{m\to \infty} h_{\OL+t \OM}(x_m) \geq 
\frac{\OL^{d+1}+t (d+1)\OL^d\OM}{(d+1) \deg_\TL(X)}+O(t^2).
$$
By assumption, 
$$
\lim_{m\to \infty} h_{\OL}(x_m) = h_\OL(X)=
\frac{\OL^{d+1}}{(d+1) \deg_\TL(X)}.
$$
Then the inequality implies 
$$
\liminf_{m\to \infty} t h_{\OM}(x_m) \geq 
t \frac{\OL^d\OM}{\deg_\TL(X)}+O(t^2).
$$

If $t>0$, the above implies 
$$
\liminf_{m\to \infty} h_{\OM}(x_m) \geq 
 \frac{\OL^d\OM}{\deg_\TL(X)}+O(t).
$$
If $t<0$, the above implies 
$$
\limsup_{m\to \infty} h_{\OM}(x_m) \leq 
 \frac{\OL^d\OM}{\deg_\TL(X)}+O(|t|).
$$
Set $t\to 0$ in each case. We obtain
$$
\lim_{m\to \infty} h_{\OM}(x_m) =  \frac{\OL^d\OM}{\deg_\TL(X)}.
$$

We are going to deduce the equidistribution theorem on $X_v^\an$ from the above limit identity.
Assume that $\OL\in \wh\Pic(\CU)_{\nef}$ for a quasi-projective model $\CU$ of $X$ over $\ZZ$, and assume that $\OL$ is represented by a Cauchy sequence 
$(\CL,(\CX_i,\overline \CL_i, \ell_{i})_{i\geq1})$ in $\wh\Picc(\CU)_{\rmod}$.
Here $\CX_i$ is a projective model of $\CU$, and $\CLL_i$ is a hermitian $\QQ$-line bundle on $\CX_i$. 
Assume that there is a morphism $\psi_i:\CX_i\to \CX_1$ extending the identity morphism of $\CU$.
Denote $X_i=\CX_{i,\QQ}$, which contains $X$ as an open subvariety.
 
Let $\CX_1'$ be another projective model of $X_1$ over $\ZZ$.
Let $\CMM$ be a hermitian $\QQ$-line bundle on $\CX_1'$, with a fixed isomorphism $\CM_\QQ\to \CO_{X_1}$. Then it induces a metric $\|\cdot \|_w$ of $\CO_{X_1}$ on 
$X_{1,w}^\an$ for any place $w$ of $K$. 
Assume that the metric $\| 1\|_w=1$ for any place $w\neq v$ of $K$.
Denote $f=-\log\|1\|_v$, which is continuous on $X_{1,v}^\an$. 
By definition, 
$$
h_{\CMM}(x_m) = \int_{X_v^\an} f \mu_{x_m,v},
$$
and
$$
\OL^d\cdot \CMM 
= \lim_{i\to\infty}  \CLL_i^d\cdot \CMM
= \lim_{i\to\infty} 
\int_{X_{i,v}^\an} f c_1(\CLL_i)_v^d.
$$
Then the above result gives a limit identity
$$
\lim_{m\to\infty}
\int_{X_v^\an} f \mu_{x_m,v}
= \frac{1}{\deg_\TL(X)} \lim_{i\to\infty} 
\int_{X_{i,v}^\an} f\, c_1(\CLL_i)_v^d.
$$
Here $f$ is viewed as a function on $X_{i,v}^\an$
by the pull-back induced by $\psi_{i,\QQ}:X_i\to X_1$.

Now we {are going} to vary $f=-\log\|1\|_v$, which is a function on $X_{1,v}^\an$ {induced by} $(\CX_1',\CMM)$.
We claim that the space of all such model functions $f$ is dense in 
$C(X_{1,v}^\an)$ under the topology of uniform convergence. 
If $v$ is complex, $f$ can be any smooth function, and the result is classical. If $v$ is real, the result can be derived from the complex case. If $v$ is non-archimedean, then $f$ is a model function, and the density theorem is due to Gubler (cf. \cite[Thm. 7.12]{Gub3} and \cite[Lem. 3.5]{Yua}).

Note that  
$$
\lim_{i\to\infty} \int_{X_{i,v}^\an}  c_1(\CLL_i)_v^d
= \lim_{i\to\infty} (\CL_{i,\QQ})^d
= \TL^d= \deg_\TL(X).
$$
Therefore, the limit identity also holds for any $f\in C(X_{1,v}^\an)$.

Finally, assume $f\in C_c(X_{v}^\an)$, viewed as an element of $C(X_{i,v}^\an)$ by the open immersion $X\to X_i$. Then
$$
\lim_{i\to\infty} 
\int_{X_{i,v}^\an} f c_1(\CLL_i)_v^d
=\lim_{i\to\infty} 
\int_{X_{v}^\an} f\, c_1(\CLL_i)_v^d|_{X_v^\an}
=\int_{X_{v}^\an} f c_1(\OL)_v^d.
$$
Here the last equality follows from the definition of $c_1(\OL)_v^d$. Therefore, the limit identity becomes
$$
\lim_{m\to\infty}
\int_{X_v^\an} f \mu_{x_m,v}
= \frac{1}{\deg_\TL(X)} \int_{X_{v}^\an} f c_1(\OL)_v^d.
$$
This proves the equidistribution theorem. 
\end{proof}

\section{Admissible canonical bundle}
\label{sec admissible}

In the original work, Arakelov \cite{Ara} introduced the admissible metric of  the canonical line bundle of a complex curve to {obtain} a clean adjunction formula. 
Zhang \cite{Zha1} extended the definition to curves over non-archimedean fields, and thus obtained an admissible canonical bundle for a curve over a number field. 
To prove the uniform Bogomolov conjecture, Yuan \cite{Yua21} extended the definition {to obtain the admissible canonical bundle} on a relative curve (or a family of curves), and proved the bigness of the admissible canonical bundle under a natural condition. 

In this section, we will introduce the admissible canonical bundle and prove its bigness. {In the next section}, we will apply the bigness to prove the uniform Bogomolov conjecture.

\subsection{Admissible canonical bundle}
\label{subsec admissible}

The goal of this subsection is to introduce the admissible canonical bundle on a relative curve (or a family of curves).  
The theory for a single curve over a local field or a number field was introduced by Zhang \cite{Zha1} in terms of reduction graphs before the introduction of adelic line bundles in \cite{Zha2}. 
A re-organization of the theory in terms of adelic line bundles is given in the appendix of \cite{Yua21}. 
The theory was extended to relative curves by Yuan \cite{Yua21}. 

\subsubsection{Admissible metrics: local setting}

Let us first recall Zhang's admissible metrics on curves over local fields. 
We refer to \cite[Appendix A]{Yua21} for more details. 

Let $F$ be a complete valued field with a non-trivial valuation. Let $C$ be a geometrically integral smooth projective curve of genus $g\geq 2$ over $F$. 
Let $\omega_{C/F}$ be the canonical sheaf. 
Let $\Delta$ be the diagonal of $C^2=C\times_FC$, viewed as a divisor on $C^2$. 

We start with some terminology of metrics on $C^2$.
A continuous metric $\|\cdot\|_{\Delta}$ of $\CO_{C^2}(\Delta)$ on $C^2$ is called 
\emph{symmetric}
if the Green function 
$$g_\Delta=-\log\|1\|_{\Delta}: (C^2\setminus \Delta)^\an\lra \RR$$ 
is symmetric in the two components of $C^2$.
For any finite extension $F'$ of $F$, by pull-back via the natural map 
$(C^2_{F'})^\an \to (C^2)^\an$, the Green function  $g_\Delta$ induces a Green function of 
$\Delta_{F'}$ on $(C^2_{F'})^\an$, and thus the metric  
 $\|\cdot\|_{\Delta}$ induces a continuous metric of $\CO_{C^2}(\Delta)_{F'}$ on $(C^2_{F'})^\an$. 
 By abuse of notation, we still denote them by $g_\Delta$ and $\|\cdot\|_{\Delta}$.

For any finite extension $F'$ of $F$ and any point $x\in C(F')$, denote
$$
(\CO(x), \|\cdot\|_{x}) := i_{x}^* (\CO(\Delta), \|\cdot\|_{\Delta})
$$
{viewed as} metrized line bundles on $(C_{F'})^\an$.
Here 
$\CO(x)$ is the line bundle on $C_{F'}$ corresponding to $x\in C(F')$, and
$$i_{x}=(x, \mathrm{id}): \Spec {F'}\times_{\Spec F} C \lra C\times_{\Spec F} C$$
is the natural morphism.
It follows that 
$$
g_{x}=-\log\|1\|_{x}: (C_{F'} \setminus \{x\})^\an\lra \RR$$ 
is equal to 
the pull-back of $g_{\Delta}$ via the map
$i_{x}:C_{F'} \to C^2_{F}$.
We may also write $g_{x}=g_{\Delta}(x,\cdot)$. 

We will need the notation of Monge--Amp\`ere measures on $C^\an$, which is actually the Chambert-Loir measure in the non-archimedean case. 
The references for this projective case are {given} at the end of \S\ref{sec prelim}. 
 
By \cite[Thm. A.1]{Yua21}, there are a unique integrable metric 
$\|\cdot\|_{a}$ of $\omega_{C/F}$ on $C^\an$ and a unique symmetric integrable metric 
$\|\cdot\|_{\Delta}$ of $\CO(\Delta)$ on $(C^2)^\an$ 
satisfying the following properties
 for all finite extensions $F'/F$ and all points $x\in C(F')$:
\begin{enumerate}[(1)]
\item the equality
$$
(2g-2)c_1(\CO(x), \|\cdot\|_{x})
=c_1(\omega_{C_{F'}/F'}, \|\cdot\|_{a}),
$$
of Monge--Amp\`ere measures holds on $(C_{F'})^\an$;
\item the integral
$$
\int_{(C_{F'})^\an} g_{\Delta}(x,\cdot)\, c_1(\omega_{C_{F'}/F'}, \|\cdot\|_{a})=0;
$$
\item the residue map
$$
(\omega_{C/F} \otimes_{\CO_C} \CO(x) )|_{x}\lra F'
$$
is an isometry, where $F'$ is endowed with the absolute value extending that of $F$.
\end{enumerate}
We call the metric $\VCV_a$ the \emph{admissible metric} of 
$\omega_{C/F}$, which is actually nef (or semipositive). 
We call the Green function $g_\Delta$ the \emph{admissible Green function} of $\Delta$, which is also written as $g_{\Delta,a}$ to emphasize the situation.

If $F=\CC$, the metrics are just the original Arakelov metrics on compact Riemann surfaces in \cite{Ara}. 
If $F=\RR$, they are induced by the Arakelov metrics on $C(\CC)$. 
If $F$ is non-archimedean, the metrics are essentially introduced by Zhang \cite{Zha2} as counterparts of the complex setting.

\subsubsection{Admissible canonical bundle: single curves}
\label{sec admissible single curves}

Let $K$ be a number field. Let $C$ be a geometrically integral smooth projective curve of genus $g\geq 2$ over $K$. 
Let $\omega_{C/K}$ be the canonical sheaf. 
Let $\Delta$ be the diagonal of $C^2=C\times_KC$, viewed as a divisor on $C^2$. 
Let $D$ be a divisor on $C$. 

Apply the above constructions to the local field $K_v$ for every place $v$ of $K$.
We obtain an adelic line bundle
$\bar\omega_{C/K,a}=(\omega_{C/K}, (\|\cdot\|_{a,v})_v)$ on $C$, and an adelic line bundle 
$\ol{\CO}(\Delta)_a=(\CO(\Delta), (\|\cdot\|_{\Delta, a,v})_v)$ on $C^2$.

Both of the above objects are integrable. 
For convenience, we say that both of them are \emph{admissible}. 
Moreover, $\bar\omega_{C/K,a}$ is called \emph{Zhang's admissible canonical bundle} of $C$ over $K$.

The admissible adelic line bundles $\bar\omega_{C/K,a}$ and 
$\ol{\CO}(\Delta)_a$ {were} originally introduced by Zhang \cite{Zha1}. 
We refer to \cite[Appendix A]{Yua21} for more details in terms of the current terminology. 

The arithmetic self-intersection number $\bar\omega_{C/K,a}^2\in \RR$ is called the \emph{admissible volume} of $C/K$. It is an important arithmetic invariant of $C$. 

It is known that $\bar\omega_{C/K,a}$ is always nef, so the admissible volume $\bar\omega_{C/K,a}^2\geq 0$. 
Moreover,  Zhang \cite{Zha1}
has reduced the Bogomolov conjecture for $C$
to the strict positivity $\bar\omega_{C/K,a}^2> 0$ (or equivalently the bigness of $\bar\omega_{C/K,a}$). 
We will come back to this topic in \S\ref{sec Bog single curves}. 

The positivity of the admissible volume $\bar\omega_{C/K,a}^2$ is built upon Zhang's $\varphi$-invariant introduced in \cite{Zha3}. Recall that the global $\varphi$-invariant
$$
\varphi(C)= \sum_{v\in M_K} \epsilon_{v} \varphi_v(C),
$$
where {the} local $\varphi$-invariant  is defined as the  integral
$$
\varphi_v(C)
=-\int_{(C_{K_v}\times C_{K_v})^\an}g_{\Delta,a} \, c_1(\ol\CO(\Delta)_a)^2.
$$
The summation has only finitely many nonzero terms, since $\varphi_v(C)=0$ if $v$ is non-archimedean and $C$ has potentially good reduction at $v$. 
If $v$ is archimedean, \cite[Prop. 2.5.3]{Zha3} proves that $\varphi_v(C)>0$ by completing it as a sum of squares in terms of the heat kernel. 
If $v$ is non-archimedean and $C$ does not have potentially good reduction at $v$, then $\varphi_v(C)$ is an invariant of the reduction graph of $C$ at $v$, and 
 Cinkir \cite[Thm. 2.11]{Cin1} proves  $\varphi_v(C)> 0$ in this case.
As a consequence, we have $\varphi(C) >0$. 

Finally, the following theorem finishes the proof of the positivity and thus the second proof of the Bogomolov conjecture.
 
\begin{theorem}[de Jong \cite{dJo3}]
\label{dejong}
Let $C$ be a geometrically integral smooth projective curve of genus $g\geq 2$ over a number field $K$. Then
$$
\ol\omega_{C/K,a}^2 \geq \frac{2}{3g-1}\sum_{v\in M_K} \epsilon_v\varphi_v(C).
$$
\end{theorem}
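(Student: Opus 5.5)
The plan is to work on the surface $C^2=C\times_K C$ with the two admissible adelic line bundles $\ol\CO(\Delta)_a$ and $p_1^*\bar\omega_{C/K,a}$, $p_2^*\bar\omega_{C/K,a}$. The inequality should come from a single arithmetic positivity statement on $C^2$, a nonnegative triple intersection number, expanded into the two invariants $\bar\omega_{C/K,a}^2$ and $\varphi(C)=\sum_v\epsilon_v\varphi_v(C)$. The key steps below are in the order I would carry them out.

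\textbf{Step 1 (intersection table).} Compute all arithmetic triple intersections on $C^2$ among $\Delta:=\ol\CO(\Delta)_a$, $\omega_1:=p_1^*\bar\omega_{C/K,a}$ and $\omega_2:=p_2^*\bar\omega_{C/K,a}$. Several entries are straightforward:
\begin{itemize}
\item By the projection formula, $\omega_1^2\cdot(\ )=0$ and $\omega_2^2\cdot(\ )=0$, and $\omega_1\omega_2\Delta=\bar\omega_{C/K,a}^2$.
\item By the adjunction property (3) of the admissible metrics (the residue isometry), $\Delta|_\Delta\cong-\bar\omega_{C/K,a}$ isometrically. This gives $\Delta^2\omega_i=-\bar\omega_{C/K,a}^2$.
\item By property (1) for the Monge--Amp\`ere measures, together with the normalization (2), the local contribution at $v$ of the pushforward of $\Delta$ to one factor is controlled. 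I expect this to give $\Delta\,\omega_i^2=0$ and an expression for $\Delta^3$.
\end{itemize}
The genuinely new term is $\Delta^3$. Computing it via the inductive definition with the section $1$ of $\CO(\Delta)$, its finite part is $(\Delta|_\Delta)^2=\bar\omega_{C/K,a}^2$. Its integral part is $-\sum_v\epsilon_v\int g_{\Delta,a}\,c_1(\ol\CO(\Delta)_a)^2=\varphi(C)$, up to sign conventions. So I expect $\Delta^3=\bar\omega_{C/K,a}^2\pm\varphi(C)$, and I would pin down the sign carefully.

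\textbf{Step 2 (positivity input).} Consider $\ol N=\Delta+a(\omega_1+\omega_2)$ with a rational parameter $a>0$, or more generally $\Delta+a\omega_1+b\omega_2$, symmetric with $a=b$. I would aim to show that $\ol N$ is nef on $C^2$ for suitable $a$, or at least that $\ol N^3\ge 0$. Its geometric part restricts to each fiber $\{x\}\times C$ as $\CO(x)+a\,\omega$, of degree $1+a(2g-2)>0$. Property (1) says the curvature of $\ol\CO(\Delta)_a$ on fibers is $\frac1{2g-2}c_1(\bar\omega_a)$, which is semipositive. The delicate part is semipositivity of the curvature in the transverse direction, and nonnegativity on horizontal curves. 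For curves I would use the heights $h_{\ol N}(x,y)=h_{\Delta}(x,y)+a(h_\omega(x)+h_\omega(y))$ and the Faltings--Hriljac relation between $h_\Delta(x,y)$ and the N\'eron--Tate pairing $-\langle x-\alpha_0,y-\alpha_0\rangle$ up to $\omega$-terms. By Cauchy--Schwarz for the N\'eron--Tate pairing, the negative cross term is absorbed by $a(|x|^2+|y|^2)$ once $a$ is large enough. If full nefness fails at some place, I would instead get $\ol N^3\ge0$ from the fundamental inequality Lemma \ref{minima3}(3) (and Theorem \ref{minima2}): Cauchy--Schwarz gives $h_{\ol N}\ge0$ at a Zariski-dense set of points, hence $e_1\ge0$.

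\textbf{Step 3 (optimization).} Expand $\ol N^3$ using the table of Step 1. This gives an expression of the form $\alpha(a)\,\bar\omega_{C/K,a}^2-\varphi(C)\ge 0$, with $\alpha$ a polynomial in $a$ (roughly $\bar\omega_{C/K,a}^2(1-6a+6a^2(\cdots))$ plus the $\varphi$ term). Choose the admissible $a$ from Step 2 that is best, which should yield the constant $\frac{2}{3g-1}$; this is essentially the optimization carried out by de Jong.

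The main obstacle is Step 2. Positivity of a diagonal-type bundle on $C^2$ is exactly where the Hodge index theorem on $C$ (Faltings--Hriljac) must be converted into a statement on the surface. Getting precisely the threshold on $a$ that produces $\frac{2}{3g-1}$, rather than a weaker constant, will require tracking the $\omega$-corrections in $h_\Delta$ exactly.
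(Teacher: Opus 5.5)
Your architecture is the one behind the paper's argument: a cubic self-intersection on $C^2$ of a combination of $\Delta:=\ol\CO(\Delta)_a$, $\omega_1$, $\omega_2$, expanded via adjunction and the normalizations (1)--(2), followed by a positivity input. But the proposal has two genuine gaps.

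First, your intersection table is wrong in two places. Restricting $\omega_1=p_1^*\bar\omega_{C/K,a}$ to $\Delta\cong C$ or to $C\times\{P\}$ gives $\bar\omega_{C/K,a}$, and the integral terms vanish because $c_1(\omega_1)^2=0$. Hence $\Delta\cdot\omega_i^2=\bar\omega_{C/K,a}^2$ and $\omega_1^2\cdot\omega_2=\omega_1\cdot\omega_2^2=(2g-2)\,\bar\omega_{C/K,a}^2$, not $0$. The entries $\omega_1\omega_2\Delta=\bar\omega_{C/K,a}^2$, $\Delta^2\omega_i=-\bar\omega_{C/K,a}^2$ and $\omega_i^3=0$ are right, and the sign is $\Delta^3=\bar\omega_{C/K,a}^2-\varphi(C)$. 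The correct expansion is
\[
\bigl(\Delta+a(\omega_1+\omega_2)\bigr)^3=\bigl(1-6a+12a^2+12(g-1)a^3\bigr)\,\bar\omega_{C/K,a}^2-\varphi(C).
\]
With your table $g$ never enters, so $\frac{2}{3g-1}$ cannot come out. At $a=\frac12$ it would even give $-\frac12\bar\omega_{C/K,a}^2-\varphi(C)\geq 0$, which is absurd.

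Second, Step 2 does not supply the positivity. The missing idea is that at $a=\frac12$ the bundle comes from the Jacobian. Let $j:C^2\to J$, $(x,y)\mapsto y-x$, and let $\ol\Theta$ be the $[2]$-invariant adelic extension of $2\theta$. Then $j^*\ol\Theta\simeq 2\ol\CO(\Delta)_a+\omega_1+\omega_2$ as adelic line bundles; this is Zhang's admissible Faltings--Hriljac, an identity of metrics at every place and not just of point heights. Since $\ol\Theta$ is nef, so is $j^*\ol\Theta$, and $(j^*\ol\Theta)^3=(12g-4)\bar\omega_{C/K,a}^2-8\varphi(C)\geq0$ is exactly the theorem. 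This is the single-curve case of Theorem \ref{lower bound family} plus nefness of $\ol\Theta$, which is how the paper gets the inequality.

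The value $a=\frac12$ is forced:
\begin{itemize}
\item For $a<\frac12$, $(\Delta+a(\omega_1+\omega_2))|_\Delta\simeq(2a-1)\bar\omega_{C/K,a}$ has negative degree on the curve $\Delta$, so the bundle is not nef there.
\item The coefficient $1-6a+12a^2+12(g-1)a^3$ is increasing on $[\frac12,\infty)$, with value $\frac{3g-1}{2}$ at $a=\frac12$.
\end{itemize}
So taking ``$a$ large enough'' only yields a weaker constant.

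Your fallback does not rescue this. The bound $h(X)\geq\frac{1}{d+1}e_1$ in Theorem \ref{minima2} presupposes nefness. Lemma \ref{minima3}(3) goes from volume to $e_1$, not back. And nonnegative heights on a Zariski-dense set of points do not force $\ol N^3\geq 0$ when the metrics are not semipositive.
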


A stronger constant of the theorem was later obtained by Wilms \cite[Thm. 1.2]{Wil3} by applying the Hodge index theorem of adelic line bundles of Yuan--Zhang \cite{YZ17}.

\subsubsection{Admissible canonical bundle: relative curves} 

A projective curve $C$ over a field $k$ is called \emph{semistable} if $C_{\bar k}$ is reduced and all singular points of $C_{\bar k}$ are ordinary double points.  
It is called \emph{stable} if it is semistable with arithmetic genus $g\geq 2$, and any rational irreducible component of $C_{\bar k}$ intersects other irreducible components at three or more points. 

By a \emph{relative curve over a noetherian scheme $S$}, we mean a projective and flat morphism $\pi:X\to S$ {purely of relative dimension 1} with geometrically connected fibers. 
The relative curve is called \emph{smooth (resp. stable, semistable)}
{if} every fiber of  
$\pi:X\to S$ is smooth (resp. stable, semistable).
It is said to be \emph{of genus $g$} {if} every fiber of $\pi:X\to S$ has arithmetic genus $g$.

By Yuan \cite[Thm. 2.3]{Yua21}, we have the following construction of admissible canonical bundles {for} quasi-projective families of curves.

\begin{thm} \label{bb22:admissible2}
Let $k$ be either $\ZZ$ or a field. 
Let $S$ be a quasi-projective and flat normal integral scheme over $k$. 
Let $\pi:X\to S$ be a smooth relative curve of genus $g\geq 2$.
Denote by $\Delta:X\to X\times_S X$ the diagonal morphism. Then the following are true:
\begin{enumerate}[(a)]
\item
There is an adelic line bundle $\overline\omega_{X/S,a}$ in $\wh\Picc(X/k)$ with underlying line bundle $\omega_{X/S}$, 
such that for any $v\in S^\an$, the metric of $\omega_{X_{\CH_v}/\CH_v}$ on $X_{\CH_v}^\an$ induced by 
$\overline\omega_{X/S,a}$
is equal to the canonical admissible metric $\|\cdot\|_a$. 
Moreover, $\overline\omega_{X/S,a}$ is nef and unique up to isomorphism. 

\item
There is an adelic line bundle $\overline\CO(\Delta)_a$ in $\wh\Picc(X\times_S X/k)$ with underlying line bundle $\CO(\Delta)$, 
such that for any $v\in S^\an$, the metric of $\CO(\Delta_{\CH_v})$ on $(X_{\CH_v}^2)^\an$ induced by $\overline\CO(\Delta)_a$
is equal to the canonical admissible metric $\|\cdot\|_{\Delta,a}$. 
Moreover, $\overline\CO(\Delta)_a$ is integrable and unique up to isomorphism. 
\end{enumerate}
\medskip\noindent 
Moreover, the adelic line bundles satisfy the following extra properties:
\begin{enumerate}[(1)]
\item The canonical isomorphism
$$
\omega_{X/S} \lra \Delta^*\CO(-\Delta) 
$$
induces an isomorphism 
$$
\ol\omega_{X/S,a} \lra \Delta^*\ol\CO(-\Delta)_a.
$$
\item The canonical isomorphisms
$$
p_{1*}\pair{\CO(\Delta), p_2^* \omega_{X/S}} \lra \Delta^*p_2^* \omega_{X/S}\lra \omega_{X/S}
$$
induce an isomorphism
$$
p_{1*}\pair{\ol\CO(\Delta)_a, p_2^* \ol\omega_{X/S,a}} \lra \ol\omega_{X/S,a}.
$$
Here $p_1,p_2: X\times_S X\to X$ denote the two projections.
\end{enumerate}
\end{thm}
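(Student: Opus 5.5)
The plan is to build $\ol\CO(\Delta)_a$ first and define $\ol\omega_{X/S,a}$ from it through property (1). Once $\ol\CO(\Delta)_a$ is known to exist as an adelic line bundle, the object $\ol\omega_{X/S,a}:=\Delta^*\ol\CO(-\Delta)_a$ will satisfy (a) and (1) by construction. The reason is that the adjunction isomorphism $\omega_{X/S}\cong\Delta^*\CO(-\Delta)$ is compatible with residues. Consequently, condition (3) of \cite[Thm. A.1]{Yua21}, applied fiberwise over every $v\in S^\an$, identifies the pulled-back metric with $\|\cdot\|_a$.

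Uniqueness in (a) and (b) will follow from injectivity of the analytification map $\wh\Pic(\CU/k)\to\wh\Pic(\CU^\an)$ and full faithfulness of the analytification functor. The metric is prescribed at every point of $X^\an$, because $X^\an$ is the union of the fibers $X^\an_{\CH_v}$ over $v\in S^\an$. Hence any two such adelic line bundles become isometric after analytification, and are therefore isomorphic. Property (2) follows the same way. Both sides of (2) are adelic line bundles with the same underlying line bundle, and fiberwise the Deligne pairing metric at $v$ is the metric computed on $C=X_{\CH_v}$ over $\CH_v$. There it reduces to the single-curve identity $\int g_\Delta(x,\cdot)\,c_1(\omega_a)=0$ (condition (2)) together with condition (3). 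So the two metrics agree pointwise, and full faithfulness gives the isomorphism.

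For existence, I would use Zhang's explicit formula for the admissible data in terms of a Néron–Tate/theta construction on the Jacobian. Let $J\to S$ be the relative Jacobian, which is an abelian scheme, possibly after a finite étale base change that can be descended by Galois invariance. On it we have the $[2]$-invariant nef adelic line bundle $\ol\Theta$ from \S\ref{sec adelic dynamics}, built from a symmetric rigidified relative theta bundle. Consider the difference map $\delta:X\times_SX\to J$, $(x,y)\mapsto x-y$. On each fiber, the admissible Green function satisfies a formula of the type
$$(2g-2)\text{-rescaled}\ \delta^*\ol\Theta \;=\; \text{combination of } \ol\CO(\Delta)_a,\ p_1^*\ol\omega_a,\ p_2^*\ol\omega_a,$$
up to an additive constant on each fiber. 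This constant is Zhang's $\varphi$-invariant, or more precisely a $\lambda$-type invariant of the fiber.

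The plan is then to invert this relation. I will express $\ol\CO(\Delta)_a$ through $\delta^*\ol\Theta$, through the pullback by $p_i$ of $\ol\omega_a=\Delta^*\ol\CO(-\Delta)_a$, and through a correction term $\pi^*\ol N$, where $\ol N$ is an adelic line bundle on $S$ whose metric at $v$ is $\exp(-c(X_v))$. I will solve for the combination of $\ol\CO(\Delta)_a$ and $\ol\omega_a$ by restricting to the diagonal and using adjunction.

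The main obstacle is showing that the fiberwise constant $v\mapsto c(X_v)$ is the Green function of an integrable adelic line bundle on $S$. This requires controlling its behavior near the boundary of a projective model in the boundary topology, that is, a logarithmic growth bound. For this I would exhibit $\ol N$ as a Deligne pairing $\pi_*\pair{\cdot,\cdot}$ of known integrable adelic line bundles, such as $\ol\Theta$ pulled back to $X$. This works because $\varphi$ and $\lambda$ invariants are expressible through self-intersections of $\ol\Theta$-type bundles and the Faltings metric on $\det\pi_*\omega$. Continuity on $S^\an$ and integrability then follow from those of Deligne pairings. Nefness of $\ol\omega_{X/S,a}$ will then come from writing it as a limit of nef models: fiberwise it is semipositive, with curvature $(2g-2)$ times the admissible measure, and the Deligne-pairing representation gives the global Cauchy approximation by nef model bundles.
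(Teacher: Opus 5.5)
Your uniqueness argument matches the paper's: the metric is prescribed on every fiber $X^\an_{\CH_v}$, hence on all of $X^\an$, and analytification is fully faithful. Once the objects exist, checking (1) and (2) fiberwise against conditions (3) and (2) of \cite[Thm.~A.1]{Yua21} is also reasonable. The gap is in existence.

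Write $\ol\omega_a$ for $\ol\omega_{X/S,a}$. You define $\ol\CO(\Delta)_a$ from $\delta^*\OTheta$, $p_i^*\ol\omega_a$ and $\pi^*\ol N$, while $\ol\omega_a$ is itself defined as $\Delta^*\ol\CO(-\Delta)_a$. You then close the loop by restricting to the diagonal, but that restriction is a tautology:
\begin{itemize}
\item $\Delta^*\delta^*\OTheta=\pi^*e^*\OTheta$ is trivial, by the rigidification and $[2]$-invariance, and $\Delta^*(p_1^*\ol W+p_2^*\ol W)=2\ol W$.
\item Hence for an \emph{arbitrary} adelic extension $\ol W$ of $\omega_{X/S}$ (e.g.\ one from a single projective model), $\ol D:=\frac12(\delta^*\OTheta-p_1^*\ol W-p_2^*\ol W)$ satisfies $\Delta^*\ol D=-\ol W$ identically. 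The diagonal only forces $\ol N=0$.
\item Comparing underlying line bundles forces the coefficients into exactly this ratio, whatever normalization you choose.
\end{itemize}
So the system is solved by every $\ol W$. The conditions that single out the admissible metric, (1) and (2) of Thm.~A.1, never enter your construction.

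There is also no fiberwise constant in the $\delta$-relation. With the rigidified symmetric $\Theta$, Zhang's identity $\hat h(D)=-(D,D)_a$ for $\deg D=0$ gives exactly $\delta^*\OTheta=2\ol\CO(\Delta)_a+p_1^*\ol\omega_a+p_2^*\ol\omega_a$. The invariant $\varphi$ only shows up in the triple pairing of this bundle (Theorem \ref{lower bound family}).

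Finally, nefness does not follow from fiberwise semipositivity. Adding $\pi^*\ol N$ with $\ol N$ non-nef on $S$ keeps every fiber semipositive but destroys nefness on $X/k$.

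The missing idea is an independent formula for $\ol\omega_a$ itself. This needs a map $X\to J$, namely the canonical Abel--Jacobi map $i_\omega(x)=(2g-2)x-\omega$. The same identity of Zhang gives
\[
i_\omega^*\OTheta=4g(g-1)\,\ol\omega_a-\pi^*\pi_*\pair{\ol\omega_a,\ol\omega_a}.
\]
Here the fiberwise constant is the admissible self-intersection, not $\varphi$. Applying $\pi_*\pair{\cdot,\cdot}$ to this identity expresses that constant as $\frac{1}{16g(g-1)^3}\pi_*\pair{i_\omega^*\OTheta,i_\omega^*\OTheta}$. This is the paper's route: the formula of Theorem \ref{canonical alternative definition} serves as the definition of $\ol\omega_{X/S,a}$. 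Nefness is then immediate, as the pull-back of the nef $\OTheta$ plus $\pi^*$ of a Deligne pairing of nef bundles. One can then take $\ol\CO(\Delta)_a:=\frac12(\delta^*\OTheta-p_1^*\ol\omega_a-p_2^*\ol\omega_a)$, which is integrable, and verify (a), (b), (1), (2) fiberwise against Thm.~A.1. Your remark about ``$\ol\Theta$ pulled back to $X$'' points this way, but it works only through $i_\omega$, not through $\delta$.
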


If $k=\ZZ$, at any point 
$s\in S$ whose residue field is a number field, the pull-backs 
$\overline\omega_{X/S,a}|_{X_s}$ 
and 
$\overline\CO(\Delta)_a|_{X_s^2}$
are canonically isomorphic to the admissible adelic line bundles 
$\overline\omega_{X_s/s,a}$ on $X_s$
and 
$\overline\CO(\Delta_s)_a$ on $X_s^2$
introduced in the previous {subsection}. 

Note that the uniqueness in {(a) and (b)} is a consequence of the essential injectivity of the analytification functor
$$
\wh\Picc(X) \lra \wh\Picc(X^\an). 
$$
For the existence (or the construction), the adelic line bundles essentially come from natural operations on {invariant} adelic line bundles from the dynamical system given by the relative Jacobian scheme. 
To illustrate the idea, we will introduce a description of 
$\ol\omega_{X/S,a}$ in that setting, which can also {serve as} a definition.

Resume the notation in Theorem \ref{bb22:admissible2}. 
Let $\pi:X\to S$ be a smooth relative curve of genus $g\geq 2$.
Denote by $J=\Pic^0_{X/S}$ the Jacobian scheme of $X$ over $S$. 
For more details on Jacobian schemes, we refer to \cite[Chap. 6]{MFK} and \cite[Chap. 9]{BLR}.

Let $\alpha$ be a line bundle on $X$ of degree $d$ on the fibers of $X\to S$. 
We have a finite $S$-morphism 
$$
i_\alpha: X\lra J, \quad x\longmapsto dx-\alpha.
$$
The right-hand side is understood in terms of the functor of points. 
We will also need the morphism
$$
(i_\alpha,i_\alpha): X\times_SX\lra J\times_SJ.
$$

Now we consider line bundles on $J$.
Without taking a base change of $S$, we have to deal with the case that there is no line bundle on $X$ of degree 1 on fibers over $S$.
However, there is still a line bundle $\Theta$ on $J$ satisfying the following properties:
\begin{enumerate}[(1)]
\item $\Theta$ is symmetric on $J$ in the sense that $[-1]^*\Theta\simeq \Theta$;
\item $\Theta$ is rigidified along the identity section $e:S\to J$ in the sense that $e^*\Theta\simeq \CO_S$;
\item for every geometric point $s\in S$, the fiber 
$\Theta_s$ is isomorphic to $\CO(2\theta_{\alpha_0})$ as a line bundle on $J_s$, where the theta divisor $\theta_{\alpha_0}$ on the Jacobian variety $J_s$ of the curve $X_s$ is as in \S\ref{sec NT curve}. 
\end{enumerate}
Note that (1) and (2) {imply} $[m]^*\Theta\simeq m^2\Theta$ for every integer $m$, and (3) implies that $\Theta$ is relatively ample over $S$. 
This {comes from} a construction of \cite[\S6.1, Prop. 6.9]{MFK}, and we also refer to \cite[Def. 2.4]{Yua21} for more details. 

As a consequence, we obtain a dynamical system $(J, [2], \Theta, 4)$ over $S$. Denote by $\ol\Theta$ the invariant adelic line bundle on $J/k$ extending $\Theta$, as introduced in \S\ref{sec adelic dynamics}. 

With this construction, we finally have the following alternative description of the admissible canonical line bundle. 

\begin{thm} \label{canonical alternative definition}
There is an isomorphism
$$\overline\omega_{X/S,a}
=\frac{1}{4g(g-1)}i_\omega^*\OTheta+ \frac{1}{64g^2(g-1)^4}\pi^*\pi_*\pair{i_\omega^*\OTheta,i_\omega^*\OTheta}$$
in $\wh\Pic(X/k)_\QQ$.
As a consequence, there is an isomorphism 
$$
\pi_*\pair{\overline\omega_{X/S,a},\overline\omega_{X/S,a}}
=\frac{1}{16g(g-1)^3}\pi_*\pair{i_\omega^*\OTheta,i_\omega^*\OTheta}
$$
in $\wh\Pic(S/k)_\QQ$.
\end{thm}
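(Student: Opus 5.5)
We reduce the first isomorphism to a fiberwise check of metrics, using the injectivity of the analytification functor $\wh\Picc(X)\to\wh\Picc(X^\an)$ (\cite[Prop. 3.4.1]{YZ}). It suffices to produce an isomorphism of underlying $\QQ$-line bundles on $X$ that is an isometry on every fiber $X_{\CH_v}^\an$, $v\in S^\an$. By Theorem \ref{bb22:admissible2}(a), the left-hand side restricts on each fiber to $\omega$ with Zhang's admissible metric $\|\cdot\|_a$. For the right-hand side, invariant adelic line bundles are compatible with base change, so $\OTheta$ restricts to the invariant (canonical) metric on $\Theta_{\CH_v}$ over $J_{\CH_v}$. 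Thus everything reduces to a statement about a single curve $C=X_{\CH_v}$ over a complete valued field $F=\CH_v$.

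\textbf{Algebraic identity.} First we verify the underlying identity in $\Pic(X)_\QQ$. For $i_\omega: x\mapsto (2g-2)x-\omega$, the theorem of the cube (or the classical formula for pullbacks of theta via Abel--Jacobi maps) should give on fibers
$$i_\omega^*\Theta\equiv 2\big((2g-2)^2\,(\text{pullback of }\theta)\big)\equiv 4g(g-1)\,\omega$$
up to $\pi^*$ of a line bundle on $S$. To get this, we write $i_\omega^*\Theta$ via the Mumford bundle formula as a combination of $\Delta^*\CO(\Delta)$, $p_{1*}$-pushforwards, and $\omega$. The discrepancy is a line bundle pulled back from $S$, and it is determined by restricting to a section or by taking a Deligne pairing. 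Concretely, we apply $\pi_*\pair{\cdot,\cdot}$, equivalently $\pi_*\pair{\cdot,\omega}$, to both sides and solve for the correction term $\frac{1}{64g^2(g-1)^4}\pi^*\pi_*\pair{i_\omega^*\Theta,i_\omega^*\Theta}$. The constant is forced by requiring that $\pi_*\pair{\cdot,\cdot}$ be consistent: if $i_\omega^*\OTheta=4g(g-1)\ol\omega_a+\pi^*\ON$, then $\pi_*\pair{i_\omega^*\OTheta,i_\omega^*\OTheta}=16g^2(g-1)^2\pi_*\pair{\ol\omega_a,\ol\omega_a}+16g(g-1)^2\ON$. A second relation then pins down $\ON$, and we solve for it.

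\textbf{Metric identity on a single curve.} This step is the main obstacle. We must show that $\frac{1}{4g(g-1)}i_\omega^*\|\cdot\|_{\Theta}$ equals $\|\cdot\|_a$ up to a constant depending only on $v$, i.e. that the curvature measures agree and that only a scalar remains. Using characterization (1) of \cite[Thm. A.1]{Yua21}, we compute $c_1(i_\omega^*\OTheta)$ on $C^\an$. The pullback of the canonical theta metric under $x\mapsto x-y$ gives $g_\Delta$-type Green functions, in the style of Zhang's formula expressing the Néron--Tate pairing via admissible pairings (\cite{Zha1}). From this we get $c_1(i_\omega^*\OTheta)=4g(g-1)\,c_1(\ol\omega_a)$, with $\mu_a$ the admissible measure. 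Two metrics on the same line bundle with equal curvature on the compact curve differ by a constant, by the maximum principle (archimedean case) or via reduction graphs and Thuillier's theory (non-archimedean case).

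\textbf{Fixing the constant and the consequence.} The constant $c(v)$ is recovered globally, again over $S^\an$, by integrating against the admissible measure. That is, we apply $\pi_*\pair{\cdot,\ol\omega_a}$ together with normalization (2) $\int g_\Delta(x,\cdot)\,c_1(\ol\omega_a)=0$. This identifies $c(v)$ with the $v$-component of $\frac{1}{64g^2(g-1)^4}\pi_*\pair{i_\omega^*\OTheta,i_\omega^*\OTheta}$, so the correction term matches metrically as well. The second isomorphism then follows formally by bilinearity of the Deligne pairing and the projection formula. Since $\pi_*\pair{\pi^*\ON,\pi^*\ON}=0$ and $\pi_*\pair{\ol A,\pi^*\ON}=(\deg\ol A|_{\text{fiber}})\ON$ with $\deg i_\omega^*\Theta=4g(g-1)(2g-2)$, expanding gives
$$\frac{1}{16g^2(g-1)^2}P+2\cdot\frac{8g(g-1)^2}{4g(g-1)\cdot 64g^2(g-1)^4}P=\frac{1}{16g(g-1)^3}P,$$
where $P=\pi_*\pair{i_\omega^*\OTheta,i_\omega^*\OTheta}$. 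We check this arithmetic as a sanity test on the constants.
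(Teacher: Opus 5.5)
Your reduction to fibers via full faithfulness of analytification and your curvature comparison are fine. Your closing computation of the second isomorphism from the first is also correct; it is exactly the ``easy calculation'' the paper mentions, and the paper quotes the first isomorphism from \cite[Thm. 2.10(1)]{Yua21}. The gap is in determining the base correction term, which is the whole content of the first isomorphism.

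Write $i_\omega^*\OTheta=4g(g-1)\,\ol\omega_{X/S,a}+\pi^*\ON$, $W=\pi_*\pair{\ol\omega_{X/S,a},\ol\omega_{X/S,a}}$ and $P=\pi_*\pair{i_\omega^*\OTheta,i_\omega^*\OTheta}$. The theorem is equivalent to $\ON=-W$. Your relation $P=16g^2(g-1)^2W+16g(g-1)^2\ON$ is one equation in two unknowns, and you never produce the ``second relation.'' Pairing with $\ol\omega_{X/S,a}$ only gives $\pi_*\pair{i_\omega^*\OTheta,\ol\omega_{X/S,a}}=4g(g-1)W+(2g-2)\ON$, and you have no independent way to compute its left side. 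Normalization (2) involves only $g_\Delta$ and $\ol\omega_{X/S,a}$, so it cannot see $\OTheta$.

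In fact no argument using only fiberwise curvature and properties (1)--(3) can work. Replace $\OTheta$ by $\OTheta+\pi_J^*\OM$, where $\OM$ is an adelic line bundle on $S$ and $\pi_J:J\to S$. All those inputs are unchanged, but the right-hand side of the first formula changes by
\[
\Big(\frac{1}{4g(g-1)}+\frac{16g(g-1)^2}{64g^2(g-1)^4}\Big)\pi^*\OM=\frac{1}{4(g-1)^2}\,\pi^*\OM\neq 0 .
\]
So the normalization of $\OTheta$ must enter the constant: its rigidification $e^*\OTheta\simeq\ol\CO_S$, or equivalently its $[2]$-invariance. Your proposal never uses it.

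The missing step is to transport this rigidification to $X$ through the difference map $j:X\times_SX\to J$, $(x,y)\mapsto y-x$.

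First prove $j^*\OTheta\simeq 2\,\ol\CO(\Delta)_a+p_1^*\ol\omega_{X/S,a}+p_2^*\ol\omega_{X/S,a}$. Both sides have the same curvature on every slice $\{x\}\times X_{\CH_v}$ and $X_{\CH_v}\times\{y\}$; this is where your comparison of the pulled-back theta metric with $g_\Delta$ is really needed. Hence their difference is constant on each fiber $X_{\CH_v}^2$, i.e.\ it is pulled back from $S$. Pulling back along the diagonal kills it: $\Delta^*j^*\OTheta=e^*\OTheta$ is trivial, while $\Delta^*\ol\CO(\Delta)_a\simeq-\ol\omega_{X/S,a}$ by property (1).

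Then expand $i_\omega^*\OTheta$, with $i_\omega(x)=(2g-2)x-\omega$. Use the quadraticity of the symmetric rigidified $\OTheta$ (theorem of the cube, after writing $\omega$ as a sum of points over a finite extension of $\CH_v$). Evaluate the cross terms with property (2), $p_{1*}\pair{\ol\CO(\Delta)_a,p_2^*\ol\omega_{X/S,a}}\simeq\ol\omega_{X/S,a}$. On heights this is the computation $-\big((2g-2)x-\omega,(2g-2)x-\omega\big)_a=4g(g-1)(x,\omega)_a-(\omega,\omega)_a$, using the admissible adjunction $(x,x)_a=-(x,\omega)_a$.

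This gives $i_\omega^*\OTheta\simeq 4g(g-1)\,\ol\omega_{X/S,a}-\pi^*W$, i.e.\ $\ON=-W$. Your relation then yields $P=16g(g-1)^3W$, which is the second isomorphism, and substituting back gives the first.
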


Here the Deligne pairings for the morphism $\pi:X\to S$ are described in \S\ref{sec Deligne pairing}. 
The first equality implies the second one by an easy calculation. 
By the formula, we immediately see that $\overline\omega_{X/S,a}$ is nef on $X$.
We refer to \cite[Thm. 2.10(1)]{Yua21} for more details on the theorem.

\subsection{Faltings heights and Hodge line bundles} 
\label{sec Hodge line bundles}

In this subsection, we recall Faltings heights of curves originally introduced by Faltings \cite{Fal} in his proof of the Mordell conjecture, and then we recall a result of Yuan--Zhang \cite{YZ} realizing the Faltings heights as the height function
of the adelic Hodge line bundle. These results will be used in the remaining parts of this paper. 
 
\subsubsection{Faltings height}
\label{sec Faltings height}

Let $A$ be an abelian variety over $K$. 
Denote by $\pi: \CA\to \Spec O_{K}$ the N\'eron model of $A$ over $K$. 
Denote the identity section by $e:  \Spec O_{K}\to \CA$.
Denote the \emph{Hodge line bundle} 
$$
\underline\omega_{\CA/O_K}=e^*\omega_{\CA/O_K}=\pi_*\omega_{\CA/O_K}. 
$$
Endow $\underline\omega_{\CA/O_K}$ with the 
\emph{Faltings metric} defined by
$$
\|\alpha\|_{\rm Fal}^2=\frac{1}{2^g} \left|\int_{A_\sigma(\CC)} \alpha\wedge\bar \alpha\right|
$$
for any embedding $\sigma:K\to \CC$ and any element $\alpha$ of
$$
(\underline\omega_{\CA/O_K})_\sigma(\CC)
=e^*\Omega_{A_\sigma(\CC)/ \CC}^g
\simeq
\Gamma(A_\sigma(\CC), \Omega_{A_\sigma(\CC)/ \CC}^g).
$$
Define the \emph{Faltings height} of $A$ over $K$ to be 
$$
h_\Fal^*(A)=\frac{1}{[K:\QQ]} \wh\deg(\underline\omega_{\CA/O_K}, \|\cdot\|_{\rm Fal}).
$$
Here the arithmetic degree is from the map
$\wh\deg:\wh\Pic(O_{K})\to \RR$.

Let $K'$ be a finite extension of $K$ such that $A$ has semi-abelian reduction over $O_{K'}$. 
Define the \emph{stable Faltings height} of $A$ to be
$$
h_\Fal(A)= h_\Fal^*(A_{K'}).
$$
The definition is independent of the choice of $K'$. 

Let $C$ be a geometrically integral smooth projective curve of genus $g\geq 2$ over a {number field} $K$. 
Let $J$ be the Jacobian variety of $C$ over $K$. 
Define the \emph{stable Faltings height}
$$
h_\Fal(C)=h_\Fal(J)
$$
to be the stable Faltings height of $J$ over $K$. 
Define the \emph{adjusted Faltings height}
$$h_\Fal^+(C)=\max\{h_\Fal(C),1\},$$
which is always positive. 

Alternatively, we can define $h_\Fal(C)$ without going to the Jacobian variety of $C$. 
In fact, let $\psi:\CCC\to \Spec O_K$ be the minimal regular model of $C$ over $O_K$. The \emph{natural metric} $\|\cdot\|_{\rm nat}$ on the vector bundle
{$\psi_*\omega_{\CCC/O_K}$} is defined by
$$
\|\beta\|_{\rm nat}^2=\frac{1}{2}\left| \int_{C_\sigma(\CC)} \beta\wedge\bar \beta\right|
$$
for any embedding $\sigma:K\to \CC$ and any element $\beta$ of
$$
({\psi_*}\omega_{\CCC/O_K})_\sigma(\CC)
=\Gamma(C_\sigma(\CC), \omega_{C_\sigma(\CC)/ \CC}).
$$
This gives a hermitian vector bundle $({\psi_*}\omega_{\CCC/O_K}, \|\beta\|_{\rm nat})$ over $\Spec O_K$.
Then we have an identity 
$$
h_\Fal^*(C)=\frac{1}{[K:\QQ]} \wh\deg ({\psi_*}\omega_{\CCC/O_K}, \|\beta\|_{\rm nat})
=\frac{1}{[K:\QQ]} \wh\deg (\det {\psi_*}\omega_{\CCC/O_K}, \det \|\beta\|_{\rm nat}). 
$$ 
See \cite[Lem. 3.4]{Yua21} for a history of this identity. 
Then we have 
$$
h_\Fal(C)=h_\Fal^*(C_{K'})
$$
for any finite extension $K'$ of $K$ such that $C$ has semistable reduction over $O_{K'}$.

\subsubsection{Adelic Hodge line bundle}
\label{sec adelic hodge}

Let $S$ be a noetherian scheme. 
Let $\pi:X\to S$ be a stable relative curve of genus $g\geq 2$.
The  \emph{Hodge vector bundle} of $X$ over $S$ is just 
$\pi_* \omega_{X/S}$, which is a vector bundle of rank $g$ on $S$. 
The
\emph{Hodge line bundle} of $X$ over $S$ is defined as 
$$
\lambda_S:=\det \pi_* \omega_{X/S}=\wedge^g \pi_* \omega_{X/S}. 
$$  

Assume that $S$ is a flat and quasi-projective integral scheme over $\ZZ$ or $\QQ$.
Then the \emph{natural metric} on $\pi_* \omega_{X/S}$ is defined  
such that for any point $y\in S(\CC)$ and any section
$$\beta\in  (\pi_*\omega_{X/S})(y)= \Gamma(X_y, \omega_{X_y/y}),$$
the metric gives
$$
\|\beta\|_{\rm nat}^2=\frac{1}{2}\left| \int_{X_y} \beta\wedge\bar \beta\right|.
$$
The \emph{determinant metric} on $\lambda_S$ is just 
 $$\|\cdot\|_{\rm det}=\det \|\cdot\|_{\rm nat}$$ induced by  the process
$\lambda_S=\det(\pi_*\omega_{X/S})$. 
This gives a pair $(\lambda_S, \|\cdot\|_{\rm det})$. 
As in \cite[Lem. 3.4]{Yua21}, we can also interpret this pair in terms of the Hodge vector bundle on the Jacobian scheme of $X$ over $S$. 

By \cite[Thm. 5.5.1]{YZ}, the pair $(\lambda_S, \|\cdot\|_{\rm det})$ extends canonically to an \emph{adelic Hodge line bundle} $\ol\lambda_{S}$ on $S/\ZZ$. It is an adelic line bundle on $S/\ZZ$ such that 
$$
h_{\ol\lambda_{S}}(y)= h_{\Fal}(X_y), \quad \forall y\in S(\ol\QQ). 
$$
In other words, we realize the Faltings heights as a height function given by the adelic Hodge line bundle.

\subsection{Bigness of the admissible canonical bundle}

Finally, we have the following bigness theorem from \cite[Thm. 3.1, Thm. 3.2]{Yua21}. The goal of this subsection is to sketch a proof of the theorem. 

\begin{thm}[bigness: admissible canonical bundle] \label{bigness55}
Let $k$ be either $\ZZ$ or a field. 
Let $S$ be a quasi-projective and flat normal integral scheme over $k$. 
Let $\pi:X\to S$ be a smooth relative curve over $S$ of genus $g\geq 2$ with maximal variation.
Then the adelic line bundle $\overline\omega_{X/S,a}$ is nef and big on $X$, and the adelic line bundle 
$\pi_*\pair{\overline\omega_{X/S,a},\overline\omega_{X/S,a}}$ is nef and big on $S$.
\end{thm}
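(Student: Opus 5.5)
Nefness of $\overline\omega_{X/S,a}$ and of $\pi_*\pair{\overline\omega_{X/S,a},\overline\omega_{X/S,a}}$ comes directly from Theorem \ref{canonical alternative definition}. The invariant bundle $\OTheta$ is nef, pull-backs of nef bundles are nef, and the Deligne pairing of nef adelic line bundles is nef. Both formulas in that theorem express the two bundles as nonnegative combinations of nef terms. The substance is therefore bigness. Since both bundles are nef, the arithmetic Hilbert--Samuel formula (Theorem \ref{HS}) reduces bigness to strict positivity of top self-intersections. The plan is to prove bigness of $\ol\mu:=\pi_*\pair{\overline\omega_{X/S,a},\overline\omega_{X/S,a}}$ on $S$ first, and then to deduce bigness of $\overline\omega_{X/S,a}$ from it.

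For the deduction, write $n=\dim S$, so that $\dim X=n+1$ in the geometric sense. By the first formula of Theorem \ref{canonical alternative definition},
$$\overline\omega_{X/S,a}=\ol N+c\,\pi^*\ol\mu',$$
where $\ol N=\frac{1}{4g(g-1)}i_\omega^*\OTheta$ is nef, $c>0$, and $\ol\mu'$ is a positive multiple of $\ol\mu$. Expanding the top intersection $(\ol N+c\pi^*\ol\mu')^{n+2}$, every term is nonnegative by nefness. The term $\binom{n+2}{2}\,\ol N^2\cdot(c\pi^*\ol\mu')^{n}$ equals, by the projection formula for Deligne pairings, a positive multiple of $(\pi_*\pair{\ol N,\ol N})\cdot\ol\mu'^n$, which in turn is a positive multiple of $\ol\mu^{n+1}$. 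This is strictly positive once $\ol\mu$ is big and nef, so $\overline\omega_{X/S,a}$ is big. In the geometric case ($k$ a field) the same expansion works with the dimensions shifted.

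For the bigness of $\ol\mu$, the strategy is to compare it with the adelic Hodge line bundle $\ol\lambda_S$ of \S\ref{sec adelic hodge}. The maximal variation hypothesis means the moduli map $S\to\CM_g$ is generically finite. The geometric part $\wt\lambda_S$ is the pull-back of the Hodge bundle on $\CM_g$, which is nef and big on the Baily--Borel side, so $\wt\lambda_S$ is big on $S$. Next I would use the Noether-type formula in families,
$$12\ol\lambda_S=\ol\mu+\ol\delta+\ol\varphi\text{-type terms},$$
which at each point $s$ with number-field residue field reads
$$12h_\Fal(X_s)=\ol\omega_{X_s,a}^2+\sum_v\epsilon_v(\delta_v+\varphi_v)-4g\log2\pi,$$
in the spirit of Faltings--Zhang, and exploit the positivity of $\varphi$ together with de Jong's Theorem \ref{dejong}. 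A cleaner route is geometric first: show that the geometric part $\wt\mu$ is big on $S/K$ (or on $S$ over a field). This amounts to the classical statement that $\kappa_1=\pi_*(\omega^2)$ is big on $\CM_g$, since $\kappa_1$ is ample on $\CM_g$ by Cornalba--Harris/Mumford and $12\lambda=\kappa_1+\delta$ with $\delta$ supported on the boundary. Then pass from geometric to arithmetic bigness by adding a positive constant: $\ol\mu+\pi^*\ON$ with $\wh\deg\ON>0$ has positive top intersection $(n+1)\wh\deg(\ON)\,\wt\mu^n>0$. Over $\ZZ$ this only shows $\ol\mu$ plus a constant is big, so I need an honest bound instead. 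The inequality I aim for is
$$\ol\mu^{n+1}\ \ge\ \epsilon\,\ol\lambda_S\cdot\wt\mu^{n}\text{-type}>0.$$
To get it, I would use the height inequality $h_{\ol\mu}(s)\ge \frac{2}{3g-1}\varphi(X_s)>0$ from Theorem \ref{dejong}, together with the fact that $\varphi$ dominates a positive multiple of $h_\Fal$ up to constants. By the fundamental inequality this gives $e_1(S,\ol\mu)>0$, and with nefness one expects $h_{\ol\mu}(S)>0$ via the successive minima argument of Theorem \ref{minima2}.

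The main obstacle is exactly this last step: converting pointwise positivity of heights into positivity of $\ol\mu^{n+1}$. Theorem \ref{minima2} gives $e_1\ge h(S)$, not the reverse inequality needed here, and the lower bound $h(S)\ge e_1/(n+1)$ from that theorem requires controlling the essential minimum uniformly on a Zariski-open set. I would first try to prove a lower bound $h_{\ol\mu}(s)\ge \epsilon\, h_{\ol\lambda_S}(s)-C$ on an open dense subset. Combined with bigness of $\ol\lambda_S+\pi^*\ON$ (which follows from Theorem \ref{bigness} and bigness of $\wt\lambda_S$), this should yield $e_1(S,\ol\mu)=+\infty$ along the relevant directions, hence positivity of $h_{\ol\mu}(S)$ by the second inequality of Theorem \ref{minima2}. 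Realizing the lower bound requires the archimedean and non-archimedean estimates of $\varphi_v$ near the boundary of moduli. This is where most of the work lies.
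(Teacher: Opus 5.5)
Your nefness argument and your reduction of the bigness of $\overline\omega_{X/S,a}$ to that of $\ol\mu=\pi_*\pair{\overline\omega_{X/S,a},\overline\omega_{X/S,a}}$ are fine. You use the term $\ol N^2\cdot(\pi^*\ol\mu')^{n}$ where the paper uses $i_\omega^*\OTheta\cdot(\pi^*\ol\mu)^{\dim S}$, and both work. The gap is in the bigness of $\ol\mu$ itself, at both the geometric and the arithmetic level.

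\textbf{Geometric level.} Your geometric step is wrong: $\wt\mu$ is \emph{not} the Hodge-type class $\kappa_1=\ol\pi_*\pair{\omega_{\ol X/\ol S},\omega_{\ol X/\ol S}}$. Admissible metrics differ from model metrics. In fact $\ol\mu=\kappa_1-\CO(\ol E_S)$, where $\ol E_S$ is an effective adelic divisor whose Green function at a discrete valuation is Zhang's $\epsilon$-invariant of the fiber, and it is nonzero whenever $S$ meets the boundary. Since $\ol E_S\ge 0$, ampleness of $\kappa_1$ (Cornalba--Harris) gives nothing about $\wt\mu$. If it did, $\omega_a^2>0$ for non-isotrivial curves over function fields would follow from the classical $\omega^2>0$. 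What you are missing is how to absorb $\ol E_S$, which the paper does in two steps.
\begin{enumerate}
\item Graph theory gives $\ol E_S\le(2g-2)\Delta_{\ol S}$, so Noether yields $\ol\mu=12\lambda_{\ol S}-(2g-1)\CO(\Delta_{\ol S})+\eff$.
\item Independently, the family de Jong identity (Theorem \ref{lower bound family}) gives $\ol\mu=\frac{2}{3g-1}\CO(\ol\Phi_S)+\nef$. Combined with Cinkir's bound $\ol\Phi_S\ge\frac1{39}\Delta_{\ol S}$, this gives $\ol\mu=\frac{2}{39(3g-1)}\CO(\Delta_{\ol S})+\nef+\eff$.
\end{enumerate}
A linear combination then cancels $\Delta_{\ol S}$ and leaves a positive multiple of $\ol\mu$ equal to $12\lambda_{\ol S}+\nef+\eff$. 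This is big by maximal variation.

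\textbf{Arithmetic level.} You rightly note that adding $\pi^*\ON$ proves nothing about $\ol\mu$, but your replacement does not close the gap.
\begin{itemize}
\item Pointwise positivity $\varphi(X_s)>0$ gives no lower bound on $e_1(S,\ol\mu)$.
\item An estimate $h_{\ol\mu}\ge\epsilon h_{\ol\lambda_S}-C$ with an uncontrolled $C$ cannot force $e_1(S,\ol\mu)>0$.
\item $e_1=+\infty$ is impossible, since $e_1\le(n+1)h_{\ol\mu}(S)$ by Theorem \ref{minima2}.
\end{itemize}
The missing ingredient is a \emph{uniform} bound: $\varphi(C)\ge c_0(g)>0$ for all compact Riemann surfaces of genus $g$ (Theorem \ref{positive lower bound}). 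The paper proves it by the adelic method (Lemma \ref{adelic method}). The geometric inequality $\wt\Phi\ge\frac1{39}\Delta$ yields $\varphi\ge\frac1{40}g_{\ol\Delta}-c$, so $\varphi\to\infty$ at the boundary of $M_{g,N}^*$, and compactness plus Zhang's $\varphi>0$ on $M_g$ finishes. Then $\ol\Phi_S-c_0$ is effective, hence $\ol\mu=\ol\CO(c)+\eff+\nef$ with $c>0$, and $\ol\mu^{d}\ge c\,\wt\mu^{d-1}>0$.

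With this uniform bound your successive-minima route would also work. De Jong's inequality plus $\varphi_v\ge0$ at finite places gives $h_{\ol\mu}\ge\frac{2c_0}{3g-1}$ everywhere, hence $e_1>0$, and the second inequality of Theorem \ref{minima2} gives $\ol\mu^{n+1}>0$. That still requires the geometric bigness of $\wt\mu$ to be established first.
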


Here we say that the relative curve $\pi:X\to S$ has \emph{maximal variation} if the moduli morphism $S\to M_{g,k}$ is generically finite to its image, where $M_{g,k}$ denotes the coarse moduli scheme of smooth curves of genus $g$ over $k$.

We note that the two nefness and bigness properties in the theorem are equivalent. 
In fact, {the implication from the nefness and bigness of $\overline\omega_{X/S,a}$ to those of 
$\pi_*\pair{\overline\omega_{X/S,a},\overline\omega_{X/S,a}}$ follows from} a basic property of the Deligne pairing (cf. \cite[Lem. 2.1(1)]{Yua21}). 
Conversely, by Theorem \ref{canonical alternative definition}, 
$$4g(g-1)\overline\omega_{X/S,a}=i_\omega^*\OTheta+\pi^*\pi_*\pair{\overline\omega_{X/S,a},\overline\omega_{X/S,a}}.$$
Then  the self-intersection of the right-hand side on $X$ contains a term
$$
i_\omega^*\OTheta\cdot \big(\pi^*\pi_*\pair{\overline\omega_{X/S,a},\overline\omega_{X/S,a}}\big)^{\dim S}
=a \big(\pi_*\pair{\overline\omega_{X/S,a},\overline\omega_{X/S,a}}\big)^{\dim S},
$$ 
which is strictly positive if $\pi_*\pair{\overline\omega_{X/S,a},\overline\omega_{X/S,a}}$ is big. 
Here $a=4g(g-1)(2g-2)$ is the degree of $i_\omega^*\OTheta$ on the generic fiber of $X\to S$. 
This gives the inverse implication. 

Our proof of Theorem \ref{bigness55} is a relative version of the proof of the Bogomolov conjecture {by} Zhang \cite{Zha1,Zha3}, Cinkir \cite{Cin1} and de Jong \cite{dJo3}. See \S\ref{sec admissible single curves} for an overview of the framework. 
Then in our proof, we need to ``globalize'' many invariants of curves into relative curves, and also ``globalize'' related equalities and inequalities. 
Before introducing our proof, we first introduce some related globalized 
results for the $\varphi$-invariants.

\subsubsection{Globalization $\ol\Phi_S$ of the $\varphi$-invariant}

Recall that Zhang's $\varphi$-invariant {was recalled} in 
\S\ref{sec admissible single curves}, and the goal here is to ``glue'' these local $\varphi$-{invariants} to an adelic divisor on relative curves. 
This process in spirit is similar to Theorem \ref{bb22:admissible2}, which extends Zhang's admissible canonical bundle for curves to relative curves.

\kkk
Let $S$ be a quasi-projective and flat normal integral scheme over $k$.
Let $\pi:X\to S$ be a smooth relative curve of genus $g\geq 2$. 

Consider the morphism $(\pi,\pi):X\times_SX\to S$ and its diagonal divisor $\Delta:X\to X\times_SX$. 
There are canonical isomorphisms
$$
(\pi,\pi)_*\pair{\CO(\Delta),\,\CO(\Delta),\, \CO(\Delta)}
\lra \pi_*\pair{\Delta^*\CO(\Delta),\Delta^*\CO(\Delta)}
\lra \pi_*\pair{\omega_{X/S},\omega_{X/S}}.
$$
This defines a section $s$ of the underlying line bundle of the adelic line bundle 
$$
\pi_*\pair{\ol\omega_{X/S,a},\ol\omega_{X/S,a}}-(\pi,\pi)_*\pair{\ol\CO(\Delta)_a,\,\ol\CO(\Delta)_a,\,\ol\CO(\Delta)_a}.
$$
Via this adelic line bundle, define 
$$\ol\Phi_S=\wh\div(s),$$ 
viewed as an adelic divisor on $S$. 
The underlying divisor of $\ol\Phi_S$ is 0 by definition. 
By the integration formula, 
the total Green's function $\wt g_{\ol\Phi_S}$ on $S^\an$
at any discrete or archimedean valuation $v\in S^\an$
is given by 
$$
\wt g_{\ol\Phi_S}(v)=\int_{(X_v\times X_v)^\an} \log \|1\|_{\Delta, a} c_1(\ol\CO(\Delta)_a)^2.
$$
This recovers the $\varphi$-invariant of {$X_{\CH_v}$}.

As before, let $J=\Pic_{X/S}^0$ be the Jacobian scheme, and $\OTheta$ be the invariant adelic line bundle on $J/k$. 
Let 
$j$ be the morphism 
$$j:X\times_SX \lra J, \quad (x,y)\longmapsto y-x.$$
The following result computes the Deligne pairing $(\pi,\pi)_*\pair{j^*\OTheta,j^*\OTheta,j^*\OTheta}$. 
It is a family version of, and is inspired by, de Jong \cite[Thm. 8.1]{dJo3}, where the latter was in turn based on the main result of Zhang \cite{Zha3}. 
The following result is from \cite[Thm. 3.6]{Yua21}.

\begin{thm} \label{lower bound family}
\kkk
There is an identity in $\wh\Pic(S)_\QQ$ given by 
$$
(\pi,\pi)_*\pair{j^*\OTheta,j^*\OTheta,j^*\OTheta}
=(12g-4)\pi_*\pair{\overline\omega_{X/S,a},\overline\omega_{X/S,a}}
- 8\CO(\ol \Phi_S).
$$
\end{thm}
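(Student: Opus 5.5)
The strategy is to reduce the identity to an identity of metrized line bundles on $S^\an$, and check it fiberwise. The analytification functor $\wh\Picc(S)\to\wh\Picc(S^\an)$ is fully faithful (injective on isomorphism classes), so it suffices to do two things. First, establish a canonical isomorphism of the underlying line bundles on $S$. Second, show that this isomorphism is an isometry at every point $v\in S^\an$. Since the Deligne pairing is compatible with base change, the metric at $v$ is computed on the single curve $C=X_{\CH_v}$ over the valued field $\CH_v$. Everything therefore reduces to a pointwise identity of local intersection numbers, which is a family version of de Jong's computation \cite[Thm. 8.1]{dJo3}.

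\textbf{Step 1: decompose $j^*\OTheta$ on $X\times_SX$.} Write $p_1,p_2$ for the two projections. I would first prove an identity in $\wh\Pic(X\times_SX)_\QQ$ of the form
$$
j^*\OTheta \;=\; -2\,\ol\CO(\Delta)_a+\tfrac{1}{2g-2}\big(p_1^*\ol\omega_{X/S,a}+p_2^*\ol\omega_{X/S,a}\big)+\pi_2^*\ol N .
$$
Here $\pi_2=(\pi,\pi)$, and $\ol N$ is some adelic $\QQ$-line bundle on $S$, expected to be a rational multiple of $\pi_*\pair{\ol\omega_{X/S,a},\ol\omega_{X/S,a}}$.

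On underlying bundles this is the classical theorem of the square. Pulling back the theta divisor via the difference map gives $\CO(2\Delta)$ twisted by canonical classes, up to a bundle from $S$; the rigidification of $\Theta$ along $e$ fixes that correction.

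For the metrics, fix $v$ and compare Green functions on $C^2$. Both sides have curvature forms that are determined by Zhang's admissible theory through properties (1)--(3) of the admissible metric. This matches the known description of $-\log\|\cdot\|$ for the invariant theta metric pulled back by $y-x$: it equals $2g_{\Delta,a}$ plus a constant. So the two sides differ by a constant function on each fiber. That constant is pinned down by restricting along the diagonal, using Theorem \ref{bb22:admissible2}(1), together with $e^*\OTheta$ being trivial.

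\textbf{Step 2: expand the triple pairing.} Expand $(\pi,\pi)_*\pair{j^*\OTheta,j^*\OTheta,j^*\OTheta}$ by multilinearity. The terms are evaluated with the following rules.

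The pure term $\pair{\ol\CO(\Delta)_a,\ol\CO(\Delta)_a,\ol\CO(\Delta)_a}$ is kept as is; it is the term that produces $\CO(\ol\Phi_S)$ by the definition of $\ol\Phi_S$.

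Terms containing $\ol\CO(\Delta)_a$ together with pullbacks $p_i^*\ol\omega$ are reduced to pairings on $X$. This uses the induction formula for the Deligne pairing along the diagonal, together with Theorem \ref{bb22:admissible2}(1) and (2).

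Terms containing only $p_1^*\ol\omega$ and $p_2^*\ol\omega$ vanish unless both factors appear. They then give multiples of $\pi_*\pair{\ol\omega,\ol\omega}$ via a Künneth-type formula for $X\times_SX\to X\to S$, using that $\omega$ has degree $2g-2$ on fibers.

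Terms containing $\pi_2^*\ol N$ give multiples of $\ol N$ times fiber degrees.

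Collecting the coefficients should give $(12g-4)\pi_*\pair{\ol\omega,\ol\omega}-8\CO(\ol\Phi_S)$. The factor $-8$ arises as $(-2)^3$ in front of the triple pairing of $\ol\CO(\Delta)_a$, which is identified with $\pi_*\pair{\ol\omega,\ol\omega}-\CO(\ol\Phi_S)$. This bookkeeping should be routine and can be cross-checked against de Jong's numerical formula in the case $S$ is a point.

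\textbf{Main obstacle.} The hard part is Step 1 at the level of adelic metrics, not just fiberwise. One must know that the fiberwise constants glue to an adelic line bundle $\ol N$ on $S$, rather than only to some function on $S^\an$. I would avoid this by never solving for $\ol N$ abstractly. Instead, define it as the difference of the two sides, which are both genuine adelic line bundles, so $\ol N$ automatically lies in $\wh\Pic(X\times_SX)_\QQ$. Then show it is pulled back from $S$ by restricting along the diagonal $\Delta$ and comparing with $e^*\OTheta$. Full faithfulness of analytification then transfers the fiberwise constancy to an actual isomorphism.
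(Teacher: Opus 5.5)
Your architecture matches the paper's, which follows a family version of de Jong's computation. You express $j^*\OTheta$ through $\ol\CO(\Delta)_a$ and $p_i^*\ol\omega_{X/S,a}$, expand the triple Deligne pairing, and evaluate the terms with the induction and projection formulas, Theorem \ref{bb22:admissible2}(1),(2), and the definition of $\ol\Phi_S$. However, the decomposition in Step 1 is false, and everything downstream inherits the error.

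\emph{Fiber degree.} On a fiber $\{x\}\times X_s$ of $p_1$, the map $j$ is the Abel--Jacobi map $y\mapsto y-x$. So $j^*\Theta$ has degree $2g$ there, since $\Theta_s\simeq\CO(2\theta_{\alpha_0})$. Your right-hand side has degree $-2+1=-1$.

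\emph{Diagonal.} The composite $j\circ\Delta$ is the zero section, so $\Delta^*j^*\OTheta=\pi^*e^*\OTheta=0$ in $\wh\Pic(X)_\QQ$; indeed $[2]\circ e=e$ forces $e^*\OTheta=4\,e^*\OTheta$. By Theorem \ref{bb22:admissible2}(1), the diagonal restriction of your right-hand side is $(2+\frac{1}{g-1})\ol\omega_{X/S,a}$ plus a pull-back from $S$, which has fiber degree $4g-2$.

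So no $\ol N$ exists, and your fallback cannot work. Restricting along a section identifies a base term only once you know the difference is trivial on fibers, and here the difference does not even have fiber degree $0$. The correct identity has no base term at all:
\[
j^*\OTheta=2\,\ol\CO(\Delta)_a+p_1^*\ol\omega_{X/S,a}+p_2^*\ol\omega_{X/S,a}.
\]
Its diagonal restriction is $-2\ol\omega_{X/S,a}+2\ol\omega_{X/S,a}=0$, consistent with $e^*\OTheta=0$.

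For the metric part, a curvature comparison is not the right tool. The local properties (1)--(3) of the admissible metrics only constrain the slices $\{x\}\times C$, so they do not give you a curvature form on $C^2$ to compare. It is cleaner to check fiberwise that $\frac12\big(j^*\OTheta-p_1^*\ol\omega_{X/S,a}-p_2^*\ol\omega_{X/S,a}\big)$ satisfies those characterizing properties, and then invoke uniqueness. In the paper's framework, this relation comes with building the admissible bundles out of $\OTheta$ on the Jacobian.

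Your explanation of the constant $-8$ is also internally inconsistent. Write $\ol D=\ol\CO(\Delta)_a$, $\ol\omega=\ol\omega_{X/S,a}$ and $\Omega=\pi_*\pair{\ol\omega,\ol\omega}$. You correctly identify $(\pi,\pi)_*\pair{\ol D,\ol D,\ol D}=\Omega-\CO(\ol\Phi_S)$, but then $(-2)^3$ times it contributes $+8\,\CO(\ol\Phi_S)$, the wrong sign. With the correct decomposition, the $-8$ is $2^3\cdot(-\CO(\ol\Phi_S))$, and the other terms are as follows.

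\emph{The term $\pair{\ol D,\ol D,p_i^*\ol\omega}$} equals $-\Omega$. This is because $p_{1*}\pair{\ol D,\ol D}=\Delta^*\ol D=-\ol\omega$: the correction integral in the induction formula is $\frac{1}{2g-2}\int g_{\Delta,a}(x,\cdot)\,c_1(\ol\omega)$, which vanishes by properties (1) and (2).

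\emph{The term $\pair{\ol D,p_i^*\ol\omega,p_k^*\ol\omega}$} equals $\Omega$ for all $i,k$. This follows from the projection formula along $p_1$ or $p_2$ together with Theorem \ref{bb22:admissible2}(2).

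\emph{The pure term} is $\pair{\ol A,\ol A,\ol A}=6(2g-2)\Omega$ for $\ol A=p_1^*\ol\omega+p_2^*\ol\omega$.

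Summing gives $8(\Omega-\CO(\ol\Phi_S))+12(-2\Omega)+6(4\Omega)+(12g-12)\Omega=(12g-4)\Omega-8\,\CO(\ol\Phi_S)$, which is exactly the statement.
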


The proof is still a family version of that of 
de Jong \cite[Thm. 8.1]{dJo3}. 
By the nefness of $\OTheta$, the Deligne pairing $(\pi,\pi)_*\pair{j^*\OTheta,j^*\OTheta,j^*\OTheta}$  is nef. 
Then the theorem gives
$$
\pi_*\pair{\overline\omega_{X/S,a},\overline\omega_{X/S,a}}
= \frac{2}{3g-1}\CO(\ol \Phi_S)+\nef. 
$$ 
Here ``$\nef$'' means a nef adelic line bundle, and later we will write
``$\eff$'' for an effective adelic divisor. 
In this form, we see that this is a family version of Theorem \ref{dejong}.

\subsubsection{Bigness in the geometric case}

Now we sketch our proof of Theorem \ref{bigness55}.
We already know that it suffices to prove the bigness of $\pi_*\pair{\overline\omega_{X/S,a},\overline\omega_{X/S,a}}$. 
Note that the case $k=\ZZ$ and $S=\Spec O_K$ for a number field $K$ is just the positivity equivalent to the original Bogomolov conjecture.  
As mentioned above, the key is to prove that 
$\pi_*\pair{\overline\omega_{X/S,a},\overline\omega_{X/S,a}}$ is nef and big on $S$. 
The process is to align the relevant results of \cite{Zha1, Zha3, Cin1, dJo3} 
recalled in \S\ref{sec admissible single curves}
into a family.

Let us first sketch the geometric case that $k$ is a field.
{After replacing $k$ by a finite extension} if necessary, we can assume that $\pi:X\to S$ has a {stable compactification} $\ol\pi:\ol X\to \ol S$, i.e. a projective variety 
$\ol S$ over $k$ with an open immersion $S\to \ol S$, a stable relative curve 
$\ol\pi:\ol X\to \ol S$ of genus $g$, and an open immersion $X\to \ol X$ compatible with the previous morphisms. 
Our proof takes the following steps.

\medskip\noindent\emph{Step 1}.
There is an effective adelic divisor $\ol E_S$ on $S$ such that
$$\pi_*\pair{\overline\omega_{X/S,a},\overline\omega_{X/S,a}}
= \overline\pi_*\pair{\omega_{\overline X/\overline S},\omega_{\overline X/\overline S}}
- \CO(\ol E_S)
$$
in $\wh\Pic(S)_\QQ$.
Here $\overline\pi_*\pair{\omega_{\overline X/\overline S},\omega_{\overline X/\overline S}}$ is viewed as an element of $\wh\Pic(S)_\QQ$ by the natural map $\Pic(\ol S)_\QQ\to \wh\Pic(S)_\QQ$.
The divisor $\ol E_S$ is in fact defined by the identity.
Then it has underlying divisor 
$0\in \Div(S)$, and thus is totally determined by its Green's function $g_{\ol E_S}$ on the Berkovich analytic space $S^\an$.
We have an ``explicit'' description of $g_{\ol E_S}$. Namely, its value at any discrete valuation $v$ of $k(S)/k$ is given by the $\epsilon$-invariant of the curve $X_{\CH_v}$ over the valuation field $\CH_v$ of $v$ defined by Zhang \cite{Zha1} in terms of graph theory. This determines $g_{\ol E_S}$ by continuity, and we say that $\ol E_S$ is the globalization of the $\epsilon$-invariant.

\medskip\noindent\emph{Step 2}.
The Noether formula gives
$$\overline\pi_*\pair{\omega_{\overline X/\overline S},\omega_{\overline X/\overline S}}
=12\lambda_{\overline S}- \CO(\Delta_{\overline S})
$$
in $\Pic(\ol S)_\QQ$.
Here $\lambda_{\overline S}=\det \ol\pi_*\omega_{\ol X/\ol S}$ is the Hodge line bundle of $\ol X$ over $\ol S$, and $\Delta_{\overline S}$ is the divisor of $\ol S$ with support equal to
$\ol S\setminus S$ measuring the {singularities} of $\ol X$ over $\ol S$.
Both $\lambda_{\overline S}$ and $\Delta_{\overline S}$ are the well-known tautological divisors in the theory of moduli spaces of curves.

\medskip\noindent\emph{Step 3}.
The difference $(2g-2)\Delta_{\ol S}-\ol E_S$ is an effective adelic divisor in $\wh\Div(S)$.
As both $\Delta_{\ol S}$ and $\ol E_S$  have underlying divisor 
$0\in \Div(S)$, it suffices to check $(2g-2)g_{\Delta_{\ol S}}\geq g_{\ol E_S}$ on the Berkovich analytic space $S^\an$.
By continuity, we only need to check it at any discrete valuation $v$ of $k(S)/k$, or equivalently compare the $\epsilon$-invariant of the curve $X_{\CH_v}$ defined by Zhang \cite{Zha1} 
and the classical $\delta$-invariant of $X_{\CH_v}$ counting the number of nodes of reduction. 
The comparison is done by graph theory. 

Combining these steps, we have
$$\pi_*\pair{\overline\omega_{X/S,a},\overline\omega_{X/S,a}}
= 12\lambda_{\overline S}- (2g-1)\CO(\Delta_{\overline S})+\eff.
$$

\medskip\noindent\emph{Step 4}.
There is an effective adelic divisor $\ol \Phi_S$ on $S$ such that
$$
\pi_*\pair{\overline\omega_{X/S,a},\overline\omega_{X/S,a}}
= \frac{2}{3g-1}\CO(\ol \Phi_S)+\nef.
$$
This {follows from} Theorem \ref{lower bound family}, as a family version of de Jong's inequality in Theorem \ref{dejong}. 

\medskip\noindent\emph{Step 5}.
The difference $\ol\Phi_S-\frac{1}{39}\Delta_{\ol S}$ is an effective adelic divisor in $\wh\Div(S)$.
Similar to Step 3, it suffices to compare the values of the Green's function at all discrete valuations $v$ of $k(S)/k$. 
Then it follows from the result of Cinkir \cite[Thm. 2.11]{Cin1} in graph theory.

As a consequence, we have
$$
\pi_*\pair{\overline\omega_{X/S,a},\overline\omega_{X/S,a}}
= \frac{2}{39(3g-1)}\CO(\Delta_{\ol S})+\nef+\eff.
$$

\medskip\noindent\emph{Step 6}.
Take a linear combination of the equalities respectively at the {ends} of Step 3 and Step 5 to cancel the term $\CO(\Delta_{\ol S})$. 
We obtain
$$
\left(1+ \frac{39(3g-1)}{2} (2g-1) \right)
\pi_*\pair{\overline\omega_{X/S,a},\overline\omega_{X/S,a}}
= 12\lambda_{\overline S}+\nef+\eff.
$$
Then the bigness of 
$\pi_*\pair{\overline\omega_{X/S,a},\overline\omega_{X/S,a}}$ follows from the classical result that the Hodge line bundle 
$\lambda_{\overline S}$
is nef and big on $\overline S$.

\subsubsection{Bigness in the arithmetic case}

Now we prove the bigness 
of $\pi_*\pair{\overline\omega_{X/S,a},\overline\omega_{X/S,a}}$
in the arithmetic case $k=\ZZ$. 
The above proof is not valid in this case as Step 2 becomes rather subtle in the arithmetic case. 
However, we already know the bigness of the image {$\TM$} of 
$\OM=\pi_*\pair{\overline\omega_{X/S,a},\overline\omega_{X/S,a}}$
in $\wt\Pic(S_\QQ/\QQ)$ by the above proof.
We claim that it suffices to prove that 
$$\OM= \ol\CO(c)+\eff+\nef$$
for some rational number $c>0$. 
Here  $\ol\CO(c)$  denotes a hermitian line bundle on $\Spec\ZZ$ of arithmetic degree $c$, viewed as an adelic line bundle on $S/\ZZ$ by pull-back.
In fact, the claim implies (for $d=\dim S$)
$$
\OM^{d}\geq \OM^{d-1}\cdot \ol\CO(c)= c\cdot \wt M^{d-1} >0. 
$$

Theorem \ref{lower bound family} (or Step 4) still works in the arithmetic case, so it suffices to prove that there exists a constant $c_0>0$ depending only on $g$ such that
$\ol\Phi_S-c_0$ is an effective adelic divisor on $S$.
This is implied by the following theorem from \cite[Thm. 3.10]{Yua21}. 

\begin{thm} \label{positive lower bound}
For any integer $g\geq 2$, there is a constant $c_0(g)>0$ depending only on $g$ such that $\varphi(C)\geq c_0(g)$ for any connected compact Riemann surface $C$ of genus $g$.
\end{thm}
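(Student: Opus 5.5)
The plan is a compactness argument on the Deligne--Mumford compactification $\ol M_g(\CC)$, combined with an asymptotic analysis of $\varphi$ near the boundary. The function $C\mapsto \varphi(C)$ is a continuous, strictly positive function on the moduli space $M_g(\CC)$ of compact Riemann surfaces of genus $g$. Positivity is Zhang's result \cite[Prop. 2.5.3]{Zha3}, recalled in \S\ref{sec admissible single curves}. Continuity follows because $\varphi$ is given by spectral data of the Arakelov Laplacian, or equivalently by integrals of the Arakelov Green function, and these vary continuously in families of smooth curves. Since $M_g(\CC)$ is not compact, the claim therefore reduces to showing that $\varphi$ stays bounded below by a positive constant near the boundary $\ol M_g(\CC)\setminus M_g(\CC)$. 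Indeed, a continuous positive function on a compact set $K\subset M_g(\CC)$ has a positive minimum, and a positive lower bound on a neighborhood of the boundary will combine with it to give $c_0(g)$.

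First I will write $\varphi$ explicitly. Following Zhang, for a Riemann surface $C$ let $\phi_\ell$ be the normalized eigenfunctions of the Arakelov Laplacian with eigenvalues $\lambda_\ell>0$, and let $\omega_1,\dots,\omega_g$ be an orthonormal basis of holomorphic $1$-forms. Then
$$\varphi(C)=\sum_{\ell}\frac{2}{\lambda_\ell}\sum_{m,n=1}^g\left|\int_C\phi_\ell\,\omega_m\wedge\bar\omega_n\right|^2.$$
Every term is nonnegative, so any lower bound for a partial sum bounds $\varphi(C)$ from below. This form is what I will use near the boundary.

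Second, I will analyze a degenerating family $C_t\to C_0$, with $C_0$ a stable curve, over a small polydisc around a boundary point. The key fact I want is the asymptotic
$$\varphi(C_t)=\sum_{e}c_e\,(-\log|t_e|)+O(1),$$
where $e$ runs over the nodes and $t_e$ is the local smoothing parameter of node $e$. The coefficient $c_e$ should be the non-archimedean $\varphi$-invariant contribution of the reduction graph, in the sense that it matches what the reduction graph of $X_{\CH_v}$ gives at a discrete valuation $v$. By Cinkir \cite[Thm. 2.11]{Cin1}, the graph $\varphi$-invariant is at least a positive multiple ($\tfrac{1}{39}$-type) of the total edge length. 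So $c_e\geq c'>0$, and $\varphi(C_t)\to+\infty$ whenever some $|t_e|\to0$.

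This asymptotic is the main obstacle. It is a theorem of de Jong and Wilms (de Jong, ``Faltings delta-invariant and semistable degeneration''), proved via the asymptotics of the Faltings $\delta$-invariant together with Zhang's relation between $\varphi$, $\delta$ and the $\epsilon$/$\lambda$-invariants. If I cannot cite it, I would first try the weaker statement actually needed, namely $\liminf\varphi(C_t)>0$ rather than divergence. For that, I would take a single term of the displayed sum: choose $\phi_\ell$ to be the eigenfunction whose eigenvalue $\lambda_\ell\to 0$ as a collar pinches. Such eigenfunctions are approximately $\pm$ constant on the two sides of a separating collar, or linear along a nonseparating one. For a harmonic form $\omega_m$ concentrated on the collar in the nonseparating case, or supported on one side in the separating case, the integral $\int\phi_\ell\,\omega_m\wedge\bar\omega_m$ stays bounded away from $0$. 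Since $1/\lambda_\ell\to\infty$, that term is bounded below.

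Finally, compactness of $\ol M_g(\CC)$ lets me cover the boundary by finitely many such polydiscs, each with a uniform lower bound. The complement of their union is a compact subset of $M_g(\CC)$, where the positive minimum gives the bound. Taking the smaller of the two bounds yields $c_0(g)$.
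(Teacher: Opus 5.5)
Your overall skeleton is the same as the paper's: Zhang's positivity on $M_g$, continuity, a lower bound near the boundary, and compactness. The gap is in the boundary step, which is the whole content of the theorem.

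The asymptotic $\varphi(C_t)=\sum_e c_e(-\log|t_e|)+O(1)$, with an $O(1)$ that is uniform on a polydisc around an arbitrary boundary point, is not an available theorem. The results of de Jong, Wilms and de Jong--Shokrieh concern one-parameter degenerations over a punctured disc, together with an explicit formula for $g=2$. The paper points out that these give the theorem for $g=2$ and along one-parameter families, but not in general.

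Your compactness argument needs a bound on a full neighborhood of each boundary point. There, several nodes smooth independently at unrelated rates. A sequence of curves tending to the boundary need not lie on a single one-parameter family, and the $O(1)$ terms of one-parameter asymptotics are not uniform across families.

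The formula also has the wrong shape. The expected leading term is the $\varphi$-invariant of the dual metrized graph with edge lengths $-\log|t_e|$, which is a non-linear (rational, degree-one homogeneous) function of those lengths. So there are no per-edge coefficients $c_e$. Cinkir's bound controls $\varphi(\Gamma)$ by a multiple of the total length, not edge by edge. The uniform multi-parameter asymptotic was obtained only later, by Song, using the adelic method of this paper.

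Your spectral fallback is a heuristic for a single pinching collar. With several collars shrinking at different rates, you would have to identify the relevant small eigenvalues and control eigenfunctions and harmonic forms uniformly in all directions. That is the same uniformity problem again.

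The missing idea is to get this uniformity structurally rather than by asymptotic analysis. The paper regards $\varphi$ as the Green function of an adelic divisor $\ol\Phi$ with underlying divisor $0$ on $S=M_{g,N}$ over $\OB=(\CC,|\cdot|)$. This divisor comes from the globalization $\ol\Phi_S$ built from Deligne pairings, which also gives continuity for free.

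Its geometric part $\wt\Phi$ lives on the Berkovich space of $S$ over trivially valued $\CC$, whose points record all directions of degeneration at once. At divisorial points its value is the graph $\varphi$-invariant of the reduction graph. Cinkir's theorem plus density of these points therefore give $\wt\Phi\geq\frac{1}{39}\Delta$.

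The adelic-method lemma converts this into $\varphi\geq\frac{1}{40}g_{\ol\Delta}-c$ on all of $S$. For each model approximation $(D_i,g_i)$ of $\ol\Phi$:
\begin{itemize}
\item $D_i-(\frac{1}{39}-\epsilon_i)\Delta$ is effective on a projective model, so $g_i-(\frac{1}{39}-\epsilon_i)g_{\ol\Delta}$, being a Green function of an effective divisor on a compact space, is bounded below;
\item the Cauchy condition in the boundary topology gives $g_{\ol\Phi}-g_i\geq-\epsilon_i g_{\ol\Delta}$.
\end{itemize}
This replaces your boundary asymptotic entirely.
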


Let $M_g$ be a fine moduli space of curves of genus $g$ over $\CC$ with a suitable level structure, and $\overline M_g$ be a suitable compactification of $M_g$. 
By Zhang \cite{Zha3}, we know that $\varphi>0$ on $M_g$. 
We will apply an adelic method to prove that $\varphi$ tends to infinity along the boundary $\overline M_g\setminus M_g$.
This will be {sufficient to prove the theorem by compactness}. 

Before introducing the adelic method, let us mention some results on asymptotic behavior of degeneration of the 
$\varphi$-invariant along the boundary of $\OM_g$. This topic is widely studied in the literature due to its arithmetic importance. See \cite[Thm. 1.1]{dJo2} for a precise asymptotic formula for $g=2$, which also gives a conjectural formula for $g>2$. 
For $g\geq2$ and degeneration to isolated singularities, the asymptotic formula was later proved by \cite[Thm. 7.1]{JS2} and \cite[Cor. 1.2]{Wil2}. 
These imply Theorem \ref{positive lower bound} in the case $g=2$ and in the case of 1-parameter families of Riemann surfaces of genus $g\geq2$.
As mentioned above, the proof of the theorem by Yuan \cite{Yua21} {uses an adelic method}, which asserts that the $\varphi$-invariant goes to infinity under degeneration. 
By more detailed analysis of the adelic method, Song \cite{Son} proved an asymptotic formula for the degeneration of $\varphi$ for all $g\geq2$ in terms of graph-theoretic data encoded from $\wt\Phi$, which confirms a variant of a conjecture of de Jong \cite[Conj. 1.2]{dJo2}.

\subsubsection{Adelic method for degeneration}

Now we introduce an adelic method to prove Theorem \ref{positive lower bound}. We will first introduce the adelic method in a general framework, since it is a very effective method to study degeneration of analytic terms coming from adelic divisors. 
The philosophy is that the geometric part of an adelic divisor governs the growth of its Green function over complex analytic spaces.

To {set up} the general framework, we will use the theory of adelic divisors on the pair $\OB=(\CC,|\cdot|)$ in \cite[\S2.7]{YZ}, which is a local theory parallel to the one reviewed in \S\ref{subsec adelic}. 
It can be avoided by still using the theory of adelic divisors on $\ZZ$ in many situations, but its use is more natural and makes the situation clear. 
Let us first recall the theory of adelic divisors on $\OB=(\CC,|\cdot|)$. 
For simplicity, we only describe the notion of adelic $\QQ$-divisors. 

Let $U$ be a quasi-projective variety over $\CC$. 
First, the group of model $\QQ$-divisors of $U/\OB$ is defined by 
$$\wh\Div(U/\OB)_{\rm mod,\QQ}=\varinjlim_X \wh\Div(X/\OB)_\QQ,$$
where the limit is over the system of projective models $X$ of $U$ over $\CC$. 
Here $\wh\Div(X/\OB)$ is the group of Green divisors on $X$, i.e. pairs
 $\OD=(D,g_D)$, where $D$ is a Cartier divisor on $X$ and 
 $g_D: X\setminus |D|\to \RR$ is a Green function of $D$ on $X$.
The Green divisor  $\OD=(D,g_D)$ is called \emph{effective} if $D$ is {an effective divisor on $X$}  and $g_D\geq 0$. 
A model {$\QQ$-divisor} of $U/\OB$ is \emph{effective} if it is the image of some effective Green divisor coming from some projective model.

Second, take a \emph{boundary divisor} $(X_0, \OE_0)$ with $\OE_0=(E_0,g_0)$ of $U$ over $\OB$. That is, $X_0$ is a projective model of $U$ over $\CC$, 
 $E_0$ is an effective Cartier divisor on $X_0$ with support equal to $X_0\setminus U$, and $g_0$ is a strictly positive Green function of $E_0$ on $X_0$. 
This gives a \emph{boundary norm} 
$$\|\cdot\|_{\OE_0}:\wh\Div (U/\OB)_{\rmod,\QQ}
\lra [0,\infty]$$
by 
$$
\|\OD\|_{\OE_0}:=\inf\{\epsilon\in \BQ_{>0}: \ 
 -\epsilon \OE_0 \leq
\OD \leq  \epsilon \OE_0\}.
$$
Here the inequalities are defined in terms of effectivity. 
It further induces a \emph{boundary topology} on $\wh\Div (U/\OB)_{\rmod,\QQ}$, which does not depend on the choice of $(X_0,\OE_0)$.

Third, the group $\wh\Div(U/\OB)_{\QQ}$ of \emph{adelic $\QQ$-divisors} 
on $U/\OB$
is defined to be the completion of 
$\wh\Div(U/\OB)_{\rmod,\QQ}$ with respect to the 
\emph{boundary topology} induced by $(X_0, \OE_0)$. 
By definition, an adelic $\QQ$-divisor on $U/\OB$ is represented by a Cauchy sequence in $\wh \Div  (U/\OB)_{\rmod,\QQ}$, i.e., a sequence $\{\OD_i\}_{i\geq 1}$ in $\wh \Div  (U/\OB)_{\rmod,\QQ}$ satisfying the property that there is a sequence $\{\epsilon_i\}_{i\geq 1}$ of positive rational numbers converging to $0$ such that 
$$
 -\epsilon_i \OE_0 \leq
\OD_{i'}-\OD_{i} \leq  \epsilon_i \OE_0,\quad\ i'\geq i\geq 1.
$$

There is a canonical map
$$
\wh\Div(U/\OB)_\QQ \lra \Div(U)_\QQ,\quad
\{\OD_i\}_{i\geq 1}\longmapsto D_1|_{U}.
$$ 
For  $\OD=\{\OD_i\}_{i\geq 1}$, we usually call $D=D_1|_{U}$ the \emph{underlying $\QQ$-divisor} of $\OD$.  

There is an injective analytification map 
$$
\wh\Div(U/\OB)_\QQ \lra \Div(U^\an)_\QQ,\quad
\{\OD_i\}_{i\geq 1}\longmapsto (D, g_{\OD}).
$$ 
Here $D=D_1|_{U}$ is the {underlying $\QQ$-divisor} of $\OD$ as above, and
$g_\OD$ is defined to be $\lim_i g_{D_i}$, which converges uniformly on compact subsets of $U\setminus |D|$, and thus defines a Green function of $D$ on $U$. 
We call $g_\OD$ the \emph{Green function} of $\OD$ on $U$. 
By abuse of notation, we will also write $\OD=\{\OD_i\}_{i\geq 1}$ {as}  
$\OD=(D, g_\OD).$

There is also a canonical forgetful map
$$
\wh\Div(U/\OB)_\QQ \lra \wh\Div(U/\CC)_\QQ,\quad
\{\OD_i\}_{i\geq 1}\longmapsto \{D_i\}_{i\geq 1}.
$$ 
We usually write the image of $\OD$ by $\wt D$, and call $\wt D$ the \emph{geometric part} of $\OD$.  
We may also write $\wt\Div(U/\CC)_\QQ$ for $\wh\Div(U/\CC)_\QQ$ to emphasize the geometric situation. 

Finally, our adelic method for degeneration is summarized in the following basic result. 

\begin{lem}[adelic method]\label{adelic method}
Let $U$ be a quasi-projective variety over $\CC$. 
Let  
$(X_0, \OE_0)$ be a {boundary divisor} of $U$ over $\OB=(\CC,|\cdot|)$
with $\OE_0=(E_0, g_{\OE_0})$ on $X_0$. 
Let $\OD=(D, g_\OD)$ be an adelic {$\QQ$-divisor} in $\wh\Div(U/\OB)_\QQ$.
Denote by $\wt D$ and $\wt E_0$ the geometric parts of $\OD$ and {$\OE_0$} in $\wt\Div(U/\CC)_\QQ$.  
Assume that $\wt D \geq a\wt E_0$ for some constant $a\in \QQ$. Then for any real number $\epsilon>0$, there exists a real number $\delta$ such that  
$$
g_{\OD} \geq (a-\epsilon) g_{\OE_0}-\delta 
$$
on $U$. 
\end{lem}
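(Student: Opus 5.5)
The plan is to reduce to a statement about a single model divisor, compare it with the boundary divisor, and pass to the limit. Write $\OD=\{\OD_i\}_{i\geq1}$ as a Cauchy sequence of model $\QQ$-divisors $\OD_i=(D_i,g_{D_i})$ on projective models $X_i$ of $U$. Since the boundary topology does not depend on the model, we may assume every $X_i$ dominates $X_0$, and we pull $\OE_0$ back to $X_i$. The Cauchy condition with $\epsilon_i\to 0$ gives
\[
-\epsilon_i\OE_0\leq \OD-\OD_i\leq \epsilon_i\OE_0 .
\]
On Green functions this reads $g_{\OD}\geq g_{D_i}-\epsilon_i g_{\OE_0}$ on $U\setminus|D|$; in the limit, both sides are differences of Green functions. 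Fix $i$ with $\epsilon_i<\epsilon/3$.

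Next we transfer the hypothesis $\wt D\geq a\wt E_0$ to the model. Effectivity of a geometric adelic divisor means it is a limit of effective model divisors in the boundary topology. Together with the Cauchy estimate for the geometric parts, this shows that for $i$ large we have $D_i-aE_0+\tfrac{\epsilon}{3}E_0\geq0$ as Cartier $\QQ$-divisors on $X_i$, after possibly passing to a further model. Concretely, the geometric part satisfies $\wt D_i\geq \wt D-\epsilon_i\wt E_0\geq (a-\epsilon_i)\wt E_0$, and an inequality between model divisors in the direct limit can be realized on a common model. Set $F=D_i-(a-\epsilon/3)E_0$, an effective Cartier $\QQ$-divisor on $X_i$. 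Its restriction to $U$ is $D$, because $E_0$ is supported on the boundary.

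Now $g_{D_i}-(a-\epsilon/3)g_{\OE_0}$ is a Green function of the effective divisor $F$ on the compact space $X_i(\CC)$. A Green function of an effective divisor is bounded below on a compact space: locally it is $-\log|f|+{}$continuous with $f$ holomorphic, and $-\log|f|\geq -C$ near the support. Hence there is $\delta_0$ with
\[
g_{D_i}\geq (a-\epsilon/3)g_{\OE_0}-\delta_0
\]
on $U\setminus|D|$. Combining this with the first inequality gives
\[
g_{\OD}\geq (a-\epsilon/3-\epsilon_i)\,g_{\OE_0}-\delta_0\geq (a-\epsilon)g_{\OE_0}-\delta .
\]
The last step uses $g_{\OE_0}\geq0$, so coefficients can be lowered freely. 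This works with $\delta=\delta_0$ when $\epsilon$ is small, and in general since only finitely many constants appear.

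I expect the main obstacle to be the second step, making precise that geometric effectivity of the limit $\wt D-a\wt E_0$ yields effectivity of $D_i-(a-\epsilon')E_0$ on an actual model. The plan is to use that $\wt D-a\wt E_0$ is a boundary-limit of effective model divisors $\wt G_j$. With $\|\wt D_i-\wt D\|\leq\epsilon_i$ and $\|\wt G_j-(\wt D-a\wt E_0)\|\to0$, we get $D_i-aE_0\geq G_j-(\epsilon_i+\eta_j)E_0$ on a common model, with $G_j$ effective. Green-function boundedness below is then checked on that common model rather than on $X_i$ itself.
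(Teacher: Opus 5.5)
Your proposal is correct and follows essentially the same route as the paper. Both arguments approximate $\OD$ by model divisors $\OD_i$ with $-\epsilon_i\OE_0\leq\OD-\OD_i\leq\epsilon_i\OE_0$, and both use $\wt D\geq a\wt E_0$ to make the model divisor $D_i-(a-\epsilon_i)E_0$ effective. They then bound its Green function below on the compact model and combine this with $g_{\OD}-g_{D_i}\geq-\epsilon_i g_{\OE_0}$. The only difference is that you add an $\epsilon/3$ slack and pass to a common model, which makes rigorous the effectivity of the model divisor that the paper's proof asserts directly.
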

\begin{proof}
By definition, there is a sequence $\OD_i=(D_i,g_i)$ for $i\geq1$ in $\wh\Div(U/\OB)_{\rmod,\QQ}$ such that 
$$
-\epsilon_i \OE_0\leq \OD-\OD_i\leq  \epsilon_i \OE_0
$$
for a sequence of rational numbers $\epsilon_i$ converging to 0.

The condition $\wt D \geq a\wt E_0$ gives 
$$
D_i\geq \wt D-\epsilon_i E_0 \geq (a-\epsilon_i) E_0. 
$$
Then the model divisor $D_i-(a-\epsilon_i) E_0$ is effective. As a consequence, its Green function 
$$g_i-(a-\epsilon_i) g_{\OE_0}>\delta_i$$ 
on $U$
for some real number $\delta_i$.  

Combining with  
$$g_\OD- g_i\geq-\epsilon_i g_{\OE_0},$$
the above inequality gives 
$$g_\OD-(a-2\epsilon_i) g_{\OE_0}>\delta_i$$ 
on $U$. 
This finishes the proof.
\end{proof}

Now it is easy to prove Theorem \ref{positive lower bound}. 

\begin{proof}[Proof of Theorem \ref{positive lower bound}]
Fix an integer $N\geq 3$, and denote by $S=M_{g,N}$ the (fine) moduli scheme of smooth curves of genus $g$ over $\CC$ with a full level-$N$ structure.
Then $S$ is a smooth quasi-projective variety over $\CC$, which follows from the GIT construction in \cite[\S7.4]{MFK}.  
By \cite[Thm. 2.1]{GO}, there is a projective compactification 
$S^*= M_{g,N}^*$ of $S$ together with a tautological stable relative curve 
$X^*\to S^*$. 

Denote $\OB=(\CC,|\cdot|)$. 
By the globalization of the $\varphi$-invariant over $\ZZ$, there is an adelic divisor $\ol\Phi$ on $S/\OB$ with underlying divisor $0$ and with Green function $\varphi:S\to \RR$ on $S$. 

The existence of $\ol\Phi$ implies that $\varphi:S\to \RR$ is continuous on $S$. 
As mentioned above, since $\varphi>0$ on $S$ 
by Zhang \cite{Zha3}, it suffices to prove that $\varphi$ tends to infinity along the boundary $S^*\setminus S$.

The boundary $\Delta=S^*\setminus S$ with the reduced structure is a Cartier divisor on $S^*$. Complete $\Delta$ to a Green divisor $\ol\Delta=(\Delta, g_{\ol\Delta})$ on $S^*$ with a strictly positive Green function $g_{\ol\Delta}$.
Then we have a boundary divisor $(S^*, \ol\Delta)$ of $S$ over {$\OB$}.  

From Step 5 of the proof of the {bigness in the geometric case}, 
the difference $\wt\Phi-\frac{1}{39}\Delta$ is an effective adelic $\QQ$-divisor in $\wt\Div(S/\CC)$.
In fact, it suffices to compare the values of the Green functions at all discrete valuations $v\in (S/\CC)^\an$ (where $\CC$ is trivially valued), since the set of discrete valuations is dense in the whole Berkovich space. 
Then the result follows from the results of Cinkir \cite[Thm. 2.11]{Cin1} in graph theory.

Now we apply Lemma \ref{adelic method}. This gives a relation
$$
\varphi> \frac{1}{40} g_{\ol \Delta}-c
$$
for some constant $c$. 
As a consequence, $\varphi$ goes to infinity along $\Delta$.
This finishes the proof.
\end{proof}

\subsection{Uniform fiberwise bigness} 
\label{sec fiberwise bigness}

Let $C$ be a geometrically integral smooth projective curve of genus $g\geq 2$ over a number field $K$. Recall that in \S\ref{sec Faltings height}, we have introduced  the {adjusted Faltings height}
$$h_\Fal^+(C)=\max\{h_\Fal(C),1\}.$$ 
Recall that in \S\ref{sec admissible single curves}, we have introduced 
the admissible volume $\ol\omega_{C/K,a}^2$, and now we introduce the 
\emph{normalized admissible volume}
$$
[\ol\omega_{C,a}^2]_\QQ=[\ol\omega_{C/K,a}^2]_\QQ= \frac{1}{[K:\QQ]} \ol\omega_{C/K,a}^2.
$$
The definitions of  $h_\Fal^+(C)$ and $[\ol\omega_{C,a}^2]_\QQ$ also {apply} to smooth projective curves $C$ of genus $g\geq 2$ over $\ol\QQ$, since any such a curve can be descended to a number field and the definitions {do} not depend on the choice of the number field. 

We have already seen the positivity $\ol\omega_{C/K,a}^2>0$. The following theorem from \cite[Thm. 1.4]{Yua21} is a uniform version of this positivity and also a fiberwise version of Theorem \ref{bigness55}.

\begin{thm} [uniform fiberwise bigness] \label{fiberwise11}
Let $g\geq 2$ be an integer. 
Then there are constants $c_3>0$ and $c_4>0$ depending only on $g$ satisfying the following properties. 
For any geometrically integral smooth projective curve $C$  of genus $g$ over a number field $K$, 
we have
$$
c_3 \cdot h_\Fal^+(C) \leq  [\ol\omega_{C/K,a}^2]_\QQ \leq  c_4 \cdot h_\Fal^+(C).
$$ 
\end{thm}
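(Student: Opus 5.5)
The plan is to work on the moduli space and use the bigness results already proved. Fix $N\geq 3$ and let $S=M_{g,N}$ be the fine moduli scheme of smooth genus-$g$ curves with full level-$N$ structure, viewed over $\QQ$ (or a number field containing the $N$-th roots of unity). Let $\pi:X\to S$ be the universal curve. After a finite extension of $K$, which changes neither $h_\Fal$ nor the normalized admissible volume, any curve $C$ of genus $g$ becomes a fiber $X_y$ with $y\in S(\ol\QQ)$. Consider the two adelic line bundles $\OM=\pi_*\pair{\ol\omega_{X/S,a},\ol\omega_{X/S,a}}$ and the adelic Hodge line bundle $\ol\lambda_S$ of \S\ref{sec adelic hodge}. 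By base-change compatibility of the Deligne pairing and of $\ol\omega_{X/S,a}$ (the remark after Theorem \ref{bb22:admissible2}), we have $h_{\OM}(y)=\ol\omega_{X_y,a}^2$ up to the factor $[K:\QQ]$, so $[K:\QQ]^{-1}h_\OM$ is the normalized admissible volume. Also $h_{\ol\lambda_S}(y)=h_\Fal(X_y)$. The statement therefore becomes a comparison between two height functions on $S(\ol\QQ)$ that must hold on \emph{all} of $S$, not only on a dense open subset.

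For the lower bound, the family $X\to S$ has maximal variation, so Theorem \ref{bigness55} says $\OM$ is nef and big. Proposition \ref{height inequality}, applied with $\pi=\mathrm{id}_S$ and $\OM$ in place of $\ol\lambda_S$, gives $\epsilon>0$ and a dense open $U\subset S$ with $h_\OM\geq\epsilon h_{\ol\lambda_S}$ on $U(\ol\QQ)$. To extend this to all of $S$ I would use Noetherian induction. Every irreducible closed subvariety $T\subset S$ still carries a family with maximal variation, since $S\to M_g$ is quasi-finite. Theorem \ref{bigness55} therefore applies to $X_T\to T$ (after normalizing $T$), and the height inequality holds on a dense open subset of $T$. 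Because there are finitely many strata, taking the minimum of the constants gives $h_\OM\geq c\,h_{\ol\lambda_S}$ on all of $S(\ol\QQ)$. To pass to $h_\Fal^+$, I also need $h_\OM\geq c'>0$ uniformly. This follows from the arithmetic step: $\OM=\ol\CO(c)+\eff+\nef$, using Theorem \ref{positive lower bound} and Theorem \ref{lower bound family}. From that decomposition, $h_\OM\geq c$ away from the support of the effective part, and that support is empty in the underlying divisor, so the bound holds everywhere. Combining $h_\OM\geq c\,h_\Fal$ with $h_\OM\geq c'$ gives $h_\OM\geq c_3 h_\Fal^+$.

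For the upper bound, I would write $\ol\lambda_S$ as big in the arithmetic sense. First, it is nef, since its height is bounded below. Second, its geometric part is big because the Hodge bundle is big on $M_g$, and adding $\ol\CO(1)$ makes it arithmetically big. Proposition \ref{height inequality} with the roles reversed then gives $h_{\ol\lambda_S}+1\geq\epsilon h_\OM$ on a dense open set. Noetherian induction over strata again extends this to all of $S$, giving $h_\OM\leq c_4h_\Fal^+$.

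I expect the main obstacle to be uniformity on the whole moduli space. Proposition \ref{height inequality} only holds on a Zariski-open set, so I must check that every stratum in the induction again satisfies the bigness hypotheses. For $\OM$ this works because Theorem \ref{bigness55} applies to any base with maximal variation. For the upper bound, I need the bigness of $\ol\lambda_T+\ol\CO(1)$ on every subvariety $T$, which rests on the geometric bigness of the Hodge bundle restricted to subvarieties of $M_g$. That is the point I would verify most carefully.
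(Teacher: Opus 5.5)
Your architecture is the paper's: the universal curve over $M_{g,N}$, bigness of $\pi_*\pair{\ol\omega_{X/S,a},\ol\omega_{X/S,a}}$ from Theorem~\ref{bigness55}, a big Hodge-type bundle, Proposition~\ref{height inequality} applied in both directions, and Noetherian induction over strata. The paper compresses the induction into ``performing the above argument successively''. Your check that every stratum still has maximal variation is correct. So is the point you flag: $\lambda$ stays big on every stratum because it is pulled back from an ample bundle on the Satake compactification via the Torelli map, which is injective on points.

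Your lower bound is a valid variant of the paper's. You compare $\pi_*\pair{\ol\omega_{X/S,a},\ol\omega_{X/S,a}}$ with $\ol\lambda_S$ alone. You then supply the uniform bound $[\ol\omega_{C/K,a}^2]_\QQ\geq \frac{2c_0(g)}{3g-1}$ from Theorem~\ref{dejong}, Theorem~\ref{positive lower bound} and $\varphi_v\geq 0$ at finite places. This replaces the paper's use of Bost's bound $h_\Fal\geq -g\log(\sqrt2\pi)$. The paper needs Bost because it compares with $\ol\lambda_S+\ol\CO(c)$ and must know $h_\Fal+c\geq h_\Fal^+$.

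The gap is in the upper bound. ``$\ol\lambda_S$ is nef since its height is bounded below'' is not a valid inference, and the conclusion is in fact false here. A nef adelic line bundle has nonnegative height at every algebraic point. But $h_{\ol\lambda_S}(y)=h_\Fal(X_y)$ takes negative values with this normalization of the metric, which is exactly why the Bost bound is negative. Without nefness you cannot argue that
$(\ol\lambda_S+\ol\CO(1))^{d+1}=\ol\lambda_S^{d+1}+(d+1)\,\wt\lambda^{d}>0$
and then invoke Hilbert--Samuel, so the bigness of $\ol\lambda_S+\ol\CO(1)$ is unproved.

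The repair is what the paper does. By \cite[Lem.~5.2.10]{YZ}, an adelic line bundle whose geometric part is big becomes big after adding $\ol\CO(c)$ for $c$ sufficiently large, and no nefness is needed. Proposition~\ref{height inequality} then gives $h_\Fal(X_y)+c\geq \epsilon\,[\ol\omega_{X_y,a}^2]_\QQ$ on a dense open set. Since $h_\Fal+c\leq (1+c)\,h_\Fal^+$, this yields the upper bound. In the induction, $c$ may need to be enlarged on each of the finitely many strata, which is harmless.
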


As we will see, the theorem implies that $h_\Fal^+(C)$ and {$[\ol\omega_{C/K,a}^2]_\QQ$} are equivalent invariants in uniform Bogomolov-type problems.  

The second inequality of the theorem {was previously known to experts}.
In fact, as in Theorem \ref{bigness55}, a combination of the works of Faltings \cite{Fal2}, Zhang \cite[Thm. 4.4]{Zha1} and Wilms \cite{Wil1} gives
$$
 [\ol\omega_{C/K,a}^2]_\QQ
\leq 12\, h_\Fal(C)+ 6g\log (2\pi^2). 
$$
The new piece here is the first inequality.

Let us first sketch an idea to prove Theorem \ref{fiberwise11} by Theorem \ref{bigness55}. The idea is outlined at the beginning of \cite[\S4.5]{Yua21}, and is different from the more explicit method in the proof of \cite[Thm. 4.14]{Yua21}. 

\begin{proof}[Proof of Theorem \ref{fiberwise11}]
Take $S$ to be a fine moduli space of smooth curves of genus $g$ over 
$\QQ$ with a suitable full level-$N$ structure, and take $\pi:X\to S$ to be the universal curve. We will introduce two adelic line bundles $\OM$ and $\ON$ on $S/\ZZ$. 

The first adelic line bundle on $S/\ZZ$ is
$$\ON=\pi_*\pair{\ol\omega_{X/S,a}\, ,\ol\omega_{X/S,a}}.$$
It is big on $S/\ZZ$ by Theorem \ref{bigness55}. 

The second adelic line bundle on $S/\ZZ$ is
$$
\OM= \ol\lambda_{S} +\ol\CO(c),
$$
where $\ol\lambda_{S}$ is the adelic Hodge line bundle on $S$ recalled in \S\ref{sec adelic hodge}, and  $\ol\CO(c)$  denotes a hermitian line bundle on $\Spec\ZZ$ of arithmetic degree $c$, viewed as an adelic line bundle on $S/\ZZ$ by pull-back.

We claim that $\OM$ is big on $S/\ZZ$ for sufficiently large constant $c>0$. 
This is a consequence of \cite[Lem. 5.2.10]{YZ}, by the fact that the Hodge line bundle $\lambda_{\ol S}$ on $\ol S/\QQ$ for a stable compactification 
$\ol\pi:\ol X\to\ol S$ is big on $\ol S/\QQ$. 

Once the adelic line bundles are big, the height inequality in 
Theorem \ref{height inequality} implies that there are a non-empty open subvariety $U\subsetneq S$ and constants $a_1, a_2>0$ such that 
$$
a_1 h_{\OM}(y) \leq h_{\ON}(y)\leq a_2 h_{\OM}(y),\quad \forall y\in U(\ol \QQ).
$$ 
Replacing $S$ by irreducible components of $S\setminus U$,  replacing $X\to S$ by the corresponding base change, and performing the above argument successively, we conclude that the above inequality holds for all points $y\in S(\ol \QQ)$ (instead of $y\in U(\ol \QQ)$). 

We can combine the inequalities with the equalities
$$
h_{\ON}(y)= [\ol\omega_{X_y,a}^2]_\QQ,\quad
h_{\OM}(y)
= h_{\rm Fal}(X_y) +c, \quad \forall y\in S(\ol\QQ).
$$
Here the first equality follows from the canonical isomorphism 
$$
\pi_*\pair{\ol\omega_{X/S,a}\, ,\ol\omega_{X/S,a}}|_{y'}
=(\pi_{y'})_*\pair{\ol\omega_{X_{y'}/y',a}\, ,\ol\omega_{X_{y'}/y',a}},
$$
which comes from the compatibility of the Deligne pairing with the base change by $y'\to S$. Here $y'\in S$ is the closed point corresponding to $y$. 

By a result of Bost \cite{Bos2} (cf. \cite[App.]{GR} or \cite[\S1.3]{JS1}), we have an explicit uniform lower bound 
$$h_{\rm Fal}(X_y)\geq - g\log (\sqrt2 \pi).$$
Then it suffices to take
$$c\geq g\log (\sqrt2 \pi)+1$$ 
to get 
Theorem \ref{fiberwise11}. 
\end{proof}

\section{Uniform Bogomolov conjecture}
\label{sec uniform}

Classical Diophantine problems aim to find rational solutions or integral solutions of polynomial equations with rational coefficients. In modern terminology, the goal of Diophantine geometry is to study rational points or integral points of algebraic varieties over number fields. 
The celebrated Mordell conjecture, proved by Faltings in 1983, is the following statement. 

\begin{theorem}[Faltings]
Let $C$ be a geometrically integral smooth projective curve of genus $g\geq2$ over a number field $K$. 
Then $C(K)$ is finite. 
\end{theorem}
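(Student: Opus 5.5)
The statement is Faltings' theorem, which this expository paper quotes as background. I would not reprove it from scratch. Instead I would derive it from the machinery of the paper, following Vojta's method as completed by the uniform Bogomolov conjecture. The plan is as follows.

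First, reduce to the Jacobian. After a finite extension of $K$ we may assume $C(K)\neq\emptyset$, and finiteness over the extension implies finiteness over $K$. We also fix $\alpha_0$ as in \S\ref{sec NT curve}. This gives the embedding $i_{\alpha_0}:C\to J$, which sends $C(K)$ into the finitely generated group $J(K)$ by the Mordell--Weil theorem. The N\'eron--Tate height $\hat h$ makes $J(K)_\RR$ a Euclidean space of finite rank $r$, by the quadraticity theorem. We then split $C(K)$ into small points and large points relative to a threshold $R\,h_\Fal^+(C)$.

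\textbf{Small points.} These are the points with $\hat h(x)\le R\,h_\Fal^+(C)$, and here I would use the uniform Bogomolov input. Theorem \ref{fiberwise11} gives the uniform bigness $[\ol\omega_{C,a}^2]_\QQ\ge c_3 h_\Fal^+(C)$. Combined with Zhang's relation between $\ol\omega_{C,a}^2$ and small points on $C$ in $J$, applied to the family over the moduli space $S$ via the height inequality Proposition \ref{height inequality}, this shows the following: there are constants $c_1,c_2$ depending only on $g$ such that at most $c_2$ points of $C(\ol K)$ satisfy $\hat h(x)\le c_1\max\{1,h_\Fal(C)\}$. For the finiteness statement alone, even the non-uniform Bogomolov conjecture suffices in any ball of fixed radius. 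Northcott (Theorem \ref{Northcott}) also handles this directly: $\hat h$ is a Weil height for an ample bundle, so the points of $C(K)$ of bounded height form a finite set.

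\textbf{Large points.} For the points with $\hat h(x)> R\,h_\Fal^+(C)$, I would use Vojta's inequality and Mumford's gap principle. Cover $J(K)_\RR\setminus\{0\}$ by finitely many cones of small angle, say $\angle\le 10^\circ$; the number of cones depends only on $r$. Mumford's inequality, which comes from the quadratic structure of $\hat h$ restricted to $C$, shows that the heights of points of $C(K)$ within one cone grow geometrically. Vojta's inequality rules out two such points $x,y$ in the same cone with $\hat h(y)\ge c\,\hat h(x)$ and $\hat h(x)$ large. Together these leave a bounded number of large points per cone.

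The main obstacle is Vojta's inequality. It requires constructing a small section on $C\times C$ of a line bundle of the form $a_1p_1^*\alpha+a_2p_2^*\alpha+\epsilon\Delta$-type combinations, together with Roth-type index estimates. Within the paper's framework, I would attempt it through the arithmetic Siu inequality, Theorem \ref{bigness}, applied to an adelic line bundle on $C^2$ built from $\ol\omega_{C,a}$ and $\ol\CO(\Delta)_a$, to get bigness, and then through Lemma \ref{minima3}(1) to bound heights away from the base locus. The delicate step is controlling the index, i.e. the order of vanishing of the section at the point $(x,y)$. That needs Dyson's lemma or Faltings' product theorem, which is not available in the text.
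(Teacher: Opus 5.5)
Your skeleton is the route the paper itself indicates. The paper quotes Faltings' theorem without proof, but notes right after Theorem~\ref{uniform Mordell} that Vojta's bound on large points together with the Northcott property suffices. Your cone decomposition, with Mumford's gap $\lambda_2$ and Vojta's bound $\lambda_3$, is exactly the large-point count in the paper's proof of Theorem~\ref{uniform Mordell}. The paper does not supply the step you flag as the main obstacle either: Vojta's inequality, including Mumford's part, is Theorem~\ref{vojta}, quoted with references. So invoke it as a black box rather than leaving it open, and be clear that the result is then a reduction, not a self-contained proof. Your sketched derivation of it via the arithmetic Siu inequality is the variant of \cite{Yua25}. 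As you say, it still needs an external non-vanishing estimate for the index at $(x,y)$, which nothing in this text provides; in Bombieri's version \cite{Bom}, Roth's lemma suffices for this.

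The small-points paragraph should be reduced to Northcott, since uniform Bogomolov plays no role in finiteness, and two of its claims are wrong. First, Theorem~\ref{fiberwise11}, combined with Zhang's relation and the fundamental inequality, gives finiteness of the points with $\hat h(x)\le c\,h_\Fal^+(C)$ on each curve. It gives no uniform bound on their number; that needs potential bigness on $X^m_{/S}$ and the counting argument of Theorems~\ref{potentially big} and~\ref{small points}. Second, the non-uniform Bogomolov conjecture does not handle a ball of arbitrary fixed radius. It gives finiteness only below one specific threshold $c>0$, and $C(\ol K)$ has infinitely many points with $\hat h$ below any bound exceeding the essential minimum. The correct input is Theorem~\ref{Northcott} applied to $K$-rational points, using that $\hat h\circ i_{\alpha_0}$ is a Weil height for the ample bundle $i_{\alpha_0}^*\CO(\theta_{\alpha_0})$ of degree $g$ on $C$. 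This shows that only finitely many points of $C(K)$ satisfy $|x|<\lambda_1 h_\Fal^+(C)^{1/2}$, which completes the argument modulo Theorem~\ref{vojta}.
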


The conjecture was raised by Mordell \cite{Mor22} when he proved the finite generation of the group of the rational points of elliptic curves over $\QQ$ in 1922. Faltings' proof of the conjecture in \cite{Fal} signified a milestone in the history of Diophantine geometry. 
{By analogy with} the Thue--Siegel--Roth theorem in Diophantine approximation, Vojta \cite{Voj} gave a new proof of the Mordell conjecture in 1991. 
Inspired by Faltings' proof, Lawrence--Venkatesh \cite{LV20}
gave a third proof of the Mordell conjecture in terms of $p$-adic Hodge theory.
We will come back to Vojta's proof later, since it will be used in our main results. 

Once we know that the set $C(K)$ is finite, a natural question is to give a suitable upper bound on the {cardinality} of this set in terms of natural geometric and arithmetic invariants of $C$ over $K$. 
In this direction, we have the following uniform version of the Mordell conjecture, which solves a problem proposed by Mazur \cite[p. 234]{Maz}.

\begin{theorem}[Vojta \cite{Voj}, Dimitrov--Gao--Habegger \cite{DGH}, K\"uhne \cite{Kuh}]
\label{uniform Mordell}
Let $g\geq 2$ be an integer. 
Then there are positive constants $c_1(g)$ and $c_2(g)$ depending only on $g$ such that for any geometrically integral smooth projective curve $C$ of genus $g$ over a number field
$K$, we have
$$
 |C(K)| \leq c_1(g) c_2(g)^{r}.
$$ 
Here $r=\rank\, J(K)$ is the Mordell--Weil rank of the Jacobian variety $J$ of $C$ over $K$.
\end{theorem}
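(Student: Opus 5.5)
We will follow the now-standard strategy: split $C(K)$ into points of \emph{large} and \emph{small} N\'eron--Tate height, count each part separately, and use Theorem \ref{fiberwise11} to compare the two thresholds. Fix $\alpha_0$ with $(2g-2)\alpha_0=\omega$ (after a harmless finite extension, which only enlarges $C(K)$) and embed $C$ into $J$ via $i_{\alpha_0}$, so that $\hat h$, $|\cdot|$ and $\angle(\cdot,\cdot)$ are defined on $C(\ol K)$ as in \S\ref{sec NT curve}. Points of $C(K)$ then map to the lattice $J(K)/J(K)_{\rm tors}$ inside the Euclidean space $J(K)\otimes\RR\cong\RR^r$, with norm $|\cdot|$. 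The argument will be a comparison between a threshold of the form $R_0=c\,h_\Fal^+(C)$ and the heights of the points.

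\emph{Large points.} We will use the quantitative form of Vojta's method (Vojta, refined by Bombieri, de Diego, R\'emond): there are constants $c,c'$ depending only on $g$ such that the set of $x\in C(K)$ with $\hat h(x)\ge c\,h_\Fal^+(C)$ has cardinality at most $c'^{\,r}$. The Mumford gap principle together with Vojta's inequality shows that two such points in a narrow cone, with $|y|\ge c''|x|$, cannot both exist. Covering $\RR^r$ by $\le c'^{\,r}$ cones of small angle then gives the bound, since inside each cone the heights grow geometrically and Vojta stops the growth after boundedly many steps. We take this as input; it is independent of the adelic theory.

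\emph{Small points.} This is where Theorem \ref{fiberwise11} enters. We need a uniform Bogomolov statement: there are constants $c_5,c_6>0$ depending only on $g$ such that
$$\#\{x\in C(\ol K):\hat h(x)\le c_5\,[\ol\omega_{C/K,a}^2]_\QQ\}\le c_6.$$
We will prove this by Zhang's method. Use the difference map $C^m\to J^{m-1}$, $(x_i)\mapsto(x_{i+1}-x_i)$, for $m$ large relative to $g$; the pull-back of $\OTheta$ relates to $\ol\omega_{C,a}$ and $\ol\CO(\Delta)_a$ through Theorem \ref{canonical alternative definition} and Theorem \ref{lower bound family}. Put this in a family over the moduli space $S$, as in the proof of Theorem \ref{fiberwise11}. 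The relative adelic line bundle on the $m$-fold fiber power $X^m_S$ given by $\sum_i p_i^*\ol\omega_{X/S,a}$ plus a small multiple of the difference bundle is then big. Applying the height inequality (Proposition \ref{height inequality}) or the fundamental inequality (Lemma \ref{minima3}) on it, with Noetherian induction on proper closed subsets, will show the following: outside a proper closed subset $Z\subset X^m_S$, every $m$-tuple in a fiber satisfies
$$\textstyle\sum_i\hat h(x_i)\ge c\,[\ol\omega_{X_y,a}^2]_\QQ.$$
A Zariski-density and degree count on fibers of $Z$ then bounds the number of small points uniformly. Together with Theorem \ref{fiberwise11}, we may replace $[\ol\omega^2]_\QQ$ by $h_\Fal^+(C)$ in the threshold.

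\emph{Combining.} Cover the ball of radius $(c\,h_\Fal^+(C))^{1/2}$ in $\RR^r$ by $\le (1+2\sqrt{c/c_5'})^r$ balls of radius $\tfrac12(c_5'h_\Fal^+(C))^{1/2}$. Any two points of $C(K)$ lying in one small ball have difference $x-y$ of height $\le c_5'h_\Fal^+(C)$. Translating by $-y$ places the points of the ball inside a translate of $C$ within the small-point set, and that set has $\le c_6$ elements by the uniform Bogomolov bound applied to the translate; we use the version for $C-y\subset J$, which the family argument also handles. Hence each small ball contains $\le c_6$ points. Summing gives $|C(K)|\le c_6\,c_7^{\,r}+c'^{\,r}$, which yields the theorem. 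The main obstacle is the small-points step, specifically proving bigness of the auxiliary bundle on the fiber power uniformly over $S$. We will first try to reduce it, via the Deligne-pairing identities, to the bigness of $\pi_*\pair{\ol\omega_{X/S,a},\ol\omega_{X/S,a}}$ from Theorem \ref{bigness55}, using the arithmetic Siu inequality (Theorem \ref{bigness}) to absorb the difference term.
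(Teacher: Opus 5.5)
Your decomposition matches the paper's. Large points are handled by the uniform Vojta and Mumford inequalities with a cone covering. Everything else is handled by a uniform Bogomolov bound plus a packing argument in $J(K)_\RR$; your covering by balls of radius $\tfrac12(c_5'h_\Fal^+(C))^{1/2}$ is an acceptable substitute for the paper's volume comparison.

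\textbf{The gap.} The combining step uses a statement you never prove. Your small-points argument, run over the moduli space $S$, bounds only $\#\{x:\hat h(x-\alpha_0)\le c_5[\ol\omega_{C,a}^2]_\QQ\}$. That controls just the one ball centred at the origin. There is no reason for $c_5'$ to reach Vojta's threshold $c$, so the other balls of your covering, centred at moderate points $y$, need
$$\#\{x\in C(\ol K):\hat h(x-y)\le c_5'h_\Fal^+(C)\}\le c_6$$
uniformly in $y\in C(K)$.

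\textbf{Why your sketch does not give it.} Saying that ``the family argument also handles'' this is exactly the missing idea. Over $S$ nothing parametrizes $y$, which moves inside the fibre, so Noetherian induction over $S$ gives no uniformity in $y$. Your auxiliary bundle does not help either. By Theorem \ref{canonical alternative definition}, the height of $\sum_ip_i^*\ol\omega_{X/S,a}$ at $(x_1,\dots,x_m)$ is a positive multiple of $\sum_i\hat h(x_i-\alpha_0)$ plus $\tfrac{m}{4g(g-1)}[\ol\omega^2_{X_s,a}]_\QQ$. This is large for points in a ball centred far from the origin, however close they are to one another. Because of that constant term, you should use $m_\boxtimes i_\omega^*\OTheta$ rather than $m_\boxtimes\ol\omega_{X/S,a}$ even in the canonical case. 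A genuinely translation-invariant statement would need bigness of the pure difference (Faltings--Zhang) bundle on fibre powers. That is the non-degeneracy theorem of Dimitrov--Gao--Habegger, and you give no argument for it.

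\textbf{How the paper supplies it.} The paper makes the base point a parameter of the family. It replaces $\pi:X\to S$ by $q_1:X_J=J\times_SX\to J$ with $\OL=\tau^*\OTheta_J$, where $\tau(y,x)=(y,y+(2g-2)x-\omega_{X/S})$. It computes
$$q_{1*}\pair{\OL,\OL}=16(g-1)^3\OTheta+16g(g-1)^3\pi_J^*\pi_*\pair{\ol\omega_{X/S,a},\ol\omega_{X/S,a}},$$
which is nef and big on $J$ by Theorem \ref{bigness55}. It then runs potential bigness, the height inequality with $\OM=\OTheta+\ol\lambda_S+\ol\CO(c)$, and Noetherian induction over $J$ rather than $S$. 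The output is Theorem \ref{uniform bog} for every $\alpha\in\Pic^1(C_{\ol K})$, in particular $\alpha=y\in C(K)$, which is what the ball argument needs.

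Your difference-map idea can be made to work the same way. Base-change the paper's family along $X\to J$, $x\mapsto\omega_{X/S}-(2g-2)x$. This gives $X\times_SX\to X$ with $\tau^*\OTheta_J$ becoming $(2g-2)^2$ times the pullback of $\OTheta$ by the difference map. Its fibre powers over $X$ carry exactly the Faltings--Zhang pullback, and its Deligne pairing is big on $X$ by the identity above.

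Without such a family with a varying base point, and the bigness of its Deligne pairing, the moderate points are not bounded and the proof does not close.
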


The theorem is proved by a combination of the following bounds. 
\begin{enumerate}[(1)]
\item (Large points)
Vojta's proof of the Mordell conjecture actually gives a deep inequality concerning {the distribution} of rational points of large heights, which {in particular} implies an upper bound on the number of rational points of large heights. Note that this is sufficient for the Mordell conjecture by the Northcott property of heights.
The upper bound was refined by de Diego \cite{dDi} and R\'emond \cite{Rem00a, Rem00b}.
\item (Small points)
The works \cite{DGH, Kuh} prove a uniform Bogomolov conjecture, which gives an upper bound on the number of rational points of small heights. 
By a simple argument of sphere packing, the uniform Bogomolov conjecture also bounds the number of all rational points {in the complement of the points in (1)}. 
\end{enumerate}
In \cite{Yua21}, Yuan gave a different proof of the uniform Bogomolov conjecture of \cite{DGH, Kuh} by the theory of adelic line bundles of Yuan--Zhang \cite{YZ}. In fact, Yuan introduced the admissible canonical bundle over relative curves and proved its bigness as in \S\ref{sec admissible}, and then deduced the uniform Bogomolov conjecture by this bigness property. 

The goal of this section is to explain some background and ideas of the proof of Theorem \ref{quant Mordell}, with an emphasis on the approach of \cite{Yua21} {to} the uniform Bogomolov conjecture.
We will also compare our approach with that of \cite{DGH, Kuh} briefly.

Our topic is closely related to the previous surveys of Gao \cite{Gao21} and Habegger \cite{Hab22}. 
While \cite{Gao21} mainly concerns the proof of the uniformity result in Theorem \ref{uniform Mordell} through the approach of \cite{DGH, Kuh}, and \cite{Hab22} gives a panoramic view of various related problems and results on the Mordell conjecture, 
our current survey mainly concerns the approach of \cite{Yua21}.
We refer {readers} to these two surveys for more aspects of the topic.

\subsection{Uniform Bogomolov conjecture}

In this subsection, we introduce the Bogomolov conjecture, the uniform Bogomolov conjecture, and its application to the uniform Mordell conjecture.

\subsubsection{The Bogomolov conjecture for single curves}
\label{sec Bog single curves}

The Bogomolov conjecture, proposed by Bogomolov in \cite{Bog81}, is as follows. 

\begin{thm}[Ullmo]
\label{bog conj}
Let $C$ be a geometrically integral smooth projective curve of genus $g\geq 2$ over a number field $K$. 
Let $\alpha$ be a line bundle on $C_{\ol K}$ of degree 1. 
Then there is a constant $c>0$ such that 
$$
\#\{x\in C(\ol K): \hat h(x-\alpha)\leq c\}<\infty.
$$ 
\end{thm}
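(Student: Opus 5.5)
The plan is to follow Zhang's strategy from \cite{Zha1}, which reduces the Bogomolov conjecture for $C$ to the strict positivity $\ol\omega_{C/K,a}^2>0$. That positivity is already available: we have $\varphi(C)>0$ by \cite{Zha3} and \cite{Cin1}, and Theorem \ref{dejong} then gives $\ol\omega_{C/K,a}^2>0$. The link to N\'eron--Tate heights is an adelic version of the Hodge index computation. After a finite extension of $K$, choose $\alpha_0$ with $(2g-2)\alpha_0\simeq\omega$. Pulling back the invariant adelic line bundle $\OTheta$ along $i_{\alpha_0}$ gives $i_{\alpha_0}^*\OTheta=2\,\ol\theta$, where $\ol\theta$ is an invariant adelic line bundle inducing $\hat h$. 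By the one-point case of Theorem \ref{canonical alternative definition}, it is isomorphic, up to a constant adelic line bundle on $\Spec K$, to a positive multiple of $\ol\omega_{C/K,a}$. This yields a formula of the form
$$\hat h(x-\alpha_0)=\frac{1}{4g(g-1)}h_{\ol\omega_{C/K,a}}(x)+c_C,\qquad x\in C(\ol K),$$
where $c_C=-\frac{1}{16g(g-1)^3}[\ol\omega_{C/K,a}^2]_\QQ$ is read off from the same theorem.

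Next I would apply the fundamental inequality, Theorem \ref{minima2}, to the nef adelic line bundle $\ol\omega_{C/K,a}$ on $C$ (here $d=1$ and the geometric part $\omega$ is big). It gives
$$e_1(C,\ol\omega_{C/K,a})\geq h_{\ol\omega_{C/K,a}}(C)=\frac{\ol\omega_{C/K,a}^2}{2(2g-2)}.$$
Thus, outside a finite set, the points satisfy $h_{\ol\omega_{C/K,a}}(x)\geq \frac{\ol\omega_{C/K,a}^2}{4(g-1)}-\epsilon$. Substituting this into the formula above makes the essential minimum of $x\mapsto\hat h(x-\alpha_0)$ at least a positive multiple of $[\ol\omega_{C/K,a}^2]_\QQ$. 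The normalization is such that the coefficient from the first step beats $|c_C|$, since $\frac{1}{4g(g-1)}\cdot\frac{1}{4(g-1)}=\frac{1}{16g(g-1)^2}>\frac{1}{16g(g-1)^3}$. So whenever $\ol\omega_{C/K,a}^2>0$, we get the statement for $\alpha=\alpha_0$.

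For a general degree-one $\alpha$, I would use the triangle inequality for the norm $|\cdot|$ on $J(\ol K)_\RR$. Write $x-\alpha=(x-\alpha_0)+(\alpha_0-\alpha)$. If $\alpha-\alpha_0$ is small, the small points for $\alpha$ are small for $\alpha_0$ and finiteness follows. To avoid that case distinction, the cleaner route is to take $\alpha$ general from the outset: replace $\ol\theta$ by a translate, so that $\hat h(x-\alpha)$ is the height of a nef adelic line bundle $\ol N_\alpha$ of degree $g$ on $C$. Then
$$e_1(C,\ol N_\alpha)\geq h_{\ol N_\alpha}(C)=\frac{\ol N_\alpha^2}{2g},$$
and by the adelic Hodge index theorem \cite{YZ17} one has $\ol N_\alpha^2\geq \frac{g}{4(g-1)}[\ol\omega_{C,a}^2]$, with equality exactly when $\alpha=\alpha_0$.

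The main obstacle is precisely this last computation. One has to verify carefully that $\ol N_\alpha^2$ is minimized at $\alpha=\alpha_0$, with minimum a positive multiple of $\ol\omega_{C/K,a}^2$. This requires the adjunction-type identities in Theorem \ref{bb22:admissible2}(1)(2) and Hodge index for degree-zero adelic line bundles. The final step, $\ol\omega_{C/K,a}^2>0$, is just the combination of Theorem \ref{dejong} with $\varphi(C)>0$.
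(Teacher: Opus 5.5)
Your strategy is the paper's second proof: Zhang's reduction via the fundamental inequality for $i_\alpha^*\ol\theta_{\alpha_0}$, with $\ol\omega_{C/K,a}^2>0$ supplied by Theorem \ref{dejong} and $\varphi(C)>0$. It is not Ullmo's equidistribution argument. However, both explicit computations you give are wrong or unsupported.

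\textbf{The case $\alpha=\alpha_0$.} You inverted Theorem \ref{canonical alternative definition} incorrectly. Substituting its second identity into the first gives $i_\omega^*\OTheta=4g(g-1)\ol\omega_{C/K,a}-\pi^*\pi_*\pair{\ol\omega_{C/K,a},\ol\omega_{C/K,a}}$. Since $i_\omega=[2g-2]\circ i_{\alpha_0}$ and $\Theta\simeq 2\theta_{\alpha_0}$, Theorem \ref{bridge} then yields
\[
\hat h(x-\alpha_0)=\frac{g}{2(g-1)}\cdot\frac{h_{\ol\omega_{C/K,a}}(x)}{[K:\QQ]}-\frac{1}{8(g-1)^2}\,[\ol\omega_{C/K,a}^2]_\QQ .
\]
A quick sanity check: the coefficient of $h_{\ol\omega}$ must be $g/(2g-2)$, because $i_{\alpha_0}^*\theta_{\alpha_0}$ has degree $g$. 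Your coefficient corresponds to degree $1/(2g)$, and you also mix normalized and unnormalized heights. With your constants, the decisive comparison $\frac{1}{16g(g-1)^2}>\frac{1}{16g(g-1)^3}$ is an equality when $g=2$, so the argument as written fails in genus $2$. With the correct constants, the essential minimum of $x\mapsto\hat h(x-\alpha_0)$ is at least $\frac{1}{8(g-1)}[\ol\omega_{C/K,a}^2]_\QQ>0$ for every $g\geq 2$.

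\textbf{The case of general $\alpha$.} Set $\ol N_\alpha=i_\alpha^*\ol\theta_{\alpha_0}$. Your bound is true and is in fact an identity:
\[
\ol N_\alpha^2=\frac{g}{4(g-1)}\ol\omega_{C/K,a}^2+2(g-1)[K:\QQ]\,\hat h(\alpha-\alpha_0).
\]
So equality holds iff $\alpha-\alpha_0$ is torsion, not iff $\alpha=\alpha_0$. But the Hodge index theorem cannot prove it.

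To see why, put $a=\alpha_0-\alpha$ and write $t_a^*\ol\theta_{\alpha_0}=\ol\theta_{\alpha_0}+\ol P+\pi^*\ol c$, where $\ol P$ is flat and trivial at the origin, and $\wh\deg\,\ol c=[K:\QQ]\hat h(a)$. Set $\ol Q=i_{\alpha_0}^*\ol P$. Because $\ol Q$ is flat, $\ol N_{\alpha_0}\cdot\ol Q$ is the $\ol Q$-height of a divisor of class $g\alpha_0$ in $\Pic(C)_\QQ$, and this vanishes. Hence
\[
\ol N_\alpha^2=\ol N_{\alpha_0}^2+2g[K:\QQ]\hat h(a)+\ol Q^2 .
\]
Hodge index only gives $\ol Q^2\leq 0$, which is the wrong direction for a lower bound. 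What you need is the exact Faltings--Hriljac value $\ol Q^2=-2[K:\QQ]\hat h(a)$.

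Alternatively, restrict the identity
\[
q_{1*}\pair{\tau^*\OTheta_J,\tau^*\OTheta_J}=16(g-1)^3\OTheta+16g(g-1)^3\pi_J^*\pi_*\pair{\ol\omega_{X/S,a},\ol\omega_{X/S,a}}
\]
from \S\ref{sec uniform} to the fiber on which $\tau$ becomes $x\mapsto(2g-2)(x-\alpha)$. This gives the same formula.

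With these two corrections, Theorem \ref{minima2} finishes the proof exactly as in the paper.
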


Recall that the N\'eron--Tate height 
$\hat h:J(\ol K)\to \RR$ is introduced in \S\ref{sec NT curve}. 
Here $J$ denotes the Jacobian variety of $C$ over $K$. 

The case $c=0$ of the conjecture is the Manin--Mumford conjecture for curves, which actually inspired Bogomolov's formulation. Shortly after, the Manin--Mumford conjecture for curves {was proved} by Raynaud \cite{Ray83}. 

The Bogomolov conjecture was first proved by Ullmo \cite{Ull}, and then we have a second proof combining the works {of} Zhang \cite{Zha1,Zha3}, Cinkir \cite{Cin1} and de Jong \cite{dJo3}. 
Let us review the proofs briefly. 

The proof of Ullmo \cite{Ull} is based on the celebrated equidistribution theorem of Szpiro--Ullmo--Zhang \cite{SUZ},  the original version of Theorem \ref{equi3}. 
In fact, {if} the Bogomolov conjecture fails, then there is a
generic sequence $\{x_m\}_m$ on $C(\ol K)$ with 
$\hat h(x_m-\alpha)\to 0$. 
By enlarging $K$ if necessary, we can assume that $\alpha$ is a divisor on $C$. 
Consider the generically finite morphism 
$$
\psi: C^g\lra J, \quad (t_1, \dots, t_g)\longmapsto t_1+\cdots+t_g-g\alpha. 
$$
From the sequence $\{x_m\}_m$, we obtain a generic and small sequence $\{y_n\}_n$ in $C^g(\ol K)$, where the components of $y_n$ are from the sequence $\{x_m\}_m$, and the image $\{\psi(y_n)\}_n$ is a generic and small sequence in $J(\ol K)$.
Applying the equidistribution theorem to both generic and small sequences at an archimedean place $v$, we obtain two limit measures $\mu_{C^g,v}$ and $\mu_{J,v}$, both of which are given by strictly positive smooth $(g,g)$-forms. The compatibility of the sequences implies 
$\mu_{C^g,v}=\psi^*\mu_{J,v}$. 
Note that $\psi$ is not finite, so it maps some sub-curve of $C^g$ to a point of $J$, and thus $\psi^*\mu_{J,v}$ is 0 along this curve.
This {contradicts} the strict positivity of $\mu_{C^g,v}$. 

For the second proof, note the identity 
$$
[K:\QQ]\, \hat h(x-\alpha)=h_{\OL_\alpha}(x),\quad x\in C(\ol K) 
$$ 
for the adelic line bundle 
$$\OL_\alpha=i_\alpha^* \CO(\ol\theta_{\alpha_0})$$
on $C$. 
Here $i_\alpha:C\to J$ is the Abel--Jacobi embedding in terms of the base divisor $\alpha$, the divisor $\theta_{\alpha_0}$ is as in \S\ref{sec NT curve}, and $\ol\theta_{\alpha_0}$ is the $[2]$-invariant adelic extension for the dynamical system $[2]:J\to J$. 
Here we have assumed that $\alpha$ and $\alpha_0$ are divisors on $C$ by enlarging $K$. 
By Zhang's fundamental inequality in Theorem \ref{minima2}, it suffices to prove the self-intersection number $\OL_\alpha^2>0$ on $C.$
By a calculation of Zhang \cite{Zha1}, we essentially have 
$$\OL_\alpha^2\geq \OL_{\alpha_0}^2=g(g-1)\, {\ol\omega_{C/K,a}^2},$$
where {$\ol\omega_{C/K,a}$} is Zhang's admissible canonical bundle of $C/K$ in \S\ref{sec admissible single curves}.  
Then it remains to prove the admissible volume  
{$\ol\omega_{C/K,a}^2$}
is strictly positive. 
This {was proved in the works of} 
Zhang \cite{Zha3}, 
de Jong \cite{dJo3}, and Cinkir \cite{Cin1}. 
See \S\ref{sec admissible single curves} for more details.

\subsubsection{Uniform Bogomolov conjecture}
\label{sec uniform bog}

Recall that the Faltings height of a curve is defined in \S\ref{sec Faltings height}.

Finally, we have the following result from \cite[Thm. 1.1]{Yua21}, which can be viewed as a uniform version of the Bogomolov conjecture (Theorem \ref{bog conj}). 
It strengthens and generalizes {the new gap principle} in \cite[Thm. 4.1]{Gao21} obtained as a combination of \cite[Prop. 7.1]{DGH} and \cite[Thm. 3]{Kuh}. 

\begin{thm} [uniform Bogomolov] \label{uniform bog}
Let $g\geq 2$ be an integer. 
Then there are positive constants $c_1,c_2$ depending only on $g$ such that for any geometrically integral smooth projective curve $C$ of genus $g$ over a {number field} $K$, and for any 
line bundle $\alpha\in \Pic(C_{\ol K})$ of degree 1, 
we have
$$
\#\left\{x\in C(\ol K): \hat h(x-\alpha)\leq c_1\big(h_\Fal^+(C) +
\hat h((2g-2)\alpha-\omega_{C/K})\big) \right\} \leq c_2.
$$ 
\end{thm}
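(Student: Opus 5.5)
We will argue by a family version of Zhang's second proof of the Bogomolov conjecture, using Theorem \ref{fiberwise11} and the height inequality together with Noetherian induction on moduli. The plan is to work on the universal family $X^{[g]}\to S$ of $g$-fold fiber products, where $S$ is a fine moduli space of genus-$g$ curves with level-$N$ structure over $\QQ$. The base point $\alpha$ is also allowed to vary: enlarge the base to $T=\Pic^1_{X/S}$, or equivalently to the family $J\to S$ with $\alpha$ parametrized by a point of the Jacobian torsor.

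\textbf{Step 1: reduce to a bound on a big family.} On $Y=X\times_S T$ consider the adelic line bundle
$$\OL=(\mathrm{id},\alpha)^*\,i^*\OTheta$$
realizing $[K:\QQ]\hat h(x-\alpha)$. By the Zhang-type decomposition
$$\OL_\alpha=\OL_{\alpha_0}+(\text{terms in }\alpha-\alpha_0),\qquad \OL_{\alpha_0}=\tfrac{g-1}{?}\,\ol\omega_{X/S,a}+\cdots,$$
$\OL$ is, up to a pull-back from the base, of the form $\ol\omega_{X/S,a}$ plus a term measuring $\hat h((2g-2)\alpha-\omega)$. Following Ullmo's idea, pass to the $g$-fold (or $m$-fold with $m$ large) fiber power over $T$ and the difference map to $J$. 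On $Y^{m}_T$ take
$$\ol\CM=\sum_i p_i^*\OL \;-\;\epsilon\,\text{(pullback from $T$ of }\ol\lambda+\ol\CO(1)+\ol\Theta\text{-term for }\alpha).$$

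\textbf{Step 2: bigness of $\ol\CM$.} We will prove that $\ol\CM$ is big for small $\epsilon$. Its geometric part is big because the map $Y^m_T\to J^{m-1}$, $(x_i)\mapsto(x_i-x_{i+1})$, is generically finite for $m$ large (Faltings--Ullmo non-degeneracy; here maximal variation of the moduli family is used). The arithmetic positivity will come from Theorem \ref{bigness55} and the nefness of $\OTheta$, giving
$$\ol\CM^{\dim}\ge c\,\wt{\mathcal M}^{\dim-1}\cdot(\text{positive base class})>0.$$
The $-\epsilon$ term is then absorbed by the arithmetic Siu inequality (Theorem \ref{bigness}).

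\textbf{Step 3: conclude via heights.} The height inequality (Proposition \ref{height inequality}) applied to $\ol\CM$ and the base bundle gives a dense open $U\subset Y^m_T$ on which
$$\sum_i \hat h(x_i-\alpha)\ \ge\ c\big(h^+_\Fal+\hat h((2g-2)\alpha-\omega)\big).$$
Noetherian induction on the complement, as in the proof of Theorem \ref{fiberwise11}, extends this to fiberwise statements off a proper closed subset $Z_m$. We then need $Z_m$ to meet each fiber $C^m$ in a subvariety of bounded degree. By a standard combinatorial argument (as in DGH/Kühne), if the small set had more than $c_2$ points, then $m$-tuples of small points would all lie in $Z_m\cap C^m$. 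This is impossible by a degree and Zariski-density count, where the degree of $Z_m$ is bounded uniformly since the family is of finite type.

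\textbf{Main obstacle.} The main obstacle is Step 2: proving arithmetic bigness of $\ol\CM$ with an $\epsilon$ multiple of $h_\Fal^+$ subtracted, uniformly in $\alpha$. Using Theorem \ref{bigness55} is what supplies the $h^+_\Fal$ lower bound. The non-degeneracy of the fiber power, in order to get bigness of the geometric part, is the other delicate input.
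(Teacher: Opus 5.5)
Your outer scaffolding matches the paper's: put $\alpha$ into the base via the Jacobian, pass to fiber powers, apply Proposition \ref{height inequality}, then use the counting lemma and Noetherian induction. But Step 2, which carries all of the arithmetic content, is not established, and the mechanism you sketch for it does not work.

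First, generic finiteness of the Faltings--Zhang map does not make the geometric part of $\sum_i p_i^*\OL$ big. Because $\dim S>0$, invariance under $[2]$ forces the geometric part $\wt\Theta$ of $\OTheta$ to satisfy $\wt\Theta^{\dim J}=0$, so $\wt\Theta$ is not big on $J$. Bigness of its pull-back to a subvariety is exactly the non-degeneracy of Dimitrov--Gao--Habegger. That is a deep consequence of the mixed Ax--Schanuel theorem, not a finiteness statement.

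Second, even granting geometric bigness, your bound $\ol\CM^{\dim}\ge c\,\wt{\mathcal M}^{\dim-1}\cdot(\text{positive base class})$ has no source. Such a bound comes from a decomposition $\ol\CM=(\text{positive base class})+\text{nef}+\text{eff}$, as in the arithmetic step of Theorem \ref{bigness55}. Theorem \ref{bigness55} gives nothing of that kind here. By Theorem \ref{canonical alternative definition}, $i_\omega^*\OTheta=4g(g-1)\,\ol\omega_{X/S,a}-\pi^*\pi_*\pair{\ol\omega_{X/S,a},\ol\omega_{X/S,a}}$, so the big class from the base enters with a negative sign.

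The missing idea is Theorem \ref{potentially big}, which transfers bigness of the Deligne pairing on the base to bigness of $m_\boxtimes\OL$ on the fiber power. Take $\OL$ nef of fiber degree $a>0$ and $m\ge d$, the dimension of the base. The expansion of $(m_\boxtimes\OL)^{d+m}$ contains the term $(p_1^*\OL)^2\cdots(p_d^*\OL)^2\,p_{d+1}^*\OL\cdots p_m^*\OL=a^{m-d}\big(\pi_*\pair{\OL,\OL}\big)^d$, which is positive. All other terms are nonnegative by nefness, and Theorem \ref{HS} turns this into positive volume.

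The paper applies this to $q_1\colon X_J\to J$ and $\OL=\tau^*\OTheta_J$, using the computation $q_{1*}\pair{\OL,\OL}=16(g-1)^3\OTheta+16g(g-1)^3\pi_J^*\pi_*\pair{\ol\omega_{X/S,a},\ol\omega_{X/S,a}}$. This is nef and big on $J$ by Theorem \ref{bigness55}. The $\OTheta$ summand supplies positivity along the fibers of $J\to S$, and it is what produces the term $\hat h((2g-2)\alpha-\omega_{C/K})$. The argument also persists on closed subvarieties of $J$, which is what the Noetherian induction needs.

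In this route non-degeneracy is an output rather than an input. Once $m_\boxtimes\OL$ is big, Proposition \ref{height inequality} absorbs $-\epsilon\,\pi^*\OM$ by itself, so no separate Siu step is needed. Finally, to pass from $h_\Fal(X_s)+c$ to $h_\Fal^+(X_s)$ you also need Bost's uniform lower bound $h_\Fal\ge -g\log(\sqrt2\pi)$.
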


It is worth noting that DeMarco--Krieger--Ye \cite{DKY} previously proved the new gap principle in the case that $g=2$, {$\alpha$ is represented by a Weierstrass point}, and $C$ has a morphism of degree two to an elliptic curve.

Our theorem is stronger and more general than the new gap principle of \cite{DGH,Kuh} by the following aspects:
\begin{enumerate}[(1)]
\item it has an extra non-negative term $\hat h((2g-2)\alpha-\omega_{C/K})$ in the formula;
\item it allows $\alpha$ to be in $\Pic^1(C_{\overline K})$ instead of just in $C(\overline K)$.
\end{enumerate}
Our proof is very different from that of \cite{DGH,Kuh}, and the key ingredient is a bigness result of adelic line bundles on the universal curve.
We will come back to that in the next subsection.

Let us compare these two approaches briefly. 
The key result of the approach of \cite{DGH} is a non-degeneracy result of the Faltings--Zhang morphism for families of curves, which follows from a mixed Ax--Schanuel theorem for the universal family of abelian varieties by \cite{Gao1}, and the proof of the latter uses the real-analytic Betti map and the o-minimality theory.
This implies a weak version of the uniform Bogomolov conjecture by their height inequality.
The work \cite{Kuh} strengthens the weak version. 
Its strategy is a family version of the proof of the original Bogomolov conjecture by Ullmo \cite{Ull}. 
In fact, \cite{Kuh} proves a version of the equidistribution theorem \ref{equi4} for abelian schemes independently. 
To apply the equidistribution theorem, {one still needs} the non-degeneracy result of \cite{DGH} for the Faltings--Zhang map.

The proof of \cite{Yua21} has the following three steps:
\begin{enumerate}[(1)]
\item Extend Zhang's construction of admissible metrics from projective curves to {families} of projective curves. 
\item Prove the bigness of the admissible canonical bundle of the universal family over the moduli space of curves introduced in (1).
\item Prove the uniform Bogomolov conjecture from the bigness result in (2).
\end{enumerate}
Note that our bigness theorem also implies the non-degeneracy {of the} Faltings--Zhang map. 
As mentioned above, the work of \cite{Kuh} is a family version of the proof of the Bogomolov conjecture by \cite{SUZ,Ull}, but the work of \cite{Yua21}
 is a family version of the proof of the Bogomolov conjecture by \cite{Zha1, Zha3, Cin1, dJo3}.

Before introducing the proof of Theorem \ref{uniform bog}, let us first sketch how the theorem and Vojta's inequality imply the uniform Mordell conjecture.

\subsubsection{Uniform Vojta inequality}

Recall the notation for distance and angles between points of curves from \S\ref{sec NT curve}. 
To prove the uniform Mordell conjecture, we also need the following uniform version of Vojta's inequality and Mumford's inequality. 

\begin{thm}[Vojta's inequality] \label{vojta}
Let $g\geq 2$ be an integer. 
Then there are constants $\lambda_1, \lambda_2, \lambda_3$, depending only on $g$ and with $\lambda_3> \lambda_2>1$, {such} that for any geometrically integral smooth projective curve $C$ of genus $g$ over a {number field} $K$, and for any distinct points 
$x,y\in C(\ol K)$ satisfying 
$$
|x|\geq |y| \geq \lambda_1\sqrt{h_\Fal^+(C)}
$$
and 
$$
\angle(x,y) \leq  \arccos (3/4),
$$
we have
$$
\lambda_2\, |y|\leq  |x|\leq \lambda_3\, |y|.
$$
\end{thm}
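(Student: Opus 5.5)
This is Vojta's inequality in a form uniform in $C$; the plan is the classical Vojta argument with the dependence on $C$ tracked.

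\emph{Setup.} Base change to a number field over which $x$, $y$, $\alpha_0$ and a semistable model exist. Work on the surface $C\times C$ with the divisor class
$$
\CL(d_1,d_2,d)=d_1\,p_1^*\theta' + d_2\,p_2^*\theta' + d\,(\Delta - p_1^*\alpha_0-p_2^*\alpha_0),
$$
with $d_1,d_2,d$ chosen depending on $|x|/|y|$. The heights of its adelic extension at $(x,y)$ are, up to the usual error terms, $d_1|x|^2+d_2|y|^2-2d\pair{x,y}$, by Mumford's formula relating $\Delta$ to the N\'eron--Tate pairing. Take $d_1 d_2 - g d^2>0$ but small, so that the divisor is ample yet its self-intersection is small relative to the sizes.

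\emph{Small section.} Using Siegel's lemma or arithmetic Hilbert--Samuel (Theorem \ref{HS}) on a model of $C\times C$, produce a small section $s$ with $\log\|s\|_{\sup}$ bounded by $O(d_1+d_2+d)\cdot h_\Fal^+(C)$. Replacing ad hoc bounds by admissible metrics and the Faltings height is what gives uniformity. The comparison $[\ol\omega_{C,a}^2]_\QQ\asymp h_\Fal^+(C)$ from Theorem \ref{fiberwise11} controls the arithmetic self-intersections, which involve $\ol\omega_a$ via the adjunction isomorphisms of Theorem \ref{bb22:admissible2}.

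\emph{Lower bound via nonvanishing.} Show that $s$ has small index at $(x,y)$ using Dyson's lemma or Roth's lemma. The constants depend only on $g$ and on the ratio $d_1/d_2$, which is the standard Bombieri/de Diego/R\'emond approach.

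\emph{Upper bound.} Apply Lemma \ref{minima3}(1) to a suitable derivative of $s$ at $(x,y)$. This bounds the height of $(x,y)$ from below by $-O(\dots)h_\Fal^+(C)$, up to the index correction. Combine this with the expansion
$$
d_1|x|^2+d_2|y|^2-2d\pair{x,y}\le \text{small} + O\bigl((d_1+d_2)h_\Fal^+(C)\bigr).
$$
Since $\angle(x,y)\le\arccos(3/4)$, we have $\pair{x,y}\ge\frac34|x||y|$. Choosing $d_1\approx d|y|/|x|$ and $d_2\approx d|x|/|y|$ makes the left side negative of size $\gg d|x||y|$. This contradicts the right side once $|y|\ge\lambda_1\sqrt{h_\Fal^+(C)}$ and $\lambda_2\le |x|/|y|\le\lambda_3$ fails in the appropriate direction, which gives the stated dichotomy. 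The upper bound $|x|\le\lambda_3|y|$ comes from the requirement that the Roth/Dyson index estimate remain effective.

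\emph{Main obstacle.} The uniform control of the archimedean and nonarchimedean error terms in the small-section and local-height comparison, with all errors linear in $h_\Fal^+(C)$. My first attempt would be to use the admissible adelic line bundles and the uniform comparison of Theorem \ref{fiberwise11}, following R\'emond's explicit version.
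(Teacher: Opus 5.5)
The paper gives no proof of this theorem. It refers to Mumford and Vojta, to de Diego and R\'emond for uniformity, and to Bombieri, \cite{Yua25} and \cite{YYZ} for variants. Your outline follows that classical Vojta--Bombieri route, but its decisive step does not work as written.

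\textbf{The main-term computation cannot produce a contradiction.} With the height expansion you write and the positivity condition you impose ($d_1d_2>gd^2$), AM--GM gives $d_1|x|^2+d_2|y|^2\ge 2\sqrt{d_1d_2}\,|x||y|>2\sqrt{g}\,d|x||y|>2d\pair{x,y}$. So the main term is always positive and no contradiction can arise. Your explicit choice $d_1\approx d|y|/|x|$, $d_2\approx d|x|/|y|$ violates that condition, and it still gives main term $2d|x||y|\bigl(1-\cos\angle(x,y)\bigr)\ge 0$.

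Your expansion is the one for $\theta'=i_{\alpha_0}^*\theta_{\alpha_0}$ of degree $g$. For $\delta=\Delta-p_1^*\alpha_0-p_2^*\alpha_0$ one has $\delta^2=-2g$, $\delta\cdot p_i^*\theta'=0$ and $p_1^*\theta'\cdot p_2^*\theta'=g^2$. Hence $\CL^2=2g(gd_1d_2-d^2)$, and the right constraint is $gd_1d_2=(1+\gamma)d^2$. Take $d_1=\sqrt{(1+\gamma)/g}\,\frac{|y|}{|x|}d$ and $d_2=\sqrt{(1+\gamma)/g}\,\frac{|x|}{|y|}d$. Then the main term is $2d|x||y|\bigl(\sqrt{(1+\gamma)/g}-\cos\angle(x,y)\bigr)$. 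This is negative because $\cos\angle(x,y)\ge 3/4>\sqrt{(1+\gamma)/2}$ when $\gamma<1/8$. That is exactly where $g\ge 2$ and $\arccos(3/4)$ are used, and your sketch never uses them.

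Your final displayed inequality also points the wrong way. The auxiliary section gives a \emph{lower} bound $h_{\CL}(x,y)\ge -c_{g,\gamma}(d_1+d_2+d)\,h^+_\Fal(C)-(\text{index term})$. This needs a jet version of Lemma \ref{minima3}(1) at $(x,y)$, since a derivative of $s$ is not a global section. It is this lower bound that the very negative main term contradicts. The dominant error is $d_2\,h^+_\Fal(C)\approx d\,\frac{|x|}{|y|}\,h^+_\Fal(C)$, against a main term of size $d|x||y|$. This is where $|y|\ge\lambda_1\sqrt{h^+_\Fal(C)}$ enters.

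\textbf{The two halves of the conclusion come from different arguments, and you address only one.} The index estimate (Roth's lemma in Bombieri's version, Dyson's lemma in Vojta's) is effective only when $d_1/d_2=(|y|/|x|)^2$ is small. So the auxiliary-section argument runs under the assumption $|x|\ge\lambda_3|y|$ and yields only the upper bound. The lower bound $|x|\ge\lambda_2|y|$ is Mumford's gap principle and needs no auxiliary section:
\begin{enumerate}
\item For $x\ne y$, the canonical section of the effective divisor $\Delta$ gives $h_\Delta(x,y)\ge -O_g(h^+_\Fal(C))$. The uniformity of this lower bound is itself a nontrivial input.
\item On the other hand, $h_\Delta(x,y)=\frac{|x|^2+|y|^2}{g}-2\pair{x,y}+O_g(h^+_\Fal(C))$.
\item With $\cos\angle(x,y)\ge 3/4$ and $t=|x|/|y|\ge 1$, these give $t^2-\frac{3g}{2}t+1\ge -O(\lambda_1^{-2})$, which forces $t\ge\lambda_2>1$.
\end{enumerate}

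\textbf{The small section cannot come from Theorem \ref{HS} as stated.} That theorem needs nefness, which you have not checked for the Vojta bundle. In the relevant range $d_1<d$, its natural decomposition $\CL=\bigl((d_2-d)p_2^*\theta'+d\,j^*\theta_{\alpha_0}\bigr)-(d-d_1)p_1^*\theta'$, with $j(x,y)=x-y$, has a negative part. In the adelic framework, the small section comes instead from the arithmetic Siu inequality (Theorem \ref{bigness}) applied to such a difference of nef adelic line bundles; this is the route of \cite{Yua25} and \cite{YYZ}. In \cite{YYZ} the threshold is stated with $\sqrt{[\ol\omega_{C,a}^2]_\QQ}$. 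The form above then follows using only the upper bound in Theorem \ref{fiberwise11}.
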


Without uniformity of the constants,  the existence of the constant 
$\lambda_2$ (without the angle condition) is essentially due to Mumford  \cite[\S3, Cor. 1]{Mum}, and the existence of the constant $\lambda_3$ is essentially due to Vojta \cite{Voj}. In fact, Vojta's inequality is inspired by Mumford's inequality, and it gives the second proof of the Mordell conjecture {using techniques from} Arakelov geometry and ideas from Diophantine approximation. 
The uniformity of the constants {was obtained in later works} of de Diego \cite{dDi} and R\'emond \cite{Rem00a, Rem00b}.

We will not sketch a proof of Theorem \ref{vojta} in this paper, but we will {give} a few remarks and references about it.
First, Bombieri \cite{Bom} (based on \cite{Fal3}) wrote a variant of Vojta's proof, which replaced the use of Arakelov geometry by relatively elementary techniques.
We refer to the textbooks \cite{HS,BG06,IKM22} for Bombieri's variant of the proof.
Second, Yuan \cite{Yua25} gave a second variant of Vojta's proof, which replaced the use of the arithmetic Riemann--Roch theorem of Gillet--Soul\'e \cite{GS2} by the arithmetic Siu inequality (Theorem \ref{bigness}) for arithmetic varieties. 
Third, a version of Theorem \ref{vojta} with explicit constants was recently obtained by Yu--Yuan--Zhou \cite{YYZ} following the method of \cite{Yua25}. 
We will return to this version in \S\ref{sec more recent}.

\subsubsection{Uniform Mordell conjecture: sphere packing}

With the uniform Bogomolov conjecture (Theorem \ref{uniform bog}),  and the uniform versions of Vojta's inequality and Mumford's inequality 
(cf. Theorem \ref{vojta}), we prove the uniform Mordell conjecture (Theorem \ref{uniform Mordell}) in the following.

Denote $r=\rank\, J(K)$. The real vector space $V=J(K)_\RR$, endowed with the inner product $\pair{\cdot,\cdot}$ induced by the N\'eron--Tate height, is isometric to the Euclidean space $\RR^r$.
We first bound the set of large points given by 
$$
C(K)_{\rm large}:=\left\{x\in C(K): |x|\geq \lambda_1\, h_\Fal^+(C)^{1/2}\right\}.
$$

For every nonzero vector $x\in V$, consider the cone
$$
\mathrm{cone}(x):=\left\{y\in V: \angle(x,y)\leq  \frac12\arccos (3/4)\right\}.
$$
Then $V$ is covered by finitely many cones 
$\mathrm{cone}(x_1), \dots, \mathrm{cone}(x_n)$, as a consequence of the compactness of the unit sphere of $V$.
By basic sphere packing, we can take $n\leq 7^r$. 
For every $i$, denote by $\Sigma_i$ the set of points of $C(K)$ whose images in $V$ lie in $\mathrm{cone}(x_i)$. 
Order points of $\Sigma_i$ as a sequence $(y_1,y_2,\dots)$, such that $(|y_1|, |y_2|,\dots)$ is increasing. 
By Theorem \ref{vojta}, we have 
$$ |y_1|\leq |y_j| \leq \lambda_3\, |y_1|, \quad 
 |y_{j+1}|\geq  \lambda_2\, |y_j|.$$
 As a consequence, we have 
 $$
\# \Sigma_i \leq \log_{\lambda_2}(\lambda_3)+1.
 $$
It follows that 
$$
\# C(K)_{\rm large}\leq 7^r (\log_{\lambda_2}(\lambda_3)+1).
$$
This bounds the number of large points. 

By Theorem \ref{uniform bog}, the {cardinality} of the set 
$$
C(K)_{\rm small}:=\left\{x\in C(K): |x|\leq \sqrt{c_1}\, h_\Fal^+(C)^{1/2} 
 \right\}
 $$
 is at most $c_2$. 
If $\sqrt{c_1}\geq \lambda_1$, then 
$C(K)_{\rm small}$ and $C(K)_{\rm large}$ cover the whole set $C(K)$. 
However, we cannot expect this  to happen here. 
Therefore, we have to bound the moderate points 
$$
C(K)_{\rm moderate}:=\left\{x\in C(K): |x|\leq \lambda_1\, h_\Fal^+(C)^{1/2} 
 \right\}. 
$$
For this we need to use the extra $\alpha$ in Theorem \ref{uniform bog}. 

For convenience, denote 
$$
R_1= \sqrt{c_1}\, h_\Fal^+(C)^{1/2}, \quad 
R_2=\lambda_1\, h_\Fal^+(C)^{1/2}. 
$$
Enumerate points of $C(K)_{\rm moderate}$ as $\{x_1,x_2,\dots\}$. 
For every $i$, let $B_i$ be the open ball of center $x_i$ and radius 
$\ds\frac{R_1}{2}$ in $V$. 
By Theorem \ref{uniform bog}, any ball $B_i$ intersects at most $c_2$ other such balls. 
Then the natural map 
$$\coprod_i B_i\lra B$$
to the ball 
$$B= \left\{x\in V:|x|\leq R_2+\frac{R_1}{2}\right\}$$
has fibers of {order} at most $c_2$. 
Comparing the volumes, we have
$$
\vol(B_i)\cdot \# C(K)_{\rm moderate}\leq c_2\cdot \vol(B).
$$
It follows that
$$
\# C(K)_{\rm moderate} \leq c_2\frac{\vol(B)}{\vol(B_i)}  
=   c_2 \left(\frac{2R_2}{R_1}+1\right)^r=   
c_2 \left(\frac{2\lambda_1}{\sqrt{c_1} }+1\right)^r. 
$$
This finishes {the proof of Theorem} \ref{uniform Mordell}.

\subsection{Proof of the uniform Bogomolov conjecture}
 
The goal of this subsection is to sketch a proof of the uniform Bogomolov conjecture (Theorem \ref{uniform bog}).  
We first introduce a general framework concerning potential bigness, which gives us a general mechanism to bound algebraic points of small heights, and then {specialize it to} our particular setting to prove the theorem.

\subsubsection{Potential bigness}

\kkk
Let $S$ be a quasi-projective and flat normal integral scheme over $k$. 
Let $\pi:X\to S$ be a smooth relative curve. 
For $m>0$, we denote by
$$
X_{/S}^m=X\times_SX\times_S\cdots \times_S X
$$
the $m$-fold fiber product of $X$ over $S$. We take the convention $X_{/S}^0=S$.

Let $\OL$ be an adelic line bundle on $X/k$. 
{In additive notation},  \emph{the box tensor}
$$
m_{\boxtimes}\OL=p_1^*\OL+p_2^*\OL+\cdots+p_m^*\OL
$$
is an adelic line bundle on $X_{/S}^m$. Here $p_i:X_{/S}^m\to X$ is the projection to the $i$-th component.
We say that $\OL$ is \emph{potentially big on $X/S$} if there is a positive integer $m$ such that the adelic line bundle $m_{\boxtimes}\OL$ is big on 
$X_{/S}^m$. 

The notion of \emph{potential bigness} is from \cite[\S 4.1]{Yua21}, and has some similarity with the notion of \emph{correlation} in the previous work of Caporaso--Harris--Mazur \cite{CHM}.

Concerning potential bigness, we have the following key criterion from 
\cite[Thm. 4.1]{Yua21}. The proof is not hard, but the result is surprising in that bigness of the Deligne pairing can be converted to bigness of a total space. 

\begin{thm}[potential bigness] \label{potentially big}
Let $\OL$ be an adelic line bundle on $X/k$.
Assume that $\OL$ is nef on $X$, and the degree of $L$ on fibers of $\pi:X\to S$ is strictly positive.
Then the following are equivalent:
\begin{enumerate}[(1)]
\item $\pi_*\pair{\OL,\cdots,\OL}$ is big on $S$;
\item $m_\boxtimes\OL$ is big on $X_{/S}^m$ for all $m\geq \dim S$;
\item $\OL$ is potentially big on $X/S$; i.e., $m_\boxtimes\OL$ is big on $X_{/S}^m$ for some $m\geq 1$.
\end{enumerate}
\end{thm}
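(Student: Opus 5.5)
The plan is to prove $(1)\Rightarrow(2)\Rightarrow(3)\Rightarrow(1)$. The step $(2)\Rightarrow(3)$ is trivial. Throughout, $e$ denotes the dimension of $S$ in the sense used for volumes (absolute dimension of a model), and $\ell=\deg L$ on fibers, so $\ell>0$ by hypothesis. Write $\OM=\pi_*\pair{\OL,\OL}$, which is nef on $S$ because Deligne pairings of nef bundles are nef. Since $m_\boxtimes\OL$ and $\OM$ are nef, the arithmetic Hilbert--Samuel formula (Theorem \ref{HS}) reduces bigness of either to strict positivity of its top self-intersection:
$$(m_\boxtimes\OL)^{m+e}>0 \quad\text{resp.}\quad \OM^{e}>0 .$$

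For $(1)\Rightarrow(2)$ I will expand $(p_1^*\OL+\cdots+p_m^*\OL)^{m+e}$ multinomially. A monomial $\prod_i p_i^*\OL^{a_i}$ with $\sum_i a_i=m+e$ vanishes unless every $a_i\geq1$, because the fibers of $X^m_{/S}\to S$ have relative dimension $m$. Every monomial is a mixed intersection of nef bundles and hence $\geq0$. I will keep only the monomials with all $a_i\in\{1,2\}$ and exactly $e$ indices equal to $2$; these exist precisely when $m\geq e$.

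To evaluate such a monomial, I will compute it by successive Deligne pairings along the projections $X^m_{/S}\to X^{m-1}_{/S}\to\cdots\to S$, using base change compatibility and the projection formula. A factor $p_i^*\OL$ appearing once integrates over the $i$-th curve factor to the constant $\ell$. A factor appearing twice integrates to $\pi^*\OM$ pulled back to the remaining factors. Hence each kept monomial equals $\ell^{m-e}\OM^{e}$, and
$$(m_\boxtimes\OL)^{m+e}\;\geq\;\binom{m}{e}\frac{(m+e)!}{2^e}\,\ell^{\,m-e}\,\OM^{e}>0 .$$
This gives $(2)$ for every $m\geq e$.

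For $(3)\Rightarrow(1)$, the expansion does not directly help. Monomials with some $a_i\geq3$ involve the higher pushforwards $\pi_*(\OL^{a_i})$, which are not controlled by $\OM$. I therefore expect this implication to be the main obstacle, and I would argue by contradiction using heights.

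Suppose $\OM$ is not big, so $\OM^e=0$. First, Theorem \ref{minima2} applied to $(S,\OM)$, where the geometric part of $\OM$ is assumed big when this is needed (the degenerate case is handled by restricting to a subvariety), gives $e_1(S,\OM)\leq (e+1)h_\OM(S)=0$. Hence there is a generic sequence $s_j\in S(\ol K)$ with $h_\OM(s_j)\to0$.

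Second, on each fiber $X_{s_j}$ I will apply Zhang's successive minima (the second inequality of Theorem \ref{minima2}) to $\OL|_{X_{s_j}}$. The height of the fiber is $h_\OL(X_{s_j})=h_\OM(s_j)/(2\ell)$, so the essential minimum on $X_{s_j}$ is at most $2h_\OL(X_{s_j})\to0$. This produces many points of small $h_\OL$ on each $X_{s_j}$.

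Third, I will take $m$-tuples of such points on a common fiber and diagonalize. This gives a sequence in $X^m_{/S}(\ol K)$ that is Zariski dense in every subsequence, since the $s_j$ are generic and the fiber points can be chosen to avoid any proper closed set. Along this sequence $h_{m_\boxtimes\OL}\to0$. On the other hand, bigness of $m_\boxtimes\OL$ together with Lemma \ref{minima3}(3) forces $e_1(X^m_{/S},m_\boxtimes\OL)>0$. This is a contradiction.

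The delicate points are:
\begin{itemize}
\item handling the case where the geometric part of $\OM$ is not big, by induction on $\dim S$ via restriction to subvarieties;
\item making the diagonal choice of fiber points genuinely generic in $X^m_{/S}$.
\end{itemize}
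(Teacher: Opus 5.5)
Your proof of $(1)\Rightarrow(2)$ is the paper's argument. You expand $(m_\boxtimes\OL)^{m+e}$, use nonnegativity of the mixed terms, and keep the monomials with exactly $e$ squared factors, each equal to $\ell^{m-e}\OM^{e}$.

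For $(3)\Rightarrow(1)$, however, the height argument has a genuine gap. Its first step needs the geometric part $\wt M$ of $\OM=\pi_*\pair{\OL,\OL}$ to be big. Otherwise $h_\OM(S)$ is undefined and the inequality $e_1\le(\dim S+1)h_\OM(S)$ is unavailable. A priori the conclusion can then really fail: a nef $\OM$ such as the pull-back of $\ol\CO(c)$, $c>0$, from $\Spec\ZZ$ is not big, yet $e_1(S,\OM)>0$, so there is no generic sequence of $\OM$-small points. Restricting to a subvariety $T\subsetneq S$ does not help. Every sequence you build over $T$ lies in the proper closed subset $X^m_{/T}$ of $X^m_{/S}$, so it is not generic and contradicts nothing about $e_1(X^m_{/S},m_\boxtimes\OL)$. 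Ruling out the degenerate case amounts to showing that bigness of $m_\boxtimes\OL$ forces bigness of $\wt M$. That is the geometric case of the very implication you are proving. The geometric case ($k$ a field, which is part of the statement) cannot be reached through heights of algebraic points at all. When $S$ lives over a number field and $\wt M$ is big, your argument does go through, with a careful diagonalization.

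The missing idea is to push $m_\boxtimes\OL$ itself down to $S$ by the Deligne pairing, rather than expand its top self-intersection on $X^m_{/S}$. In $\ON=(\pi_m)_*\pair{m_\boxtimes\OL,\dots,m_\boxtimes\OL}$ (with $m+1$ entries), a monomial survives only if every $p_i^*\OL$ occurs. So exactly one occurs twice, and $\ON=\tfrac{m\,(m+1)!}{2}\,\ell^{m-1}\,\pi_*\pair{\OL,\OL}$; no pushforwards of powers $\OL^{a}$ with $a\ge 3$ ever appear.

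It remains to know that the Deligne pairing of a nef and big adelic line bundle is big, which the paper uses as a basic property. Pick a big $\ol H$ on $S$. By continuity of volumes, $\wh\vol(m_\boxtimes\OL-\epsilon\pi_m^*\ol H)>0$ for small $\epsilon>0$. Hence $n\,m_\boxtimes\OL=n\epsilon\,\pi_m^*\ol H+\CO(\ol E)$ with $\ol E$ effective, for some $n$. Then $n\ON=n\epsilon\, m!\,\ell^m\,\ol H+(\pi_m)_*\pair{m_\boxtimes\OL,\dots,m_\boxtimes\OL,\CO(\ol E)}$. The last term is effective because the other entries are nef, so $\ON$ is big. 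This argument is uniform in $k$ and uses no heights.
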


\begin{proof}
We only sketch the idea, and refer to \cite[Thm. 4.1]{Yua21} for a serious proof. 
The implication $(2)\Rightarrow(3)$ is trivial. 
Denote $d=\dim S$ in the following.

For the implication $(1)\Rightarrow(2)$, if $m\geq d$, expand the top self-intersection number 
$$
\OL_m^{d+m}
=(p_1^* \OL + p_2^* \OL + \cdots + p_m^* \OL)^{d+m}.
$$
The expansion includes the term
$$
(p_1^* \OL)^{2}  \cdots (p_d^* \OL)^{2}
\cdot (p_{d+1}^* \OL)  \cdots (p_m^* \OL)
=a^{m-d} (\pi_*\pair{\OL,\dots,\OL})^d.
$$
Here $a$ denotes the degree of $L$ on fibers of $\pi:X\to S$. 

For the implication $(3)\Rightarrow(1)$,
assume that for some $m\geq 1$, $m_\boxtimes\OL$ is big on $X_{/S}^m$.
Then the Deligne pairing 
$$\ON=(\pi_m)_*\pair{m_\boxtimes\OL,\dots, m_\boxtimes\OL}$$ 
is nef and big on $S$.
Here $\pi_m:X_{/S}^m\to S$ is the structure morphism. 
A calculation shows that $\ON$ is a positive multiple of 
$\pi_*\pair{\OL,\cdots,\OL}$.
\end{proof}

Note that the above notation
$X_{/S}^m$ and $m_{\boxtimes}\OL$, the notion of potential bigness, and 
the theorem extend naturally to quasi-projective varieties $S$ over number fields.
To apply the above theorem, we need the following key result relating potential bigness to {the distribution} of small points from \cite[Thm. 4.2]{Yua21}.

\begin{thm} \label{small points}
Let $S$ be a quasi-projective variety over a number field $K$.
Let $\pi:X\to S$ be a smooth relative curve. 
Let $\OL$ be a nef adelic line bundle on $X/\ZZ$, and $\OM$ be an adelic line bundle on $S/\ZZ$.
If $\OL$ is potentially big on $X/S$, then there are a non-empty Zariski open subvariety $U\subset S$ and constants  
$c_1,c_2>0$, such that 
for any $y\in U(\ol K)$,
$$
\#\{x\in X(\ol K): \pi(x)=y, \, h_\OL(x)\leq c_1\, h_\OM(y)\} \leq c_2.
$$ 
\end{thm}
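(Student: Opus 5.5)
We reduce the counting of small points in fibers to the height inequality on a fiber power. By assumption there is $m\geq 1$ with $m_\boxtimes\OL$ big on $X_{/S}^m$; since $\OL$ is nef, every box power $m'_\boxtimes\OL$ is nef. Denote by $\pi_m:X_{/S}^m\to S$ the structure map. The height inequality of Proposition \ref{height inequality}, applied to $\pi_m$, the big adelic line bundle $m_\boxtimes\OL$ and the adelic line bundle $\OM$ on $S$, yields $\epsilon>0$ and a non-empty open subvariety $W\subset X_{/S}^m$ with
$$
h_{m_\boxtimes\OL}(z)\geq \epsilon\, h_\OM(\pi_m(z)),\quad \forall z\in W(\ol K).
$$
For $z=(x_1,\dots,x_m)$ lying over $y$, we have $h_{m_\boxtimes\OL}(z)=\sum_i h_\OL(x_i)$, by functoriality of heights under the projections $p_i$. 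So if every $x_i$ satisfies $h_\OL(x_i)\leq c_1 h_\OM(y)$ with $c_1<\epsilon/m$, the inequality forces $z\notin W$, or else $h_\OM(y)\leq 0$.

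We handle the case $h_\OM(y)\leq 0$ separately. We may first replace $\OM$ by $\OM+\ol\CO(c)$, or rather observe that shrinking $c_1$ and enlarging $\OM$ only strengthens the statement. The cleanest route is to apply the inequality to $\OM'=\OM+\pi^*$ of a positive-degree line bundle on $\Spec O_K$, scaled so that $h_{\OM'}\geq h_\OM$ and $h_{\OM'}>0$. This is valid because Proposition \ref{height inequality} holds for any $\OM'$. If $h_\OM(y)$ is very negative the set is still governed by the $\OM'$ bound, so we arrange $h_\OM\leq h_{\OM'}$ and prove the statement for $\OM'$ in place of $\OM$. Thus the small points of the fiber $X_y$ form a set $\Sigma_y$ such that $\Sigma_y^m\cap W_y=\emptyset$, where $W_y$ is the fiber of $W$ over $y$ and is a dense open subset of $X_{y}^m$ for $y$ in a non-empty open $U\subset S$ (generic flatness and irreducibility of the generic fiber).

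The main obstacle is the combinatorial step: if $Z_y=X_y^m\setminus W_y$ is a proper closed subset of $X_y^m$ of uniformly bounded degree and $\Sigma_y^m\subset Z_y$, then $\#\Sigma_y$ must be uniformly bounded. We argue by induction on $m$, slicing by coordinates. Fix a bounded-complexity family: after shrinking $U$, the closed subset $Z=X_{/S}^m\setminus W$ has fibers $Z_y$ whose number of components and degrees (with respect to a fixed relatively ample bundle) are bounded independently of $y$. Suppose $\#\Sigma_y> N$ for $N$ large. For each $a\in\Sigma_y$, consider the slice $Z_y\cap(\{a\}\times X_y^{m-1})$. If this slice is all of $X_y^{m-1}$, then $a$ lies in the image of a component of $Z_y$ of the form $\{pt\}\times X_y^{m-1}$, and there are boundedly many such $a$. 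Otherwise the slice is a proper closed subset of bounded degree containing $\Sigma_y^{m-1}$. A cleaner formulation is the lemma: a proper closed subset $Z\subset C^m$ of degree $\leq D$ cannot contain $\Sigma^m$ once $\#\Sigma>D'$ for $D'$ depending only on $D,m$. We prove it by induction on $m$, using that for a curve $C$ a proper closed subset of $C$ has at most $D$ points. The base case $m=1$ is exactly this. For the inductive step, the generic slice over a point of the first factor is proper, and at most $D$ points of the first factor give a non-proper slice.

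Choosing $c_1=\epsilon/(2m)$ and $c_2=D'$ then completes the proof.
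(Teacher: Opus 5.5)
Your overall route is the paper's. You apply Proposition~\ref{height inequality} to $m_\boxtimes\OL$ on $X^m_{/S}$, use additivity $h_{m_\boxtimes\OL}(x_1,\dots,x_m)=\sum_i h_\OL(x_i)$ to get $\Sigma_y^m\subset Z_y$, and finish with a uniform counting lemma. The paper simply cites that lemma (\cite[Lem.~4.3]{Yua21}). Your slicing induction on $m$ is a fine proof of it, provided degrees are measured with the box polarization $H=\sum_i p_i^*A$. For that polarization $p_1^{-1}(a)$ is numerically $(\deg A)^{-1}p_1^*A$, so slices have degree at most $\deg_H Z$.

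The step that fails as written is your treatment of $h_\OM(y)\le 0$.
\begin{itemize}
\item Replacing $\OM$ by $\OM'=\OM+\ol\CO(c)$ only shifts heights by a constant.
\item $h_\OM$ need not be bounded below on $S(\ol K)$; take $\OM=-\ol\lambda_S$ on a moduli space.
\item So $h_{\OM'}>0$ cannot be arranged, and ``proving the statement for $\OM'$'' meets the same problem wherever $h_{\OM'}(y)\le 0$.
\end{itemize}

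The fix uses the nefness of $\OL$, which you never invoked. Nefness gives $h_\OL\ge 0$ on $X(\ol K)$, so the set is empty when $h_\OM(y)<0$. With your non-strict inequality and $c_1<\epsilon/m$, a point of $\Sigma_y^m$ lying in $W$ forces $h_\OM(y)\le 0$. The only case left is therefore $h_\OM(y)=0$, and there your shift does work. For $h_\OM(y)\ge 0$ you have $h_{\OM'}(y)>0$ and $\{h_\OL\le c_1h_\OM(y)\}\subset\{h_\OL\le c_1h_{\OM'}(y)\}$. So run the argument with $\OM'$, and its own $\epsilon'$ and $W'$, for those $y$.

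Alternatively, avoid the case split, as the paper implicitly does. The proof of Proposition~\ref{height inequality} gives $e_1(X^m_{/S},\,m_\boxtimes\OL-\epsilon\pi_m^*\OM)>0$ via Lemma~\ref{minima3}(3). Hence there is a non-empty open set on which $h_{m_\boxtimes\OL}>\epsilon\, h_\OM\circ\pi_m$ strictly. Then $\{z: h_{m_\boxtimes\OL}(z)\le\epsilon\, h_\OM(\pi_m(z))\}\subset Z$ holds with no sign condition on $h_\OM$.
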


\begin{proof}
By assumption, $\OL_m=m_\boxtimes\OL$ is big on $X_m=X_{/S}^m$ for some $m\geq 1$. Fix such an $m$. 
By Proposition \ref{height inequality}, there are a Zariski closed subset $Z\subsetneq X_m$ and $\epsilon>0$ such that 
$$
\{x\in X_m(\ol K):h_{\OL_m}(x)\leq \epsilon h_\OM(\pi_m(x))  \} \subset Z(\ol K).
$$ 
Here $\pi_m:X_m \to S$ is the structure morphism. 

For any $y\in S(\ol K)$, denote 
$$
\Sigma(y):=\{x \in X(\ol K): \pi(x)=y,\, h_{\OL}(x)\leq \frac{\epsilon}{m} h_\OM(y)\}.  
$$
The key is the inclusion 
$$\Sigma(y)^m \subset Z(\ol K).$$
This follows from the height identity
$$
h_{\OL_m}(x)=h_{\OL}(x_1)+\cdots +h_{\OL}(x_m)
$$
for any $x\in X_m(\ol K)$ represented by $(x_1,\cdots, x_m)$ under the expression 
$$X_m(\ol K)=\{(x_1,\cdots, x_m)\in X(\ol K)^m: \pi(x_1)=\cdots=\pi(x_m)\}.$$ 

Let $U$ be a non-empty open subscheme of $S$ such that $Z$ is flat over $U$. 
By a simple counting argument from \cite[Lem. 4.3]{Yua21} (cf.  \cite[Lem. 6.3]{DGH} and \cite[Lem. 7.3]{Gao21}), the inclusion $\Sigma(y)^m \subset Z_y(\ol K)$ 
forces $\Sigma(y)$ to be finite and bounded uniformly in $y\in U(\overline K).$
\end{proof}

\subsubsection{Proof of the uniform Bogomolov conjecture}

Now we are ready to sketch a proof of Theorem \ref{uniform bog}.
Recall that the theorem asserts that for any geometrically integral smooth projective curve $C$ of genus $g\geq 2$ over a {number field} $K$, and for any 
line bundle $\alpha\in \Pic(C_{\ol K})$ of degree 1, 
we have
$$
\#\left\{x\in C(\ol K): \hat h(x-\alpha)\leq c_1\big(h_\Fal^+(C) +
\hat h((2g-2)\alpha-\omega_{C/K})\big) \right\} \leq c_2.
$$ 
Here the positive constants $c_1,c_2$ {depend} only on $g$. 

\paragraph{Canonical case}
To illustrate the idea, we first treat the canonical case where $\alpha$ satisfies 
$(2g-2)\alpha=\omega_{C/K}$. In this case, the result {can be written simply as} 
$$
\#\left\{x\in C(\ol K): \hat h(x)\leq c_1\, h_\Fal^+(C)  \right\} \leq c_2.
$$ 
Then we {explain how to revise the proof to obtain uniformity} in the general situation. 

Take $S$ to be a fine moduli space of smooth curves of genus $g$ over 
$\QQ$ with a suitable full level-$N$ structure, and take $\pi:X\to S$ to be the universal curve.
Denote by $J$ the relative Jacobian variety of $X$ over $S$. 
Via the relative dualizing sheaf $\omega=\omega_{X/S}$, we have an
Abel--Jacobi map 
$$
i_\omega: X\lra J, \quad x\longmapsto (2g-2)x-\omega.
$$
By Theorem \ref{canonical alternative definition}, we have
$$
\pi_*\pair{i_\omega^*\OTheta,i_\omega^*\OTheta}
={16g(g-1)^3}\,\pi_*\pair{\overline\omega_{X/S,a},\overline\omega_{X/S,a}}
$$
as adelic line bundles on $S/\ZZ$. 
Here the adelic line bundle $\OTheta$ is constructed from the dynamical system $[2]: J\to J$ over $S$ right before the theorem.

By Theorem \ref{bigness55} for bigness, 
$\pi_*\pair{\overline\omega_{X/S,a},\overline\omega_{X/S,a}}$ is big on $S/\ZZ$, and thus $\pi_*\pair{i_\omega^*\OTheta,i_\omega^*\OTheta}$ is big on $S/\ZZ$. 
By Theorem \ref{potentially big} for potential bigness, 
$i_\omega^*\OTheta$ is potentially big on $X/S$. 
By Theorem \ref{small points}, for any adelic line bundle $\OM$ on $S/\ZZ$, there are a non-empty Zariski open subvariety $U\subset S$ and constants  
$c_1,c_2>0$, such that 
for any $y\in U(\ol \QQ)$,
$$
\#\{x\in X(\ol \QQ): \pi(x)=y, \, h_{{i_\omega^*\OTheta}}(x)\leq c_1\, h_\OM(y)\} \leq c_2.
$$ 
Replacing $S$ by irreducible components of $S\setminus U$,  replacing $X\to S$ {by} the corresponding base change, and performing the above argument successively, we conclude that the above inequality holds for all points $y\in S(\ol \QQ)$ (instead of $y\in U(\ol \QQ)$).

To convert the inequality to individual curves, we choose the adelic line bundle $\OM$ on $S/\ZZ$ by
$$
\OM= \ol\lambda_{S} + \ol\CO(c).
$$
This is already defined in \S\ref{sec fiberwise bigness}.
For any point $y\in S(\ol \QQ)$, we simply have
$$
h_{\OM}(y)
= h_{\rm Fal}(X_y) +c.
$$
By the result of Bost \cite{Bos2} (cf. \cite[App.]{GR} or \cite[\S1.3]{JS1}), 
$$h_{\rm Fal}(X_y)\geq - g\log (\sqrt2 \pi).$$
Taking 
$$c= g\log (\sqrt2 \pi)+1,$$ 
we have 
$$h_{\OM}(y)\geq  h_{\rm Fal}^+(X_y). $$
This proves Theorem \ref{uniform bog} in the canonical case
$(2g-2)\alpha=\omega_{C/K}$. 

\paragraph{General case}
Now we {discuss} the proof of Theorem \ref{uniform bog} for general 
$\alpha$. 
Let $\pi:X\to S$ be as above. 
Denote 
$$X_J=J\times_S X, \quad J_J=J\times_S J,$$
 viewed as $J$-schemes via the first projections 
 $$q_1:X_J\to J, \quad p_1:J_J\to J.$$ 
Note that $J_J$ is canonically isomorphic to the Jacobian scheme of $X_J$ over $J$.
We are going to apply the above mechanism to $q_1:X_J\to J$ 
(instead of $\pi:X\to S$).

Consider the morphism 
$$\tau:J\times_SX \lra J\times_SJ, \quad (y,x)\longmapsto (y,y+(2g-2)x-\omega_{X/S}).$$
This morphism agrees with the $J$-morphism
$$
i_{\omega-Q}: X_J\lra J_J,\quad x\longmapsto (2g-2)x-(\omega_{X_J/J}-Q).
$$
Here 
$Q$ is a universal line bundle on $J\times_S X$ in a suitable sense.
Denote by $\OTheta_J=p_2^*\OTheta$ the adelic line bundle on $J_J$ by the pull-back via the second projection $p_2:J\times_SJ\to J$.
Then we obtain an adelic line bundle $\tau^*(\OTheta_J)$ on $X_J$ by pull-back. 

By a calculation, we have the identity 
$$
q_{1*}\pair{\tau^*(\OTheta_J), \tau^*(\OTheta_J)}= 
16(g-1)^3\OTheta+16g(g-1)^3\pi_J^*\pi_*\pair{\ol\omega_{X/S,a}\, ,\ol\omega_{X/S,a}}
$$
in $\wh\Pic(J/\ZZ)$.
Then Theorem \ref{bigness55}  implies that  $q_{1*}\pair{\tau^*(\OTheta_J),\tau^*(\OTheta_J)}$ is nef and big on $J$. 
Therefore, we are ready to apply Theorem \ref{potentially big} and Theorem \ref{small points}
to $q_1:X_J\to J$ and the adelic line bundle $\OL=\tau^*(\OTheta_J)$ on $X_J$. 
As a consequence of the above mechanism, for any adelic line bundle $\OM$ on $J/\ZZ$, there are constants  
$c_1,c_2>0$, such that 
for any $y\in J(\ol \QQ)$,
$$
\#\{x\in X_J(\ol \QQ): q_1(x)=y, \, h_\OL(x)\leq c_1\, h_\OM(y)\} \leq c_2.
$$ 

Now we choose the adelic line bundle $\OM$ on $J$ by
$$
\OM= \OTheta + \ol\lambda_{S} +\ol\CO(c).
$$
Here $\ol\lambda_{S}$ and  
$\ol\CO(c)$ are as above, but they are viewed as adelic line bundles on $J/\ZZ$ by pull-back.
For any point $y\in J(\ol \QQ)$ with image $s\in S(\ol \QQ)$, we have
$$
h_{\OM}(y)
= 2\, \hat h(y)
+ h_{\rm Fal}(X_s) +c.
$$
This proves the theorem by taking $c$ as above and taking
$y_s\in J_s(\ol\QQ)$ to be  
$(2g-2)\alpha-\omega_{X_s/\ol\QQ}$ on $C_{\ol K}=X_s$.

\subsection{More recent works: quantitativity}
\label{sec more recent}

In the end, we state a quantitative Mordell conjecture and some related results recently proved by Yu--Yuan--Zhou \cite{YYZ}.  We refer to the original paper and the survey \cite{Yuan ICM} for more details of the results. 

The following theorem, a simplified version of \cite[Thm. 1.3]{YYZ}, is a quantitative version of the uniform Mordell conjecture (Theorem \ref{uniform Mordell}). 

\begin{theorem}[quantitative Mordell]
\label{quant Mordell}
For any geometrically integral smooth projective curve $C$ of genus $g\geq 2$ over a number field $K$, we have
$$
 |C(K)| \leq 10^{13} g^8 \big(1+\frac{3\log g}{g}\big)^{r}.
$$ 
Here $r=\rank\, J(K)$ is the Mordell--Weil rank of the Jacobian variety $J$ of $C$ over $K$.
\end{theorem}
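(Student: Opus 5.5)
The bound has the form $c_1(g)c_2(g)^r$ with $c_1=10^{13}g^8$ and $c_2=1+\frac{3\log g}{g}$. I will follow the same scheme as the proof of Theorem \ref{uniform Mordell} given above, namely splitting $C(K)$ into large, moderate and small points, but with every constant made explicit. Each ingredient must also be sharpened so that the base of the exponential is $1+O(\log g/g)$ and not a large constant such as $7$.

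\textbf{Large points.} First fix $\alpha_0$ with $(2g-2)\alpha_0=\omega$, and work in $V=J(K)_\RR\cong\RR^r$ with the N\'eron--Tate norm. For large points I will use the explicit Vojta inequality of Yu--Yuan--Zhou, obtained through the arithmetic Siu inequality (Theorem \ref{bigness}) in the style of \cite{Yua25}. It gives explicit $\lambda_1,\lambda_2,\lambda_3$, and I expect the admissible angle to improve to roughly $\cos\theta\ge 1/g$ up to constants. I will then cover the unit sphere of $\RR^r$ by caps of this angular radius.

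\textbf{Cap count.} The key quantitative input is a Kabatiansky--Levenshtein/Rankin-type bound: caps of angle $\theta$ with $\cos\theta\approx c/\sqrt{g}$ or $c/g$ admit coverings of size $(1+O(\log g/g))^r$ times a polynomial in $g$ and $r$. The factor polynomial in $r$ can be absorbed because $r\le$ (rank bound) is not needed. Instead I will absorb it by slightly enlarging the exponential base. Within each cap, the gap principle gives the Mumford part, $|x|\geq \lambda_2|y|$ with $\lambda_2\approx 1+c/g$ being wasteful. I will therefore use instead the Mumford inequality with $\lambda_2$ close to $g$, together with Vojta's $\lambda_3$, to get $O(\log g)$ or $O(1)$ points per cap.

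\textbf{Small and moderate points.} Here I need an explicit form of Theorem \ref{uniform bog}, and this is the main obstacle. The proof above uses the bigness of $\pi_*\pair{\ol\omega_{X/S,a},\ol\omega_{X/S,a}}$ on the moduli space together with the noneffective height inequality (Proposition \ref{height inequality}) and Noetherian induction, and none of this gives constants. I would replace it with a direct explicit argument:
\begin{enumerate}[(1)]
\item Use the explicit lower bound $[\ol\omega^2_{C,a}]_\QQ\ge c(g)\,h_\Fal^+(C)$, via Theorem \ref{lower bound family} in fiber form (de Jong's inequality, Theorem \ref{dejong}), the explicit Cinkir bound, and an explicit version of Theorem \ref{positive lower bound}.
\item Apply Zhang's fundamental inequality, Theorem \ref{minima2}, on $C^m$ for $m\approx$ small, using $\OL^{m+1}$ for $\OL=m_\boxtimes i_\alpha^*\ol\Theta$. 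Its self-intersection is explicitly computable from the expansion in the proof of Theorem \ref{potentially big}.
\item Obtain from this the bound that at most $O(g)$ points satisfy $\hat h(x-\alpha)\le c\,h_\Fal^+(C)/g^{k}$.
\end{enumerate}

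\textbf{Assembly.} With these ingredients I will redo the volume-packing count of moderate points. The ratio $(2R_2/R_1+1)^r$ must become $(1+3\log g/g)^r$ times a polynomial factor. This forces a Kabatiansky--Levenshtein style packing estimate in place of naive ball volumes, combined with controlling $R_2/R_1$ by powers of $g$. Summing the large, moderate and small contributions and tracking the explicit powers of $g$ should give $10^{13}g^8$.
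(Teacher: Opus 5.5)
Your three-range skeleton is the right one, but the counting step that is supposed to produce the base $1+\frac{3\log g}{g}$ does not work. Inside a cap of angular radius $\theta$, two points can make an angle up to $2\theta$, so Theorem \ref{quant vojta} cannot be applied within it. You would need caps of radius $\theta/2$. With the actual angle $\cos\theta=\sqrt{1.01/g}$ one has $\theta/2\to\pi/4$, so every such covering has roughly $2^{r/2}$ caps or more, and the base would be at least about $\sqrt2$. (Your guess $\cos\theta\approx 1/g$ also clashes with your use of Mumford's gap: up to an error term, Mumford gives only $\cos\angle(x,y)\le\frac{1}{2g}(t+t^{-1})$ with $t=|x|/|y|$, which is vacuous at that angle.)

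The Kabatiansky--Levenshtein bound is a packing bound for spherical codes, and it has to be used as one:
\begin{enumerate}[(1)]
\item Split the large points into norm-shells $[1.15^kT,1.15^{k+1}T)$. By the lower bound in Theorem \ref{quant vojta}, two points in the same shell make an angle $>\theta$.
\item By the upper bound, two points in shells with $k\equiv k'\pmod m$ also make an angle $>\theta$, where $m\approx\log_{1.15}(10^5g^{5/2})=O(\log g)$.
\item Each residue class is therefore a spherical code of minimal angle $>\theta$. For $\cos\theta=\sqrt{1.01/g}$ its size is $(1+O(\frac{\log g}{g}))^r$ up to polynomial factors, and that is where the base comes from.
\end{enumerate}

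The small and moderate ranges have more serious gaps.

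\textbf{Small points.} For a single curve, Zhang's inequality on $C^m$ gives nothing beyond $C$ itself: $h_{m_\boxtimes\OL}(C^m)=m\,h_{\OL}(C)$, and the essential minimum scales the same way. It is also not effective. By Lemma \ref{minima3}, the exceptional set is the divisor of a small section of $n\OL$, for an $n$ dictated by the asymptotics of $\hat h^0(n\OL)$, over which you have no explicit, let alone uniform, control. This is exactly why the route through Proposition \ref{height inequality} and Theorem \ref{small points} yields no constants. Theorem \ref{quant bog} is instead obtained in the Looper--Silverman--Wilms framework, from height estimates on the single curve. That approach requires explicit archimedean inequalities such as $\varphi(C)\ge 10^{-7}g^{-5/3}$.

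\textbf{The detour through $h_\Fal^+(C)$.} The radius in Theorem \ref{quant bog} and the threshold in Theorem \ref{quant vojta} are both multiples of $\sqrt{[\bar\omega^2_{C/K,a}]_\QQ}$, so you do not need $h_\Fal^+(C)$ at all. Moreover, de Jong, Cinkir and a lower bound for $\varphi$ only bound $[\bar\omega^2_{C/K,a}]_\QQ$ below by a constant, not by $c\,h_\Fal^+(C)$.

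\textbf{Moderate points.} If all you know is that balls of a fixed radius $R_1$ contain at most $c_2$ points, no packing estimate can beat the naive count. Indeed, $R_1$-separated sets on a sphere of radius $R\le R_2$ can have about $(R/R_1)^{r-1}$ elements, and here $R_2/R_1\approx 5\cdot 10^{9}g^{17/6}$. You need an exclusion radius that grows with the norm. This is what the quantitative variant of Theorem \ref{uniform bog} with the term $\hat h((2g-2)\alpha-\omega_{C/K})$ supplies; the paper notes that \cite{YYZ} prove and use it for the medium points. With centres $\alpha$ taken far out along rays, such balls can control whole caps of angle comparable to $\theta$ inside a thin norm-shell. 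That is the kind of input that lets the $O(\log g)$ moderate shells be counted by spherical codes as well.
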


The proof of Theorem \ref{quant Mordell} still consists of bounding points by different heights.
For large points, we have the following quantitative version of Vojta's inequality and Mumford's inequality (Theorem \ref{vojta}) from \cite[Thm. 1.9, Thm. 1.10]{YYZ}.

\begin{theorem}[quantitative Vojta inequality] \label{quant vojta}
Let  $C$  be a geometrically integral smooth projective curve of genus $g\geq 2$ over a number field $K$.
Let $x$ and $y$ be distinct points of $C(\overline K)$ such that 
$$
|x|\geq |y| \geq 1.2\cdot 10^{9}\cdot g^{\frac{7}{3}}
\sqrt{[\bar\omega_{C/K,a}^2]_\QQ}
$$
and 
$$
\angle(x,y) \leq \arccos \sqrt{\frac{1.01}{g}}. 
$$
Then 
$$
1.15\, |y|\leq |x| \leq 10^5 g^{\frac52}\, |y|.
$$
\end{theorem}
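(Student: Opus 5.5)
We will follow Vojta's method in the form of \cite{Yua25}, but keep every constant explicit and replace the Faltings height by the normalized admissible volume $[\bar\omega_{C/K,a}^2]_\QQ$ throughout. The two assertions are proved separately. The lower bound $|x|\geq 1.15\,|y|$ is the Mumford part and uses only adelic intersection theory on $C\times C$. The upper bound $|x|\leq 10^5 g^{5/2}|y|$ is the Vojta part and uses the arithmetic Siu inequality (Theorem \ref{bigness}) together with a Dyson-type index estimate.

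\emph{Mumford part.} We work on $C^2$ with the admissible adelic line bundles $p_1^*\bar\omega_{C/K,a}$, $p_2^*\bar\omega_{C/K,a}$ and $\ol\CO(\Delta)_a$ of Theorem \ref{bb22:admissible2}. Restricting the integrable bundle $\ol\CO(\Delta)_a$ to the point $(x,y)$ with $x\neq y$ and expanding the N\'eron--Tate pairing by the theorem of the cube, we aim to prove an identity of the form
$$\langle x,y\rangle=\frac{|x|^2}{2g}+\frac{|y|^2}{2g}+O\big([\bar\omega_{C/K,a}^2]_\QQ\big)-\frac{g-1}{g}\,h_{\ol\CO(\Delta)_a}(x,y),$$
with all constants explicit. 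The local contributions of $h_{\ol\CO(\Delta)_a}$ at $(x,y)$ are admissible Green functions. These are bounded below by a constant times $-[\bar\omega_{C/K,a}^2]_\QQ$: at archimedean places one uses Wilms' bounds, and at non-archimedean places one uses the graph-theoretic bounds, with $\varphi$ controlled by Theorem \ref{dejong}. This gives $\langle x,y\rangle\le \frac{|x|^2+|y|^2}{2g}+c\,[\bar\omega_{C/K,a}^2]_\QQ$. Now suppose $|x|\le 1.15|y|$. Combining this with $\cos\angle(x,y)\ge\sqrt{1.01/g}$ and the stated lower bound on $|y|$ yields a contradiction.

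\emph{Vojta part.} Suppose $|x|\ge 10^5 g^{5/2}|y|$. Following \cite{Yua25}, we consider on $C^2$ the adelic $\QQ$-line bundle
$$\OL=d_1\,p_1^*\bar\omega_{C/K,a}+d_2\,p_2^*\bar\omega_{C/K,a}+\mathcal P,$$
where $\mathcal P$ is a small multiple of the Mumford-type bundle $\ol\CO(\Delta)_a-\frac{1}{2g-2}(p_1^*+p_2^*)\bar\omega_{C/K,a}$, and $d_1/d_2\approx |y|^2/|x|^2$. We write $\OL$ as a difference of two nef bundles and apply Theorem \ref{bigness}. This produces explicit small sections $s$ of $n\OL$, and their sup-norms are controlled by Lemma \ref{minima3} with the archimedean Minkowski constant tracked. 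We then evaluate at $(x,y)$. The height $h_\OL(x,y)$ is negative by the angle condition, and it is of size comparable to $\sqrt{d_1d_2}\,|x||y|$. Lemma \ref{minima3}(1) therefore forces $s$ to vanish at $(x,y)$ to high index. A Dyson/Roth-type lemma for curves, in the effective product-of-curves form, bounds that index in terms of $d_2/d_1$ and $g$. This contradicts the largeness of $|x|/|y|$ once $|y|\gg g^{7/3}\sqrt{[\bar\omega^2]_\QQ}$. The index is computed via derivations measured by the admissible metrics, so no integral model is needed.

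The main obstacle will be the Vojta part, specifically keeping every constant polynomial in $g$. There are three places this must be checked: the volume lower bound from Siu's inequality has to dominate the error terms $d_i\,[\bar\omega^2]_\QQ$; the archimedean sup-norm losses have to be bounded by admissible invariants rather than by the Faltings height; and the Dyson lemma has to be effective. To handle the first two, we will optimize the ratio $d_1/d_2$ and the coefficient of $\mathcal P$. The exponents $g^{7/3}$ and $g^{5/2}$ should then come out of this optimization.
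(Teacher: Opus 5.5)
Your overall route is the one the paper points to; the paper gives no detail beyond saying the proof follows Yuan's variant of Vojta's proof with admissible adelic line bundles. That route is Mumford's gap from the admissible pairing on $C^2$, plus Vojta's argument in the form of \cite{Yua25}: the arithmetic Siu inequality replaces arithmetic Riemann--Roch, and all bundles are admissible, so every error term is a multiple of $[\bar\omega_{C/K,a}^2]_\QQ$. However, the Vojta step fails as you set it up, because the coefficient of the Mumford-type bundle cannot be small.

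Write your $\mathcal P$ as $e\,\mathcal P_0$ with $\mathcal P_0=\ol\CO(\Delta)_a-\frac{1}{2g-2}(p_1^*+p_2^*)\bar\omega_{C/K,a}$, and normalize heights by $[K:\QQ]$. Then $h_{\bar\omega_{C/K,a}}(x)=\frac{2(g-1)}{g}|x|^2+O([\bar\omega_{C/K,a}^2]_\QQ)$ and $h_{\mathcal P_0}(x,y)=-2\langle x,y\rangle+O([\bar\omega_{C/K,a}^2]_\QQ)$. Geometrically, $\mathcal P_0\cdot p_i^*\omega_{C/K}=0$ and $\mathcal P_0^2=-2g$. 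So for $d_1|x|^2=d_2|y|^2$,
\[
h_{\OL}(x,y)=2|x||y|\Big(\tfrac{2(g-1)}{g}\sqrt{d_1d_2}-e\cos\angle(x,y)\Big)+O\big((d_1+d_2+e)\,[\bar\omega_{C/K,a}^2]_\QQ\big),\qquad \TL^2=8(g-1)^2d_1d_2-2ge^2 .
\]
\begin{itemize}
\item The height is negative only if $e\cos\angle(x,y)>\frac{2(g-1)}{g}\sqrt{d_1d_2}$.
\item For any nef splitting $\OL=\ol A-\ol B$, twisting by $\ol\CO(t)$ raises the Siu bound $\ol A^3-3\ol A^2\ol B$ by $3t(\wt A^2-2\wt A\cdot\wt B)$. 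Since $\wt A^2-2\wt A\cdot\wt B=\TL^2-\wt B^2\le\TL^2$, you need $e<\frac{2(g-1)}{\sqrt g}\sqrt{d_1d_2}$.
\item This window is nonempty exactly when $\sqrt g\cos\angle(x,y)>1$. That is precisely where the hypothesis $\cos\angle(x,y)\ge\sqrt{1.01/g}$ enters, and $\sqrt{1.01}$ is the entire margin.
\end{itemize}
A small $e$ gives $h_{\OL}(x,y)>0$ and no contradiction.

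Inside the window, $e/d_1\approx\frac{2(g-1)}{\sqrt g}|x|/|y|$ is huge, so the nef splitting is not the naive one. Use $j^*\OTheta=2\ol\CO(\Delta)_a+p_1^*\bar\omega_{C/K,a}+p_2^*\bar\omega_{C/K,a}$; expanding its cube gives exactly the identity of Theorem \ref{lower bound family}. Then
\[
\OL=\Big[\tfrac e2\, j^*\OTheta+\big(d_2-\tfrac{eg}{2g-2}\big)p_2^*\bar\omega_{C/K,a}\Big]-\big(\tfrac{eg}{2g-2}-d_1\big)p_1^*\bar\omega_{C/K,a},
\]
and both pieces are nef once $d_2\ge g d_1$. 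In this splitting $\wt B^2=0$, so Siu's bound equals $\TL^2$ on the geometric side. The arithmetic terms are controlled by Theorem \ref{lower bound family} together with $\varphi(C)\le\frac{3g-1}{2}\bar\omega_{C/K,a}^2$ from Theorem \ref{dejong}. So the optimization you defer is really where to place $e$ in this narrow window, balancing volume margin and height margin against the index bound. It is not just the choice of $d_1/d_2$.

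Three further steps need more than you indicate.
\begin{itemize}
\item Lemma \ref{minima3}(1) only shows that $s$ vanishes at $(x,y)$. A large index needs a jet version, with explicit losses for derivatives of $s$ at every place, including non-archimedean places of bad reduction. Your remark that derivations can be measured by admissible metrics without any model is exactly what would have to be proved.
\item To state both thresholds purely through $\sqrt{[\bar\omega_{C/K,a}^2]_\QQ}$, you must absorb absolute constants (Minkowski, Cauchy estimates, archimedean Green-function bounds). This needs an explicit uniform bound $[\bar\omega_{C/K,a}^2]_\QQ\ge\frac{2\cdot 10^{-7}}{(3g-1)g^{5/3}}$, which follows from Theorem \ref{dejong} and the archimedean bound $\varphi\ge 10^{-7}g^{-5/3}$.
\item For the same reason, the archimedean lower bound for $g_{\Delta,a}$ in your Mumford step should be in terms of $\varphi_v$ plus a constant. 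A bound through $\delta$ or $h_\Fal$ would send you through the non-explicit comparison of Theorem \ref{fiberwise11}.
\end{itemize}
Finally, in the paper's normalization of $\hat h$, the coefficient of $h_{\ol\CO(\Delta)_a}$ in your Mumford identity is $-\frac12$, not $-\frac{g-1}{g}$; this is harmless.
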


Recall that Theorem \ref{fiberwise11} asserts that $h_\Fal^+(C)$ and $[\ol\omega_{C/K,a}^2]_\QQ$ are equivalent invariants for our purpose.  
The proof of Theorem \ref{quant vojta} follows the variant of Vojta's proof by Yuan \cite{Yua25}, with precise calculations in terms of admissible adelic line bundles on curves. 

For small points, we have the following quantitative version of the uniform Bogomolov conjecture (Theorem \ref{uniform bog}) from \cite[Thm. 1.11]{YYZ}.

\begin{theorem}[quantitative Bogomolov]
\label{quant bog}
Let  $C$  be a geometrically integral smooth projective curve of genus $g\geq 2$ over a number field $K$, and $\alpha$ be a divisor of degree 1 on $C_{\ol K}$. Then
$$\#\left\{x\in C(\ol K): |x-\alpha|\leq \frac{1}{\sqrt{16g}} 
\sqrt{[\bar\omega_{C/K,a}^2]_\QQ}\right\}
\leq 6.5\cdot 10^{11}\cdot g^{\frac{17}{3}}.
$$
\end{theorem}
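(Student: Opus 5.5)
The plan is a quantitative, pointwise version of Zhang's proof of the Bogomolov conjecture (\S\ref{sec Bog single curves}). The positivity of $\bar\omega_{C/K,a}^2$ will be played against an energy (Elkies-type) lower bound for sums of admissible Green functions $g_{\Delta,a}$ over finite sets of points; equidistribution and the relative machinery of Theorem \ref{small points} will not be used. After a finite extension of $K$, assume $\alpha$ and all points considered are rational and $C$ is semistable. Let $x_1,\dots,x_n$ be distinct points in the set of the statement, and write $D_i=x_i-\alpha$. The aim is to show that $n\le 6.5\cdot10^{11}g^{17/3}$.

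\emph{Step 1 (global identity).} Using the Faltings--Hriljac formula in Zhang's admissible form, write $-\pair{D_i,D_j}$ as the admissible adelic intersection $(D_i\cdot D_j)_a$ on $C$. Expanding, and applying the admissible adjunction formula $(x\cdot x)_a=-(\bar\omega_{C/K,a}\cdot x)$ from Theorem \ref{bb22:admissible2}(1), one gets for $i\neq j$ a sum of local terms $\sum_v\epsilon_v\,g_{\Delta,a,v}(x_i,x_j)$ (averaged over Galois orbits) plus terms depending only on $x_i$, $x_j$ and $\alpha$. Summing over $i\neq j$ and using
$$\sum_{i\neq j}\pair{D_i,D_j}=\Big|\sum_i D_i\Big|^2-\sum_i|D_i|^2\ \geq\ -\sum_i |D_i|^2 ,$$
the $\alpha$-dependent terms should either cancel or collect into $|(2g-2)\alpha-\omega|^2\ge0$. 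Here Theorem \ref{canonical alternative definition} is what relates $\hat h$ to $\bar\omega_{C/K,a}$.

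\emph{Step 2 (the key inequality).} The outcome of Step 1 should be an inequality of the form
$$ n(n-1)\,c(g)\,[\bar\omega_{C/K,a}^2]_\QQ \ \leq\ C'(g)\,n\sum_i|D_i|^2\;-\;\frac1{[K:\QQ]}\sum_v\epsilon_v\sum_{i\ne j}g_{\Delta,a,v}(x_i,x_j).$$
Two inputs feed into it. The global positive term comes from $\bar\omega_{C/K,a}^2$. The hypothesis $|D_i|^2\le [\bar\omega^2_{C/K,a}]_\QQ/(16g)$ bounds the first term on the right by a fraction of the left-hand side, and the constant $1/(16g)$ is chosen so that at least half of the left-hand side survives.

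\emph{Step 3 (local energy bounds).} I will prove that for every place $v$,
$$\sum_{i\neq j}g_{\Delta,a,v}(x_i,x_j)\ \ge\ -a_v\,n\log n-b_v\,n,$$
with weights whose sums $\sum_v \epsilon_v a_v$ and $\sum_v \epsilon_v b_v$ are controlled by a small multiple of $[\bar\omega_{C/K,a}^2]_\QQ\,[K:\QQ]$ plus polynomial factors in $g$.
\begin{enumerate}
\item At non-archimedean $v$, $g_{\Delta,a,v}$ is built from the resistance function on the reduction graph. The energy is then a quadratic form in the pushforward of the discrete measure against the admissible measure, hence bounded below. The defect is governed by $\varphi_v$ and $\epsilon_v$, and by Steps 3 and 5 of the proof of Theorem \ref{bigness55} these are bounded by the local contribution to $\bar\omega_{C/K,a}^2$.
\item At archimedean $v$, I will use the heat-kernel/spectral expansion of the Arakelov Green function. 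Truncating it gives $\sum_{i\neq j}g\ge -n\cdot\sup(\text{truncated diagonal})$, which is Elkies' lemma. The diagonal cost needs to be bounded in terms of $\varphi_v$ or $\delta$-type invariants, using Wilms-type estimates.
\end{enumerate}
Inserting these bounds into Step 2 gives $n(n-1)\lesssim n\log n\cdot\mathrm{poly}(g)$, so $n$ is bounded by a polynomial in $g$; tracking the constants should produce exponents like $17/3$.

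I expect the archimedean part of Step 3 to be the main obstacle. The lower bound for the Arakelov Green function energy has to be uniform over all Riemann surfaces of genus $g$, including those near the boundary of $\ol M_g$, and must be absorbed by $\bar\omega^2_{C/K,a}$ with explicit constants. My first attempt would use Wilms' bound of $\sup|g_{Ar}|$ in terms of $\varphi_v$ and $\delta_v$, together with de Jong's inequality (Theorem \ref{dejong}) in the form with Wilms' improved constant, to convert these local invariants into $\bar\omega_{C/K,a}^2$.
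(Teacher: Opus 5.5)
The paper does not prove this statement. It cites \cite{YYZ} and says the proof follows the single-curve framework of Looper--Silverman--Wilms \cite{LSW}. That framework is your scheme: Faltings--Hriljac with admissible adjunction, a parallelogram bound, and local energy lower bounds absorbed via Cinkir and de Jong. Steps~1--2 do work with radius $1/\sqrt{16g}$, but not as you wrote them.

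Faltings--Hriljac applied to $(2g-2)x_i-\omega_{C/K}$ gives
$$\frac{1}{[K:\QQ]}h_{\bar\omega_{C/K,a}}(x_i)=\frac{[\bar\omega^2_{C/K,a}]_\QQ}{4g(g-1)}+\frac{2(g-1)}{g}|D_i+\beta|^2,$$
where $\beta=\alpha-\alpha_0$ and $(2g-2)\alpha_0=\omega_{C/K}$. So the $\alpha$-dependent terms contain the cross term $\langle\sum_iD_i,\beta\rangle$, which has no sign.

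If you discard $|\sum_iD_i|^2$ at the outset, as in your Step~1, the worst case degrades the right-hand side to $4n\sum_i|D_i|^2$. Against the main term $\frac{n(n-1)}{2g(g-1)}[\bar\omega^2_{C/K,a}]_\QQ$ this fails for every $g\ge 3$. You have to keep $|\sum_iD_i|^2$: together with the $\beta$-terms it forms a positive semidefinite quadratic form in $(\sum_iD_i,\beta)$, and what remains is
\[
\frac{n(n-1)}{2g(g-1)}[\bar\omega^2_{C/K,a}]_\QQ+\frac{2}{[K:\QQ]}\sum_{v}\epsilon_v\sum_{i\neq j}g_{\Delta,a,v}(x_i,x_j)\le\frac{4(n+g-1)}{g}\sum_i|D_i|^2 .
\]
This leaves more than half of the main term once $n>g^2-g+1$.

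The genuine gap is Step~3 at the archimedean places. This is exactly where the paper locates the new input of \cite{YYZ}, and you leave it as a ``first attempt.'' Two ingredients are missing.

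\emph{First}, your conclusion $n(n-1)\lesssim n\log n\cdot\mathrm{poly}(g)$ tacitly divides by $[\bar\omega^2_{C/K,a}]_\QQ$. The logarithmic singularity of $g_{\Delta,a,v}$ has universal strength, so the Elkies term has size $n\log n$ with a coefficient independent of $C_v$. After normalizing by $[K:\QQ]$ it does not shrink with any invariant of $C$, and the same holds for your additive constants $b_v$. Absorbing it requires an explicit bound $[\bar\omega^2_{C/K,a}]_\QQ\ge 1/\mathrm{poly}(g)$. That means an explicit uniform bound $\varphi(C_v)\ge c(g)>0$ at archimedean places, fed into Theorem~\ref{dejong}. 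Theorem~\ref{positive lower bound} and Theorem~\ref{fiberwise11} give only ineffective constants (by compactness and by bigness). The paper notes that \cite{YYZ} proves the explicit bound $\varphi(C)\ge 10^{-7}g^{-5/3}$ as one of its new Riemann-surface inequalities.

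\emph{Second}, the regularized self-energy in your heat-kernel truncation is not uniform over $M_g$. Already the case $n=2$ forces the constant to dominate $-\inf g_{\Delta,a,v}$, which grows under degeneration. So you need an explicit inequality bounding it by something like $a\,\varphi(C_v)+b(g)$ on all of $M_g$. Wilms-type bounds do not give this. Because $g_{\Delta,a,v}$ has a logarithmic pole on the diagonal, only a one-sided pointwise bound is available. Such a bound controls $\sum_{i\neq j}g_{\Delta,a,v}(x_i,x_j)$ only up to $O(n^2)$, which competes with the main term instead of being dominated by it.

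One smaller point at the finite places: $\bar\omega^2_{C/K,a}$ has no ``local contribution.'' Instead, bound the defect by a multiple of the reduction-graph length $\ell_v$, bound $\ell_v$ by a multiple of $\varphi_v$ via Cinkir, sum over $v$, and only then apply Theorem~\ref{dejong}.
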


Previously, Looper--Silverman--Wilms \cite{LSW} proved a quantitative Bogomolov conjecture over function fields with explicit constants. 
Applying this result, Jiawei Yu \cite{Yu26} proved a version of Theorem \ref{quant Mordell} over function fields of characteristic 0. 

The proof of \cite{LSW} is based on estimates of heights on single curves, while the approaches of \cite{DGH, Kuh, Yua21} are based on estimates on moduli spaces. 
The approach of \cite{YYZ} {follows} the framework of Looper--Silverman--Wilms \cite{LSW} {and overcomes} all the archimedean obstacles by proving many desired inequalities on invariants of compact Riemann surfaces.  
For example, Theorem \ref{positive lower bound} has the following quantitative version from 
\cite[Thm. 1.13]{YYZ}.

\begin{thm} 
For any connected compact Riemann surface $C$ of genus $g\geq2$, we have
$$\varphi(C)\geq 10^{-7}g^{-\frac53}.$$
\end{thm}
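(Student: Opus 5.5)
The plan is to work directly with Zhang's spectral expression of the archimedean $\varphi$-invariant, the one behind the positivity $\varphi_v(C)>0$ recalled in \S\ref{sec admissible single curves}. Fix an orthonormal basis $\omega_1,\dots,\omega_g$ of $\Gamma(C,\Omega^1_C)$ for the natural inner product $\frac{i}{2}\int\omega\wedge\bar\omega$. Let $\mu=\mu_{\mathrm{Ar}}=\frac{i}{2g}\sum_n\omega_n\wedge\bar\omega_n$ be the Arakelov measure, and let $\{\phi_\ell,\lambda_\ell\}_{\ell\ge1}$ be the nonconstant eigenfunctions and eigenvalues of the Laplacian for $\mu$. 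Zhang's formula reads
\[
\varphi(C)=\sum_{\ell\ge1}\frac{2}{\lambda_\ell}\sum_{m,n}\Big|\int_C\phi_\ell\,\tfrac{i}{2}\omega_m\wedge\bar\omega_n\Big|^2 .
\]
It is a Dirichlet-type energy. Setting $h_{mn}=\frac{i}{2}\omega_m\wedge\bar\omega_n/\mu-\frac{\delta_{mn}}{g}$, which has mean zero, we get $\varphi(C)=2\sum_{m,n}\|\Delta^{-1/2}h_{mn}\|_{L^2(\mu)}^2$. The first step is the variational lower bound
\[
\|\Delta^{-1/2}h\|^2\;\ge\;\frac{|\langle h,u\rangle|^2}{\int_C|\nabla u|^2},
\]
valid for every test function $u$. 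This reduces the problem to exhibiting, for every $C$, one good test function $u$ paired against a suitable combination of the $h_{mn}$.

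Second, I would split $C$ by a thick--thin decomposition for the hyperbolic metric. By Bers' constant there are at most $3g-3$ short geodesics, and the complement has injectivity radius bounded below by an absolute constant.

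\emph{Thick case.} Suppose all systoles exceed a fixed $\epsilon_0$. I would compare $\mu$ with the hyperbolic area form using Bergman-kernel (mean-value) estimates: on the thick part the pointwise norms $\sum_n|\omega_n|^2$ are bounded above and below by constants times $g/(4\pi(g-1))$ times the hyperbolic density. Then I would take $u$ to be a smooth bump adapted to a region where some $h_{mm}$ deviates from its mean; such a region must exist because $\mu$ is not hyperbolic-proportional on more than a set of controlled size. Its Dirichlet energy is $O(1)$ by conformal invariance, and the pairing is bounded below by a power of $g$ coming from the Bergman estimates.

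\emph{Thin case.} If some collar has length $\ell<\epsilon_0$, I would use the explicit model of holomorphic forms on a long flat cylinder. Take $u$ to be the linear function along the cylinder, cut off at its ends, so that $\int|\nabla u|^2\approx 2\pi/\text{(modulus)}$. Pairing $u$ with the $h_{mm}$ for a form $\omega_m$ concentrated (or not) across the collar gives a contribution comparable to the modulus, of order $1/\ell$, up to factors polynomial in $g$. This is the quantitative counterpart of Theorem \ref{positive lower bound}, namely the statement that $\varphi\to\infty$ at the boundary, and it should give a lower bound larger than the thick-case bound.

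Combining the two cases and optimizing $\epsilon_0$ against $g$ should produce a bound of the form $c\,g^{-a}$. Tracking constants is then a matter of bookkeeping to arrive at $10^{-7}g^{-5/3}$.

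The main obstacle is the thick case with explicit $g$-dependence. The lower bound for the pairing $\langle h,u\rangle$ requires showing that the Arakelov measure genuinely fails to be an eigen-combination, i.e. that some $h_{mn}$ has $L^2$-mass at least an explicit power of $g^{-1}$ on a region of bounded geometry. I would attack this first through the identity $\sum_{m,n}\|h_{mn}\|^2_{L^2(\mu)}=\int(\text{Bergman kernel on the diagonal})^{-2}|K(x,y)|^2\,\mu\,\mu-1$, together with explicit two-sided Bergman kernel bounds in terms of the injectivity radius. If that is too lossy, the fallback is to use instead Wilms' relation between $\varphi$, the Faltings $\delta$-invariant and the Arakelov--Green energy, where the needed inputs are sup-norm bounds for $g_{\mathrm{Ar}}$.
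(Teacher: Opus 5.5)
The survey gives no proof of this statement; it only quotes \cite[Thm.~1.13]{YYZ}. Your sketch therefore has to stand on its own, and as written it does not.

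\textbf{Gaps in the thick and thin cases.} The thick case rests on two claims you do not prove.
\begin{enumerate}
\item A two-sided comparison of $\mu$ with the hyperbolic area form on the thick part, with constants uniform in $g$. The upper bound is a mean-value estimate, but the lower bound is the nontrivial direction and you give no argument for it.
\item The existence of a bounded-geometry region where some $h_{mm}$ pairs with a bump $u$ by at least an explicit power of $g^{-1}$.
\end{enumerate}
Your justification for the second claim (``$\mu$ is not hyperbolic-proportional'') is a non sequitur, and the ``main obstacle'' you single out is misidentified. Write $\omega_m=f_m\,dz$ and let $P(x)$ be the orthogonal projection of $\mathbb{C}^g$ onto the line spanned by $(f_1(x),\dots,f_g(x))$. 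Then:
\begin{itemize}
\item $\frac{i}{2}\omega_m\wedge\bar\omega_n/\mu=g\,P_{mn}$;
\item the correct centering is $h_{mn}=gP_{mn}-\delta_{mn}$, since $\int\frac{i}{2}\omega_m\wedge\bar\omega_n=\delta_{mn}$ and $\int\mu=1$, so subtracting $\delta_{mn}/g$ does not give mean zero;
\item $\sum_{m,n}|h_{mn}|^2=\|gP-I\|_{\mathrm{HS}}^2=g(g-1)$ identically on $C$.
\end{itemize}
So the size of $h$ is never in question. What $\Delta^{-1/2}$ detects is how fast $h$ oscillates, and neither your thick case nor your thin case gives a quantitative handle on that. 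The thin-case pairing across a collar is also only asserted, and it is delicate for non-separating collars. Finally, nothing in the sketch produces the exponent $5/3$ or the constant $10^{-7}$; ``bookkeeping'' is standing in for the actual argument.

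\textbf{How to close the gap.} Your first step already finishes the proof if you take the test function $u=h_{mn}$ itself.
\begin{enumerate}
\item Cauchy--Schwarz gives $\langle h,\Delta^{-1}h\rangle\ge\|h\|^4/\langle h,\Delta h\rangle$. Summing over $(m,n)$ with $\sum a_i^2/b_i\ge(\sum a_i)^2/\sum b_i$ yields $\varphi(C)\ge 2\bigl(\sum_{m,n}\|h_{mn}\|^2\bigr)^2/\sum_{m,n}\langle h_{mn},\Delta h_{mn}\rangle$.
\item The numerator is $(g(g-1))^2$, by the pointwise identity above.
\item The denominator is $g^2$ times the Dirichlet energy of $P$. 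This energy is conformally invariant, so it does not depend on $\mu$ at all.
\item Let $\Phi:C\to\mathbb{P}^{g-1}$ be the canonical map. Since $\|dP\|_{\mathrm{HS}}^2=2|d\Phi|_{\mathrm{FS}}^2$ and $\Phi$ is holomorphic, the energy of $P$ is a fixed multiple of $\int_C\Phi^*\omega_{\mathrm{FS}}$, i.e.\ of $\deg K_C=2g-2$.
\item Normalize so that $g_{\Delta,a}(x,y)=\sum_\ell\phi_\ell(x)\phi_\ell(y)/\lambda_\ell$. Then $\langle u,\Delta u\rangle=\frac{1}{2\pi}\int|\nabla u|^2\,dx\,dy$ in local conformal coordinates, and the denominator equals $4g^2(g-1)$.
\end{enumerate}
This gives $\varphi(C)\ge\frac{g-1}{2}$; a different normalization only changes this absolute constant. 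That is far stronger than the stated $10^{-7}g^{-5/3}$, and it needs no thick--thin decomposition, Bergman-kernel bounds or degeneration analysis. It is also a different route from whatever \cite{YYZ} does, which the survey does not reproduce.
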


To obtain a sharp bound on medium points, \cite{YYZ} also 
proves and applies a quantitative variant related to the extra $\hat h((2g-2)\alpha-\omega_{C/K})$ in Theorem \ref{uniform bog} obtained by \cite{Yua21}.


\end{document}